\setlist[enumerate]{leftmargin=1cm}
\def\@tocline#1#2#3#4#5#6#7{\relax
 \ifnum #1>\c@tocdepth 
 \else
  \par \addpenalty\@secpenalty\addvspace{#2}%
  \begingroup \hyphenpenalty\@M
  \@ifempty{#4}{%
   \@tempdima\csname r@tocindent\number#1\endcsname\relax
  }{%
   \@tempdima#4\relax
  }%
  \parindent\z@ \leftskip#3\relax \advance\leftskip\@tempdima\relax
  \rightskip\@pnumwidth plus4em \parfillskip-\@pnumwidth
  #5\leavevmode\hskip-\@tempdima
   \ifcase #1
    \or\or \hskip 1em \or \hskip 2em \else \hskip 3em \fi%
   #6\nobreak\relax
  \dotfill\hbox to\@pnumwidth{\@tocpagenum{#7}}\par
  \nobreak
  \endgroup
 \fi}
\def\@seccntformat#1{%
  \protect\textup{\protect\@secnumfont
    \ifnum\pdfstrcmp{subsection}{#1}=0 \bfseries\fi
    \csname the#1\endcsname
    \protect\@secnumpunct
  }%
}  
\theoremstyle{plain} 
\newtheorem{theorem}{Theorem} 
\newtheorem{lemma}[theorem]{Lemma}
\newtheorem{proposition}[theorem]{Proposition}
\newtheorem{corollary}[theorem]{Corollary}
\theoremstyle{definition}
\newtheorem{definition}[theorem]{Definition}
\newtheorem{example}[theorem]{Example}
\theoremstyle{remark}
\newtheorem{remark}[theorem]{Remark}
\numberwithin{equation}{section}
\numberwithin{theorem}{section}
\newcommand{\N}{\mathbb{N}}  
\newcommand{\Z}{\mathbb{Z}}  
\newcommand{\R}{\mathbb{R}}  
\newcommand{\C}{\mathbb{C}}  
\newcommand{\SL}{\mathrm{SL}}
\newcommand{\GL}{\mathrm{GL}}
\newcommand{\eps}{\varepsilon} 
\newcommand{\iu}{\mathrm{i}}   
\newcommand{\e}{\mathrm{e}}    
\newcommand{\dif}{\mathrm{d}}
\newcommand{\imbound}{q_0^{-1/2}r^{-1}} 
\newcommand{\defi}{\; \stackrel{\mathrm{def}}{=} \;}
\newcommand{\tdefi}{:=}
\newcommand{\bgr}[1] {\4\bigr#1}
\newcommand{\bgl}[1]{\bigl#1\4}
\newcommand{\na}{\,\, {\raise.4pt\hbox{$\shortmid$}}{\hskip-2.0pt\to}\, \, }
\newcommand{\wh}{\widehat}
\newcommand{\q}{\quad}
\newcommand{\vol}{\mathrm{vol}_\mathbb{Z}\,}
\renewcommand{\v}{\mathrm{vol}\,}
\newcommand{\norm}[1]{\lVert#1\rVert}
\newcommand{\4}{\kern1pt}
\DeclareMathOperator{\volu}{\operatorname{vol}}
\DeclareMathOperator{\Nm}{\operatorname{Nm}}
\DeclareMathOperator{\Tr}{\operatorname{Tr}}
\newcommand{\ffrac}[2]{\raise.5pt\hbox{\small$\4\displaystyle\frac{\,#1\,}{\,#2\,}\4$}}
\newcommand{\abs}[1]{\lvert#1\rvert}
\newcommand{\largewedge}{\mbox{\large $\wedge$}}
\DeclareSymbolFont{spfont}{U}{cmr}{m}{n}
\DeclareMathSymbol{\specialv}{\mathord}{spfont}{118}
\begin{document}

\title[Distribution of Values of Quadratic Forms]{Distribution of Values of Quadratic Forms at Integral Points ${}^*$}
\author{P.~Buterus, F.~G\"{o}tze, T.~Hille, G.~Margulis}
\thanks{${}^*$ Research supported by the DFG, CRC 701}
\subjclass[2000]{11P21,11D75}

\keywords{Lattice points, ellipsoids, irrational indefinite quadratic forms, distribution of values of quadratic forms, positive forms, Oppenheim conjecture}

\date{\today}



\begin{abstract}
  The number of lattice points in $d$-dimensional hyperbolic or elliptic shells $\{m : a<Q[m]<b\}$, which are restricted to rescaled and growing domains $r\4\Omega$, is approximated by the volume. An effective error bound of order $o(r^{d-2})$ for this approximation is proved based on Diophantine approximation properties of the quadratic form $Q$. These results allow to show effective variants of previous non-effective results in the quantitative Oppenheim problem and extend known effective results in dimension $d \geq 9$ to dimension $d \geq 5$. They apply to wide shells when $b-a$ is growing with $r$ and to positive definite forms $Q$. For indefinite forms they provide explicit bounds (depending on the signature or Diophantine properties of $Q$) for the size of non-zero integral points $m$ in dimension $d\geq 5$ solving the Diophantine inequality $\lvert Q[m] \rvert < \varepsilon$ and provide error bounds comparable with those for positive forms up to powers of $\log r$.
\end{abstract}

\maketitle

\vspace{-1.5em}

\tableofcontents

\vspace{-2em}



\section{Introduction}
\label{section:introduction}
\noindent Let $Q[x]$ denote an indefinite quadratic form in $d$ variables. We say that the form $Q$ is \textit{rational}, if it is proportional to a form with integer coefficients; otherwise it is called \textit{irrational}. The Oppenheim conjecture, proved by G.~Margulis \cite{margulis:1987} in 1986, states that $Q[\Z^d]$ is dense in $\R$ if $d \geq 3$ and $Q$ 
is irrational. Initially this was conjectured for $d \ge 5$ by A.~Oppenheim \cite{oppenheim:1929,oppenheim:1931} in 1929 and in 1946 strengthened (for diagonal forms) to $d\ge 3$ by H.~Davenport \cite{davenport-heilbronn:1946}. The proof given in 1986 uses a connection, noticed by M.~S.~Raghunathan, between the Oppenheim conjecture and questions concerning closures in $\SL (3, \R) / \SL (3, \Z)$ of orbits of certain subgroups of $\SL (3, \R)$. It is based on the study of minimal invariant sets and the limits of orbits of sequences of points tending to a minimal invariant set. Previous studies have mostly used analytic number theory methods. In fact, 
B.~J. Birch, H.~Davenport and D.~Ridout proved in a series of papers that 
$Q[\Z^d]$ is dense in $\R$ if $d \geq 21$ provided that $Q$ is irrational, see \cite{lewis:1973} and \cite{margulis:1997} for a complete historical overview until 1997.

For a measurable set $B \subset \R^d$ let $\v \, B$\index{V@ $\v \, (B)$, 
Lebesgue measure on $\R^d$} denote the Lebesgue measure of $B$ and let\index{V@ $\v_\Z(B)= \# (B \cap \Z^d)$, counting measure on $\Z^d$} $\v_\Z 
\, B \4 \tdefi \4 \#(B \cap \Z^d)$ denote the number of integer points in 
$B$. We define for $a, b \in \R$ with $a < b$ the \textit{hyperbolic shell}\index{E@ $E_{a,b} \tdefi \{ x \in \R^d \, \colon \, a < Q [x] < b \}$, hyperbolic or ellipsoidal shell}
\begin{equation*}
  E_{a, b} \defi \{ x \in \R^d \, \colon \, a < Q [x] < b \}.
\end{equation*}
The Oppenheim conjecture is equivalent to the statement that if $d \geq 3$ and $Q$ is irrational, then $\vol E_{a,b} = \infty$ whenever $a < b$. 
We would like to study the distribution of values of $Q$ at integer points, often referred to as ``quantitative Oppenheim conjecture'' with an emphasis on establishing effective error bounds for the approximation of the 
number of lattice points restricted to growing domains. Our methods rely mainly on G\"{o}tze's Fourier approach \cite{goetze:2004} via Theta series, translating the lattice point counting problem into averages of certain functions on the space of lattices, for which we extend the mean-value estimates obtained by Eskin-Margulis-Mozes \cite{eskin-margulis-mozes:1998}.

\subsection{Related Results}
Let $\mathcal{R}$ be a continuous positive function on the sphere $\{v \in \R^d \,\colon \, \norm{v}=1\}$ and let $\Omega= \{v\in \R^d\,:\, \norm{v} \le 1/ \mathcal{R}(v/\norm{v})\}$. Note that the Minkowski functional of $\Omega$, that is $M(v)=\inf\{r>0 \, \colon \, v \in r \Omega\}$, may be rewritten as $M(v)=\norm{v} \4 \mathcal{R}(v/\norm{v})$ and therefore $\Omega =\{v\in \R^d \,:\, M(v) \le 1\}$. Without loss of generality we may assume that $\Omega\subset [-1,1]^d$. We denote by $r\Omega$ 
the dilate of $\Omega$ by $r > 1$. In \cite{dani-margulis:1993} S.~G.~Dani and G.~Margulis obtained the following asymptotic exact lower bound under the same assumptions that $Q$ is irrational and $d\ge 3$:
\begin{equation}
  \label{eqa}
  \liminf_{r\to\infty} \frac{\vol (E_{a,b}\cap r\Omega)}{\v (E_{a,b} \cap 
r\Omega)}\ge 1.
\end{equation}

\begin{remark}
  \label{remark:volume}
  It is not difficult to prove (see Lemma 3.8 in \cite{eskin-margulis-mozes:1998}) that as $r\to \infty$,
  \begin{equation*}
    \v(E_{a,b}\cap r\Omega)\sim \lambda_{Q, \Omega}(b-a)r^{d-2},
  \end{equation*}
  where
  \begin{equation}
    \label{eq1}
    \lambda_{Q,\Omega}\defi \int_{L\cap\Omega}\frac{\dif A}{\norm{\nabla Q}},
  \end{equation}
  $L$ is the light cone $Q=0$ and $\dif A$ is the area element on $L$.
\end{remark}

The situation with asymptotics and upper bounds is more subtle. It was proved in \cite{eskin-margulis-mozes:1998} that if $Q$ is an irrational indefinite quadratic form of signature $(p,q)$ with $p+q=d$, $p \geq 3$ and $q \geq 1$, then for any $a<b$
\begin{equation}
  \label{eqb}
  \lim_{r\to\infty}\frac{\vol (E_{a,b}\cap r\Omega)}{\v (E_{a,b}\cap r\Omega)}= 1
\end{equation}
or, equivalently, as $r\to \infty$
\begin{equation}
  \label{eq2}
  \vol(E_{a,b}\cap r\Omega)\sim \lambda_{Q, \Omega}(b-a)r^{d-2},
\end{equation}
where $\lambda_{Q, \Omega}$ is as in \eqref{eq1}.

If the signature of $Q$ is $(2,1)$ or $(2,2)$, then no universal formula like \eqref{eq2} holds. In fact, one can show (see Theorem 2.2 in \cite{eskin-margulis-mozes:1998}) that if $\Omega$ is the unit ball and $q=1$ or $q=2$, then for every $\eps >0$ and every $a<b$ there exists an irrational quadratic form $Q$ of signature $(2,q)$ and a constant $c>0$ such that for an infinite sequence $r_j\to \infty$
\begin{equation*}
  \vol(E_{a,b}\cap r_j\Omega)>cr_j^{d-2}(\log r_j)^{1-\eps}.
\end{equation*}
While the asymptotics as in \eqref{eq2} do not hold in the case of signatures $(2,1)$ and $(2,2)$, one can show (see \cite{eskin-margulis-mozes:1998}) that in these cases there is an upper bound of the form $r^{d-2}\log r$. This upper bound is effective and it is uniform over compact sets in the space of quadratic forms. In addition, there is  an 
effective uniform upper bound (see \cite{eskin-margulis-mozes:1998}) of the form $cr^{d-2}$ for the case $p\ge 3$, $q\ge 1$.

\noindent The examples in \cite{eskin-margulis-mozes:1998} for the cases of signatures $(2,1)$ and $(2,2)$ are obtained by considering irrational forms which are very well approximated by split rational forms. More precisely, a quadratic form $Q$ is called \textit{extremely well approximable 
by split rational forms} (EWAS) if for any $N>0$ there exists a split integral form $Q^\prime$ and a real number $t \geq 2$ such that
\begin{equation*}
  \norm{tQ-Q^\prime} \le t^{-N},
\end{equation*}
where $||\cdot||$ denotes a norm on the linear space of quadratic forms. It is shown in \cite{eskin-margulis-mozes:2005} that if $Q$ is an indefinite quadratic form of signature $(2,2)$, which is not (EWAS), then for any interval $(a,b)$, as $r\to \infty$,
\begin{equation}
  \label{eqc}
  \widetilde{N}_{Q,\Omega}(a,b,r)\sim \lambda_{Q, \Omega}(b-a)r^2,
\end{equation}
where $\lambda_{Q, \Omega}$ is the same as in \eqref{eq1} and $\widetilde{N}_{Q,\Omega}(a,b,r)$ counts all the integral points in $E_{a,b}\cap r\Omega$ not contained in rational subspaces isotropic with respect to $Q$. It should be noted that
\begin{enumerate}[label=(\roman*)]
  \item an irrational quadratic form of signature $(2,2)$ may have at most four rational isotropic subspaces,
  \item if $0\not\in (a,b)$, then $\widetilde{N}_{Q,\Omega}(a,b,r)=\vol 
(E_{a,b}\cap r\Omega).$
\end{enumerate}
The above mentioned results have analogs for inhomogeneous quadratic forms
\begin{equation*}
  Q_\xi[x]= Q[x+\xi], \quad \xi \in \R^d.
\end{equation*}
We define for $a,b\in \R$ with $a<b$ the shifted hyperbolic shell
\begin{equation*}
  E_{a,b,\xi}\defi \{x\in \R^d:\, a<Q_\xi[x]<b\}.
\end{equation*}
We say that $Q_\xi$ is \textit{rational} if there exists $t>0$ such that the coefficients of $tQ$ and the coordinates of $t\xi$ are integers; otherwise $Q_\xi$ is \textit{irrational}. Then, under the assumptions that $Q_\xi$ is irrational and $d\ge 3$, we have that (see \cite{margulis-mohammadi:2011})
\begin{equation}
  \label{eqa'}
  \liminf_{r\to\infty} \frac{\vol (E_{a,b, \xi}\cap r\Omega)}{\v (E_{a,b, 
\xi}\cap r\omega)}\ge 1.
\end{equation}
The proof of \eqref{eqa'} is similar to the proof of \eqref{eqa}.\par Let 
$(p,q)$ be the signature of $Q$. If $p \geq 3$, $q \geq 1$ and $Q_\xi$ is 
irrational then
\begin{equation}
  \label{eqb'}
  \lim_{r\to\infty}\frac{\vol (E_{a,b, \xi}\cap r\Omega)}{\v (E_{a,b, \xi}\cap r\Omega)}= 1,
\end{equation}
or, equivalently, as $r\to \infty$,
\begin{equation}
  \label{eq2'}
  \vol(E_{a,b,\xi} \cap r\Omega) \sim \lambda_{Q, \Omega}(b-a)r^{d-2}.
\end{equation}
The proof of \eqref{eqb'} is similar to the proof of \eqref{eqb}, see \cite{margulis-mohammadi:2011}. The latter paper \cite{margulis-mohammadi:2011} also contains an analog of \eqref{eqc} for inhomogeneous forms in the 
case of signature $(2,2)$. One should also mention related results of Marklof \cite{marklof:2002,marklof:2003}.

\begin{remark}
  The proofs of the above mentioned results use such notions as a minimal 
invariant set (in the case of the Oppenheim conjecture) and an ergodic invariant measure. These notions do not have in general effective analogs. Because of that it is very difficult to get `good' estimates for the size 
of the smallest non-trivial integral solution of the inequality $\abs{Q[m]}<\eps$ and `good' error terms in the quantitative Oppenheim conjecture by applying dynamical and ergodic methods.
\end{remark}

\subsection{Diophantine Inequalities}
\label{subsection:dio_ineq}
One of our main objective is to develop effective analogs of \eqref{eq2'} 
and show that \textit{all indefinite quadratic forms $Q$ of rank at least 
$5$} admit a non-trivial integral solution to the Diophantine inequality $\abs{Q[m]}<\eps$ whose size can be bounded effectively in terms of $\eps^{-1}$. On the one hand, we will exploit Schlickewei's results \cite{Schlickewei:1985} on small zeros of integral forms (see Subsection \ref{section:integer_val:qforms}) in order to establish effective bounds depending on the signature $(r,s)$ of $Q$. On the other hand we will introduce an appropriate Diophantine condition on the space of quadratic forms, which will enable us to significantly improve our effective bounds due to the exponents appearing in the Diophantine approximation of $Q$. To state these 
bounds we need to introduce notation. \par
Denote by $Q$\index{Q@ $Q$, as quadratic form and the corresponding symmetric matrix} also the symmetric matrix in $\GL(d, \R)$ associated with the form $Q[x] \tdefi \langle x, Q\, x\rangle$, where $\langle\, \cdot\,, \,\cdot \, \rangle $\index{Q@ $Q[x]=\langle Q x,x \rangle$, Siegel's notation} is the standard Euclidean scalar product on $\R^d$\index{1@ $\langle \, \cdot \,, \, \cdot \, \rangle$, $\norm{\, \cdot  \,}$, Euclidean inner product and associated norm}. Let $Q_{+}$\index{Q@ $Q_+$, positive definite square root of $Q^2$} denote the unique positive symmetric matrix 
such that $Q_{+}^2=Q^2$ and let $Q_{+}[x]=\langle x, Q_{+}\, x\rangle$ denote the associated positive form with eigenvalues being the eigenvalues of $Q$ in absolute value. Let $q$, resp.\ $q_0$,\index{Q@ $q$, largest eigenvalue of $Q$ in absolute value}\index{Q@ $q_0$, smallest eigenvalue of $Q$ in absolute value, $q_0 \geq 1$} denote the largest, resp.\ smallest, of the absolute value of the eigenvalues of $Q$ and assume 
$q_0 \ge 1$. In the first case, where we compare $Q$ with rational forms, 
we can replace the form $Q$ by $Q / \eps$ and consider the solubility of the inequality $\abs{Q[m]} <1$. Since this Diophantine inequality includes the case of integral-valued indefinite forms, we shall appeal to Corollary \ref{bound:schlickewei:corollary} (a variant of \textit{Folgerung 3} in \cite{Schlickewei:1985}) on the size of non-trivial integral solutions.

\begin{theorem}
  \label{dio-ineq}
  For all indefinite and non-degenerate quadratic forms $Q$ of dimension $d \ge 5$ and signature $(r,s)$ there exists for any $\delta >0$ a non-trivial integral solution $m \in \Z^d \setminus \{0\}$ to the Diophantine inequality $\abs{Q[m]} < 1$ satisfying
  \begin{equation}
    \label{size_bound}
    \norm{Q_{+}^{1/2} m} \ll_{\delta,d} (q/q_0)^{\frac{d+1}{d-2}} q^{\frac{1}{2} + \max\{ \rho d +2,d+1\}/(d-4)  +\delta},
  \end{equation}
  where the dependency on the signature $(r, s)$\index{R@ $\rho=\rho(r,s)$, Schlickewei exponent} is given by
  \begin{equation}
    \label{dio-ineq-rho}
    \rho := \rho(r,s) := \begin{cases}
      \frac{1}{2} \frac{r}{s}     & \text{for} \ r \geq s+3 \\
      \frac{1}{2} \frac{s+2}{s-1} & \text{for} \ r=s+2 \ \text{or} \ r=s+1 \\
      \frac{1}{2} \frac{s+1}{s-2} & \text{for} \ r=s.
    \end{cases}
  \end{equation}
\end{theorem}
In particular, for indefinite non-degenerate forms in $d\ge 5$ variables of signature $(r,s)$ and eigenvalues in absolute value contained in a compact set $[1, C]$, i.e $1 \le q_0 \le q \le C$,  Theorem \ref{dio-ineq} yields non-trivial solutions $m \in \Z^d$ of $\abs{Q[m]} <\eps$ of size bounded by
\begin{equation*}
  \norm{m} \ll_{C,\delta}  \eps^{- \max\{ \rho d +2,d+1\}/(d-4) - \delta}.
\end{equation*}
As an example, we obtain solutions of order $\ll_{C,\delta} \eps^{-1- \frac{5}{(d-4)}-\delta}$ for the special case $r= s+3$ and $d \geq 12$. More generally, we may embed $\Z^{d_1} \subset \Z^d$ for dimensions $d \geq 
d_1 \geq 5$, in such a way that the restricted form is indefinite and of rank $d_1$, and apply Theorem \ref{dio-ineq} to this form in $d_1$ dimensions. As a consequence, since $(Q^*)^2 \le Q^2$ in the ordering of positive forms we get $q\ge q^{*} \geq q_0^* \geq q_0 \geq 1$ and $\abs{\det Q^*} \le \abs{\det Q}$, we obtain the following corollary.

\begin{corollary}
   For all indefinite and non-degenerate quadratic forms $Q$ in $d \geq 5$ variables there exists for any $\varepsilon >0$ at least one non-trivial integral solution $m \in \Z^d$ of
   \begin{equation}
      \begin{aligned}
        \abs{Q[m]} &< \varepsilon, \\
        \norm{m} &\leq c_{C,\delta}\4 \varepsilon^{-f_d -\delta},
      \end{aligned}
   \end{equation}
   for any $\delta>0$, where $f_d=12,\4 8\frac12,\4 7\frac23$ for $d=5,\4 6,\4 7$ respectively and $f_d=7\frac12$ for all $d\ge 8$. The constant $c_{C,\delta}$ depends only on $\delta$ and $C>0$ for forms $Q$ satisfying $1 \leq q_0 \leq q \leq C$.
\end{corollary} 

\begin{remark}
  (a) For the special case of \textit{diagonal} indefinite forms $Q[x]=\sum_{j=1}^5 q_j x_j^2$ with $\min \abs{q_j} \ge 1$ Birch and Davenport 
(1958), \cite{birch-davenport:1958}, obtained a sharper bound. They showed for arbitrary small $\delta>0$ that there exists an $m\in \Z^5 \setminus \hspace{-1pt} \{0\}$ with $\abs{Q[m]}<1$ and $Q_{+}[m]\ll_{d,\delta} \abs{\det{Q}}^{1+\delta}$. This implies (as above) for a compact set of forms $Q$ that there exists an integral vector $m$ satisfying $\abs{Q[m]}< \eps$ and $\norm{m} \le c_{d,\delta}\, \eps^{-2+\delta}$ for any fixed $\delta >0$. In \cite{buterus-goetze-hille:2019} Buterus, G\"{o}tze and Hille extended the approach of Birch and Davenport to improve the size of a solution by using Schlickewei's result \cite{Schlickewei:1985} on small zeros of integral forms: Let $Q[x] = \sum_{j=1}^d q_j x_j^2$ be an indefinite form of signature $(r,s)$ in $d = r+s \ge 5$ variables. Then for 
any $\eps >0$ the Diophantine inequality $\abs{Q[m]} < \eps$ admits a non-trivial solution $m \in \Z^d$, whose size is bounded by $\ll \eps^{-\rho+\delta}$ for any fixed $\delta>0$.\\[2mm]
  (b) Recently, quantitative versions of the Oppenheim conjecture were studied by Bourgain \cite{Bourgain:2016}, Athreya and Margulis \cite{Athreya-Margulis:2018}, and Ghosh and Kelmer \cite{Ghosh-Kelmer:2018}. Bourgain 
\cite{Bourgain:2016} proves essentially optimal results for one-parameter 
families of diagonal ternary indefinite quadratic forms under the Lindel\"of hypothesis by using also a Fourier approach, based on Epstein-Zeta functions. In contrast, Ghosh and Kelmer \cite{Ghosh-Kelmer:2018} consider the space of all indefinite ternary quadratic forms and use spectral methods (an effective mean ergodic theorem). Lastly, Athreya and Margulis apply classical bounds of Rogers for $L^2$-norm of Siegel transforms in order to prove that for every $\delta >0$ and almost every $Q$ (with respect to the 
Lebesgue measure) with signature $(r,s)$, there exists a non-trivial integral solution $m \in \Z^d$ to the Diophantine inequality $\lvert Q[m] \rvert < \eps$ whose size is bounded by $\norm{m} \ll_{\delta,Q} \eps^{-\frac{1}{d-2}-\delta}$ if $d \geq 3$.
\end{remark}

As mentioned above let us introduce a class of Diophantine forms as follows.

\begin{definition}
  \label{def:dio_type}
  We call $Q$ Diophantine of type $(\kappa,A)$\index{D@ Diophantine quadratic form of type $(\kappa,A)$}, where $\kappa, A >0$, if for any $m\in \Z \setminus \{0\}$ and $M \in M(d,\Z)$ we have
  \begin{equation}
    \label{diophant}
    \inf_{t \in [1,2]}\norm{M- m\4t\4 Q} \ge A\,\abs{m}^{-\kappa},
  \end{equation}
  where $\norm{\,\cdot \,}$ denotes the operator norm induced by the Euclidean norm on $\R^n$.
\end{definition}

We shall see in Section \ref{subsection:diophantinity} that almost every form satisfies this property for some $\kappa$ and $A$. In particular, fixing an integer $k$ such that $1 \leq k \leq \frac{d(d+1)}{2}-1$, we shall show that a form $Q$ for which $k+1$ non-zero entries $y,x_1,\dots, x_k$ exist such that $x_1/y,\dots, x_k/y$ are algebraic and $1, x_1/y,\dots, x_k/y$ are linearly independent over $\mathbb{Q}$ is Diophantine in this sense and admits a non-trivial solution to the Diophantine inequality $\abs{Q[m]}<\epsilon$ of order $\ll_{Q,d, \delta} \epsilon^{-\frac{d(3+2k)-4}{2k(d-4)}-\delta}$ for any $\delta>0$. In 
particular, for $k = \frac{d(d+1)}{2}-1$ we can give a bound for the size of the least solution of order $\ll_{Q,d,\delta} \epsilon^{-\frac{d^3+d^2+d-4}{(d^2+d-2)(d-4)}-\delta}$ and in this case for $d=5$ of order $\ll_{Q,\delta} \epsilon^{-151/28-\delta}$.

\begin{corollary}
  \label{th:diophante:smallzeros}
  Let $Q$ be an indefinite quadratic form in $d \geq 5$ variables and of Diophantine type ($\kappa$,$A$) and fix $\delta >0$. Then for any $\eps >0$ there exists a non-trivial lattice point $m \in \Z^d \setminus 0$ satisfying
  \begin{equation*}
    \abs{Q[m]} < \eps \quad \text{and} \quad \norm{m} \ll_{Q,d,\delta} \eps^{- \frac{2d+3\kappa d-4\kappa}{2d-8}-\delta}.
  \end{equation*}
\end{corollary}

For \textit{irrational indefinite} quadratic forms we may quantify the density of values $Q[m]$, $m \in r\Omega \cap \Z^d$, where $\Omega$ denotes a (not necessarily admissible) parallelepiped satisfying \eqref{eq:sec7:Def:Omega} (see Subsection \ref{specialp}) as follows: Consider the set
\begin{equation*}
  V(r) \defi \bgl\{ Q[m] \4 :\4 m\in r \Omega \cap \Z^d \bgr\}\cap [-c_0\4 r^2, c_0\4 r^2]
\end{equation*}
of values of $Q[x]$, $x \in r \Omega \cap \Z^d$ lying in the interval $[-c_0\4 r^2, c_0\4 r^2]$, where $c_0$ denotes the constant introduced in Lemma \ref{l3}. For each $r \geq 1$ we arrange the values $V(r)$ in increasing order $v_0(r)< \ldots < v_k(r)$, $k=k(r)$, and define the maximal gap between successive values of $V(r)$ as
\begin{equation}
  \textstyle
  d(r) \tdefi \sup_{i \in \{1,\ldots,k(r)\}} \abs{v_{i}(r)-v_{i-1}(r)}.
\end{equation}
As a consequence of our technical quantitative bounds we obtain

\begin{corollary}
  \label{corgaps}
  Let $Q$ denote a non-degenerate indefinite form in $d \ge 5$ variables and of Diophantine type $(\kappa,A)$. For $\delta >0$ we obtain for the maximal gap $d(r)$ between successive values of the quadratic form in the set $V(r)$
  \begin{equation}
    d(r) \le r^{-\nu_0+\delta},
  \end{equation}
  for sufficiently large $r \geq c_{\delta,d,\Omega,\kappa,A,Q}$, where $\nu_0 \tdefi \frac{2d-8}{2d + 3 \kappa d - 4 \kappa}$ and $c_{\delta,d,\Omega,\kappa,A,Q}>0$ denotes a constant depending on $\kappa, A, Q, \Omega, d$ and $0<\delta <1/10$ (here we omit a description of the explicit dependence). 
\end{corollary}

For \textit{positive definite} quadratic forms Davenport and Lewis (see \cite{davenport-lewis:1972}) conjectured, that the distance between successive values $v_n$ of the quadratic form $Q[x]$ on $\Z^d$ converges to zero as $n\to \infty$, provided that the dimension $d$ is at least five and $Q$ is irrational. This conjecture was proved by G\"otze in \cite{goetze:2004}. It also follows by the results of the present paper which provides 
error bounds for the lattice point counting problem \emph{for the indefinite case as well as the positive definite case}.

The proof is similar as in the case of positive forms solved in \cite{goetze:2004}: For any $\eps>0$ and any interval $[b, b+\eps]$, we find at least two lattice points in the shell $E_{b, b+\eps}$ (and the box of size $r= \sqrt{2b}$) by Corollary \ref{positive}, provided that $b$ is larger than a threshold $b(\eps)$. Here $b(\eps)$ and consequently the distance between successive values (as a function of $b$) depends on the rate of convergence of the Diophantine characteristic $\rho_Q^{\mathrm{ell}}(r)$ in the bound of Corollary \ref{positive} towards zero. For quadratic forms of Diophantine type $(\kappa,A)$ this 
dependency can be stated explicitly.

\subsection{Discussion of Effective Bounds and Outline of the Proofs}

In order to prove an effective result like Theorem \ref{dio-ineq} we need 
an explicit bound for the error, say $R( I_{E_{a,b} \cap  r\Omega})$ (for a formal definition see \eqref{eq:df:formal_lattice_rem} below) with $I_B$ denoting the indicator of a set $B$\index{I@ $I_B$, indicator function of a set $B$}, of approximating the number of integral points $m \in E_{a,b}$ in a bounded domain $r\,\Omega$ by the volume $\v(E_{a,b}\cap r\Omega)$, compare Remark \ref{remark:volume}. First, we simplify the problem by replacing the weights $I_{r\Omega}(m)=1$ of integral points $m\,\in r\4\Omega$ by suitable smoothly changing weights $\specialv(m/r)$ (for notational simplicity, we will write $\specialv_r(m) := \specialv(m/r)$), which tend to zero as $m/r$ tends to infinity. This smoothing (together with a smoothing of the indicator function of $[a,b]$) allows us to use techniques from Fourier analysis, but we are forced to restrict the region $\Omega$ to parallelepipeds in order to ensure that the corresponding error has logarithmic growth only.

\subsubsection{Fourier analysis}
\label{subsec:intro_fourier}
Starting with smooth weight functions $\specialv_r$ (which depend on the dilation parameter $r$), we also construct a $w$-smooth\-ing $g$ of the indicator function of $[a,b]$ via convolution with an appropriate kernel $k$ whose Fourier transform  decays like $\abs{\wh k(t)} \ll \exp\{-\sqrt{\abs{wt}}\}$\index{K@ $k$, compactly supported kernel with sufficiently fast decaying Fourier transform}. This allows us to replace the indicator function of $[a,b]$ in the lattice point counting problem by a smooth function, gaining an error bounded in Corollary \ref{co1}. After this smoothing procedure, writing $g^Q(x) \tdefi g(Q[x])$, our main objective will be to estimate the weighted lattice remainder\index{R@ $R(I_{E_{a,b}} \4 \specialv_r), R(g \4 \specialv_r),R( I_{E_{a,b} \cap r\Omega})$, lattice point remainder}
\begin{equation}
  \label{eq:df:formal_lattice_rem}
  R(g^Q \4 \specialv_r) \defi \sum_{m\in \Z^d} g(Q[m]) \specialv (\tfrac{m}{r}) - \int_{\R^d} g(Q[x]) \specialv (\tfrac{m}{r}) \4 \dif x,
\end{equation}
where $g$ and $\specialv$ are smooth functions whose Fourier transforms decay fast enough as well. More precisely, we will assume that $\specialv$ satisfies \eqref{zeta-decay}. (At this point we should note that the abbreviation introduced in \eqref{eq:df:formal_lattice_rem} will frequently be used to denote remainder terms.) Next we shall use inverse Fourier transforms in order to express the weights as
\begin{equation*}
  g(Q[m]) = \int_\R \wh{g}(t) \exp\{ 2 \pi \iu \4 t\4 Q[m]\} \, \dif t,
  \q \zeta(m)=\int_{\R^d} \wh{\zeta}(u) \exp\{2 \pi \iu \4 \langle u,m\rangle\} \, \dif u,
\end{equation*}
where $\zeta(x) = \specialv(x) \exp\{Q_{+}[x]\}$. Combining the resulting factors $\exp\{ 2 \pi \iu \4 t \4 Q[m] \}$, \linebreak $\exp\{ 2 \pi \iu \langle v,m\rangle\}$ and $\exp\{-Q_{+}[\frac x r]\}$ in \eqref{eq:df:formal_lattice_rem} into terms of the generalized theta series\index{T@ $\theta_v(t)$, Theta series for $Q$}
\begin{equation*}
  \theta_v(t)\defi \sum_{m \in \Z^d} \exp\{ -2 \pi \iu \4 \langle v ,m\rangle /r - 2 \pi \iu \4t\4 Q[m] - Q_{+}[m]/r^2\}
\end{equation*}
one arrives at an expression for the sum $ V_r \tdefi \sum_{m\in \Z^{d}} \specialv(\tfrac{m}{r}) \4 g(Q[m])$ by the following integral (in $t$ and $v$) over $\theta_v(t)$:
\begin{equation}
 V_r = \int_{\R^d} \wh{\zeta}(v) \int_\R \wh{g}(t) \theta_v(t) \, \dif t \, \dif v.
\end{equation}
The approximating integral $W_r \tdefi \int_{\R^d} \specialv(\tfrac m r)\4 g(Q[x]) \, \dif x$ to this sum $V_r$ can be rewritten in exactly the same way by means of the theta integral\index{T@ $\vartheta_v(t)$, Theta integral for $Q$}
\begin{equation*}
  \vartheta_v(t) \defi \int_{\R^d} \exp\{ - 2 \pi \iu \4 \langle v ,x\rangle/r - 2 \pi\iu\4t\4 Q[x] - Q_{+}[x]/r^2\} \, \dif x,
\end{equation*}
replacing the theta sum $\theta_v(t)$. Thus, in order to estimate the error $\abs{ R( g^Q \4 \specialv_r )} =\abs{V_r-W_r}$, the integral over $t$ and $v$ of $\abs{\theta_v(t)- \vartheta_v(t)} \abs{\wh g(t) \wh \zeta(v)}$ has to be estimated.

For $\abs{t} \le \imbound$ and $\norm{x} \ll r$ the functions $ x\mapsto \exp\{ 2 \pi \iu\4t\4 Q[x]\}$ are sufficiently smooth, so that the sum $\theta_v(t)$ is well approximable by the first term of its Fourier series, 
that is the corresponding integral $\vartheta_v(t)$, see \eqref{poisson1} 
and \eqref{sum-bound1}. The error of this approximation, after integration over $v$, yields the second error term in \eqref{simplerror}, which does not depend on the Diophantine properties of $Q$. Additionally, we may restrict the integration to $\abs{t} \le T_{+}$  for an appropriate choice of $T_{+}$ (depending on the width of the shell) by using the decay rate of the kernel $k$. So we end up with the remaining error term
\begin{equation}
  I=\int_{T_{+} >\abs{t}>\imbound} \int_{\R^d} \abs{\theta_v(t) \4 \wh{g}(t)\4 \wh{\zeta}(v)} \, \dif v \, \dif t,
\end{equation}
which we estimate as follows
\begin{equation}
  \label{errfac}
  I \le \norm{\wh \zeta}_1 \4 \sup_{v \in \R^d}  \int_{T_{+}\ge \abs{t}>\imbound} \4 \abs{\theta_v(t)} \, \abs{\wh g(t)} \, \dif t.
\end{equation}
The second factor in the bound of $I$ in \eqref{errfac} encodes both the Diophantine behavior of $Q$ as described above as well as the growth rate 
with respect to $r$. We shall describe in the next subsection our method to extract out of this factor the correct rate of growth, while simultaneously avoiding the loss of information on the Diophantine properties of $Q$, provided that $d >4$. However, let us first state that the resulting bound (the choice of $T_+$ depending on the width of the shell) is an error bound depending on characteristics of $\wh{\zeta}
(v)$ of the form (see Theorem \ref{maintheorem})
\begin{equation}
  \label{roughbound}
  R(I_{E_{a,b}} \4 \specialv_r) \ll_{\kappa,d,Q,} w r^{d-2} + \norm{\wh \zeta}_{1} \rho_{Q,b-a}^w(r) r^{d-2} + \norm{\wh \zeta}_{1,*}\4 r^{d/2}\log \Big(1 +\frac {b-a}{r}\Big),
\end{equation}
which has to be optimized in the smoothing size $w$ (compare e.g.\ Corollary \ref{positive}) and $\rho_{Q,b-a}^w(r)$ depends on the Diophantine properties of $Q$ and $r$ (see Theorem \ref{maintheorem}).

\subsubsection{Mean-Value Estimates}
In order to describe the second term in \eqref{roughbound}, we follow \cite{goetze:2004} (by using a modified Weyl differencing argument) to show in Lemma \ref{thetaestimate2} that uniformly in $v$ and pointwise in $t$
\begin{equation}
\label{averagetheta}
  \abs{\theta_v(t)}^2 \ll r^d \, \abs{\det Q}^{-1/2} \sum_{ v \in \Lambda_t} \exp\{ -\norm{v}^2\},
\end{equation}
where $\{\Lambda_t\}_{t \in \R}$ is a family of $2d$-dimensional unimodular lattices generated by orbits of one-parameter subgroups of $\SL(2,\R)$ indexed by $t$ and $r$, see \eqref{df:lambda_t} for the precise definition. It is well-known that the expression $\psi(r,t) \tdefi \sum_{v \in  \Lambda_t} \exp \{- \norm{v}^2\}$ can be bounded by the number of lattice points $v \in \Lambda_t$ satisfying $\norm{v}_\infty \ll 1$. Combining this estimate together with the symplectic structure of $\Lambda_t$ (see Section \ref{qf-geom-num}) yields the estimate
\begin{equation*}
  \psi(r,t) \ll \frac{1}{M_{1}(\Lambda_t) \ldots M_{d}(\Lambda_t)} \asymp_d 
\alpha_d(\Lambda_t),
\end{equation*}
where $M_i(\Lambda_t)$ denotes the $i$-th successive minima of $\Lambda_t$ and $\alpha_d(\Lambda_t)$ the $d$-th $\alpha$-char\-ac\-ter\-is\-tic of 
$\Lambda_t$, that is $\alpha_d(\Lambda_t) =  \sup \{\abs{\det(\Lambda')}^{-1}: \Lambda' \text{ is a $d$-dimensional sublattice of $\Lambda_t$} \}$. After a local approximation of a certain one-parameter unipotent subgroup by the compact group $\mathrm{SO}(2)$ (see Section \ref{compact_groups}), we estimate the average of $\alpha_d(\Lambda_t)^\beta$ over $t$ for $0 < \beta \le 1/2$ in Lemmas \ref{GM}, \ref{special_int1} and \ref{special-int2}. This argument involves a recursion in the size of $r$ and builds upon a method developed in \cite{eskin-margulis-mozes:1998} on upper estimates of averages of certain functions on the space of lattices along translates of orbits of compact subgroups.

Let us give a brief sketch of the main ideas involved in this argument. Let $\mathrm G= \SL(2,\R), \, \mathrm K = \mathrm{SO}(2)$ endowed with 
the probability Haar measure $\dif k$ and denote by $A_r$ the mean-value operator on $\mathrm K \backslash \mathrm G$ defined by
\begin{equation*}
  A_r(f)(h) = \int_{\mathrm K} f(gkh) \,\dif k,
\end{equation*}
where $f$ is any continuous function on $\mathrm K \backslash \mathrm G$, 
$g \in \mathrm G$ denotes any element for which $\norm{g}=r$ and $\| \, \cdot \, \|$ denotes the operator norm induced by the standard Euclidean norm. Fixing $2/d < \beta \leq 1/2$, we shall show that uniformly in $v$ and for all intervals $I$ of fixed bounded length there exists a positive function $f$ depending only on $Q$ and $\beta$ such that
\begin{equation*}
  \int_I \abs{\theta_v(t)} \, \dif t \ll r^{d-\beta d} \abs{\det Q}^{-1/4} \gamma_{I,\beta}(r)\, A_{r}(f)(\mathbbm 1),
\end{equation*}
where $\gamma_{I,\beta}(r)$ contains information on the Diophantine properties of $Q$ and tends to zero for irrational forms as $r$ tends to infinity (see Corollary \ref{irr-dio}).

The function $f$ does not appear isolated but emerges as the maximum of a 
family of positive functions $f_1, \dots, f_{2d}$. For a positive number $r_0>0$ and any $g_0 \in \mathrm G$ such that $\norm{g_0}=r_0$ we show that this family satisfies two main properties. First, the value of each $f_i$ on any orbit of the form $g_0 \mathrm K h$ is bounded (up to a constant depending only on $r_0$) by its value at $f_i(h)$. Second, the mean-value $A_{r_0}(f_i)$ of any $f_i$  satisfies the following functional inequality (see Lemma \ref{Lemma 9.7})
\begin{equation*}
  A_{r_0}f_i \ll  \tau_{\lambda_i}(g_0) f_i + \max_{0<j \leq \bar i}\sqrt{f_{i-j}f_{i+j}},
\end{equation*}
where we set $\bar i = \min\{i,2d-i\}$, $\lambda_i := \max \{2, \beta 
\bar i\}$ and  $\tau_{\lambda_i}$ denotes the spherical function
\begin{equation*}
  \tau_{\lambda_i}(g)=  \int_{\mathrm K} \norm{g k e_1}^{-\lambda_i} \dif k,
\end{equation*}
where $e_1=(1,0)$ denotes the first standard unit vector on $\R^2$. 

The asymptotic growth of spherical functions is well-understood and in our case $\tau_{\lambda}(g) \asymp \norm{g}^{\lambda-2}$ whenever $\lambda >2$ and $g \not \in \mathrm K$. Here spherical functions are crucial precisely because they are the eigenfunctions of the mean-value operator. We show, in a first instance, that any positive function $f$ satisfying an inequality of the form
\begin{equation}
  \label{integralinequality}
  A_{r_0}f \ll \tau_\lambda(g_0)f + b \tau_\eta,
\end{equation}
for $\lambda>2$ and $0<\eta <\lambda$ satisfies
\begin{equation}
  \label{meanvalueestimate}
  A_r f (\mathbbm 1) \ll \tau_\lambda(g) f(\mathbbm 1),
\end{equation}
for any $r>0$, where $g \in \mathrm G$ is any element for which $\norm{g}=r$. In other words, the growth of the mean value at $\mathbbm 1$ grows at most as fast as the associated spherical function. In a second instance we obtain, after radializing the family, a preliminary estimate of the form
\begin{equation}
  \label{inductiveblabla}
  A_r(f)(\mathbbm 1) \ll_\mu f(\mathbbm 1)\tau_{\mu}(g),
\end{equation}
for any fixed $\mu > \lambda_d$. We then show inductively, using repeatedly \eqref{integralinequality}, \eqref{meanvalueestimate} and \eqref{inductiveblabla}  that
\begin{equation}
  \label{inductiveestimates}
  A_r(f_i) \ll f(\mathbbm 1)\tau_{\mu_i}, \text{ for all } i \neq d
\end{equation}
for an appropriate sequence $\lambda_d>\mu_i >\lambda_i$. Combining these 
estimates again with \eqref{integralinequality} in the case $i =d$ then 
yields the inequality $A_{r_0}f_d \ll\tau_{\lambda_d}(g_0)f_d + f(\mathbbm 1)\tau_\eta$, for some $\eta <\lambda_d$, which implies together with \eqref{integralinequality} and \eqref{inductiveestimates} the desired and expected estimate (see Theorem \ref{GM}), namely that  $A_r(f)(\mathbbm 1) \ll \tau_{\lambda_d}(g) f(\mathbbm 1) \asymp r^{\beta d -2} f(\mathbbm 1)$
for any $r \gg 1$ and any $g \in \mathrm G$ for which $\norm{g}=r$. In particular for any such interval $I$ we obtain the following bound
\begin{equation*}
\int_I \abs{\theta_v(t)} \, \dif t \ll r^{d-2} \abs{\det Q}^{-1/4} \gamma_{I,\beta}(r) f(\mathbbm{1}).
\end{equation*}
At this point the current approach is fundamentally different to the approach of previous effective bounds for $R(I_{E_{a,b} \cap r\Omega})$ by Bentkus and G\"otze \cite{bentkus-goetze:1999} (see also \cite{bentkus-goetze:1997}) valid for $d \geq 9$ and positive as well as indefinite forms. The reduction to \eqref{averagetheta} and the Diophantine factor $\rho_{Q,b-a}^{w}(r)$ follows the approach used by G\"otze in \cite{goetze:2004}, where the average on the right-hand side of \eqref{averagetheta} was estimated for $d \geq 5$ by methods from the Geometry of Numbers and essentially required \textit{positive definite forms}. A variant of that method was applied to \textit{split indefinite forms} in a PhD thesis by G.\ Elsner \cite{elsner:2009}.

\subsubsection{Smooth weights on $\Z^d$}
For the Gaussian weights $\specialv_r(x) = \exp\{-2 \4Q_{+}[x]/r^2\}$ our techniques yield effective bounds for the approximation of a weighted count of lattice points $m \in \Z^d$ with $Q[m] \in [a,b]$ by a corresponding integral with an error
\begin{equation}
 \label{remainedererroreq}
  R(I_{E_{a,b}} \4 \specialv_r) = \sum_{m\in E_{a,b} \cap \Z^d} \specialv_r(m) - \int_{ E_{a,b}} \specialv_r(x) \, \dif x.
\end{equation}
The following bounds for $R(I_{E_{a,b}} \4 \specialv_r)$ are \textit{identical} for the case of positive and indefinite $d$-dimensional forms $Q$, provided that $d \geq 5$. Using Vinogradov's notation $A \ll_B C$\index{1@ $A\ll_B C, A\asymp_B C$, Vinogradov's notation}, meaning that $A< c_B \, C$ with a constant $c_B>0$ depending on $B$, we have

\begin{theorem}
  \label{th:gaussian_weights}
  Let $Q$ be a non-degenerate quadratic form in $d\geq 5$ variables. Choose $\beta =\tfrac{2}{d} + \tfrac{\delta}{d}$ for some arbitrary small $\delta \in (0,\tfrac{1}{10})$. Then for any $r \ge q^{1/2}$, where $q$ denotes the maximal eigenvalue of $Q$, $b>a$ and $ 0 < w < (b-a)/4$ we have
  \begin{equation}
  \label{simplerror}
  R(I_{E_{a,b}}\4 \specialv_r) \ll_{Q,\beta,d} r^{d-2}\4 (w+\rho_{Q,b-a}^{w}(r))+ r^{d/2-1} (b-a),
\end{equation}
provided that $b-a \leq r$. If $r < b-a \ll r^2$ the second term in the bound has to be replaced by $ r^{d/2} \log{r}$.
\end{theorem}
 
 In Theorem \ref{maintheorem} an explicit description of the Diophantine factor $\rho_{Q,b-a}^{w}(r)$ will be provided. Depending on whether $Q$ is definite or indefinite, this factor will be further refined in Corollary \ref{positive}, resp.\ Corollary \ref{variable_smooth}. Moreover, the function $\rho_{Q,b-a}^{w}(r)$ tends to zero as $r$ tends to infinity if $Q$ is irrational. Additionally, if $Q$ is Diophantine of type $(\kappa,A)$, as we shall introduce in  Definition \ref{def:dio_type}, we find a polynomial decay $\rho_{Q,b-a}^{w}(r) \ll_{Q,d,A} r^{-\nu}$ for an appropriate choice of $0<w<(b-a)/4$, where $\nu \in (0,\infty)$ depends on $d$, $\kappa$ and $A$, see Corollary \ref{diophantex}. These results follow from Theorem \ref{maintheorem} with parameters chosen for the indefinite, positive and effective Diophantine cases in the proofs in Section \ref{subsection:Applications}.

\subsubsection{The role of the region $\Omega$}
\label{specialintro}
In order to estimate the lattice point deficiency \linebreak $R(I_{E_{a,b} \cap r\Omega})$ we have to $\eps$-smooth the indicator function of $\Omega$ which yields weights $\zeta=\zeta_{\eps}$ and an additional error of order $\eps (b-a) r^{d-2}$ in case of \textit{indefinite forms} due to the intersection of $E_{a,b}$ with the boundary $\partial r\Omega$. For \textit{positive definite forms}, $r \Omega$ contains $E_{a,b}$, that is $\eps >0 $ could be fixed independent of $r$, since this boundary intersection term is not present here.

In the indefinite case one needs to match the actual size of the error by choosing $\eps$ small enough in \eqref{roughbound}. This leads to a critical dependence on $\eps$ through the Fourier transform of $\zeta_{\eps}$ and its characteristics (see \eqref{defzeta*}). Here $\norm{\wh{\zeta_{\eps}}}_1$ moderately grows like $(\log 1/\eps)^d$ for arbitrary small $\eps$ in the case of polyhedra only, see Lemma \ref{l2}. The dependence of $\norm{\wh \zeta_{\eps}}_{1,*}$, see \eqref{defzeta*}, is again critically dependent on $\Omega$ and the width $b-a$ of the hyperbolic shell $E_{a,b}$. For $b-a \gg r$ the boundary of $ r\Omega \cap E_{a,b}$ will contain a larger segment of $\partial r\Omega$. For a sequence of scalings $r$ these segments of the $(d-1)$-polytope potentially contain a large number of lattice points which induce large errors in the lattice point approximation, for which the technical restriction to the region $\Omega$ is solely responsible. In order to avoid this artefact which is reflected by a large growth of $\norm{\wh{\zeta}_{\eps}}_{1,*}$ when $\eps$ is small, we restrict ourselves to special admissible regions $r\Omega$, where $\Omega=B^{-1}[-1,1]^d$, and $B\in \operatorname{GL}(d, \R)$ is chosen such that the lattice $\Gamma= B \Z^d$ is \textit{admissible} in the sense of Subsection \ref{specialp}, i.e.\ both \eqref{eq:sec7:Def:Omega} and \eqref{Def:Admissible} are satisfied. This ensures that the lattice point remainder of $r \Omega$ satisfies $\abs{\vol r\Omega - \v r\Omega} \ll_{\Omega} (\log r)^{d-1}$ uniformly which is `abnormally' small. Likewise $\norm{\wh \zeta_{\eps}}_{1,*}$ grows of order $(\log 1/\eps)^d$ only. The resulting error bounds in Corollary \ref{variable_smooth} for wide shells 
with $\max\{\abs{a},\abs{b}\} \ll_{B} r^2$ are then comparable up to at most $(\log 1/\eps)^d$ factors to the case of positive forms in Corollary \ref{positive}.

\subsection{Organization of this Paper}
The paper is organized mostly in logical order. In Section \ref{section:2} we describe the explicit technical estimates on lattice point remainders for both positive definite and indefinite forms. In the following Section \ref{section:3} we transfer the problem to Fourier transforms of the error starting with a first smoothing step and rewrite the lattice remainder in terms of integrals over $d$-dimensional theta sums. Section \ref{geom-num} provides a reformulation of the problem via upper bounds in terms of integrals over the absolute value of other theta sums with an underlying symplectic structure on $\R^{2d}$ which, in turn, are estimated using basic arguments from the Geometry of Numbers. Section \ref{section-aol} contains crucial estimates for averages of functions on the space of lattices. Finally, in Section \ref{conclus} all these results are combined to prove Theorem \ref{maintheorem}. Starting with the applications, we collect in Section \ref{section:parallelepipeds} the geometric bounds related to parallelepiped regions $\Omega$ used in this paper and afterwards conclude (in Subsection \ref{subsection:Applications}) the results of Section \ref{section:2}. In the last Section \ref{sec:small_values:qform} we focus on small values of indefinite quadratic forms: After recollecting and refining some results due to Schlickewei \cite{Schlickewei:1985} on the size of small zeros of integral quadratic forms, we shall prove Theorem \ref{dio-ineq}.

Compared to an earlier preprint \cite{goetze-margulis:2010} this version has been rewritten so that it allows to separate the error contributions due to the Diophantine properties of $Q$ and the influence of weights for 
the lattice points in Theorem \ref{maintheorem}. The latter has been developed for special choices of regions $\Omega$ which are particularly relevant for wide shells $E_{a,b}$ in Section \ref{section:parallelepipeds}. Moreover, the effective bounds for non-trivial solutions of the Diophantine inequality $\abs{Q[m]} < \eps$ have been improved in terms of the signature $(r,s)$ by using Schlickewei's result \cite{Schlickewei:1985} on small zeros of quadratic forms. In addition, we included a number of corrections concerning the explicit dependence on $Q$ (resp.\ $\Omega$) and the 
dimensions, and corrected typos as well.



\newpage
\section{Effective Estimates}
\label{section:2}
\noindent We consider the quadratic form
\begin{equation*}
  Q[\4x\4] \defi \langle \4 x, \4 Q x\4\rangle \quad \text{for} \quad x \in \R^d,
\end{equation*}
where $\langle \cdot ,\cdot \rangle$ resp.\ $\norm{\,\cdot \,}$ denote the standard Euclidean scalar product and norm\index{1@ $\langle \, \cdot \,, \, \cdot \, \rangle$, $\norm{\, \cdot  \,}$, Euclidean inner product and associated norm}, $Q \colon \R^d\to \R^d$ denotes a symmetric linear operator in $\GL(d, \R)$\index{Q@ $Q$, as quadratic form and the corresponding symmetric matrix} with eigenvalues $q_1,\dots, q_d$. Write\index{Q@ $q$, largest eigenvalue of $Q$ in absolute value}\index{Q@ $q_0$, smallest eigenvalue of $Q$ in absolute value, $q_0 \geq 1$}\index{Q@ $Q[x]=\langle Q x,x \rangle$, Siegel's notation}\index{D@ $d_Q:= \abs{\det Q}^{-1/2}$}
\begin{equation}
  \label {eq:1.1}
  q_0\defi\min_{1\leq j\leq d}\, \abs{q_j}, \quad\quad q\defi\max_{1\leq j\leq d}\, \abs{q_j}, \quad\quad d_Q\defi \abs{\det Q}^{-1/2}.
\end{equation}
In what follows we shall always assume that the form is \textit{non-degenerate}, that is $q_0>0$. In order to describe the explicit bounds we need to introduce some more notations. Let $\beta > \tfrac{2}{d}$ such that $0< \tfrac 1 2 -\beta <\tfrac 1 2 - \tfrac 2 d$ for $d>4$\index{B@ $\beta$, exponent in the range $(\frac{2}{d},\frac{1}{2})$}. For a lattice $\Lambda \subset \R^n$, $n \in \N$, with $\dim \Lambda = n$ we define for $1\le l\le n$ its $\alpha_l$-characteristic\index{A@ $\alpha,\alpha_l$-characteristic of a lattice} by
\begin{equation}
  \label{alp0}
  \alpha_l( \Lambda)\defi \sup \Bigl\{\abs{\det(\Lambda')}^{-1}: \Lambda'\subset \Lambda,\ \ l\text{-dimensional sublattice of $\Lambda$} \Bigr \}.
\end{equation}
Here $\Lambda'=B\,\Z^n$ is determined by a $n \times l$-matrix $B$ and $\det(\Lambda')= \det(B^T\, B)^{1/2}$ is the volume of a fundamental domain.

\begin{remark}
  \label{remark_existence_alpha_l}
  Given $\Lambda = g \Z^n$ with $g \in \GL(n,\R)$, then any $l$-dimensional sublattice $\Delta \subset \Lambda$ is spanned by $g n_1,\ldots,g n_l$, where $n_i \in \Z^n$ and $\det(\Delta) = \norm{g n_1 \wedge \ldots \wedge g n_l}$. If $\Delta' \subset \Lambda$ is a sublattice distinct from $\Delta$ with basis $g n_1',\ldots,g n_l'$, $n_i' \in \Z^n$, then
  \begin{equation*}
    \norm{(g n_1 \wedge \ldots \wedge g n_l) - (g n_1' \wedge \ldots \wedge g n_l')} \gg_g \norm{(n_1 \wedge \ldots \wedge  n_l) - (n_1' \wedge \ldots 
\wedge n_l')} \ge 1,
  \end{equation*}
  since the $l$-th exterior product of $g$ is invertible. This argument shows that the $\alpha_l$-characteristic is attained at some $l$-dimensional sublattice $\Lambda' \subset \Lambda$.
\end{remark}

In the special case $n=2d$ we also introduce\index{G@ $\gamma_{[a,b],\beta}(r)$, Diophantine factor for $Q$ on $[a,b]$ with exponent $\beta$}
\begin{equation}
  \label{defgamma0}
  \gamma_{[T_{-},T],\beta}(r) \defi \sup \; \bgl\{ \bigl(r^{-d}\alpha_d(\Lambda_t)\bigr)^{1/2-\beta}:\,\,T_{-} \le \abs{t} \le T \bgr\},
\end{equation}
where $\Lambda_t = d_r u_t \Lambda_Q$ denotes a $2d$-dimensional lattice obtained by an appropriate action of $d_r, u_t \in \SL(2,\R)$ on $\R^{2d}$ (see \eqref{svo4}), where $d_r$ and $u_t$ denote the usual diagonal and unipotent elements and $\Lambda_Q$ denotes a fixed $2d$-dimensional lattice depending on $Q$ (see \eqref{svo5}). Recall that $E_{a,b} = \{ x \in \R^d \4 : \4 a < Q[x] < b \}$ and let $\specialv(x)$ denote a smooth weight function\index{V@ $\specialv$, weight function on $\R^d$ of sufficiently fast decay} such that $\zeta(x) \tdefi \specialv(x) \exp\{Q_{+}[x]\}$ satisfies
\begin{equation}
  \label{zeta-decay} \textstyle
  \sup_{x\in \R^d}\big(\abs{\zeta(x)} + \abs{\wh{\zeta}(x)}\big) (1+\norm{x})^{d+1} < \infty.
\end{equation}
An explicit construction of weight functions for parallelepiped regions will be given in Section \ref{section:parallelepipeds}. Nevertheless, as a simple example, one can take the Gaussian weights $\specialv(x) = \exp\{-2 Q_{+}[x]\}$.




\begin{theorem}
  \label{maintheorem}
  Let $Q$ be a non-degenerate quadratic form in $d\geq 5$ variables with $q_0\ge 1$. Choose $\beta =\tfrac{2}{d} + \tfrac{\delta}{d}$ for some arbitrary small $\delta \in (0,\tfrac{1}{10})$. Write $(b-a)_q \tdefi b-a$ 
if $b-a \leq q$ and $(b-a)_q\tdefi q^{\beta d -1/2}$\index{1@ $(b-a)_q := 
(b-a)I(b-a \le q) + q^{(2\beta d -1)/2} I(b-a > q)$} if $b-a >q$, and $(b-a)^* \tdefi (b-a)$ if $b-a \leq 1$\index{1@ $(b-a)^* \defi (b-a)I(b-a \le 1) + I(b-a > 1)$} and $(b-a)^* \tdefi 1$ if $b -a >1$. Then for any $r \ge q^{1/2}$, $b>a$ and $ 0 < w < (b-a)/4$ we have
  \begin{equation}
    \label{eq:mth:1}
    \begin{aligned}
      \Big\lvert\sum_{m\in E_{a,b}\cap \Z^d} \specialv(\tfrac{m}{r}) - \int_{E_{a,b}} \specialv (\tfrac{x}{r}) \, \dif x \Big\rvert \ll_{\beta,d} \big\{ w \4\norm{\specialv}_Q & + \norm{\wh\zeta}_1 C_Q \4 \rho_{Q,b-a}^{w}(r)\big\} r^{d-2} \\
      & \! \! \! + d_Q \4 r^{d/2}\norm{\hat \zeta}_{*,r} \log\big(1+ \tfrac{\abs{b-a}} {q_0^{1/2} r}\big),
    \end{aligned}
  \end{equation}
  where $C_Q \tdefi q \4 \abs{\det{Q}}^{-1/4-\beta/2}$\index{C@ $C_Q := 
q \4 \abs{\det{Q}}^{-1/4-\beta/2}$} and $\norm{\specialv}_Q$ is defined in Lemma \ref{l3} (the quantity $\norm{\specialv}_Q$ depends additionally on $r,a,b$ and $w$, but we will suppress this dependence),\index{R@ $\rho_{Q,b-a}^{w}(r)$}
  \begin{equation*}
    \begin{aligned}
      \rho_{Q,b-a}^{w}(r) \defi \inf \Big\{ (b{-}a)_q\4 \big( c_Q T_{-}^{\frac{d}{2}-2-\delta} \! \! + \gamma_{[T_{-},1],\beta}(r)\big) + \gamma_{(1,T_+],\beta}(r) \4 \big(1{+}\log((b{-}a)^* \4 T_+)\big) \ \  & \\
      + c_Q^{-1} \4 (T_+ w)^{-1/2} \4 \e^{- (T_+w)^{1/2}} : T_{-} \in [\imbound,1], \ T_+ \ge 1 \Big\}&
    \end{aligned}
  \end{equation*}
  and $c_Q \tdefi \abs{\det{Q}}^{1/4-\beta/2}$\index{C@ $c_Q := \abs{\det{Q}}^{1/4-\beta/2}$}. Furthermore\index{Z@ $\norm{\wh \zeta}_{*,r} := 
q^{d/4}\big((\frac{q}{q_0})^{d/2} \norm{\wh{\zeta}}_1 + \int_{\norm{v}_{\infty} > r/2} \frac{\abs{\wh \zeta(v)}} {({q^{1/2}} r^{-1} +\norm{ v r^{-1} }_{\Z})^{d/2}} \, \dif v\big)$}
  \begin{equation}
    \label{defzeta*}
    \norm{\wh \zeta}_{*,r} \defi q^{d/4}\Big( \Big(\frac{q}{q_0}\Big)^{d/2} \4 \norm{\wh{\zeta}}_1 + \int_{\norm{v}_{\infty} > r/2} \frac{\abs{\wh 
\zeta(v)}} {({q^{1/2}} r^{-1} +\norm{ v r^{-1} }_{\Z})^{d/2}} \, \dif v\Big)
  \end{equation}
  and here $\norm{v}_{\Z} \tdefi \min_{m \in \Z^d} \norm{v-m}_{\infty}$.
\end{theorem}

We use the notation $A \asymp_d B$ for quantities of equivalent size up to constants depending on $d$ only, i.e.\ $A \ll_d B \ll_d A$.\index{1@ $A\ll_B C, A\asymp_B C$, Vinogradov's notation}

\begin{remark}
  \label{mainrem}
  Note that
  \begin{enumerate}[label=\alph*)]
    \item Theorem \ref{maintheorem} extends to affine quadratic forms $Q[x+\xi]$ uniformly in $\abs{\xi}_{\infty} \le 1$.
    \item Depending on the application, the lattice remainder \eqref{eq:mth:1} will be optimized in the parameters $w$, $\eps$ and $T_{+}$ differently: For thin shells the error should also scale with the length $b-a$. This forces $T_{+}$ to be large and requires `strong' Diophantine assumptions. In the case of wide shells it is possible to choose $w$ relatively large.
    \item If $Q$ is irrational, then Corollary \ref{irr-dio} implies that 
$\rho_{Q,b-a}^{w}(r) \to 0$ for $r \to \infty$, provided that $w$ and $(b-a)$ are fixed. The first factor in the definition of $\rho_{Q,b-a}^{w}$ corresponds to small values of $t$ on the Fourier side and the last factor to the decay rate of the $w$-smoothing of the interval $[a,b]$.
  \end{enumerate}
\end{remark}

With these notations we state a result providing quantitative bounds for the difference between the volume and the lattice point volume in $E_{a,b}$.



\subsection{Ellipsoids \texorpdfstring{$E_{0,b}$}{}}
Here $Q$ is positive definite and we may assume that $b$ tends to infinity. Let $r=\sqrt{2b}$ in Theorem \ref{maintheorem}. Then the ellipsoid $E_{0,b}=\{ x \in \R^d\;:\; Q[x] \le b \}$\index{E@ $E_{a,b} \tdefi \{ x \in \R^d \, \colon \, a < Q [x] < b \}$, hyperbolic or ellipsoidal shell} is contained in $r\Omega = Q_+^{-1/2}[-r,r]^d$. Choosing in Theorem \ref{maintheorem} a smoothing of $I_{\Omega}$, say $\specialv_{\eps}$ of width $\eps=\tfrac{1}{15}$, which equals $1$ on $E_{0,b}$, and the smoothing parameter $w$ in terms of $T_+$, such that the right-hand side in \eqref{eq:mth:1} is minimal, will lead to

\begin{corollary}
  \label{positive}
  Let $Q$ denote a non-degenerate $d$-dimensional positive definite form with $d \ge 5$ and $q_0 \ge 1$. For any $r \ge q^{1/2}$ and $r= \sqrt{2b}$ we have with $H_r \tdefi E_{0,b}$\index{H@ $H_r \tdefi E_{0,b}$, with 
$r = \sqrt{2b}$, if $Q$ is positive definite}
  \begin{equation}
    \abs{\volu_{\Z} H_r - \volu H_r} \ll_{\beta,d} d_Q \4 r^{d-2} \big( \rho_Q^{\mathrm{ell}}(r) + q^{d/4}\4 r^{-d/2+2} \4 (q/q_0)^{d/2} \4 \log(r) \big),
  \end{equation}
  where\index{R@ $\rho_Q^{\mathrm{ell}}(r)$}
  \begin{equation*}
    \rho_Q^{\mathrm{ell}}(r) \hspace{-1mm} \defi \hspace{-1mm} \inf \Big\{ a_Q \big( q^{\tfrac{3}{2}+\delta} (c_Q \4 T_{-}^{\frac{d}{2}-2-\delta} \! + \gamma_{[T_{-},1],\beta}(r)) \! +  \gamma_{(1,T_+],\beta}(r) \log(T_{+}\!+ \! 1) \big) \! + \! \tfrac{\log(1+q \4 T_+)^2}{T_+} \Big\}
  \end{equation*}
  and the infimum is taken over $T_{-} \in [\imbound,1]$ and $T_+ \geq 1$, where $a_Q = q \abs{\det{Q}}^{\frac{1}{4}-\frac{\beta}{2}}$, $c_Q = 
\abs{\det{Q}}^{1/4-\beta/2}$. Furthermore, $\lim_{r\to \infty}   \rho_Q^{\mathrm{ell}}(r) =0$ as $r$ tends to infinity, provided that $Q$ is irrational.
\end{corollary}

Compared to the quantitative results in \cite{bentkus-goetze:1997} and \cite{bentkus-goetze:1999}, this bound holds already for $d \ge 5$. Moreover, Corollary \ref{positive} refines the estimates obtained in \cite{goetze:2004}.



\subsection{Hyperboloid Shells \texorpdfstring{$E_{a,b}$}{}}
If $Q$ is indefinite, we distinguish, depending on $b-a$, between `small' 
and `wide' shells $E_{a,b}$. Here we restrict ourselves to a special class of rescaled \textit{admissible} parallelepipeds $r\Omega$ for $r>0$: We 
suppose that $\Omega=B^{-1}[-1,1]^d$ is determined by some $B\in \operatorname{GL}(d, \R)$ such that the lattice $\Gamma= B \Z^d$ is admissible in the sense of Subsection \ref{specialp}, i.e.\ both \eqref{Def:Admissible} and \eqref{eq:sec7:Def:Omega} should be satisfied (for examples, see Remark \ref{remark:app_a:set_of_admis} and Example \ref{example:app_a:admis_latt}). Note that the latter condition \eqref{eq:sec7:Def:Omega}, that is $Q_{+} \leq B^T B \leq c_B Q_{+}$ with $c_B \geq 1$,  ensures that the region $\Omega$ is rescaled with respect to the quadratic form $Q$.

To estimate the lattice point remainder for this restriction of $E_{a,b}$ 
given by $H_{r} \tdefi E_{a,b}\cap r\Omega$\index{H@ $H_r := E_{a,b} \cap r\Omega$, if $Q$ is indefinite} we smooth the indicator function $I_{\Omega}$ in an $\eps$-neighborhood with an error of order $\mathcal O(\eps 
(b-a) r^{d-2})$ using Lemma \ref{l3}. This yields a smooth function $\specialv_{\eps}$ and a final weight function $\zeta_{\eps}$, according to \eqref{zeta-decay} in Theorem \ref{maintheorem}. Since $\Omega$ is admissible, both $\norm{\zeta_{\eps}}_1$ and $\norm{\zeta_{\eps}}_{*,r}$ in \eqref{defzeta*} are growing with a power of $\abs{\log \eps}$ only, see Lemmas \ref{l2} and \ref{omega-remainder}.

In the next step we calibrate both smoothing parameters $w$ and $\eps$ in order to get Corollary \ref{variable_smooth} below for `wide' and  `thin' shells. The actual choice of $\eps$ is then determined by calibrating the main terms $\eps r^{d-2}$ and $\norm{\zeta_{\eps}}_1 \rho_{Q,b-a}^{w}(r) r^{d-2}$ depending on the speed of convergence of $\lim_{r\to \infty} \rho_{Q,b-a}^{w}(r)=0$. The resulting error bound for \textit{indefinite forms} will then differ at most by some $\abs{\log \eps}$-factors from the positive definite case, and is thus dominantly influenced by the Diophantine properties reflected in the decay of the $\gamma_{[T_{-},T_{+}],\beta}$, resp.\ the $\rho_{Q,b-a}^{w}$-characteristic of irrationality. In particular we have uniformly for `small' and `wide' shells $E_{a,b}$ and admissible regions $\Omega$ the following bound:

\begin{corollary}
  \label{variable_smooth}
  Under the assumptions of Theorem \ref{maintheorem} we get for an admissible region $\Omega$, all $\max\{\abs{a},\abs{b}\} \le c_0 r^2$, where $c_0>0$ is chosen as in Lemma \ref{l3}, and $b-a \geq q$\index{D@ $\Delta_r := \abs{ \vol H_r -  \volu H_r}$, lattice point deficiency}
  \begin{equation}
    \label{eq:variable_smooth:1}
    \Delta_r \defi \abs{ \vol H_r -  \volu H_r} \ll_{\beta,d} d_Q \4 r^{d-2} \big( \rho_{Q,b-a}^{\mathrm{hyp}+}(r) + R_{Q,A}(r) \big),
  \end{equation}
  where
  \begin{equation}
    \label{eq:variable_smooth:2}
    R_{Q,A}(r) \defi q^{\frac{d}{4}} \4 r^{-\frac{d}{2}+2} \log(r {+} 1)^d \big( (\tfrac{q}{q_0})^{\frac{d}{2}} + \tfrac{c_B^{d/2} q_0^{-\frac{d}{4}}}{\Nm(\Gamma)} \4 \log(2 {+} \tfrac{1}{\Nm(\Gamma)})  \big)  \log \big(1 {+} \tfrac{b-a}{q_0^{1/2} r}\big),
  \end{equation}
  $\Nm(\Gamma) \tdefi \inf_{\gamma \in \Gamma \setminus \{0\}} \abs{\gamma_1 \ldots \gamma_d}$ in standard coordinates $\gamma = (\gamma_1,\ldots,\gamma_d)$ and\index{R@ $\rho_{Q,b-a}^{\mathrm{hyp}+}(r)$}
  \begin{equation*}
    \begin{aligned}
      \rho_{Q,b-a}^{\mathrm{hyp}+}(r) \defi {\inf}^*_{T_{+},T_{-}} \Big\{ \log \big( (b-a)T_{-}^{-(\frac{d}{2}-2-\delta)}{+}1 \big)^d \Big(  a_Q \4 
q^{\frac{3}{2}+\delta} ( c_Q T_{-}^{\frac{d}{2}-2-\delta} + \gamma_{[T_{-},1],\beta}(r)) \ \ & \\
      + a_Q \gamma_{(1,T_{+}],\beta}(r) \log(T_{+}+1)  + \tfrac{\log(q T_{+}+1)^2}{T_{+}} \Big) \Big\} & ,
    \end{aligned}
  \end{equation*}
  where the infimum is taken over all $T_{-} \in [\imbound,1]$ and $T_{+} 
\geq 1$. If $b-a \leq q$, then \eqref{eq:variable_smooth:1} holds, too, whereby the Diophantine factor $\rho_{Q,b-a}^{\mathrm{hyp}+}(r)$ has to be replaced by\index{R@ $\rho_{Q,b-a}^{\mathrm{hyp}-}(r)$}
  \begin{equation*}
    \begin{aligned}
      \rho_{Q,b-a}^{\mathrm{hyp}-}(r)  \defi {\inf}_{T_{-},T_{+}}^* \Big\{ a_Q \log \big(1\!+\!T_{-}^{-(\frac{d-4}{2}-\delta)}\big)^d \Big( &  (b-a) \big( c_Q \4 T_{-}^{\frac{d}{2}-2-\delta}+ \gamma_{[T_{-},1],\beta}(r)\big) \\
      & +\gamma_{(1,T_+],\beta}(r) \4 (\log( (b-a)^* \4 T_+)+1) \Big) \! \Big\}.
    \end{aligned}
  \end{equation*}
  In the last equation the infimum is taken over all $T_{-} \in [\imbound,1]$ and $T_{+} \geq 1$ with
  \begin{equation}
    \label{eq:cor2.4:small}
    T_{+} \geq 4 (b-a)^{-1} T_{-}^{-(\frac{d}{2}-2-\delta)}  \max \Big \{1,\log \big(c_Q^2 (b-a) T_{-}^{\frac{d}{2}-2-\delta}\big)^2 \Big\}.
  \end{equation}
\end{corollary}

These bounds refine the results obtained in \cite{bentkus-goetze:1999} providing explicit estimates in terms of $Q$ and are valid for $d \geq 5$. Note that, due to the `uncertainty principle' for the Fourier transform, we need to choose $T_{+}$ at least as large as in \eqref{eq:cor2.4:small} 
if $E_{a,b}$ is `thin' in order to control the factor $\exp \{- \abs{T_{+} w}^{1/2} \}$ (occurring in the definition of $\rho_{Q,b-a}^{w}$) which scales with $b-a$. In Section \ref{subsection:Applications} we prove a variant of Corollary \ref{variable_smooth} for thin shells 
and non-admissible regions $\Omega$ as well, see Corollary \ref{variable_smooth:non-admissible}.

\subsection{Quadratic Forms of Diophantine Type \texorpdfstring{$(\kappa,A)$}{(k,A)}}
For any fixed $T_{+} >1 > T_{-}>0$ and irrational $Q$ it is shown in Corollary \ref{irr-dio} that
\begin{equation}
  \lim_{r\to \infty} \gamma_{[T_{-},T_{+}],\beta}(r) = 0,
\end{equation}
with a speed depending on the Diophantine properties of $Q$. For indefinite forms $Q$, this implies for fixed $b-a >0$ that
\begin{equation}
  \lim_{r \to \infty} \rho_{Q,b-a}^{\mathrm{hyp}+}(r) =0, \q \lim_{r \to 
\infty} \rho_{Q,b-a}^{\mathrm{hyp}-}(r) =0
\end{equation}
and hence $\Delta_r = o(r^{d-2})$ as $r\to \infty$. This holds \textit{uniformly} for all intervals $[a,b]$ with $0<u_r \le b-a \le v_r\le c_0 r^2$ and sequences $\lim_r u_r=0$, $\lim_r v_r = \infty$, $r \to \infty$ depending on $Q$. For the special class of quadratic forms of Diophantine type $(\kappa,A)$, as introduced in Definition \ref{def:dio_type}, we 
may apply Corollary \ref{irr-dio} to obtain explicit bounds on the Diophantine factors in the previous theorems as follows.

\begin{corollary}
  \label{diophantex}
  Consider an indefinite quadratic form $Q$ that is Diophantine of type $(\kappa,A)$. Moreover, let $\beta = 2/d + \delta/d $ for some sufficiently small $0<\delta < \tfrac{1}{10}$. Then for the case of wide shells $b-a \ge q$ in Corollary \ref{variable_smooth} we have
  \begin{equation}
    \label{diophantex0}
    \rho_{Q,b-a}^{\mathrm{hyp}+}(r) \ll_{\beta,d} \log(r+1)^d \4 h_Q \4 q^{\frac{3}{2}+\nu+\delta} (1+ A^{-\nu}) ( r^{-\frac{d-2(2+\delta)}{d(\kappa +1)+1}} + r^{-\frac{2\nu}{\kappa \nu +1}} \log(q \4 r +1 )),
  \end{equation}
  where $h_Q = q \4 \abs{\det{Q}}^{1/2-\beta}$, $\nu = (1-2\beta)/(2\kappa + 2)$ and $\sigma = d(1/2-\beta)$. Thus for an admissible region $\Omega$ satisfying \eqref{eq:sec7:Def:Omega} we have for all $r \ge q^{1/2}$ and $\max \{ \abs{a},\abs{b}\} \le c_0 r^2$
  \begin{equation}
    \label{diophantex1}
    \Big\lvert \frac{\volu_{\Z} H_r}{\volu H_r} -1 \Big\rvert \ll_{Q,\Omega,\beta,d} 
\frac{\log(r+1)^d}{b-a} \Big( r^{-\frac{(1-2\beta)d}{1+(\kappa+1)d}} + r^{- \frac{2-4\beta}{2+(3-2\beta) \kappa}} + r^{-\frac{d}{2}+2} \log \big(1+ \tfrac{b-a}{r}\big) \Big),
  \end{equation}
  where the implied constant in \eqref{diophantex1} can be explicitly determined. For thin shells, i.e.\ $b-a \le q$, we have
  \begin{align*}
    \rho_{Q,b-a}^{\mathrm{hyp}-}(r) \ll_{\beta,d} {\inf}_{T_{-},T_{+}}^* \Big\{  h_Q \log \big(1+T_{-}^{-(\frac{d}{2}-2-\delta)} \big)^d \big(  (b-a) (T_{-}^{\frac{d}{2}-2-\delta} + q^\nu A^{-\nu} \4 T_{-}^{-\nu} r^{- 2\nu } ) \q & \\
    + q^\nu A^{-\nu} \4 T_{+}^{ \kappa \nu} r^{-2 \nu} (\log((b-a)^*T_{+}) \big)+1)\big) & \Big\},
  \end{align*}
  where the infimum is taken over all $T_{-} \in [\imbound,1]$ and $T_{+} 
\geq 1$ restricted to
  \begin{equation*}
    T_{+} \geq 4 (b-a)^{-1} T_{-}^{-(\frac{d}{2}-2-\delta)} \max \Big \{1, \log \big(c_Q^2 (b-a) T_{-}^{-(\frac{d}{2}-2-\delta)}\big)^2 \Big\}.
  \end{equation*}
\end{corollary}



\section{Fourier Analysis}
\label{section:3}

\subsection{Smoothing}
\label{subsection:3.1}
The first step in the proof of Theorem \ref{maintheorem} is to rewrite the lattice point counting error (i.e.\ the left hand side of \eqref{eq:mth:1}) in terms of integrals over appropriate smooth functions. To this end, we introduce smooth approximation of the indicator functions of $E_{a,b}$ and $\Omega$ constructed as follows. Denote by $k=k(x) \4 \dif x$ a probability measure (symmetric around $0$) with compact support satisfying $k([-1,1])=1$ and $ \abs{\wh{k}(t)} \le C \exp\{-\abs{t}^{1/2}\}$ for all $t\in\R$ and a positive constant $C>0$, where $\wh{k}(t) \tdefi \int k(x) \exp \{ - 2 \pi \iu \4 t\4 x \} \, \dif x$ denotes the Fourier transform of the measure $k$\index{K@ $k$, compactly supported kernel with sufficiently fast decaying Fourier transform}. For an example of $k$ we refer to Corollary 10.4 in \cite{bhattacharya-rangarao:1986}. More generally, by a result of Ingham \cite{Ingham:1934} (see e.g.\ Theorem 10.2 in \cite{bhattacharya-rangarao:1986}) there is a probability density $k$ such that $\abs{\wh{k}(t)} \le C \exp \{ - u(\abs{t}) \abs{t} \}$, where $u$ is a continuous, non-negative, non-increasing function on $[0,\infty)$ satisfying $\int_1^\infty u(t) \4 t^{-1} \, \dif t < \infty$ and this condition is also necessary. However, we will not need this improved decay rate. For $\tau>0$ let $k_{\tau}$ denote the rescaled measures $k_{\tau}(A) \tdefi k(\tau^{-1}A)$ for any $A\in\mathcal{B}^d$, where $\mathcal{B}^d$ denotes the Borel $\sigma$-algebra. Using the same notation, let $k_{\tau}(x) = k_{\tau}(x_1)\ldots k_{\tau}(x_d)$, $x=(x_1,\ldots,x_d)$, denote its multivariate extension on $\R^d$, $d \geq 1$. Furthermore, let $f*k_{\tau}$ denote the convolution of a function $f$ on $\R^d$ and $k_{\tau}$. We need the following standard estimate for smooth approximations.

\begin{lemma}
  \label{lemma:1}
  Let $\mu$ and $\nu$ be (positive) finite measures on $\R^d$, let $f$ and $f_{\tau}^{\pm}, \tau>0$, denote bounded real-valued Borel-measurable functions on $\R^d$ such that for any $\tau >0$
  \begin{equation}
    \label{smooth-dom}
    \begin{aligned}
       & f^{-}_{\tau}(x) \le \inf\{ f(y) : \norm{y -x}_{\infty} \hspace{-2pt} < \hspace{-2pt}\tau\}              & \text{and} &  & f^{+}_{\tau}(x)\ge \sup\{ f(y) : \norm{y -x}_{\infty}\hspace{-2pt} < \hspace{-2pt}\tau\}, \\
       & f^{-}_{2\tau}(x) \le \inf\{ f^{-}_{\tau}(y) : \norm{y -x}_{\infty} \hspace{-2pt} < \hspace{-2pt} \tau\} & \text{and} &  & f^{+}_{2\tau}(x) \ge \sup\{ f^{+}_{\tau}(y) : \norm{y -x}_{\infty}\hspace{-2pt} < \hspace{-2pt} \tau\}.
    \end{aligned}
  \end{equation}
  Then
  \begin{equation}
    \label{smooth-basic}
    \Big\lvert\int f \, \dif (\mu-\nu)\Big\rvert \le \max_{\pm} \Big\lvert \int f^{\pm}_{\tau} \, \dif (\mu-\nu)*k_{\tau} \Big\rvert + \int (f^{+}_{2\4 \tau} - f^{-}_{2\4 \tau}) \, \dif \nu.
  \end{equation}
\end{lemma}

\begin{proof}
  Note that $k_{\tau}$ is a probability measure with support contained in 
a $\norm{\cdot}_\infty$-ball of radius $\tau$. Hence, \eqref{smooth-dom} implies the following chain of inequalities
  \begin{equation}
    \label{smooth-basic2}
    f^{-}_{2\tau} \le  f^{-}_{\tau}*k_{\tau} \le  f \le f^{+}_{\tau}*k_{\tau}\le f^{+}_{2\tau},
  \end{equation}
  which leads to
  \begin{equation}
    \label{smooth-basic3}
    \int f \, \dif (\mu-\nu) \leq \int f^{+}_{\tau}*k_{\tau} \, \dif(\mu-\nu)+ \int (f^{+}_{\tau}*k_{\tau}-f) \, \dif \nu
  \end{equation}
  together with a similar lower bound. Since by \eqref{smooth-basic2} $f \le f^{+}_{\tau}*k_{\tau}\le f^{+}_{2\tau}$ and $f \ge f^{-}_{\tau}*k_{\tau}\ge f^{-}_{2\tau}$, the upper bound \eqref{smooth-basic3} together with the corresponding lower bound proves the lemma.
\end{proof}

First we shall investigate approximations to the sum under consideration, counting the lattice points in $E_{a,b}$ with weights $\specialv_r(x) \tdefi \specialv(x/r)$\index{1@ $\phi_r(x)= \phi(x/r)$, for a function $\phi$ on $\R^m$}\index{R@ $R(I_{E_{a,b}} \4 \specialv_r), R(g \4 \specialv_r),R( I_{E_{a,b} \cap r\Omega})$, lattice point remainder}. In accordance with the notation introduced in \eqref{eq:df:formal_lattice_rem} at the beginning of Section \ref{subsec:intro_fourier}, we write
\begin{equation}
  \label{smooth-remainder}
  \sum_{m\in \Z^d} I_{[a,b]}(Q[m]) \specialv_r (m) = \int_{\R^d} I_{[a,b]}(Q[x]) \specialv_r (x) \4 \dif x +R(I_{E_{a,b}} \specialv_r),
\end{equation}
where $\specialv(x)$ is a sufficiently fast decreasing smooth function such that the function\index{Z@ $\zeta(x):= \specialv(x)\4 \exp\{ Q_{+}[x]\}$}
\begin{equation}
  \label{zeta-def}
  \zeta(x)\defi \specialv(x)\4 \exp\{ Q_{+}[x]\}
\end{equation}
satisfies \eqref{zeta-decay}. For such weights both sides of \eqref{smooth-remainder} are well defined and $R(I_{E_{a,b}} \specialv_r)$ may be estimated by Poisson's formula, see \cite{bochner:1948}, \S 46. By means of Lemma \ref{lemma:1} we now replace the indicator $I_{[a,b]}$ by a smooth approximation.

\begin{corollary}
  \label{co1}
  Let $[a,b]_{\tau} \tdefi [a-\tau,b +\tau]$ and write\index{G@ $g_{\pm w} := I_{[a,b]_{\pm w}}*k_{w}$}\index{G@ $g^Q_{\pm w}(x):= g_{\pm w} (Q[x]), \ x\in \R^d$}
  \begin{equation*}
    g_{\pm w} \defi I_{[a,b]_{\pm w}}*k_{w} \q \text{and} \q g^Q_{\pm w}(x)\defi g_{\pm w} (Q[x]), \ x\in \R^d,
  \end{equation*}
  where $0 < w < (b-a)/4$. Then
  \begin{equation}
    \label{zeta-boundary}
    \abs{R(I_{E_{a,b}}\specialv_r)} \le  \max_{\pm} \4 \abs{R(g^Q_{\pm w} 
\4\specialv_r)}+c_d \4 w \norm{\specialv}_Q \4 r^{d-2},
  \end{equation}
  where $R(g^Q_{\pm w}\specialv_r)$ is defined in accordance to \eqref{smooth-remainder}, $\norm{\specialv}_Q$ is defined in Lemma \ref{l3} and $c_d$ is a positive constant depending on $d$ only.
\end{corollary}

\begin{proof}
  In Lemma \ref{lemma:1} we choose the measure $\mu$, resp.\ $\nu$, on $\R$ 
as the induced measure under the map $x\mapsto Q[x]$ of the counting measure with weights $\specialv_r(m)$, resp.\ the measure $\specialv_r(x)\, \dif x$. Let $f(z)=I_{[a,b]}(z)$ and $f^{\pm}_{\tau}(z)=I_{[a,b]_{\pm \tau}}(z)$. Then \eqref{smooth-dom} is satisfied and \eqref{smooth-basic} applies with $\tau=w$. In order to bound the remainder term in \eqref{smooth-basic} observe that
  \begin{equation*}
    f^{+}_{2\4 w} - f^{-}_{2\4 w}\le I\big(\{ x \in \R^d\, : \, Q[x] \in [ a-2 w ,a+2w] \cup [ b-2w,b+2w ] \}\big)
  \end{equation*}
  and apply the geometric estimate of Lemma \ref{l3}; that is \eqref{Q-smooth1} of Subsection \ref{subsection:smoothing_regions}.
\end{proof}

Thus we have reduced the determination of the lattice point remainder $R(I_{E_{a,b}} \4\specialv_r)$ to the remainder $R(g^Q_{\pm w}\specialv_r)$ for smooth weights. In the next subsection we shall rewrite the latter by means of the corresponding Fourier transforms.



\subsection{Fourier Transforms and Theta-Series}
Rewrite the weight factor $\specialv$ in \eqref{smooth-remainder} as $\specialv(x) = \exp \{- Q_{+}[x] \} \4 \zeta(x)$. Since by definition (see the previous Subsection \ref{subsection:3.1})
\begin{equation}
  \label{h-estimate1}
  \abs{\wh{g}_{\pm w}(t)} \ll \abs{ \wh{I}_{[a,b]_{\pm w}}(t)\4 \wh{k}_w(t)} \ll s_{[a,b]_{\pm w}}(t)\exp\{ - \abs{t\4 w}^{1/2}\} \q \text{and} \q 
\wh{\zeta}\in L^1(\dif v),
\end{equation}
where\index{S@ $s_{[a,b]_{\pm w}}(t) := \abs{(2\pi t)^{-1}\4\sin(\pi t\4(b-a \pm 2w))}$} 
\begin{equation}
 \label{h-estimate2}
  s_{[a,b]_{\pm w}}(t) \defi \abs{(2\pi t)^{-1}\4\sin(\pi t\4(b-a \pm 2w))},
\end{equation}
we may express the weight functions $g_{\pm w}$ and $\zeta$ by their Fourier transforms
\begin{equation*}
  \wh{g}_{\pm w}(v)= \int_{\R} g_{\pm w} (x) \exp\{ - 2 \pi \iu \4 t\4 x\} \, \dif x \q \text{and} \q \wh{\zeta}(v)= \int_{\R^d} \zeta(x) 
\exp \{- 2 \pi  \iu \4 \langle v, x\rangle \} \, \dif x.
\end{equation*}
This yields
\begin{align}
  \label{g-inversion}
  g_{\pm w} (Q[x]) & = \int_\R \wh{g}_{\pm w}(t) \exp\{ 2 \pi \iu \4 t\4 Q[x]\} \, \dif t, \\
  \label{zeta-inversion}
  \zeta(x) & = \int_{\R^d} \wh{\zeta}(v)  \exp\{ 2 \pi \iu \langle x,v\rangle\} \, \dif v.
\end{align}
Using \eqref{g-inversion} we obtain by interchanging summation and integration in \eqref{smooth-remainder}
\begin{equation}
  \label{Q-smooth}
  R(g^Q_{\pm w} \4 \specialv_r) = \int_\R R(e_{tQ}\4 \specialv_r) \4 \wh{g}_{\pm w} (t) \, \dif t
\end{equation}
with $e_{tQ}(x) \tdefi \exp\{ 2 \pi \iu \4 t \4 Q[x]\}$.  (Here $R(e_{tQ}\4 \specialv_r)$ denotes the inner integral with respect to the variable $v$.) In the same way, writing $\tilde{e}_{v,r}(x) \tdefi \exp\{ - Q_{+}[x/r] + 2 \pi \iu \4\langle x,v \4 r^{-1} \rangle\}$, we derive by \eqref{zeta-inversion} the remainder
\begin{equation}
  \label{v-Fourier}
  R(e_{tQ} \4 \specialv_r) = \int_{\R^d} R(e_{tQ} \4 \tilde{e}_{v,r}) \4 \wh{\zeta}(v) \, \dif v.
\end{equation}
The sum $R(e_{tQ}\4 \tilde{e}_{v,r}) $ is the remainder between the generalized theta series\index{T@ $\theta_v(t)$, Theta series for $Q$} and its 
corresponding theta integral\index{T@ $\vartheta_v(t)$, Theta integral for $Q$}, that is $R(e_{tQ}\4 \tilde{e}_{v,r})= \theta_v(z)- {\vartheta}_{v}(t)$, where
\begin{equation}
  \label{thetadef}
  \theta_v(t) \defi  \sum_{x\in\Z^d} \exp\left\{Q_{r,v}(t,x)\right\} \q \text{and} \q \vartheta_{v}(t) \defi\int_{\R^d} \exp\left\{ Q_{r,v}(t,x)\right\}\4 \dif x,
\end{equation}
\index{Q@ $Q_{r,v}(t,x) := 2 \pi \iu\4 t\4 Q[x] - r^{-2} \4 Q_{+}[x] + 2 \pi \iu \4\langle x,v\4 r^{-1}\rangle$}
\begin{equation}
  \label{Qrv-def}
  Q_{r,v}(t,x) \defi 2 \pi \iu\4 t\4 Q[x] - r^{-2} \4 Q_{+}[x] + 2 \pi \iu \4\langle x,v\4 r^{-1}\rangle.
\end{equation}
Let us note that both $\vartheta_t(v)$ as well as $\theta_t(v)$ depend on the dilating variable $r$. However, we shall suppress this underlying dependency in order to reduce the notational burden. 
For $\abs{t} \le \imbound$ we shall use following representations of $ R(e_{tQ}\4 \tilde{e}_{v,r})= \theta_v(z)- {\vartheta}_{v}(t)$ in \eqref{Q-smooth} by means of Poisson's formula (see \cite{bochner:1948}, \S 46), which obviously applies here:
\begin{equation}
  \label{poisson1}
  \theta_v(t)- {\vartheta}_{v}(t)=\sum_{m \in \Z^d \setminus \{0\}} \vartheta_{v-r\4m}(t).
\end{equation}
Note that by definition \eqref{thetadef} the Fourier transform of $ x\mapsto \exp\{Q_{r,v}(t,x)\}$ at $u \in \R^d$ is given by $\vartheta_{v-r \4 u}(t)$, where
\begin{equation}
  \label{qsdef}
  \exp\{Q_{r,v}(t,x)\}=\exp\{-\tilde{Q}_t[x] + 2 \pi \iu \langle x, v\4 
r^{-1}\rangle\} \q \text{and} \q \tilde{Q}_t \defi r^{-2}Q_{+} - 2 \pi \iu \4 t\4 Q.
\end{equation}
In view of \eqref{v-Fourier} and \eqref{poisson1} we have
\begin{equation}
  \label{rep1}
  R(e_{tQ} \4 \specialv_r) = \int_{\R^d} \4 \Big(\sum_{m \in \Z^d \setminus \{0\}}\vartheta_{v- r\4m}(t)\Big)\4 \wh{\zeta}(v) \4 \dif v.
\end{equation}
From here we only consider the weight $g_w$. The same inequalities hold 
also for $g_w$ replaced with $g_{-w}$. Next, we decompose the integral over $t$ in \eqref{Q-smooth} into the segments $J_0 \tdefi [ - \imbound, \imbound] $\index{J@ $J_0 := [ - \imbound, \imbound] $} and $J_1 \tdefi \R \setminus J_0$\index{J@ $J_1 := \R \setminus J_0$} and obtain
\begin{align}
  \label{eq-splitting}
  \abs{R(g^Q_w \4 \specialv_r)}  \ll_d \ I_\Delta + I_\vartheta + I_\theta,\q
\end{align}
where,\index{I@ $I_\Delta :=  \abs{\int_{J_0} R(e_{tQ} \4 \specialv_r) 
\, \wh{g}_w(t) \, \dif t}$}\index{I@ $I_\vartheta  :=  \abs{\int _{J_1} 
\wh{g}_w(t) \int_{\R^d} {\vartheta}_{v}(t) \, \wh{\zeta}(v)\4 \dif v \, \dif t}$}\index{I@ $I_\theta  :=  \abs{ \int_{J_1} \wh{g}_w(t)\int_{\R^d} \theta_v(t) \, \wh{\zeta}(v) \4 \dif v \, \dif t}$} 
\begin{align}
  \label{I0}
  I_\Delta & \ \defi \ \Bigl\lvert \, \int_{J_0} R(e_{tQ} \4 \specialv_r) \, \wh{g}_w(t) \, \dif t \Bigr\rvert, \\
  \label{I0*}
  I_\vartheta & \ \defi \ \Bigl\lvert \, \int _{J_1} \wh{g}_w(t) \int_{\R^d} {\vartheta}_{v}(t) \, \wh{\zeta}(v)\4 \dif v \, \dif t\Bigr\rvert, \\
  \label{I1}
  I_\theta & \ \defi \ \Bigl\lvert \, \int_{J_1} \wh{g}_w(t) \int_{\R^d} \theta_v(t) \, \wh{\zeta}(v) \4 \dif v \, \dif t\Bigr\rvert.
\end{align}
We start with the integral over the sections $J_1$. In the term $I_\theta$ we separate the $t$ and $v$ integrals via
\begin{equation}
  \label{bound3.43}
  I_\theta \ll_d \norm{\wh{\zeta}}_1  \sup_{v \in \R^d} \int_{ \abs{t} >\imbound} \abs{\wh{g}_w(t) \4 \theta_v(t)} \, \dif t,
\end{equation}
where the estimation of the latter integral will be done in the Sections \ref{geom-num}--\ref{conclus}. In order to estimate the terms $I_\Delta$ and $I_\vartheta$ we need to estimate  $\abs{\vartheta_v(t)}$ first:

\subsubsection{Estimates for $\abs{\vartheta_v(t)}$}
For any symmetric complex $d\times d$-matrix $\Xi$, whose imaginary part is positive definite, we have
\begin{equation}
  \label{defthetaint}
  \int_{\R^d}\exp\bigl\{\pi \iu \4 \Xi [x]+2\pi \iu \langle x,v\rangle\bigr\} \, \dif x
  =\left(\det\left(\Xi/ \iu \right)\right)^{-1/2}\4 \exp\left\{ -\pi \iu \4 \Xi^{-1}[v]\right\},
\end{equation}
where we choose the branch of the square root which takes positive values 
on purely imaginary $\Xi$, $v\in \R^d$ and $\Xi^{-1}\bigl[ x\bigr]$ denotes the quadratic form $\langle\Xi^{-1}x,x\rangle$, defined by the inverse 
operator $\Xi^{-1} \colon \C^d\rightarrow\C^d$ whose imaginary part is negative definite (see \ \cite{mumford:1983}, p.\ 195, Lemma 5.8 and (5.6)). We shall apply \eqref{defthetaint} in the case $\Xi_t \tdefi \iu \pi^{-1} \tilde{Q}_t = 2tQ+\iu \pi^{-1} r^{-2}Q_+$ in order to obtain the following expression for $\vartheta_v$ in \eqref{thetadef} (see also \eqref{qsdef})
\begin{equation}
  \label{fourier-theta}
  \vartheta_{v}(t)  = \int_{\R^d} \hspace{-0.5mm} \exp\big\{\pi \iu \4 \Xi_t[x] + 2 \pi \iu \big\langle x, v/r \rangle \big\} \4 \dif x
  = (\det(\Xi_t / \iu))^{-\frac{1}{2}} \exp \{  - \pi \iu \4\Xi_t^{-1} [v/r] \}.
\end{equation}
Hence, the Fourier transform of $x \mapsto \exp\{Q_{r,v}(t,x)]\}$ takes the following shape
\begin{equation}
  \label{fourier-def}
  \det \big(\pi^{-1} \tilde{Q}_t \big)^{-1/2}\4 \exp \left\{ -\pi^2 \tilde{Q}_t^{-1}[u-v/r] \right\}=\vartheta_{v- r\4 u}(t)=\vartheta_{r\4 u -v}(t).
\end{equation}
A short calculation shows that $\tilde{Q}_t^{-1}=(4 \pi^2 t^2 +r^{-4})^{-1}(2 \pi \iu t Q^{-1}+r^{-2}Q_+^{-1})$ and it follows immediately that
\begin{equation}
  \label{detqc} \textstyle
  \det \tilde{Q}_t^{-1}  = (4 \pi^2 t^2 +r^{-4})^{-d} \prod_{i=1} ^d (2 \pi \iu t q_i^{-1} +r^{-2}\abs{q_i}^{-1}).
\end{equation}
Taking the absolute value of \eqref{fourier-theta} and \eqref{detqc} we conclude that
\begin{equation}
  \label{vartheta-bound}
  \abs{\vartheta_{u\4 r}(t)} \ll_d d_Q\4 r^{d/2}r_t ^{d/2} \exp \Big\{- \pi^2  r_t^{2} \4 Q_+^{-1}[u] \Big\},
\end{equation}
where $r_t \tdefi r (4 \pi^2 t^2 r^4 +1)^{-1/2}$\index{R@ $r_t :=r (4 \pi^2 t^2 r^4 +1)^{-1/2}$} and $d_Q \tdefi \abs{\det Q}^{-1/2}$ as already defined in \eqref{eq:1.1}.\footnote{The first of these notations will be used throughout this section only and should not be confused with the notation $r_* \tdefi r q^{-1/2}$ which will be introduced latter in Lemma \ref{special_int}.}



\subsubsection{Estimation of $I_\vartheta$}
\label{Sub:Estimation-I_vartheta}
By \eqref{vartheta-bound} with $v=ur$ we have $\abs{\vartheta_v(t)} \ll_d d_Q\4 r^{d/2} \4 r_t^{d/2}$ and therefore we obtain by using \eqref{h-estimate1} after integrating over $v$ in \eqref{I0*}
\begin{equation}
  \label{I0*-bound}
  I_\vartheta \ll_d d_Q\4 r^{d/2} \4 \norm{\wh{\zeta}}_1\4 \int_{\abs{t}> 
\imbound}s_{[a,b]_{\omega}}(t) \exp\{ - \abs{w\4 t}^{1/2}\}  \4 r_t^{d/2}\4 \dif t.
\end{equation}
If $\abs{b-a}^{-1} \leq \imbound$, then we use $s_{[a,b]_w}(t) \le \abs{t}^{-1}$ and $r_t \leq (rt)^{-1}$ to get the bound
\begin{equation*}
  \int_{\imbound}^{\infty} s_{[a,b]_w}(t) \4 r_t^{d/2} \, \dif t  \le r^{-d/2} \int_{\imbound}^{\infty} t^{-d/2-1} \, \dif t  \ll_d q_0^{d/4}.
\end{equation*}
In the case $\abs{b-a}^{-1} > \imbound$ we shall estimate the $t$-integral in \eqref{I0*-bound} by means of $s_{[a,b]_w}(t) \leq \abs{b-a+2w}/2$. Using $\abs{w} < (b-a)/4$ additionally leads to
\begin{equation*}
  \int_{\abs{t}> \imbound}s_{[a,b]_{\omega}}(t) \4 r_t^{d/2} \4 \dif t \leq r^{-d/2} \abs{b-a+2w} \int_{\imbound}^{\infty} t^{-d/2} \, \dif t \ll_d 
\frac{\abs{b-a}}{q_0^{1/2} \4 r} \4 q_0^{d/4}.
\end{equation*}
Summarizing, we have established the bound
\begin{equation}
  \label{Istar0}
  I_\vartheta \ll_d  d_Q \4\norm{\wh{\zeta}}_1\4 \min\{\abs{b-a} \imbound 
,1\}\4 r^{d/2} \4 q_0^{d/4},
\end{equation}
provided that $d>2$.



\subsubsection{Estimation of $I_\Delta$}
According to \eqref{I0}, \eqref{v-Fourier} and \eqref{poisson1} we may write
\begin{equation}
  \label{I01}
  \begin{aligned}
       & I_\Delta = \bigg\lvert \int_{J_0}  \wh{g}_w(t)\4 R(e_{tQ} \4 \specialv_r) \, \dif t \, \bigg\rvert, \q &                                          
                              & \text{where} \\
       & R(e_{tQ} \4 \specialv_r) = \int_{\R^d} \4 S_{t,v}\4 \wh{\zeta}(v)\4 \dif v,
    \q &                                                                  
                           & S_{t,v}\defi \sum_{m \in \Z^d \setminus \{0\}} \vartheta_{v- r\4m}(t).
  \end{aligned}
\end{equation}
In order to use the estimate \eqref{vartheta-bound} let $v \in \R^d$ and write $v = ru$ with $u = u_0 + m_u$, where $u_0 \in [-1/2,1/2]^d$ and 
$m_u \in \Z^d$. Then
\begin{equation}
  \abs{S_{t,v}} \leq \sum_{m \neq m_u} \abs{\vartheta_{r(u_0+m)}(t)}\ll d_Q r^{d/2}r_t^{d/2} \sum_{m\neq m_u} \exp\{ -\pi^2 r_t^2 Q_+^{-1}[u_0+m] \}.
\end{equation}
Note that $\norm{m+u_0} \geq \norm{m+u_0}_\infty \geq \frac{1}{2}$ for any $m \in \Z^d \setminus \{0\}$ and therefore $\frac{\pi^2}{2}Q_+^{-1}[u_0 
+m] \geq \frac{\pi^2}{8}q^{-1} \geq q^{-1}$ which yields the bound
\begin{equation}
  \label{sum-bound1}
  \abs{S_{t,v}} \ll d_Q r^{d/2}\abs{r_t}^{d/2}\Big(\e^{-\pi^2 r_t^2 Q_+ ^{-1}[u_0]} I_r(v) + \e^{-r_t^2 / q} K_{u_0} \Big),
\end{equation}
where $I_r(v) \tdefi I_{[r/2,\infty)}(\norm{v}_{\infty})$ and $K_{u_0} \tdefi \sum_{m \in \Z^d} \exp \{-\frac{\pi^2}{2}r_t^2 Q_+ ^{-1}[m+u_0]\}$. The sum $K_{u_0}$ may be estimated by an integral as follows: Since the map  $t\mapsto r_t^2 = r (4 \pi^2 t^2 r^4 +1)^{-1/2}$ is strictly monotone increasing on $t < 0$ and decreasing on $t >0$, we find that $r_t^2 \ge q_0/(4 \pi^2+1)$ for $\abs{t} \le \imbound$ as $r \geq q^\frac{1}{2}$ and thus $\exp\{- \pi^2 r_t^2 Q_+^{-1}[u]\} \le \exp\{- \frac{q_0}{5}  
Q_{+}^{-1}[u]\}$. Let $I \tdefi [-\frac{1}{2} , \frac{1}{2}]^d$ and note that $ Q_+^{-1}[x] \leq \tfrac{d}{4q_0}$ for $x\in I$, from which we deduce that
\begin{equation*}
  k_u \defi \int_I \exp \{-\tfrac{q_0}{5} Q_+^{-1}[u+x] \} \, \dif x \gg_d \exp\{-\tfrac{q_0}{5}  Q_+^{-1}[u] \} \4 \int_I \exp\{- \tfrac{2 q_0}{5} \langle Q_+^{-1} u ,x \rangle \} \, \dif x,
\end{equation*}
where the integral on the right-hand side is at least one by Jensen's inequality. Hence
\begin{equation}
  \label{integ1}
  K_{u_0} \le \sum_{m \in \Z^d} \e^{- \frac{q_0}{5} \4 Q_{+}^{-1}[m + u_0]} \ll_d \sum_{m \in \Z^d} k_{m + u_0} = \int_{\R^d} \e^{ - \frac{q_0}{5} \4 Q_{+}^{-1}[x]} \, \dif x \ll_d  \Big(\frac{q}{q_0}\Big)^{\frac{d}{2}}.
\end{equation}
Using \eqref{I01} together with \eqref{sum-bound1} and \eqref{integ1}, we 
may now estimate $I_\Delta$ by the following integrals. Writing $v_0 = v- r m$, $\norm{v_0}_{\infty} \leq \frac{r}{2}$, $m \in \Z^d$, we have
\begin{equation}
  \label{theta2}
  I_\Delta \ll_d d_Q \int_{J_0} \abs{\wh{g}_w(t)}\4 \big(\Theta_{t,1}+ \Theta_{t,2}\big) \, \dif t,
\end{equation}
where
\begin{align*}
  \Theta_{t,1} & \defi \Big(\frac{q}{q_0}\Big)^{d/2} \4 r^{d/2} r_t^{d/2} 
\e^{-\tfrac{r_t^2} q} \int_{\R^d} \abs{ \wh{\zeta}(v)} \, \dif v, \\
  \Theta_{t,2} & \defi r^{d/2} r_t^{d/2} \int_{\norm{v}_{\infty} > \4 r /2} \exp\{- \pi^2 r_t^2 Q_{+}^{-1}[v_0 r^{-1}]\}\4\abs{\wh{\zeta}(v)}\4 \dif v.
\end{align*}
If we write $h(s;x) \tdefi s^{d/4} \e^{-s \4 x}$ with $s,x>0$, then the maximum of $s \mapsto h(s;x)$ is attained at $s_0=d/(4\4x)$. Hence, $\max_{t\in J_0} h(r_t^2;x)\ll_d \min(x^{-d/4}, r^{d/2})\ll_d (x+ \tfrac 1 {r^2})^{-d/4}$. Thus, we obtain with $x=1/q$
\begin{equation}
  \label{Theta1-bound}
  \max_{t\in J_0} \, \Theta_{t,1} \ll_d (q/q_0)^{d/2} \4 r^{d/2}\4q^{d/4}\4\norm{\wh \zeta}_1.
\end{equation}
Note that the value $x=1/q$ is within the range of $t \mapsto r_t^2$, $t \in J_0$, since its maximum is $r_0^2=r^2$ and its minimum is  $q_0/(4 \pi^2+1) \le r_{t^*}^2 \le q_0$, where $t^*=\pm \imbound$. In order to estimate $\Theta_{t,2}$, we choose $x=Q_{+}^{-1}[v_0/r]/4$ and get
\begin{equation}
  \label{Theta2-bound}
  \sup_{t\in J_0} \Theta_{t,2} \ll_d r^{d/2} \int_{\norm{v}_{\infty} > r/2 } \frac{\abs{\wh{\zeta}(v)}}{(r^{-2} +Q_+^{-1}[v_0/r] )^{d/4}} \, \dif v.
\end{equation}
Now we integrate the bounds \eqref{Theta1-bound} and \eqref{Theta2-bound} 
in $t \in J_0$ weighted with $\abs{\wh{g}_w(t)}$: In view of \eqref{h-estimate1} we have $\int_{J_0} \abs{\wh{g}_w(t)} \, \dif t \ll \log(1+\abs{b-a} \4 \imbound )$ and thus we finally get, using the quantity $\norm{\wh{\zeta}}_{*,r}$ as defined in \eqref{defzeta*} for the weights $\zeta(x)$, the estimate
\begin{equation}
  \label{eq:I0estimate1}
  I_{\Delta} \ll_d d_Q \4 r^{d/2} \log (1+ \abs{b-a} \4 \imbound ) \4 \norm{\wh{\zeta}}_{*,r}.
\end{equation}
Applying \eqref{zeta-boundary} of Corollary \ref{co1} with \eqref{eq-splitting}, \eqref{Istar0} and \eqref{eq:I0estimate1} we may now collect the results obtained so far as follows for the lattice point remainder of \eqref{smooth-remainder}. We have\index{Z@ $\norm{\wh \zeta}_{*,r} := q^{d/4}\big((\frac{q}{q_0})^{d/2} \norm{\wh{\zeta}}_1 + \int_{\norm{v}_{\infty} > r/2} \frac{\abs{\wh \zeta(v)}} {({q^{1/2}} r^{-1} +\norm{ v r^{-1} }_{\Z})^{d/2}} \, \dif v\big)$}
\begin{equation}
  \label{crucialbound}
  \begin{aligned}
    \Big\lvert \sum_{m\in\Z^d} & I_{[a,b]}(Q[m]) \specialv_r (m) - \int_{\R^d} 
I_{[a,b]}(Q[x]) \specialv_r (x) \, \dif x \Big \rvert \\
    & \ll_d I_\theta + d_Q \4  r^{d/2} \4 \norm{\wh{\zeta}}_{*,r} \4 \log 
(1+ \abs{b-a} \4 \imbound)+  w \4 \norm{\specialv}_Q \4 r^{d-2}.
  \end{aligned}
\end{equation}



\subsubsection{Estimation of $I_\theta$}
We shall now estimate the crucial error term $I_\theta$, see \eqref{I1} and \eqref{bound3.43}. At first we shall bound the theta series $\theta_v(t)$ uniformly in $v$ by another theta series in dimension $2\4d$ in order 
to transform the problem to averages over functions on the space of lattices subject to an appropriate action of $\SL(2,\R)$. We have

\begin{lemma}
  \label{thetaestimate2}
  Let $\theta_v(t)$ denote the theta function in \eqref{thetadef} depending on $Q$, $r \in \R$ and $v\in \R^d$. For $r \geq 1$, $t \in \R$ the following bound holds uniformly in $v \in \R^d$\index{H@ $H_{t}(m,n) := r^2 \4 Q_+^{-1} [m - 4 \4 t\4 Q n ] + r^{-2}\4 Q_+[\4n\4]$}\index{P@ $\psi (r,t) := \sum_{m,n \in \Z^d} \exp\{- H_t(m,n)\}$}
  \begin{alignat}{3}
    \label{eq:thetaestimate1}
    \bigl\lvert \theta_v (t) \bigr\rvert & \q \ll_d \q &  & (\det{Q_+})^{-1/4} \4 r^{d/2} \4 \psi (r,t)^{1/2},           &  & \text{where} \\
    \label{eq:thetaestimate2}
    \psi (r,t) & \q \defi \q &  & \sum_{m,n \in \Z^d} \exp\{- H_t(m,n)\}, 
&  &  \\
    \label{eq:thetaestimate3}
    H_{t}(m,n)                   & \q \defi \q &  & r^2 \4 Q_+^{-1} [m - 4 \4 t\4 Q n ] + r^{-2}\4 Q_+[\4n\4],
  \end{alignat}
  and $H_t(m,n)$ is a positive quadratic form on $\Z^{2d}$. Note that $H_t(m,n)$ depends as well on the currently fixed dilating variable $r$ which we suppress here.
\end{lemma}

\begin{proof}
  For any $x,y \in \R^d$ the equalities
  \begin{equation}
    \label{eq:quad1}
    \begin{aligned}
      2 \4 \left(Q_{+}[\4x\4] + Q_{+}[\4y\4]\right)    & \ = \ Q_{+}[\4x + y\4] + Q_{+}[\4x -y\4], \\
      \left\langle Q\4 (x + y), \,x - y \right\rangle & \ = \ Q[\4x\4] - Q[\4y\4]
    \end{aligned}
  \end{equation}
  hold. Rearranging $\theta_v(z)\, \overline{\theta_v(z)}$ and using \eqref{eq:quad1}, we would like to use $m+n$ and $m-n$ as new summation variables on a lattice. But both vectors have the same parity, that is $m + n \equiv m - n \mod 2$. Since they are dependent one has to consider the $2^d$ affine sublattices indexed by \,$ \alpha = (\alpha_1, \dots, \alpha_d)$ \,with \,$\alpha_j \in \{0,1\}$ for $1 \leq j \leq d$:
  \begin{equation*}
    \Z^d_{\alpha} \defi \{ m \in \Z^d \,:\, m \equiv \alpha \mod 2 \},
  \end{equation*}
  where, for $ m = (m_1, \dots , m_d)$, $m \equiv \alpha \mod 2$ means $m_j \equiv \alpha_j \mod 2$ for all $ 1\leq j \leq d$. Thus writing
  \begin{equation*}
    \theta_{v,\alpha}(t) \defi \sum_{m \in \Z^d_{\alpha}} \exp\left[-\frac{1}{r^2}Q_+[m] - 2 \pi \iu \4 t \4 Q[m] + 2 \pi \iu \4\langle m,\frac{v}{r}\rangle\right],
  \end{equation*}
  we obtain $\theta_v(t) = \sum_{\alpha} \theta_{v,\alpha}(t)$ and hence by the Cauchy-Schwarz inequality
  \begin{equation}
    \label{eq:cauchy}
    \textstyle
    \bigl\lvert \theta_v(t) \bigr\rvert^2 \leq \ 2^d \sum_{\alpha \in \{0,1\}^d }\ \bigl\lvert \theta_{v,\alpha}(t) \bigr\rvert^2.
  \end{equation}
  Using \eqref{eq:quad1} and the absolute convergence of \,$\theta_{\alpha}(t)$, we can write
  \begin{align*}
    \abs{\theta_{v,\alpha}(t)}^2 & = \! \sum_{m, n \in \Z^d_{\alpha}} \exp \! \left[-\frac{1}{r^2} \bigl(Q_{+}[m]+Q_{+}[n]\bigr)- 2 \pi \iu t\4\bigl(Q[m]-Q[n]\bigr) - 2 \pi \iu \4\langle m-n,\frac{v}{r}\rangle\right] \\
    & = \! \sum_{m, n \in \Z^d_{\alpha}} \exp \! \left[-\frac{2}{r^2} \bigl(Q_{+}[\bar{m}]+Q_{+}[\bar{n}]\bigr)- 4 \pi \iu \4 \bigl\langle 2\4t\4Q\bar{m}+\frac{v}{r},\bar{n}\bigr\rangle\right]
  \end{align*}
  where $\bar{m} = \frac{m+n}{2}$, $\bar{n} = \frac{m - n}{2}$. Note that the map
  \begin{equation*}
    \textstyle
    \bigcup_{\alpha \in \{0,1\}^d } \Z^d_{\alpha} \times \Z^d_{\alpha} \longrightarrow \Z^d \times \Z^d, \ \ (m, n) \longmapsto \Big(\ffrac{m + {n}}{2}, \ffrac{m-n}{2}\Big)
  \end{equation*}
  is a bijection. Therefore we get by \eqref{eq:cauchy}
  \begin{equation}
    \label{eq:double1}
    \begin{aligned}
      \bigl\lvert \theta_v(t) \bigr\rvert^2 & \ll_d \sum_{\alpha \in\{0,1\}^d} \sum_{m, n \in \Z^d_{\alpha}} \exp\left[ -\frac{2}{r^2}\bigl( Q_{+}[\bar{m}]+Q_{+}[\bar{n}]\bigr)-4 \iu \pi \4\bigl\langle 2t\4Q\bar{m}+\frac{v}{r},\bar{n}\bigr\rangle \right] \\
                                 & = \sum_{\bar{m}, \bar{n} \in \Z^d} \exp\left[-\frac{2}{r^{2}}\bigl(Q_{+}[\bar{m}]+Q_{+}[\bar{n}]\bigr)-4 \iu \pi \4\bigl\langle 2t\4Q\bar{m}+\frac{v}{r},\bar{n}\bigr\rangle\right].
    \end{aligned}
  \end{equation}
  In this double sum fix $\bar{n}$ and sum over $\bar{m} \in \Z^d$ first, and call the inner sum $\theta_v(t, \bar{n})$. Using \eqref{defthetaint} with $\Xi = 2\iu Q_+ r^{-2}/\pi$ and $v = -4 \4 t \4 Q \4 \bar{n} +m$, we get for $\delta \tdefi \left( \det \left(\frac{2}{\pi r^2} \4 Q_{+}\right)\right)^{-1/2}$ by the symmetry of $Q$ and Poisson's formula (see \cite{bochner:1948}, \S 46)
  \begin{alignat*}{3}
    \theta_v(t, \bar{n}) \  & \defi
    &                                     
 &  & 
 & \sum_{\bar{m} \in \Z^d } \exp\left[-\frac{2}{r^2} \bigl(Q_+[\bar{m}]+Q_+[\bar{n}]\bigr)- 4 \pi \iu \4 \bigl\langle 2t \4 Q\bar{m} +\frac{v}{r},\bar{n}\bigr\rangle\right] \\ & \ = &&
    \delta                  &       & \sum_{m \in \Z^d} \exp\left[ - \frac{ \pi^2 \4 r^2}{2}\4 Q_+^{-1}[\4 m - 4\4t\,Q \4\bar{n}\4]-\frac{2}{r^2} Q_+[\bar{n}]- 4 \pi \iu \langle\4\frac{v}{r},\bar{n}\4\rangle\right].
  \end{alignat*}
  Thus, we have uniformly in $v \in \R^d$
  \begin{equation}
    \label{eq:double2}
    \bigl\lvert \theta_v(t, \bar{n}) \bigr\rvert \leq \delta \4 \sum_{m \in \Z^d} \exp \Bigl\{ - \frac{ \pi^2 \4 r^2}{2}\4 Q_{+}^{-1}[ m- 4 \4 t\4Q\4 \bar{n}\4]-\frac{2}{r^2}Q_{+}[\bar n ]\Bigr\}.
  \end{equation}
  Hence we obtain by \eqref{eq:double1} and \eqref{eq:double2}
  \begin{equation*}
    \bigl\lvert \theta_v(t)\bigr\rvert^2 \ll_d (\det\4 Q_{+})^{-1/2} \4 r^d \sum_{m, n \in \Z^d} \4 \exp \{ -G_t(m,n) \},
  \end{equation*}
  where $G_t(m,n) \tdefi \frac{\pi^2 r^2}{2} Q_{+}^{-1}[\4 m - 4 \4 t \4 Q\4 n \4] +\frac{2}{r^2} Q_{+}[n]$. Since $\pi^2/2 >1 $ we may bound $G_t(m,n)$ from below as follows:
  \begin{equation*}
    G_t(m,n)\ge r^2 Q_+^{-1} [m- 4 t Q n] + r^{-2} Q_+[n] = H_t(m,n)
  \end{equation*}
  which proves the claimed estimate \eqref{eq:thetaestimate1}. Finally, observe that we can write
  \begin{equation*}
  H_t(m,n) = \big \|\begin{pmatrix} r Q_+^{-\frac{1}{2}}(m-4t Qn) \\ r^{-1} Q_+ n \end{pmatrix} \big \|^2,
  \end{equation*}
  which shows that $H_t(m,n)$ is a positive definite quadratic form on $\Z^{2d}$.
\end{proof}

In view of Lemma \ref{thetaestimate2} we can introduce the $2d$-dimensional lattice\index{L@ $\Lambda_t := D_{rQ} \4 U_{4tQ} \Z^{2d}$, lattice on $\R^{2d}$}
\begin{equation}
  \label{df:lambda_t}
  \Lambda_t \defi D_{rQ} \4 U_{4tQ} \Z^{2d},
\end{equation}
where\index{D@ $D_{rQ}$, diagonalizable matrix on $\R^{2d}$}\index{U@ $U_{4tQ}$, unipotent matrix on $\R^{2d}$}
\begin{align}
  \label{df:action}
  D_{rQ} = \begin{pmatrix} r Q_+ ^{-\frac{1}{2}} & \\ & r^{-1} Q_+ ^{\frac{1}{2}} \end{pmatrix} \q \ \ \text{and} \q \ \ U_{4tQ}=
  \begin{pmatrix} \mathbbm 1_d & -4t Q \\ & \mathbbm 1_d \end{pmatrix},
\end{align}
in order to write $\psi(r,t)= \sum_{v \in \Lambda_t} \exp\{ -\norm{v}^2\}$ as the Siegel transform of $\exp \{-\norm{x}^2\}$ evaluated at the lattice $\Lambda_t$.  According to the Lipschitz principle in the Geometry of Numbers (see \cite{schmidt:1968}, Lemma 2, or \cite{eskin-margulis-mozes:1998}, Lemma 3.1) one can show that $\psi(r,t) \ll_d  \alpha(\Lambda_{t})$, where $\alpha$ is the maximum over all $\alpha_l$-characteristics 
(see \eqref{alp0}). However, we choose to follow a more direct and transparent argument for the sake of clarity and motivate the relation between the $\alpha_i$-characteristics and the successive minima of a lattice for the convenience of the reader. The following Lemma \ref{Dav1} (with $\eps=1$) reduces the problem of estimating the theta series \eqref{eq:thetaestimate2} to the problem of counting lattice points as follows
\begin{equation}
  \label{eq:further_estimate_theta}
  \psi(r,t) \asymp_d \# \{w \in \Lambda_t : \norm{w}_\infty \le 1\} \ll_d 
\# \{ w \in \Lambda_t : \norm{w} \le d^{1/2} \}.
\end{equation}

\begin{lemma}
  \label{Dav1}
  Let $\Lambda$ be a lattice in $\R^d$. Assume that $0<\eps\le1$, then
  \begin{equation}
    \label{eq:Hequiv}
    \exp\{ -d\eps\} \4 \# \mathcal H \,\le \, \sum_{v \in \Lambda} \exp \bgl \{- \eps\,\norm v^2 \bgr\} \ll_d \eps^{-d/2}\,\# \mathcal H,
  \end{equation}
  where $\mathcal H \tdefi \bgl\{v \in \Lambda \, : \, \norm v_\infty< 1 \bgr\}$\index{H@ $\mathcal H := \bgl\{v \in \Lambda \, : \, \norm v_\infty< 1 \bgr\}$}.
\end{lemma}

\begin{proof}
  The lower bound for the sum is obvious by restricting summation to the set of elements in $\mathcal H$. As for the upper bound introduce for \,$\mu = (\mu_1, \dots, \mu_d)\in \Z^d$ \,the sets
  \begin{equation*}
    B_\mu \defi \left[\,\mu_1-\ffrac{1}{2},\, \mu_1 + \ffrac{1}{2}\right)\times\cdots\times \left[\,\mu_{d}-\ffrac{1}{2},\,\mu_d + \ffrac{1}{2}\right)
  \end{equation*}
  such that $\R^d = \bigcup_{\mu \in \Z^d} B_\mu$. For any fixed $w^* \in \mathcal H_\mu \tdefi \Lambda \cap B_\mu $ we have $w - w^*\in \mathcal H$ for all $w\in \mathcal  H_\mu$. Hence we conclude for any $\mu\in \Z^d$
  \begin{equation*}
    \# \mathcal  H_\mu \, \leq \, \# \mathcal H.
  \end{equation*}
  Since $x \in B_\mu$ implies $\norm{x}_\infty \geq \norm{\mu }_{\infty}/2$, we obtain
  \begin{align*}
    \sum_{v \in \Lambda} \e^{-\eps\,\norm{v}^2 } \le \sum_{v \in \Lambda} 
\e^{- \eps\, \norm{v}_\infty^2} & \le  \sum_{\mu \in \Z^d} \, \sum_{v \in 
\Lambda \cap B_\mu } \e^{- \frac{\eps}{4} \,\norm{\mu}^2_{\infty}} \le \# 
\mathcal  H \sum_{\mu \in \Z^d} \e^{- \frac{\eps}{4} \,\norm {\mu}^2} \ll_d \eps^{-d/2}\,\# \mathcal  H.
  \end{align*}
  This concludes the proof of Lemma \ref{Dav1}.
\end{proof}



\section{Functions on the Space of Lattices and Geometry of Numbers}
\label{geom-num}
\noindent Let $n \in \N^+$ be fixed (later to be chosen as $n=2d$) and for every integer $l$ with $1\leq l\leq n$ we fix a quasinorm $\vert\cdot\vert_l$ on the exterior product $\largewedge^l\R^n$.  Let $L$ be a subspace of $\R^n$ and $\Delta$ a lattice in $L$ (i.e.\ $\Delta$ is a free $\Z$-module of full rank $\dim L$), then any two bases of $\Delta$ are related by a unimodular transformation, 
that is, if $u_1, \dots, u_l$ and $v_1,\dots, v_l$ are two bases of $\Delta$, where $l = \dim L$, then $v_1 \wedge \dots \wedge v_l = \pm u_1 \wedge \dots \wedge u_l$, which implies that the expression $\abs{v_1 \wedge \dots \wedge v_l}_l$ is independent of the choice of basis. \par
Let $\Delta$ be a lattice in $\R^n$, we say that a subspace $L$ of $\R^n$
is $\Delta$-\textit{rational} if $L\cap\Delta$ is a lattice in $L$. For any $\Delta$-rational subspace $L$, we denote by $d_\Delta (L)$, or simply 
by $d(L)$\index{D@ $d_\Delta (L)=d(L)$, covolume of the $\Delta$-rational subspace $L$}, the quasinorm  $\vert u_1\wedge\ldots\wedge u_l\vert_l$ 
where $\{ u_1,\ldots ,u_l\}$, $l =\dim L$, is a basis of $L\cap\Delta$ over $\Z$. For $L = \{ 0\}$ we write $d(L) \tdefi 1$. If the quasinorms 
$\vert\cdot\vert_{l}$ are the norms on $\largewedge^l \R^n$ induced from the standard Euclidean norm on $\R^n$, then $d(L)$ is equal to the determinant (or discriminant) $\det(L\cap \Delta)$ of the lattice $L \cap \Delta$, that is the volume of $L/(L\cap\Delta)$. In particular, in this case the lattice $\Delta$ is said to be unimodular if and only if $d_\Delta (\R^n) = 1$. Also in this case $d(L)d(M)\geq d(L\cap M)d(L+M)$ for any two $\Delta$-rational subspaces $L$ and $M$  (see Lemma 5.6 in \cite{eskin-margulis-mozes:1998}), but any two quasinorms on $\largewedge^l\R^n$ are equivalent, which proves

\begin{lemma}
  \label{Lemma 9.6}
  There is a constant $C\geq 1$ depending only on the quasinorm $\vert \,\cdot \,\vert_l$ and not on $\Delta$ such that
  \begin{equation}
    \label{9.1}
    C^2 d(L)d(M)\geq d(L\cap M)d(L+M)
  \end{equation}
  for any two $\Delta$-rational subspaces $L$ and $M$.
\end{lemma}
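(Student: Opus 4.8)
The statement is the standard "quasinorm version" of the elementary inequality $d(L)d(M)\ge d(L\cap M)d(L+M)$ for lattice determinants, and the whole point is that a single constant $C$ — depending only on the chosen quasinorm $\vert\cdot\vert_l$ but not on the lattice $\Delta$ — suffices. The plan is to compare the given quasinorms $\vert\cdot\vert_l$ on $\wedge^l\R^n$ with the Euclidean norms $\vert\cdot\vert_l^{\mathrm{eucl}}$ induced by the standard inner product on $\R^n$, for which the inequality is already known (this is exactly Lemma 5.6 in \cite{eskin-margulis-mozes:1998}, quoted in the paragraph above the statement). Since $\wedge^l\R^n$ is finite-dimensional, any two quasinorms on it are equivalent: there is a constant $c_l\ge 1$, depending only on $\vert\cdot\vert_l$ (and the fixed Euclidean structure, hence only on the data of the lemma), with
\[
c_l^{-1}\,\vert\omega\vert_l^{\mathrm{eucl}}\ \le\ \vert\omega\vert_l\ \le\ c_l\,\vert\omega\vert_l^{\mathrm{eucl}}\qquad\text{for all }\omega\in\wedge^l\R^n.
\]

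**Key steps.** First I would fix $\Delta$-rational subspaces $L$ and $M$, set $p=\dim L$, $q=\dim M$, $s=\dim(L\cap M)$, $t=\dim(L+M)=p+q-s$, and write $d(\cdot)$ for $d_\Delta(\cdot)$ computed with $\vert\cdot\vert$, and $d^{\mathrm{eucl}}(\cdot)$ for the Euclidean version (i.e. the actual covolumes). Applying the equivalence of quasinorms in each relevant degree gives
\[
d(L)d(M)\ \le\ c_p c_q\, d^{\mathrm{eucl}}(L)\,d^{\mathrm{eucl}}(M),\qquad
d(L\cap M)d(L+M)\ \ge\ c_s^{-1}c_t^{-1}\, d^{\mathrm{eucl}}(L\cap M)\,d^{\mathrm{eucl}}(L+M).
\]
Then I would invoke the Euclidean inequality $d^{\mathrm{eucl}}(L)\,d^{\mathrm{eucl}}(M)\ge d^{\mathrm{eucl}}(L\cap M)\,d^{\mathrm{eucl}}(L+M)$ from Lemma 5.6 of \cite{eskin-margulis-mozes:1998}. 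Chaining the three displays yields
\[
d(L)d(M)\ \ge\ (c_p c_q c_s c_t)^{-1}\, d(L\cap M)\,d(L+M),
\]
so taking $C^2 \defi \max\{\,c_p c_q c_s c_t\ :\ 0\le p,q,s,t\le n\,\}\ge 1$ — a finite maximum of finitely many quantities, each depending only on the quasinorms $\vert\cdot\vert_l$, $0\le l\le n$, and independent of $\Delta$, $L$, $M$ — gives the claimed bound $C^2 d(L)d(M)\ge d(L\cap M)d(L+M)$. The degenerate cases $L=\{0\}$ or $M=\{0\}$ (where $d(\{0\})=1$ by convention) and $L\subseteq M$ or $M\subseteq L$ are handled trivially by the same chain, since the Euclidean inequality is itself trivial there.

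**Main obstacle.** There is no serious obstacle: the only thing requiring care is bookkeeping of the constant, namely verifying that $C$ can be chosen uniformly in $\Delta$. This works precisely because the equivalence constants $c_l$ between $\vert\cdot\vert_l$ and the Euclidean norm on $\wedge^l\R^n$ are properties of the ambient normed space $\wedge^l\R^n$ alone — the lattice $\Delta$ enters only through which subspaces are rational and through the $d^{\mathrm{eucl}}$-values, to which the Euclidean inequality already applies with constant $1$. One should also note that $d_\Delta(L)$ as defined depends only on $L\cap\Delta$ and on $\vert\cdot\vert_{\dim L}$ (invariance under change of $\Z$-basis was checked in the preceding paragraph), so the estimates above are well-posed. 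Beyond that, the proof is the short three-line chain of inequalities indicated.
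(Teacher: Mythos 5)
Your proof is correct and is essentially the paper's own argument: the paper likewise deduces Lemma \ref{Lemma 9.6} by citing the Euclidean-norm case (Lemma 5.6 of \cite{eskin-margulis-mozes:1998}) and the equivalence of any two quasinorms on the finite-dimensional spaces $\wedge^l\R^n$. The only thing you add is the explicit bookkeeping of the constants $c_p,c_q,c_s,c_t$ and their maximum, which the paper leaves implicit.
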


Let us introduce the following notations for $0\le l \le n$\index{A@ $\alpha,\alpha_l$-characteristic of a lattice},
\begin{alignat}{2}
  \label{9.2}
   & \alpha_l(\Delta ) &  & \defi \sup\{ d(L)^{-1} : L \,\, \text{is a}\,\, \Delta \text{-rational subspace of dimension}\,\, l\}, \\
  \label{9.3} \textstyle
   & \alpha (\Delta )  &  & \defi \max_{0\leq l\leq n} \,\, \alpha_l (\Delta ).
\end{alignat}
This extends the earlier definition \eqref{alp0} of $\alpha_l(\Delta )$ in the introduction of Section \ref{section:2} to the case of general seminorms on $\largewedge^l \R^n$. In this section the functions $\alpha_l$ and $\alpha$ will be based on standard Euclidean norms, that is, we have $d(L)=\det(L \cap \Delta)$.\par
In the following we shall use some facts from the Geometry of Numbers and 
the classical reduction theory for lattices in $\R^n$, see Davenport (1958, \cite{davenport:1958c}), Cassels (1959, \cite{Cassels:1959}) and  Einsiedler-Ward (\cite{einsiedler-ward:2019}).
The successive minima of a lattice $\Lambda$ are the numbers $M_1(\Lambda) \leq \dots \leq M_n(\Lambda)$\index{M@ $M_j(\Delta)$, $j$-th successive minimum of a lattice $\Delta$} defined as follows: $M_j(\Lambda)$ is the infimum of $\lambda > 0$ such that the set $\{v \in \Lambda : \norm{v} < \lambda\bgr \}$ contains $j$ linearly independent vectors and in particular $M_1(\Lambda)$ is the shortest non-zero vector of the lattice $\Lambda$.
It is easy to see that these infima are attained, that is, there exist linearly independent vectors $v_1,\dots, v_n \in \Lambda$ such that $\norm{v_j} = M_j(\Lambda)$ for all $j=1,\ldots, n$. Moreover, as a consequence of the reduction algorithm of Korkine and Zolotareff (see \cite{korkine-zolotareff:1872},\cite{korkine-zolotareff:1873}, and \cite{korkine-zolotareff:1877}) the $\alpha_l$-characteristic and the successive minima are related according to $\alpha_l(\Lambda) \asymp_d (M_1(\Lambda) \dots M_l(\Lambda))^{-1}$ (see \cite{einsiedler-ward:2019}, Chapter 1, Theorem 15).

\begin{lemma}
  \label{Dav4}
  Let $F$ be a norm in $\R^n$ and denote by $M_1 \leq \dots \leq M_n$ the 
successive minima with respect to $F$.  Let $\Lambda$ be a lattice in $\R^n$, then
  \begin{equation}
    \label{eq:alpham}
    \alpha_l(\Lambda) \asymp_n (M_1(\Lambda) \cdots M_l(\Lambda))^{-1},\q 
l=1,\ldots,n.
  \end{equation}
  Moreover, for any $\mu >0$, if $1\le j\le n$ is such that $M_j(\Lambda) 
\leq \mu <M_{j+1}(\Lambda)$, where the right-hand side is omitted if $j=n$, then
  \begin{equation}
    \label{boxcount}
    \# \{ v \in \Lambda\, :\, F(v) \le \mu \} \asymp_{n} \mu^j \, \alpha_j(\Lambda).
  \end{equation}
\end{lemma}

\begin{proof}
  First we prove the lower bound. We may assume that $M_j(\Lambda) \leq \mu < M_{j+1}(\Lambda)$, the right-hand side being omitted if $j=n$. Let $v_1,\dots, v_n$ denote the elements in $\Lambda$ corresponding to the successive minima $M_i(\Lambda)$, $i=1, \ldots , n$. For $m_1, \dots, m_j \in \Z$ with $\abs{m_i} \le j^{-1}\4 \mu \4 F(v_i)^{-1}$ notice that $v=m_1\4 v_1+ \ldots + m_j\4 v_j$ satisfies $F(v) \leq \mu$, thus
  \begin{equation}
    \label{eq:defnumber}
    N(\mu)\defi \# \{ v \in \Lambda \, :\, F(v) \le \mu \} \gg_m \mu^j (M_1(\Lambda) \cdots M_j(\Lambda))^{-1}.
  \end{equation}
  The upper bound is also proven in Davenport \cite{davenport:1958c} (see 
Lemma 1). We include the short argument here for the sake of completeness: Let $w_1,\dots, w_n$ be an integral basis of $\Lambda$ such that $v_i$ is linearly dependent on $w_1,\dots, w_i$ for any $i=1,\dots, n$. Consequently any lattice point $v \in \Lambda$ with $F(v) < M_{j+1}$ is linearly dependent on $w_1,\ldots, w_j$ and hence any element $v \in \Lambda$ with $F(v)\leq \mu$ can be written as $v= m_1\4 w_1+ \ldots + m_j\4w_j$ 
with $m_i \in \Z$. Suppose $v' \in \Lambda$ is another element with $F(v') \le \mu$ and write $v'= m_1'\4 w_1+ \ldots m_j'\4 w_j$ with $m_i' \in 
\Z$. Now define positive integers $\nu_1, \ldots, \nu_j$ by
  \begin{equation}
    \label{eq:defnu}
    2^{\nu_i-1} \le \frac {2\4 \mu } {M_i(\Lambda)} < 2^{\nu_i},
  \end{equation}
  and observe that $\nu_1 \ge \nu_2 \ge \ldots \ge \nu_j$. Assuming for the moment that $m_i \equiv m_i' \4 \mod 2^{\nu_i}$ for every $i=1,\ldots, j$ and let $i_0$ denote the largest index $i_0$ such that $m_{i_0} \ne 
m'_{i_0}$. Then $x \tdefi 2^{-\nu_{i_0}}\4 (v-v')$ is an element of $\Lambda$ and linearly independent of $w_1, \dots, w_{{i_0}-1}$. This implies $F(x) \ge M_{i_0}(\Lambda)$. On the other hand we have
  \begin{equation*}
    F(x) = 2^{-\nu_{i_0}}\4 F(v-v')\le 2^{-\nu_{i_0}}\4(F(v) + F(v'))\le 2^{-\nu_{i_0}}\4 2\4 \mu < M_{i_0}(\Lambda)
  \end{equation*}
  by \eqref{eq:defnu}. This contradiction shows that there is at most one 
lattice point in $\Delta$, implying that the coordinates $m_1, \ldots, m_j$ lie in the same residue classes modulo $2^{\nu_1}, \4 2^{\nu_2}, \dots, 2^{\nu_j}$ respectively. Hence, the number of lattice points $N(\mu)$ in \eqref{eq:defnumber} is bounded from above by the number of all residue classes, i.e.\ by $2^{\nu_1}\4 2^{\nu_2}\ldots \4 2^{\nu_j} \le (4 \4 \mu)^j (M_1(\Lambda)\ldots M_j(\Lambda))^{-1}$. This shows the upper bound in 
\eqref{boxcount}.
\end{proof}

\begin{lemma}[Davenport \cite{davenport:1958c}]
  \label{Dav3}
  Let $\Lambda = g \4 \Z^n$ and $\Lambda' = (g^{-1})^T \4 \Z^n$ denote dual lattices of rank $n$, then for all $j=1,\ldots, n$ we have
  \begin{align}
    \label{dual}
    1 \leq  M_j(\Lambda) M_{n+1-j}(\Lambda')\ll_n 1.
  \end{align}
\end{lemma}

This is a variant of Lemma 2 of Davenport \cite{davenport:1958c} for the Euclidean norm. Again, for the reader's convenience, we include the short argument here.

\begin{proof}
  Let $v_1, \ldots,v_n \in \Lambda$, resp.\ $v_1',\ldots,v_n' \in \Lambda'$, be linearly independent such that $\norm{v_i} = M_i(\Lambda)$, resp.\ $\norm{v_i'} = M_i(\Lambda')$. Then $v_1,\ldots,v_j$ cannot be orthogonal to all lattice points $v_1',\ldots,v_{n+1-j}'$, otherwise they would fail to be independent. Thus, we have $\langle v_i, v_k' \rangle \neq 
0$ for some $i =1,\ldots,j$ and $k=1,\ldots, n+1-j$, which implies that
  \begin{equation*}
    M_j(\Lambda) M_{n+1-j}(\Lambda') \geq M_i(\Lambda) M_k (\Lambda') = 
\norm{v_i} \norm{v_k'} \geq \abs{\langle v_i,v_k' \rangle} \geq 1
  \end{equation*}
  because of duality. The right-hand side of \eqref{dual} follows from \eqref{eq:alpham} with $l=n$, which is known as Minkowski's inequality. Indeed, $\det(\Lambda)= \alpha_n(\Lambda)^{-1} \asymp_n M_1(\Lambda) \dots M_n(\Lambda)$ and since $\det(\Lambda)\det(\Lambda')=1$ we conclude that
  \begin{equation*}
    \textstyle
    M_j(\Lambda) M_{n+1-j}(\Lambda') \ll_n  \prod_{h=1,h \neq j}^n (M_h(\Lambda) M_{n+1-h}(\Lambda'))^{-1} \ll_n 1. \qedhere
  \end{equation*}
\end{proof}



\subsection{\texorpdfstring{Sympletic Structure of $\Lambda_t$}{Structure 
of Symplectic Lattices}}
\label{qf-geom-num}
In the following we shall apply the previous results from the Geometry of 
Numbers to the special $2d$-dimensional lattice $\Lambda_t$ introduced in 
\eqref{df:lambda_t}. The symplectic structure of $\Lambda_t$ will allow us to establish a majorizing relation between the theta series \eqref{eq:thetaestimate2} and the $\alpha_d$-characteristic of $\Lambda_t$, see \eqref{eq:alpha_psi}. To do this, we shall apply Lemma \ref{Dav4} combined with Lemma \ref{Dav3} as follows. (We note that the results of this section remain valid regardless of whether $r \geq q^{1/2}$ or not.)

\begin{lemma}
  \label{succ-dual}
  Let $\Lambda_t$ be the lattice defined in \eqref{df:lambda_t}. Then we have for any $t \in \R$
  \begin{align}
    \label{dual_Min}
    M_j(\Lambda_t)\4 M_{2d+1-j}(\Lambda_t) \asymp_d 1   & \q (j=1,\ldots, d),
    \\
    \label{Minima_size}
    M_1(\Lambda_t)\le \ldots \le M_d(\Lambda_t) \ll_d 1 & \le M_{d+1}(\Lambda_t) \leq \ldots \leq M_{2d}(\Lambda_t),
  \end{align}
  and the lower bound
  \begin{equation}
    \label{Minima_size1}
    M_1(\Lambda_t) \ge \min\{r^{-1}q_0^{1/2}, r q^{-1/2} \}.
  \end{equation}
\end{lemma}

\begin{corollary}
  \label{alphamax}
  As a consequence, we find for $\mu\ge 1$
  \begin{align}
    \label{boxcount1}
     & \# \{ v \in \Lambda_t \4 :\4 \norm{v} \le \mu\} \ll_d \mu^{2\4d}\alpha_d(\Lambda_t), \\
     & \label{eq:alphamax}
    \alpha(\Lambda_t)= \max\{ \alpha_j(\Lambda_t)\, :\, j=1, \ldots, 2d\}  \asymp_d \alpha_d(\Lambda_t)
  \end{align}
  and
  \begin{equation}
    \label{eq:alpha_psi}
    \psi(r,t) \ll_d \alpha_d(\Lambda_t).
  \end{equation}
\end{corollary}

\begin{proof}[Proof of Lemma \ref{succ-dual}]
  First we prove \eqref{dual_Min}. Let
  \begin{equation*}
    J \defi \begin{pmatrix} & \mathbbm 1_d \\ -\mathbbm 1_d & \end{pmatrix},
  \end{equation*}
  and consider the lattice
  \begin{equation*}
    \Lambda_t' = J D_{rQ} U_{4tQ} J^{-1} \Z^{2d}.
  \end{equation*}
  Then $J D_{rQ} U_{4tQ} J^{-1} = D_{rQ}^{-1}U_{-4tQ}^T$ and hence $\Lambda_t '$ is the lattice dual to $\Lambda_t$ in the sense of Lemma \ref{Dav3}. We claim that they have identical successive minima. To this end, note that for any $N = (m, \bar m)^T \in \Z^{2d}$
  \begin{align}
    \label{mineq}
    \norm{D_{rQ} U_{4tQ} N} = \norm{J^{-1}J D_{rQ} U_{4tQ}J^{-1} J N} = 
\norm{D_{rQ}^{-1} U_{-4tQ}^TJ N},
  \end{align}
  where we use that $J$ is an orthogonal matrix. Since $J \Z^{2d} = \Z^{2d}$, the equation \eqref{mineq} implies that the successive minima of $\Lambda_t$ and $\Lambda'_t$ are identical and by Lemma \ref{Dav3} we conclude $ M_j(\Lambda_t) M_{2d+1-j}(\Lambda_t) \asymp_d 1$ for $j=1,\ldots,d$. \par
  To prove \eqref{Minima_size} we note that $M_d \le M_{d+1}$ and $1 \le M_d(\Lambda_t)\4 M_{d+1}(\Lambda_t) \ll_d 1$ implies
  \begin{equation*}
    M_j(\Lambda_t) \leq M_d(\Lambda_t) \ll_d 1 \q \text{and} \q 1 \le M_{d+1}(\Lambda_t) \leq M_{d+j}(\Lambda_t)
  \end{equation*}
  for all $j=1,\ldots,d$. Thus, it remains to show the lower bound \eqref{Minima_size1} for $M_1(\Lambda_t)$: Take $m,\bar{m} \in \Z^d$ with $M_1(\Lambda_t) = \norm{D_{rQ} U_{4tQ} (m, \bar{m})} = H_t(m,\bar{m})^{1/2}$, where $H_t$ denotes the special norm \eqref{eq:thetaestimate3} in the theta series \eqref{eq:thetaestimate2}. If $\bar{m} \neq 0$, then we have $M_1(\Lambda_t) \ge r^{-1} \norm{Q_+^{1/2}\4\bar{m}} \ge q_0^{1/2}r^{-1}$, but otherwise $M_1(\Lambda_t) = r \norm{Q_+^{-1/2}m} \ge r q^{-1/2}$.
\end{proof}

\begin{proof}[Proof of Corollary \ref{alphamax}]
  We begin with proving \eqref{boxcount1} as follows. Recall that $\mu \ge 1$ and let $2d \ge j\ge 1$ denote the maximal integer with $M_j(\Lambda_t) \le \mu$. Then Lemma \ref{Dav4} implies
  \begin{equation*}
    \# \{ v \in \Lambda_t : \, \norm{v} \le \mu \} \ll_d \mu^j \alpha_j(\Lambda_t) \leq \mu^{2\4 d} \alpha_d(\Lambda_t),
  \end{equation*}
  since we have $M_j(\Lambda_t) \ge \ldots \ge M_{d+1}(\Lambda_t) \gg 1$ if $j>d$ and $\mu < M_{j+1}(\Lambda_t)\le \ldots \le M_d(\Lambda_t) \ll_d 1$ if $j < d$. In the case $\mu < M_1(\Lambda_t)$ the inequality in \eqref{boxcount1} holds trivially. Moreover, this argument also proves \eqref{eq:alphamax}. Finally, the estimate \eqref{eq:alpha_psi} follows from the 
relation \eqref{eq:further_estimate_theta} combined with \eqref{boxcount1} for $\mu=d^{1/2}$.
\end{proof}

For arbitrary $t \in \R$ the following bounds hold independently of the Diophantine properties of $Q$.

\begin{lemma}
  \label{apriori}
  Denote by $\Delta$ the lattice $Q_+ ^{1/2} \Z^d$, then
  \begin{equation}
    \label{apriori-alpha}
    \textstyle
    \sup_{t \in \R} \alpha_d( D_{sQ} \4 U_{4tQ} \4 \Z^{2d}) \ll_d \varphi_Q(s),
  \end{equation}
  where $D_{sQ}$ and $U_{4tQ}$ are defined as in \eqref{df:action} and
  \begin{equation}
    \label{apriori-alpha00}
    \varphi_Q(s) \defi s^d \4 \abs{\det Q}^{-1/2}\4 \textstyle \prod_{j\,:\, M_{j}(\Delta) > s} (s^{-2} M_{j}(\Delta)^2), \,\, s >0.
  \end{equation}
  In particular, it follows that
  \begin{alignat}{2}
    \label{apriori-alpha01}
    \varphi_Q(s) & \ll_d s^d \4 \abs{\det Q}^{-1/2}, \q &  & \text{if} \q 
\abs{s} \ge q^{1/2},
  \end{alignat}
  and for small $t$ we get
  \begin{alignat}{2}
    \label{apriori-alpha1}
    \alpha_d( D_{sQ} \4 U_{4tQ} \4 \Z^{2d} ) & \ll_d \abs{\det Q}^{1/2}\4 (s^{-1}+\abs{t \4 s})^d, \q  & \text{if} \q q_0^{1/2}\abs{t\4 s} \ge 1, \\  
    \label{apriori-alpha2}
    \alpha_d(D_{sQ} \4 U_{4tQ} \4 \Z^{2d}) & \ll_d \abs{\det{Q}}^{-1/2} \max\{1,(\sqrt{q}/s)^d\} \abs{t \4 s}^{-d} , \q & \text{if} \q q^{1/2}\4\abs{t\4 s} \le 1.
  \end{alignat}
\end{lemma}

We emphasize that these estimates will be used for a wide range of $s>0$ (depending on the blow-up parameter $r \geq q^{1/2}$), see e.g.\ the proof of Lemma \ref{special-int2}, and for small $t$ as well (by which we mean $r^{-1} q_0^{-1/2} < t < T_{-}$ as stated in Theorem \ref{maintheorem}).

\begin{proof}
  In this proof we replace the definition of $\Lambda_t$, see \eqref{df:lambda_t}, by $\Lambda_t = D_{sQ} \4 U_{4tQ} \Z^{2d}$, i.e.\ $r$ has to be replaced by $s$. If $1/8 < M_1(\Lambda_t)$, then we have 
  \begin{equation}
    \label{equiv_a}
    \alpha_d(\Lambda_t) \asymp_d (M_1(\Lambda_t)\ldots M_d(\Lambda_t))^{-1} \ll_d \# \{ v\in \Lambda_t \, : \, \norm{v} \le 1/8\}.
  \end{equation}
  Otherwise, there exists an integer $j=1,\ldots,d$ with $M_j(\Lambda_t) \leq 1/8 < M_{j+1}(\Lambda_t)$, since $1 \leq M_{d+1}(\Lambda_t)$ holds by \eqref{Minima_size}. Now, taking $\mu=1/8$ in \eqref{boxcount} of Lemma \ref{Dav4} shows that
  \begin{equation*}
    \alpha_d(\Lambda_t) \asymp_d (M_1(\Lambda_t)\ldots M_d(\Lambda_t))^{-1} \! \ll (M_1(\Lambda_t)\ldots M_j(\Lambda_t))^{-1} \asymp_d \# \{ v\in \Lambda_t : \norm{v} \le 1/8\},
  \end{equation*}
  i.e.\ \eqref{equiv_a} holds also in the second case. Recalling again \eqref{eq:thetaestimate3}, we see that the right-hand side of \eqref{equiv_a} is the same as the number all lattice points $m,\bar{m} \in \Z^d$ satisfying
  \begin{equation}
    \label{ht}
    H_t[m,\bar{m}] = s^2 \4 Q_{+}^{-1}[m- 4t\4 Q\4 \bar{m}] + s^{-2}\4 Q_{+}[\bar{m}] \le 1/64,
  \end{equation}
  where the positive form $H_t[\cdot,\cdot]$ is defined as in \eqref{eq:thetaestimate3}, but here again $r$ has to be replaced by $s$.
  
  \textit{Proof of \eqref{apriori-alpha}.} If \eqref{ht} holds, then $\norm{Q_+^{1/2} \bar{m}} \le s/2$, which has again by Lemma \ref{Dav4} at most $\ll_d \prod_{j\,:\, M_{j}(\Delta)\le s} (s \4 M_{j}(\Delta)^{-1})$ integral solutions. Similarly, for fixed $\bar{m}$ the triangle inequality 
combined with \eqref{ht} implies
  \begin{equation*}
    \norm{sQ_+^{-1/2}(m_1-m_2)} \leq \sqrt{H_t[m_1,\bar{m}]}+ \sqrt{H_t[m_2,\bar{m}]} \leq 1.
  \end{equation*}
  Thus, for fixed $\bar{m}$, the number of pairs $(m, \bar{m})$ for which 
\eqref{ht} holds is bounded by the number of elements $v$ in the dual lattice  $\Delta'=Q_{+}^{-1/2}\Z^d$ to $\Delta$ such that $\norm{v} \leq s^{-1}$. Since the successive minima for this dual lattice are determined by Lemma \ref{Dav3}, we may use Lemma \ref{Dav4}, inequality \eqref{boxcount}, again to determine the upper bound
  \begin{equation*}
    \textstyle
    \ll_d  \prod_{j\,:\, M_{j}(\Delta') \leq s^{-1}} (s M_{j}(\Delta'))^{-1} \leq \prod_{j\,:\, M_{j}(\Delta)\ge s} (s^{-1} M_{j}(\Delta))
  \end{equation*}
  for this number as well. The product of both numbers yields the bound
  \begin{equation*}
    \textstyle
    \alpha_d(\Lambda_t) \ll_d  \# \{ v\in \Lambda_t : \norm{v} \le 1/2\} \ll_d  s^d \big( \prod_{j=1}^d M_{j}(\Delta) \big)^{-1} \big( \prod_{j\,:\, M_{j}(\Delta) \ge s}(s^{-2} M_{j}(\Delta)^2) \big).
  \end{equation*}
  Finally, using Lemma \ref{Dav4} in form of $(\prod_{j=1}^d M_{j}(\Delta))^{-1} \asymp_d \alpha_d(\Delta) = \abs{\det \4 Q}^{1/2}$ shows the claimed bound in \eqref{apriori-alpha}. Also the inequality \eqref{apriori-alpha01} follows immediately from \eqref{apriori-alpha00}.
  
  \textit{Proof of \eqref{apriori-alpha1}.} Assume $q_0^{1/2}\abs{t \4 s} 
\ge 1$ and $q_0\ge 1$. If $m=0$ we conclude that $\norm{\bar{m}} \le \abs{4t\4 s} \norm{Q_+^{1/2} \bar{m}} \le 1/8$. Hence $\bar{m}=0$. For any fixed $m \ne 0$ the triangle inequality implies that there is at most one element $\bar{m}\in \Z^d$ with \eqref{ht}. Furthermore, we get $(\norm{Q_+^{-1/2} m} - 1/(8 \4 s)) \le \norm{ 4t\4 Q_+^{1/2}\, \bar{m}}$ for that pair $(m,\bar{m})$. This implies
  \begin{equation*}
    1/8 \ge \sqrt{H_t(m,\bar m)} \ge s^{-1} \norm{ Q_+^{1/2} \4 \bar{m}} \ge \big( \norm{ Q_+^{-1/2} m} - 1/(8 \4 s) \big) / \abs{4t\4 s}
  \end{equation*}
  and hence $\norm{Q_+^{-1/2} m} \le (s^{-1}+\abs{4t\4 s})/8$. Thus
  \begin{equation*}
    \# \{ v \in \Lambda_t\,:\,\norm{v}^2 \le 1/4\} \ll_d (s^{-1}+\abs{t \4 s})^d \, \abs{\det Q}^{1/2}.
  \end{equation*}
  \textit{Proof of \eqref{apriori-alpha2}.} As in the previous case, \eqref{ht} implies by the triangle inequality that
  \begin{equation}
    \label{apriori-alpha4}
    \big\lvert \norm{Q_+^{-1/2} m} - \norm{ 4t\4 Q_+^{1/2}\,S\, \bar{m}} \big\rvert \le (8\4 s)^{-1}
  \end{equation}
  and together with $q^{1/2}\4 \abs{t \4 s} \le 1$ also $\abs{4t\4 s}\4 s^{-1} \norm{Q_+^{1/2}\bar{m}}\le \abs{4t\4 s}/8 \le  (2 q)^{-1/2}$. Moreover one of these inequalities is strict and therefore we have
  \begin{equation}
    \label{apriori-alpha5}
    q^{-1/2} \norm{m} \leq \norm{Q_+^{-1/2} m} < (2 \4 s)^{-1} +(2 \4 q^{1/2})^{-1}.
  \end{equation}
  If $s \geq q^{1/2}$, this leads to a contradiction unless $m=0$. Hence, the possible solutions for $\bar{m}$ in \eqref{apriori-alpha4} satisfy 
$\norm{Q_+^{1/2}\bar{m}} \le \abs{32 t \4 s}^{-1}$ which, as in the proof of \eqref{apriori-alpha}, has at most $\ll_d \abs{\det\4 Q}^{-1/2} \abs{t\4 s}^{-d}$ solutions. In the second case, i.e.\ if $s < q^{1/2}$, the inequality \eqref{apriori-alpha5} has at most $\ll_d (q^{1/2}/s)^d$ solutions for $m$. Now any possible $\bar{m}$ must satisfy
  \begin{equation*}
    \norm{Q_+^{1/2}\bar{m}} \leq \abs{32t s}^{-1} + \abs{4t}^{-1} \norm{Q_+^{-1/2} m} \leq  \abs{2 t s}^{-1}
  \end{equation*}
  again, which completes the proof of \eqref{apriori-alpha2} in view of \eqref{equiv_a}.
\end{proof}



\subsection{Approximation by Compact Subgroups}
\label{compact_groups}
In Section \ref{section-aol} we shall develop mean-value estimates for fractional moments of the $\alpha_d$-characteristic of the lattice $\Lambda_t$ introduced in \eqref{df:lambda_t}. In order to apply techniques from harmonic analysis, we will rewrite the family $\{\Lambda_t\}_{t \in \R}$ as an orbit of a single lattice by means of elements of the one-parameter 
subgroups $\mathrm D \tdefi \{d_r : r>0\}$\index{D@ $\mathrm D :=\{d_r : r>0\}$, diagonal subgroup of $\SL(2,\R)$} and $\mathrm U \tdefi \{u_t : 
t\in \R\}$\index{U@ $\mathrm U := \{u_t : t\in \R\}$, standard unipotent subgroup of $\SL(2,\R)$} of $\SL(2,\R)$, where
\begin{equation}
  \label{svo4}
  d_r \defi \left(\begin{array}{*{2}c} r & 0 \\
      0         & r^{-1}\end{array}\right),\qquad u_t \defi \left(\begin{array}{*{2}c}  1 & -t\\ 0 &  1\end{array}\right),
\end{equation}
and then approximate the subgroup $\mathrm U$ locally by the compact subgroup $\mathrm K= \mathrm{SO}(2)=\{ k_\theta : \theta\in[0,2\pi]\}$\index{K@ $\mathrm K:= \mathrm{SO}(2)=\{ k_\theta : \theta\in[0,2\pi]\}$, orthogonal subgroup of $\SL(2,\R)$} parameterized, as usual, by elements
\begin{equation}
  \label{eq:defkth}
  k_\theta \defi \begin{pmatrix} \cos{\theta} & - \sin{\theta} \\  \sin{\theta}  &  \cos{\theta} \end{pmatrix}.
\end{equation}
Let $S$\index{S@ $S$, orthogonal matrix such that $SQQ_+^{-1}S^T = Q_0$} be an orthogonal matrix such that $SQQ_+^{-1}S^T = Q_0$, where $Q_0$\index{Q@ $Q_0$ signature matrix corresponding to $Q$} denotes the signature matrix corresponding to $Q$, that is $Q_0 = \text{diag}(1,\dots, 1,-1, \dots ,-1)$. A short computation shows that\index{D@ $D_{rQ}$, diagonalizable matrix on $\R^{2d}$}\index{U@ $U_{4tQ}$, unipotent matrix on $\R^{2d}$}
\begin{align*}
  D_{rQ} U_{4tQ} =  \begin{pmatrix} S^T & \\ & S^T \end{pmatrix} d_r u_{4t} \begin{pmatrix} SQ_+^{-1/2} & \\ & SQ_+^{1/2} \end{pmatrix},
\end{align*}
where we embed $\SL(2,\R)$ into $\SL(2d,\R)$ according to the following action
\begin{align}
  \label{embedding}
  \begin{pmatrix} a& b \\ c & d \end{pmatrix} \longmapsto \begin{pmatrix}a \mathbbm 1_d & b \, Q_0 \\ c \,Q_0 & d \mathbbm 1_d \end{pmatrix}.
\end{align}
Define the $2d$-dimensional lattice\index{L@ $\Lambda_Q$, lattice in $\R^{2d}$ depending on $Q$ only}
\begin{align}
  \label{svo5}
  \Lambda_Q \defi \begin{pmatrix} SQ_+^{-1/2} & \\ & SQ_+^{1/2} \end{pmatrix} \Z^{2d},
\end{align}
then as claimed,\index{L@ $\Lambda_t := D_{rQ} \4 U_{4tQ} \Z^{2d}$, lattice on $\R^{2d}$}
\begin{align}
  \label{svo6}
  \Lambda_t = \begin{pmatrix} S^T & \\ & S^T \end{pmatrix} d_r u_{4t} \,\Lambda_Q.
\end{align}
Moreover, since $S$ is orthogonal and $\alpha_i$ is invariant under left multiplication by orthogonal matrices we observe for any $i = 1,\dots,2d$ that
\begin{align}
  \alpha_i(\Lambda_t) = \alpha_i(d_r u_{4t} \Lambda_Q).
\end{align}

\begin{lemma}
  \label{compact-group}
  With respect to the embedding of $\SL(2,\R)$ defined in \eqref{embedding} we have for $t \in \R$, $s \ge 1$ and any $2d$-dimensional lattice $\Lambda$ in $\R^{2d}$
  \begin{alignat}{2}
    \label{eq:kth}
    \alpha_j(d_s\4 u_t \4\Lambda)  \ll_d (1+t^2)^{\frac{j}{2}} \4 \alpha_j(d_s \4 k_\theta \Lambda),  \;\q  j=1, \ldots, 2d,
  \end{alignat}
  where $\theta= \arctan t$.
\end{lemma}

\begin{proof}
  Suppose the signature of $Q$ is $(p,q)$ and let $(v,w) \in \R^d \times \R^d$, thought of as a column vector with coordinates $v_1,\dots,v_d,w_1,\dots, w_d$, then
  \begin{equation}
    \label{eq:kth:proof}
    \norm{d_s u_t (v,w)}^2 = \sum_{i=1}^p \norm{d_s u_t (v_i,w_i)}^2 +  \sum_{i=p+1}^d \norm{d_s u_{-t} (v_i,w_i)}^2.
  \end{equation}
  Let $x,y \in \R$. Note that $y+t\4 x=(1 +t^2)\,y+t\, (x-t\4 y)$, which implies that
  \begin{equation*}
    (y+t\4 x)^2 \le 2\4 (1 +t^2)^2\,(y)^2 + 2\, t^2\,(x-t\4 y )^2,
  \end{equation*}
  and therefore we find
  \begin{equation}
    \label{eq:rho3}
    s^2\,(x-t\4 y)^2+s^{-2}\4 (y+t\4 x)^2 \le 2 \4 (1 +t^2)^2\,\bgl( s^2\,(x-t\4 y)^2+ s^{-2}\4{y}^2\bgr),
  \end{equation}
  provided that $s \geq 1$. Taking $\theta =\arctan t$ and noting that $\cos(\theta) = (t^2+1)^{-1/2}$, resp.\ $\sin(\theta) = t (t^2+1)^{-1/2}$, we see that \eqref{eq:rho3} can be written as
  \begin{equation*}
    \norm{d_s k_\theta (x,y)}^2 \le 2 \4 (1+t^2) \norm{d_s u_t (x,y)}^2,
  \end{equation*}
  and it is easy to see, along the same lines as before, that
  \begin{equation*}
    \norm{d_s k_\theta^T (x,y)}^2 \le 2 \4 (1+t^2)  \norm{d_s u_{-t} (x,y)}^2.
  \end{equation*}
  Hence, we obtain in view of \eqref{eq:kth:proof} that
  \begin{equation*}
    \norm{d_s k_\theta (v,w)}^2  \le 2 \4 (1+t^2) \norm{d_s u_t (v,w)}^2,
  \end{equation*}
  from which we deduce that $(1+t^2)^{i/2} M_i(d_s u_t \Lambda) \gg  M_i(d_s k_\theta \Lambda)$ for any $i =1, \dots ,2d$. The claim follows now 
from \eqref{eq:alpham}.
\end{proof}



\subsection{Irrational and Diophantine Lattices}
\label{subsection:diophantinity}
The purpose of this section is to relate the $\alpha_d$-characteristic of 
$\Lambda_t$ to the Diophantine approximation of $tQ$ by symmetric integral matrices. We begin by motivating the Definition \ref{def:dio_type}: Recall that $Q$ is said to be Diophantine of type $(\kappa,A)$, where $\kappa>0$ and $A>0$, if
\begin{equation*}
  \inf_{t \in [1,2]} \norm{M - mt Q} > A m^{-\kappa} \; \; \text{ for all } m \in \Z \setminus \{0\} \text{ and } M \in \text{Sym}(d,\Z)
\end{equation*}
or equivalently if we introduce the truncated rational approximation error\index{D@ $\delta_{tQ;R}$, rational approximation error of $tQ$ truncated at $R$}
\begin{equation}
  \label{diophant0}
  \delta_{tQ;R} \defi \min \Big\{\norm{M- m\4 t \4 Q}\, :\, m \in \Z, 0 < 
\abs{m} \le R, \, M\in \mathrm{Sym}(d,\Z)\Big\},\,\, R\ge 1,
\end{equation}
we require $Q$ to satisfy
\begin{equation}
  \label{diophant1}
  \inf_{t \in[1,2]} \delta_{tQ;R} > AR^{-\kappa} \, \, \text{ for all } R 
\geq 1.
\end{equation}

\begin{remark}
  As an aside, we remark that the property of $Q$ being Diophantine in the above sense is equivalent to the requirement that for some $\tilde{\kappa}>0$
  \begin{equation*}
    \norm{M-t Q}  > t^{-\tilde{\kappa}},\, \, \, \text{ for all } t \geq 2 \text{ and } M \in \text{Sym}(d,\Z),
  \end{equation*}
  which was introduced in \cite{eskin-margulis-mozes:1998} in the context of forms that are (EWAS). However, this formulation is not optimal because $\tilde{\kappa}$ must be chosen larger than $\kappa$ depending on $A$. Moreover, in most applications the constant $A$ cannot be determined explicitly due to non-effective methods in Diophantine approximation.
\end{remark}

The following lemma justifies calling such forms Diophantine:

\begin{lemma}
  \label{diophantineexample}
  Let $k$ be an integer in the range $1 \leq k \leq \frac{d(d+1)}{2}-1$ and let $Q$ be a form such that $k+1$ non-zero entries $y, x_1, \dots, x_k$ satisfy the property that
  \begin{equation*}
    \max_{i=1,\ldots,k} \abs{q \, x_i / y + p_i} > A q^{-\kappa}
  \end{equation*}
  for all $k$-tuples $(p_1/q,\dots,p_k/q)$ of rationals. Then $Q$ is Diophantine of type  $(\kappa,A')$, where $A'$ depends on $A, \kappa, y, x_1/y, \dots, x_k/y$ only (see \eqref{def:A_apo}).
\end{lemma}

\begin{proof}
  Let $M \in \text{Sym}(d,\Z)$, $m \in \Z \setminus \! \{0\}$ and $t \in [1,2]$. Denoting the entries in $M$ corresponding to the coordinates of $Q$ in which $y, x_1, \dots, x_k$ appear by $q, p_1, \dots, p_k$, we find the inequality
  \begin{equation*}
    \norm{M-m\,tQ} \geq \max \big\{ \max_{1 \leq i \leq k} \abs{p_i-m\,t x_i}, \abs{q 
- m\,t y}\big\}.
  \end{equation*}
  Suppose that the expression on the right-hand side is strictly less than $A'm^{-\kappa}$, where
  \begin{equation}
    \label{def:A_apo}
    A'= \min\{A\,  (4y)^{-\kappa} \,(1 + \max_{1 \leq i \leq k} \abs{x_i /y} ))^{-1},1/2\}.
  \end{equation}
  Note first that $\abs{m} \ge \abs{m \4 t y}/(2 y) > q/(4 y)$ and hence
  \begin{equation*}
    \bigg\lvert \frac{x_i}{y} \4 q - p_i \bigg\rvert \leq \bigg\lvert \frac{x_i}{y}\bigg\rvert \, \abs{q - m \4 t y} + \abs{mt \4 x_i -p_i} < A' m^{-\kappa}(1+ \abs{x_i /y}) < Aq^{-\kappa}
  \end{equation*}
  for all $i=1,\ldots,k$, which yields a contradiction.
\end{proof}

Recall that a number $\theta \in \R$ is called Diophantine of type $\kappa>0$ if there exists $c_\kappa>0$ such that $\abs{q \theta -a} \geq c_\kappa \abs{q}^{-\kappa}$ for every rational number $a/q$. In particular any form $Q$ for which one ratio of two of its entries is a Diophantine number, is Diophantine in the sense of Definition \ref{def:dio_type} and hence almost all forms are Diophantine in this sense. An example of Diophantine forms for which we can control the exponent $\kappa$ is the following: Suppose $Q$ is a form with $k+1$ entries $y, x_1, \dots,x_k$ such that $x_1/y, \dots, x_k/ y$ are algebraic and $1, x_1/y, \dots, 
x_k/ y$ are linearly independent over $\mathbb{Q}$, then Schmidt's Subspace Theorem together with Lemma \ref{diophantineexample} implies that for any $\eta>0$ the form $Q$ is Diophantine of type $(1/k +\eta,A')$, where $A'$ is a constant depending only on $\eta,A,y, x_1/y, \dots, x_k/y$. However, as is usually the case in Diophantine approximation, the constant $A$ and hence $A'$ is ineffective in the sense that these constants cannot 
be determined explicitly.

After the previous motivation, we shall state the main result of this section. In particular, we will see that larger values of $\beta_{t;r}$\index{B@ $\beta_{t;r} := \alpha_d(\Lambda_t) \4 r^{-d} \4 \abs{\det Q}^{1/2}$} (see \eqref{def:beta}) enforce smaller values of the truncated rational approximation error $\delta_{4tQ;R}$ as follows

\begin{lemma}
  \label{alpha_dio}
  Assume that $q_0\ge 1$. Then we have for all $t \in \mathbb{R}$ and $r\ge q^{1/2}$
  \begin{equation}
    \label{dualal}
    \delta_{4 \4 t\4Q; \beta_{t;r}^{-1}} \ll_d q \4 r^{-2} \4 \beta_{t;r}^{-1},
  \end{equation}
  where
  \begin{equation}
    \label{def:beta}
    \beta_{t;r} \defi \alpha_d(\Lambda_t) \4 r^{-d} \4 \abs{\det Q}^{1/2}.
  \end{equation}
  Note that this bound is non-trivial for $\beta_{t;r} > q \4 r^{-2}$ only, due to the uniform bound $\beta_{t;r}\ll_d 1$ for $r \geq q^{1/2}$ established in Lemma \ref{apriori}.
\end{lemma}

Before proving \eqref{dualal}, we shall state some important consequences.

\begin{corollary}
  \label{irr-dio}
  Consider any interval $[T_-,T_+]$ with $T_- \in (0,1]$ and $T_+ \geq 1$.
  \begin{enumerate}[label=\roman*)]
    \item If $Q$ is \textit{irrational}, then
          \begin{equation}
            \label{irrational}
            \lim_{r \rightarrow \infty} \big( \sup_{T_- \le t \le T_+} \alpha_d(\Lambda_t) \4 r^{-d} \, \big) = 0.
          \end{equation}
    \item If $Q$ is \textit{Diophantine} of type $(\kappa,A)$,
          then
          \begin{equation}
            \label{diophant2}
            \sup_{T_- \le t \le T_+} \alpha_d(\Lambda_t)\4 r^{-d} \ll_d \abs{\det{Q}}^{-1/2} (q \4 A^{-1} r^{-2} )^{\frac{1}{\kappa+1}} \4 \max\big\{ (T_-)^{-\frac{1}{\kappa+1}}, (T_+)^{\frac{\kappa}{\kappa+1}}\big\}\4 .
          \end{equation}
  \end{enumerate}
\end{corollary}

A variant of (i) in terms of the successive minima of $\Lambda_t$ can also be found in \cite{goetze:2004}, see Lemma 3.11, yielding an alternative 
proof of \eqref{irrational} when combined with \eqref{eq:alpham}.

\begin{proof}
  \textit{i)} We show the contraposition: Assume that there exists an $\eps >0$ and sequences $(r_j)_j$, $(t_j)_j$ such that $\lim_{j \to \infty} r_j = \infty$ and $\beta_{t_j;r_j} > \eps$. Passing to a subsequence we may assume that $\lim_{j \to \infty} t_j =t$ for some $t \in [T_-,T_+]$. Thus \eqref{dualal} yields $\lim_{j \to \infty} \delta_{4t_jQ; R_j^*}=0$ with $R^*_j \tdefi \beta_{t_j;r_j}^{-1}  < \eps^{-1}$. By definition, this means that $\lim_{j \to \infty} \norm{M_j - 4 t_j m_j Q}=0$ for some $M_j\in \mathrm{Sym}(d,\Z)$ and $m_j \in \Z$ with $\abs{m_j} \le \eps^{-1}$. Obviously both, $\norm{M_j}$ and $\abs{m_j}$, are bounded. Hence there exist integral elements $M$, $m$ and an infinite subsequence $j'$ of $j$ with $M_{j'}=M$, $m_{j'}=m$ and by construction $\lim_{j'} t_{j'}=t$. These limit values satisfy $\norm{M - 4 \4 m\4t\4 Q}=0$, i.e.\ $Q$ is a multiple of a rational form.
  
  \textit{ii)} First we note that for any $t \in [1,T_{+}]$ we have by \eqref{diophant1}
  \begin{equation*}
    \textstyle
    (\delta_{tQ;R})^{-1} \le \sup_{t' \in [1,2]} (\delta_{t'Q;4tR})^{-1} < A^{-1} (4 t R)^\kappa \leq A^{-1} (T_+)^\kappa (4R)^\kappa
  \end{equation*}
  and similarly for $t \in [T_{-},1]$
  \begin{equation*}
    (T_{-})^{-1} \delta_{4tQ;R} \gg  \lceil t^{-1} \rceil \delta_{tQ;4 R} 
\ge \delta_{(\lceil t^{-1} \rceil t)Q;4R} > A (4R)^{-\kappa}.
  \end{equation*}
  Thus, the relation \eqref{dualal}, established in Lemma \ref{alpha_dio}, implies for any $t \in [T_-,T_+]$ that
  \begin{equation*}
    \beta_{t;r} \ll_d q r^{-2} (\delta_{4tQ;\beta_{t;r}^{-1}})^{-1} \ll_d 4^\kappa q \4 
r^{-2} A^{-1} \max\{ (T_-)^{-1}, (T_+)^{\kappa} \} (\beta_{t;r})^{-\kappa} ,
  \end{equation*}
  where we used \eqref{dualal}. Therefore we conclude \eqref{diophant2} as claimed.
\end{proof}

%

\begin{proof}[Proof of Lemma \ref{alpha_dio}]
  We begin by recalling that $\Lambda_t = D_{rQ} \4 U_{4tQ} \4 \Z^{2d}$ 
(see \eqref{df:lambda_t}), where
  \begin{equation*}
    D_{rQ} = \begin{pmatrix} \,r Q_{+}^{-1/2} & 0\\ 0 & r^{-1}\, Q_{+}^{1/2}\end{pmatrix} \q \ \ \text{and} \q \ \ U_{4tQ} = \begin{pmatrix} \, \mathrm{I}_d & - 4 \4 t \4 Q \\ 0 &  \mathrm{I}_d \end{pmatrix}.
  \end{equation*}
  As noted in Remark \ref{remark_existence_alpha_l} the $\alpha_d$-characteristic of $\Lambda_t$ is attained at some sublattice,  that is we can write $\alpha_d(\Lambda_t) = \norm{ w_1 \wedge \ldots \wedge w_d}^{-1}$ by means of vectors $w_j \tdefi D_{rQ} U_{4t Q} l_j$ with linear independent points $l_1,\ldots,l_d \in \Z^{2d}$ depending on $t$. Here we use the 
standard Euclidean norm on the exterior product $\largewedge^d\R^{2d}$. Moreover, we write $l_j=(m_j,n_j)$, where $m_j,n_j \in \Z^d$ and the coordinates of $(m_j,n_j)$ are the coordinates of the vectors $m_j$ and $n_j$ in the corresponding order. Additionally, we introduce the $d\times d$ integer matrices $N$ and $M$ with columns $n_1, \ldots , n_d$ and $m_1,\ldots,m_d$ as well. Using this notation, we may write
  \begin{equation}
    \label{rewrite-wedge}
    w_1 \wedge \ldots \wedge w_d = (D_{rQ} U_{4tQ}) \begin{pmatrix} M \\ N \end{pmatrix} e_1 \wedge \ldots \wedge e_d.
  \end{equation}
  First, we shall prove that
  \begin{equation}
    \label{detN}
    \alpha_d(\Lambda_t) > q \4 d_Q \4 r^{d-2} \q \q \text{implies} \q \q \beta_{t;r}^{-1} >\abs{\det(N)} > 0.
  \end{equation}
  Note that the left-hand side of \eqref{detN} can be rewritten as $\beta_{t;r} > q \4 r^{-2}$ and we may assume that this inequality holds, since 
otherwise the bound \eqref{dualal} is trivial.

  Let us show that $\operatorname{rank}(N)=d$. To this end, we write $k=d-\operatorname{rank}(N)$. According to elementary divisor theory (for matrices with entries in a principal ideal domain) there exist $P, P' \in \GL(d,\Z)$ such that $P'N P$ is a diagonal matrix with positive entries of the form $\text{diag}(0,\dots,0,a_{k+1},\dots,a_d)$ with $a_i \mid a_{i+1}$, $a_i \in \N$. In particular $NP$ is a matrix whose first $k$ columns are zero. Moreover, since $\det{P} = \pm 1$, we conclude that
  \begin{equation*}
    \begin{pmatrix} M P \\ N P \end{pmatrix} e_1 \wedge \ldots \wedge e_d 
=\pm \begin{pmatrix} M \\ N \end{pmatrix} e_1 \wedge \ldots \wedge e_d,
  \end{equation*}
  and hence we can assume from now on that $N = (0,\ldots,0,n_{k+1},\ldots,n_{d})$ with linearly independent vectors $n_{k+1},\ldots,n_d \in \Z^d$. 
Since $l_1,\ldots,l_d$ constitute a basis of a $d$-dimensional lattice, we note that $m_1,\ldots,m_k$ are necessarily linearly independent. Now we 
shall express $w_1\wedge\ldots \wedge w_d$ in terms of the standard basis 
$e_{I} \wedge e_{J}$ indexed by pairs of subsets $I \subset \{1,\ldots, d\}$ and $J \subset \{d+1, \ldots, 2d\}$ with $\abs{I}+\abs{J}=d$, i.e.\ we write
  \begin{equation*}
    w_1 \wedge \ldots \wedge w_d = \sum_{I,J} \omega_{I,J} e_I \wedge e_J.
  \end{equation*}
  Let $I = \{i_1,\ldots,i_m\}$ and $J = \{j_1,\ldots,j_{d-m}\}$, then 
the coefficients $\omega_{I,J}$ are given by
  \begin{equation}
    \label{coeffjk}
    \omega_{I,J} \defi \det{\begin{pmatrix} A_I & * \\ 0 & B_J \end{pmatrix}},
  \end{equation}
  where
  \begin{align*}
    \label{coeff2}
    A_I & \defi \begin{pmatrix}
      \langle r Q_+^{-\frac{1}{2}} m_1, e_{i_1} \rangle & \dots & \langle 
r Q_+^{-\frac{1}{2}} m_k, e_{i_1} \rangle \\
      \vdots & & \vdots \\
      \langle r Q_+^{-\frac{1}{2}} m_1, e_{i_m} \rangle & \dots & \langle 
r Q_+^{-\frac{1}{2}} m_k, e_{i_m} \rangle \\
    \end{pmatrix}  \\
    B_J & \defi \begin{pmatrix}
      \langle r^{-1} Q_+^{\frac{1}{2}} n_{k+1}, e_{j_1} \rangle     & \dots & \langle r^{-1} Q_+^{\frac{1}{2}} n_{d}, e_{j_1} \rangle     \\
      \vdots                                                        &     
  & \vdots                                                      \\
      \langle r^{-1} Q_+^{\frac{1}{2}} n_{k+1}, e_{j_{d-m}} \rangle & \dots & \langle r^{-1} Q_+^{\frac{1}{2}} n_{d}, e_{j_{d-m}} \rangle \\
    \end{pmatrix}.
  \end{align*}
  Since the matrix in \eqref{coeffjk} is of block-type, we find
  \begin{equation}
    \label{betabound}
    \begin{aligned}
      \alpha_d(\Lambda_t)^{-2} & = \norm{w_1 \wedge \ldots \wedge w_d}^2  
                                                                      \\
       & \ge \sum_{\abs{I} =k} \sum_{\abs{J} = d-k } \omega_{I,J}^2 = \Big( \sum_{\abs{I} =k} (\det{A_I})^2 \Big) \Big( \sum_{\abs{J} = d-k } (\det{B_J})^2 \Big) \\
                               & = r^{4k-2d} \norm{Q_+^{-\frac{1}{2}} (m_1 
\wedge \ldots \wedge m_k)}^2 \4 \norm{Q_{+}^{\frac{1}{2}} (n_{k+1} \wedge \ldots \wedge n_d)}^2.
    \end{aligned}
  \end{equation}
  Without loss of generality assume that the eigenvalues of $Q$ are indexed such that $\abs{q_1} \leq \dots \leq \abs{q_d}$. Since $q_0 \geq 1$, note that 
the minimal eigenvalue of the $k$-th exterior power of $Q_{+}^{-1/2}$ is given by $\abs{q_{d-k+1} \ldots q_d}^{-1/2}$ and that of the $(d{-}k)$-th exterior power of $Q_+^{1/2}$ is precisely $\abs{q_1 \ldots q_{d-k}}^{1/2}$. Hence, since $m_1,\ldots, m_k$ and $n_{k+1}, \ldots, n_d$ are linearly independent and integral, we obtain the following lower bound
  \begin{equation*}
    \alpha_d(\Lambda_t)^{-1} \ge r^{2k-d} \left( \frac{\abs{q_1 \ldots q_{d-k}}}{\abs{q_{d-k+1} \ldots q_d}}\right)^{1/2} \ge q^{-1}  \abs{\det{Q}}^{1/2} 
r^{2-d}.
  \end{equation*}
  where we used that $r \ge q^{1/2}$. In view of \eqref{detN}, this strict inequality yields a contradiction unless $k=0$. Thus, we proved that $k=0$, i.e.\ $\abs{\det N} > 0$. Now \eqref{betabound} also implies $\beta_{t;r}^{-1} \ge  \abs{\det N}$. Hence, the upper bound for $\abs{\det N}$ in \eqref{detN} holds as well. \par
  Finally, we shall prove \eqref{dualal}. Since $N$ is invertible, we can rewrite $w_1 \wedge \ldots \wedge w_d$ by
  \begin{equation}
    \label{lasteq0}
    (D_{rQ} \4 U_{4tQ}) \begin{pmatrix}  M N^{-1} \\ \mathbbm{1}_d \\ \end{pmatrix}  N \4  ( e_1 \wedge \ldots \wedge e_d ) = (\det{N}) (D_{rQ} \4 U_{4tQ}) \begin{pmatrix}  M N^{-1} \\ \mathbbm{1}_d \\ \end{pmatrix} e_1 \wedge \ldots \wedge e_d,
  \end{equation}
  i.e.\ we parametrized the subspace spanned by $l_1,\ldots,l_d$. Introduce also the $2d {\times} d$ matrix
  \begin{equation*}
    W \defi (D_{rQ} \4 U_{4tQ}) \begin{pmatrix}  M N^{-1} \\ \mathbbm{1}_d \\ \end{pmatrix} = \begin{pmatrix}  r Q_{+}^{-\frac{1}{2}}(M N^{-1}-4tQ) \\ r^{-1} Q_{+}^{\frac{1}{2}} \\ \end{pmatrix}
  \end{equation*}
  and note that $W^T W$ is a positive definite symmetric $d \times d$ matrix. Thus, there exists an orthogonal matrix $V \in O(d)$ such that $D \tdefi V^T W^T W V$ is diagonal with positive entries. Since $(\det{V}) (e_1 \wedge \ldots \wedge e_d) = V (e_1 \wedge \ldots \wedge e_d)$ it follows that
  \begin{equation}
    \label{lasteq1}
    \begin{aligned}
      \norm{W (e_1\wedge & \ldots \wedge e_d)}^2 = \norm{WV (e_1\wedge \ldots \wedge e_d)}^2                                                           
               \\
                     & = \langle D (e_1\wedge \ldots \wedge e_d), (e_1\wedge \ldots \wedge e_d) \rangle =\prod_{i=1}^d \norm{D e_i} = \prod_{i=1}^d \norm{ W v_i}^2,
    \end{aligned}
  \end{equation}
  where $v_1,\ldots,v_d$ denote the columns of $V$. Next observe that
  \begin{equation}
    \label{lasteq2}
    \max_{1\le i \le d} \norm{W v_i} \ge \max_{1 \le i \le d} \norm{rQ_{+}^{-\frac{1}{2}}(MN^{-1}-4t Q) v_i} \gg_d r q^{-\frac{1}{2}} \norm{M N^{-1}-4t Q}.
  \end{equation}
  Now let $i_0$ be a subscript for which $\norm{Wv_i}$ is maximal. Similar to the proof of \eqref{betabound} we may write $ W ( \wedge_{i \ne i_0} v_i) = \sum \omega_{I,J} e_{I} \wedge e_J$, where the sum is taken over subsets $I \subset \{1,\ldots,d\}$ and $J \subset \{d+1,\ldots,2d\}$ with $\abs{I}+\abs{J} = d-1$, and find that
  \begin{equation}
    \label{lasteq3}
    \norm{W (\wedge_{i \ne i_0} v_i)}^2 \ge \sum_{\abs{I}=0, \abs{J} = d-1} \omega_{I,J}^2 = \norm{r^{-1} Q_+^{\frac{1}{2}} (\wedge_{i \ne i_0} v_i)}^2 \ge r^{-2(d-1)} q^{-1} \abs{\det{Q}}.
  \end{equation}
  Combining \eqref{lasteq0} together with \eqref{lasteq1}--\eqref{lasteq3} yields
  \begin{align*}
    \alpha_d(\Lambda_t)^{-1} & = \abs{\det(N)} \, \norm{W v_{i_0}} \,{\textstyle \prod_{i \neq i_0}} \norm{Wv_i} = \abs{\det(N)} \, \norm{W v_{i_0}} \, \norm{W (\wedge_{i \neq i_0} v_i)} \\ & \gg_d r^{-(d-2)} q^{-1} \abs{\det{Q}}^\frac{1}{2} \,\abs{\det{N}}\, \norm{MN^{-1} -4 t Q}.
  \end{align*}
  Since $(\det N)\4 N^{-1}$ is an integral matrix, the last line together 
with \eqref{detN} implies
  \begin{equation*}
    \min\{\norm{ \bar{M} - 4 \4 m\4 t \4 Q} \, : \, 0< \abs{m} \le \beta_{t;r}^{-1}  ,\, m, \bar{M} \text{ integral} \} \ll_d q \4 r^{-2} \4 \beta_{t;r}^{-1},
  \end{equation*}
  and, since $Q$ is symmetric, we may take $\bar{M}$ symmetric as well, which proves \eqref{dualal}.
\end{proof}



\section{\texorpdfstring{Averages Along Translates of Orbits of $\mathrm{SO}(2)$}{Averages Along Translates of Orbits of SO(2)}}
\label{section-aol}

\subsection{Application of Geometry of Numbers}
\label{R}
In view of the bound \eqref{crucialbound} we need to estimate the error term $I_\theta$, that is \eqref{I1}. Proceeding as in \eqref{bound3.43} combined with the estimates $\abs{\theta_v(t)} \ll_d \abs{ \det Q}^{-1/4}\4 
r^{d/2}\4 \psi(r,t)^{1/2}$ and $\psi(r,t) \ll_d \alpha_d(\Lambda_t)$, obtained in Lemma \ref{thetaestimate2} respectively \eqref{eq:alpha_psi} of Corollary \ref{alphamax}, leads to
\begin{equation}
  \label{eq:5.1:intro}
  I_\theta \ll_d r^{d/2} \4 \abs{\det{Q}}^{-1/4} \4 \norm{\wh{\zeta}}_1 \int_{\abs{t} > \imbound} \abs{\wh{g}_w(t)} \4 \alpha_d(\Lambda_t)^{1/2} \, \dif t,
\end{equation}
where $\Lambda_t$ denotes the lattice defined in \eqref{df:lambda_t} and $g_w$ the smoothed indicator function of $[a,b]$ with $0 < w < (b-a)/4$, see Corollary \ref{co1}. Since Lemma \ref{l2} provides estimates for $\norm{\wh{\zeta}}_1$ in the case of both admissible and non-admissible regions $\Omega$, it remains to estimate the integral in \eqref{eq:5.1:intro}. 
We shall start with bounding this integral over an interval $I$ of length 
at most $1/q$. For this, we introduce the maximum value over $I$ of the $\alpha_d$-characteristic for the lattice $\Lambda_t$ via\index{G@ $\gamma_{[a,b],\beta}(r)$, Diophantine factor for $Q$ on $[a,b]$ with exponent $\beta$}
\begin{equation}
  \label{defgamma}
  \gamma_{I, \beta}(r) \defi \sup\bigl\{ \big( r^{-d}\4 \alpha_d(\Lambda_t)\big)^{\frac{1}{2}-\beta}: \, \, t\in I \bigr\}
\end{equation}
and the following family of lattices\index{L@ $\Lambda_{Q,t}:= d_{q^{1/2}} \4 u_{4t}\4 \Lambda_Q$, lattice on $\R^{2d}$}
\begin{equation}
  \label{latticelambdaqt}
  \Lambda_{Q,t}\tdefi d_{q^{1/2}} \4 u_{4t}\4 \Lambda_Q,
\end{equation}
where $\Lambda_Q$ is as defined in \eqref{svo6}. Here $\gamma_{I, \beta}(r)$ depends on the Diophantine properties of $Q$ and tends to zero for growing $r \rightarrow \infty$ by Lemma \ref{irr-dio} for irrational $Q$.

\begin{lemma}
  \label{special_int}
  Let $r \ge q^{1/2}$, $0<\beta  \le 1/2$ and fix an interval $I=[\tau_1,\tau_2]$ of length at most $1/q$. Then we have
  \begin{equation}
    \label{eq:alpha_int1}
    \int_I  \alpha_d(\Lambda_t)^{1/2} \4 \abs{\wh{g}_w(t)}  \, \dif t
    \ll_d \wh{g}_I \4 r^{\frac{d}{2} - \beta \4 d} \4 \gamma_{I,\beta}(r) 
\frac{1}{q} \int_{-\pi}^\pi  \alpha(d_{r_*} \4 k_\theta \4 \Lambda_{Q,4\tau_1})^{\beta}\4 \frac{\dif \theta}{2 \4 \pi},
  \end{equation}
  where $r_* \tdefi r \4 q^{-1/2}$\index{R@ $r_* := r \4 q^{-1/2}$} and 
$\wh{g}_I \tdefi \max \{ \abs{\wh{g}_w(t)} : t \in I\}$\index{G@ $\wh{g}_I  := \max \{\abs{\wh g_w(t)} : t \in I\}$, maximum of $\abs{\wh g_w(t)}$ on an interval $I$}.
\end{lemma}

\begin{proof}
  Using the trivial bound $\alpha_d(\Lambda_{t}) \le r^{d- 2 \4 \beta \4 d} \gamma_{I, \beta}(r)^2 \4 \alpha_d(\Lambda_{t})^{2\beta}$ and estimating $\abs{\wh{g}_w}$ by its maximum $\wh{g}_I$ on $I$ yields
  \begin{equation}
    \label{finalbound1}
    \int_I \alpha_d(\Lambda_t)^{1/2} \4 \abs{\wh{g}_w(t)} \, \dif t \le \wh{g}_I \4 r^{\frac{d}{2} - d\4\beta} \gamma_{I,\beta}(r) \int_I \alpha_d(\Lambda_{t})^{\beta}\4 \dif t.
  \end{equation}
  Since the group $\mathrm{D}$ normalizes $\mathrm{U}$, a computation shows that $d_r\4 u_{4t} = d_r \4 u_{4(t-\tau_1)} \4 u_{4 \tau_1}=d_{r_*}\4 u_{\tau} \4 d_{q^{1/2}} \4 u_{4\tau_1}$, where $\tau \tdefi 4 \4 (t-\tau_1)\4 q$. Changing variables from $t$ to $\tau$ we obtain in terms of the lattices $\Lambda_{Q,s}$, defined in \eqref{latticelambdaqt},
  \begin{equation}
    \label{finalbound2}
    \int_I \alpha_d(\Lambda_t)^\beta \dif t
    = \int_{\tau_1}^{\tau_2} \alpha_d(d_{r_*}\4 u_{\tau}\4 d_{q^{1/2}} u_{4\tau_1}\4\Lambda_Q)^{\beta} \dif t
    \ll \frac{1}{q} \int_{0}^4 \alpha_d(d_{r_*}\4 u_\tau \4 \Lambda_{Q,4\tau_1})^\beta \dif \tau.
  \end{equation}
  Finally, we estimate the last average with the help of Lemma \ref{compact-group} by the average over the group $\text{K} = \mathrm{SO}(2)$. Changing variables $\theta(s) = \arctan(\tau)$, $\tau \in [0,4]$, and noting that $\abs{\theta} < \pi$ and $\dif \tau = (1+\tau^2) \, \dif \theta$, we get by \eqref{eq:kth} of Lemma \ref{compact-group} that
  \begin{equation*}
    \int_{0}^4 \alpha_d(d_{r_*}\4 u_{\tau}\4 \Lambda_{Q,4\tau_1})^{\beta} 
\4 \dif \tau
    \ll \int_0^4 \alpha_d(d_{r_*} \4 k_{\theta(\tau)} \4 \Lambda_{Q,4\tau_1})^{\beta} \4 \dif \tau
    \ll \int_{-\pi}^{\pi} \alpha_d(d_{r_*}\4 k_\theta \4 \Lambda_{Q,4\tau_1})^{\beta}\4 \frac {\dif \theta} {2 \4 \pi}.
  \end{equation*}
  Now note that $\alpha_d(\Lambda) \le \alpha(\Lambda)$ holds for any lattice $\Lambda$ in $\R^{2d}$. Thus, the last inequality together with \eqref{finalbound1} and \eqref{finalbound2} completes the proof.
\end{proof}

In the following paragraphs we shall develop explicit bounds for averages 
over the group $\mathrm K$ of type $\int_{\mathrm K} \alpha_d(d_r\4 k \4 \Lambda)^{\beta} \, \dif k$.



\subsection{\texorpdfstring{Operators $A_g$ and Functions $\tau_\lambda$ on $\SL(2,\R)$}{Operators A and Functions tau on SL(2,R)}}
\label{s1}
Let $\mathrm  G = \SL(2,\R)$\index{G@ $\mathrm  G = \SL(2,\R)$}. We consider the following two subgroups of $\mathrm  G$\index{T@ $\mathrm  T $, Borel subgroup of $\SL(2,\R)$}:
\begin{equation*}
  \mathrm K = \mathrm{SO}(2) = \left\{ k_\theta \,:\, 0\leq\theta < 2\pi\right\} \q \text{and} \q \mathrm  T = \left\{\begin{pmatrix} \, a & 
b \\ 0 & a^{-1}\end{pmatrix} :  a > 0 , \, b\in\R\right\},
\end{equation*}
where $k_\theta$ is defined in \eqref{eq:defkth}. According to the Iwasawa decomposition, any $g\in G$ can be uniquely represented as a product of 
elements from $\mathrm{K}$ and $\mathrm{T}$, that is\index{I@ Iwasawa decomposition of $g \in \SL(2,\R)$: $g = k(g)t(g), \, k(g)\in \mathrm K, \, t(g)\in \mathrm  T$}
\begin{equation*}
  g = k(g)t(g), \q k(g)\in \mathrm K, \, t(g)\in \mathrm  T.
\end{equation*}
Now let
\begin{equation*}
  d_a \defi \begin{pmatrix} a & 0 \\ 0 & a^{-1} \end{pmatrix}  \ \text{for} \ a > 0 \,\, \text{and} \,\, \mathrm  D^+ = \{ d_a : a\geq 1\}.
\end{equation*}
According to the Cartan decomposition\index{C@ Cartan decomposition of $g 
\in \SL(2,\R)$: $g=k_1(g)d(g)k_2(g)$}, we have
\begin{equation*}
  \mathrm{G=KD^+K}, \,\, g=k_1(g)d(g)k_2(g), \,\, g\in \mathrm  G, k_1(g), k_2(g)\in \mathrm  K,\,\, d(g)\in \mathrm  D^+.
\end{equation*}
In this decomposition $d(g)$ is determined by $g$, and if $g\notin \mathrm  K$ then $k_1(g)$ and $k_2(g)$ are also determined by $g$ up to a factor of $\pm 1$ on $k_1$ and $k_2$. It is clear that $\norm{g} = \norm{d(g)}$, where $\norm{\,\cdot \,}$ denotes the operator norm induced by the standard Euclidean norm on $\R^2$. Note that, in the simple case $g= d_a$, this norm is given by $\norm{d_a}=a$. Since $d_a$ is the conjugate of $d_{a^{-1}}$ by $k_{\pi /2}$, we see that $g^{-1}\in \mathrm  Kg \mathrm K$ or equivalently, $d(g) = d(g^{-1})$ for any $g\in \mathrm  G$. Therefore, $\norm{g} = \norm{g^{-1}}$, $g\in \mathrm  G$. \par We say that a function $f$ on $\mathrm  G$ is \textit{left $\mathrm  K$-invariant} (resp.\ \textit{right $\mathrm  K$-invariant}, resp.\ \textit{bi-$\mathrm  K$-invariant}) if $f(\mathrm  Kg) = f(g)$ (resp.\ $f(g\mathrm  K) = f(g)$, resp.\ $f(\mathrm  Kg \mathrm  K) = f(g)$). Any bi-$\mathrm K$-invariant function on $\mathrm G$ is completely determined by its restriction to $\mathrm D^+$. Hence for any bi-$\mathrm  K$-invariant function $f$ on $\mathrm G$, there is a function $f^*$\index{1@ $f^*$, radial realization of a bi-$\mathrm K$-invariant function on $\mathrm G$} on $[1,\infty )$ such that $f(g) =  f^*(\norm{g})$, $g\in \mathrm  G$.

For any $\lambda\in\R$ we define a character $\chi_\lambda$ of $\mathrm  T$\index{X@ $\chi_\lambda$, character of $\mathrm  T$}  by
\begin{equation*}
  \chi_\lambda \begin{pmatrix} \, a & b \\ 0 & a^{-1}\end{pmatrix} = a^{-\lambda}
\end{equation*}
and the function $\varphi_\lambda :\mathrm  G\to\R^+$\index{P@ $\varphi_\lambda :\mathrm  G\to\R^+$, corresponding to the character $\chi_\lambda$} by
\begin{equation*}
  \varphi_\lambda (g) = \chi_\lambda (t(g)), \q g\in G. 
\end{equation*}
The function $\varphi_\lambda$ has the property
\begin{equation}
  \label{7.3}
  \varphi_\lambda (kgt) = \chi_\lambda (t) \varphi_\lambda (g),\q g\in \mathrm  G, \, k\in \mathrm  K, \, t\in \mathrm  T,
\end{equation}
and it is completely determined by this property and the condition $\varphi_\lambda (1) = 1$. \par
For $g\in \mathrm  G$ and a continuous action of $\mathrm  G$ on a topological space $X$, we define the operator $A_g$\index{A@ $A_g$, mean-value operator on $\mathrm G$} on the space of continuous functions on $X$ by
\begin{equation}
  \label{7.4}
  (A_gf)(x) = \int_{\mathrm K} f(gkx) \, \dif\sigma (k), \q x\in X,
\end{equation}
where $\sigma$ is the normalized Haar measure on $\mathrm  K$, or, using the parametrization of $\mathrm K$, by
\begin{equation*}
  (A_gf)(x) = \frac{1}{2\pi} \int_0^{2\pi} \, f(g k_\theta x) \, \dif\theta,\q x\in X.
\end{equation*}
The operator $A_g$ is a linear map into the space of left $\mathrm{K}$-invariant functions on $X$. If $X=\mathrm{G}$ and $\mathrm{G}$ acts on itself by left translations, then $A_g$ commutes with right translations. From these two remarks, or using a direct computation, we get that $A_g\varphi_\lambda$ has the property \eqref{7.3}. Hence $\varphi_\lambda$ is an 
eigenfunction for $A_g$ with the eigenvalue\index{T@ $\tau_\lambda$, spherical function on $\mathrm G$}
\begin{equation}
  \label{7.6}
  \tau_\lambda (g) \defi (A_g \varphi_\lambda)(1) = \int_{\mathrm K} \varphi_\lambda (gk) \, \dif\sigma (k) = \int_{\mathrm K} \chi_\lambda (t(gk)) \, \dif \sigma (k).
\end{equation}
We see from \eqref{7.6} that $\tau_\lambda$ is obtained from $\varphi_\lambda$ by averaging over right translations by elements of $\mathrm K$. But $\varphi_\lambda$ is left $\mathrm K$-invariant and $A_g$ commutes with 
right translations. Hence the function $\tau_\lambda$ is bi-$\mathrm K$-invariant and it is an eigenfunction for $A_g$ with the eigenvalue $\tau_\lambda (g)$, that is
\begin{equation}
  \label{7.7}
  (A_g\tau_\lambda)(h) = \tau_\lambda (g) \tau_\lambda (h) \q \text{for 
all} \q h\in \mathrm G.
\end{equation}
We have that
\begin{equation}
  \label{7.8}
  \varphi_\lambda (g) = \norm{ge_1}^{-\lambda},\q g\in G, \, e_1 = (1,0),
\end{equation}
where $\norm{\cdot}$ denotes the usual Euclidean norm on $\R^2$. Indeed
\begin{equation*}
  \varphi_\lambda (g) = \chi_\lambda (t(g)) = \norm{t(g)e_1}^{-\lambda} = \norm{k(g)t(g)e_1}^{-\lambda} = \norm{g e_1}^{-\lambda}.
\end{equation*}
From \eqref{7.6} and \eqref{7.8} we get
\begin{equation}
  \label{7.9}
  \begin{aligned}
    \tau_\lambda (g) & = \int_K \norm{gke_1}^{-\lambda} \, \dif \sigma(k)
    = \frac{1}{2\pi} \int_0^{2\pi} \norm{gk(\theta )e_1}^{-\lambda} \, \dif \theta                                                               
                    \\
                     & = \frac{1}{2\pi} \int_0^{2\pi} \norm{g(\cos \theta, \sin\theta)}^{-\lambda} \, \dif\theta = \int_{S^1} \norm{gu}^{-\lambda} \, \dif \ell (u),
  \end{aligned}
\end{equation}
where $S^1$ is the unit circle in $\R^2$ and $\ell$ denotes the normalized rotation invariant measure on $S^1$. One can easily see that $\norm{gu}^{-2}$, $g\in G$, $u\in S^1$, is equal to the Jacobian at $u$ of the diffeomorphism $v\mapsto gv/\norm{g v}$ of $S^1$ onto $S^1$. On the other hand, it follows from the change of variables formula that
\begin{equation*}
  \int_M J^\lambda_f = \int_M \, J^{1-\lambda}_{f^{-1}}, \q \lambda\in \R,
\end{equation*}
where $f \colon M \to M$ is a diffeomorphism of a compact differentiable manifold $M$ and $J_f$ (resp.\ $J_{f^{-1}}$) denotes the Jacobian of $f$ (resp.\ $f^{-1}$). Now using \eqref{7.9} we get
\begin{equation}
  \label{7.10}
  \tau_\lambda (g) = \tau_{2-\lambda} (g^{-1}) = \tau_{2-\lambda} (g), \, g\in G, \, \lambda \in \R.
\end{equation}
The second equality in \eqref{7.10} is true because $\tau_\lambda$ is bi-$\mathrm K$-invariant and $g^{-1}\in \mathrm Kg \mathrm K$. Since, obviously, $\tau_0 (g) = 1$, it follows that
\begin{equation}
  \tau_2 (g) = \tau_0 (g) = 1. \label{7.11}
\end{equation}
Since $t^{-\lambda}$ is a strictly convex function of $\lambda$ for any $t > 0, t\neq 1$, it follows from \eqref{7.9} that $\tau_\lambda (g)$ is a 
strictly convex function of $\lambda$ for any $g\in G$. From this, \eqref{7.10} and \eqref{7.11} we deduce that
\begin{alignat}{2}
   & \tau_\eta (g) < \tau_\lambda (g) \q                            &  & \text{for any} \, \, g\notin \mathrm K \,\, \text{and} \,\, 1 \le \eta < \lambda \le 2, \nonumber \\
  \label{7.13}
   & \tau_\eta (g) < 1 \,\, \text{and} \,\, \tau_\lambda (g) > 1 \q &  & \text{for any} \, \, g\notin \mathrm K,\,\, 0 < \eta < 2, \lambda > 2, \, 
\text{ and }      \\
  \label{7.14}
   & \tau_\eta (g) < \tau_\lambda (g) \q                            &  & \text{for any} \,\, g\notin \mathrm K, \; \lambda\geq 2, \, 0 < \eta < \lambda.
\end{alignat}
Since the function $\tau_\lambda (g)$ is bi-$\mathrm K$-invariant, it depends only on the norm $\norm{g}$ of $g$. Thus, we can write
\begin{equation}
  \label{7.15}
  \tau_\lambda (g) = \tau_\lambda^* (\norm{g}), \q g \in \mathrm G,
\end{equation}
where for $a\geq 1$
\begin{equation}
  \label{7.16}
  \tau_\lambda^* (a) = \tau_\lambda (d_a) = \int_{\mathrm K} \norm{d_a k e_1}^{-\lambda} \, \dif \sigma (k) = \frac{1}{2\pi} \int_0^{2\pi} \frac{\dif \theta}{(a^{2}\cos^{2}\theta +a^{-2}\sin^{2}\theta)^{\lambda /2}}.
\end{equation}
In view of \eqref{7.7} and the definition of $A_g$, we get
\begin{equation}
  \label{7.17}
  \int_K \tau_\lambda^* ( \norm{gkd_a} )\, \dif\sigma (k) = \tau_\lambda (g) \tau_\lambda^* (a), \q g\in \mathrm G, a\geq 1.
\end{equation}
Since $\Vert g\Vert = \Vert g^{-1}\Vert$ for all $g\in \mathrm G$,
\begin{equation*}
  \frac{a}{\Vert g\Vert} \leq \Vert gkd_a\Vert \leq a\Vert g\Vert
\end{equation*}
for all $k\in \mathrm K$ and $g\in \mathrm G$. From this, \eqref{7.13} and \eqref{7.17} we deduce that, for any $\lambda > 2$, the continuous function $\tau_\lambda^* (a), a\geq 1$, does not have a local maximum. Hence $\tau_\lambda^*$ is strictly increasing for all $\lambda > 2$ or, equivalently,
\begin{equation}
  \label{7.18}
  \tau_\lambda (g) < \tau_\lambda (h) \q \text{if} \,\, \Vert g\Vert < \Vert h\Vert, \, g, h\in G, \,\, \lambda > 2.
\end{equation}
Using \eqref{7.10} and \eqref{7.16} yields
\begin{equation}
  \label{7.19}
  \tau_\lambda^*(a) = \tau_{2-\lambda}^*(a)  = \frac{1}{2\pi} \int_0^{2\pi} (a^2 \cos^2 \theta + a^{-2} \sin^2 \theta )^{\frac{\lambda}{2}-1} \, \dif \theta.
\end{equation}
Since $a^2\cos^2\theta\leq a^2\cos^2 \theta + a^{-2} \sin^2 \theta \leq a^2$, we deduce from \eqref{7.19} the estimates
\begin{equation}
  \label{7.20}
  c(\lambda )a^{\lambda-2} \leq \hat\tau_\lambda (a) \leq a^{\lambda -2}, 
\,\, a\geq 1, \,\, \lambda \geq 2,
\end{equation}
where
\begin{equation}
  \label{harishcvalue}
  c(\lambda ) = \frac{1}{2\pi} \int_0^{2\pi} \, \abs{\cos{\theta}}^{\lambda-2} \, \dif \theta = \frac{2}{\pi} \int_0^{\pi/2} \cos(\theta)^{\lambda-2} \, \dif \theta = \frac{\mathrm{B} \big(\tfrac{\lambda-1}{2},\tfrac{1}{2} \big)}{\pi} = \frac{\Gamma(\frac{\lambda-1}{2})}{\Gamma(\frac{\lambda}{2}) \sqrt{\pi}},
\end{equation}
$\mathrm{B}$ denotes the beta function and we use the identity $\mathrm{B}(x,y) = \Gamma(x)\Gamma(y)/\Gamma(x+y)$ as well as $\Gamma(1/2)=\sqrt \pi$. From \eqref{7.19} we also conclude that for any $\lambda > 2$ the 
ratio $\frac{\tau_{\lambda}^*(a)}{a^{\lambda-2}}$ is a strictly decreasing function of $a\geq 1$ and
\begin{equation}
  \label{harishc}
  \lim_{a\to\infty} \,\, \frac{\tau_{\lambda}^*(a)}{a^{\lambda -2}} = c(\lambda).
\end{equation}

\begin{remark}
  The function $\tau_\lambda$ can be viewed as a spherical function on the upper-half plane $\mathbb H$ (see \cite{helgason:1984} Chapter IV Proposition 2.9) and all spherical functions on $\mathbb H$ are of this form for some $\lambda \in \C$. In particular, it is not difficult to see that $\tau_\lambda$ can also be represented as
\begin{align*}
  \tau_\lambda(g) = \frac{1}{2 \pi}\int_0 ^{2\pi} \big( \cosh(2 \log \norm{g}) + \sinh(2 \log \norm{g}) \sin(\theta) \big)^{\lambda/2 -1} \dif \theta.
\end{align*}
Moreover, for $\text{Re}(\lambda)>1$ it is well-known that $c(\lambda)$, which is usually referred to as Harish-Chandra's $c$-function, as defined 
in \eqref{harishc} exists and its value is given by \eqref{harishcvalue} (see \cite{helgason:1984} Introduction Theorem 4.5 or \cite{lang:1985} Chapter V \S 5).
\end{remark}

\begin{lemma}
  \label{Lemma 7.1}
  Let $g\in \mathrm G, g\notin \mathrm K$, $\lambda > 2$, $0 < \eta < \lambda$, $b\geq 0$, $B > 1$, and let $f$ be a left $\mathrm K$-invariant positive continuous function on $\mathrm G$. Assume that
  \begin{equation}
    \label{7.22}
    A_gf \leq \tau_\lambda (g)f + b \4 \tau_\eta
  \end{equation}
  and that
  \begin{equation}
    \label{7.23}
    f(yh) \leq B f(h) \q \text{if}\q h, y\in \mathrm G \ \, \text{and} \ \, \norm{y} \leq \norm{g}.
  \end{equation}
  Then for all $h\in G$
  \begin{equation*}
    (A_hf)(1) = \int_{\mathrm K} f(hk)\, \dif \sigma (k) \leq s\tau_\lambda (h),
  \end{equation*}
  where
  \begin{equation}
    \label{7.25}
    s = B \left( f(1) +\frac{b}{\tau_{\lambda}(g)-\tau_\eta (g)}\right).
  \end{equation}
\end{lemma}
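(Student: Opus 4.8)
The plan is to exploit the eigenfunction relation \eqref{7.7} for $\tau_\lambda$ together with the hypothesis \eqref{7.22} in an iterative fashion, applying the averaging operator $A_g$ repeatedly and summing a geometric series. First I would record the key consequence of \eqref{7.7}: applying $A_g$ to both sides of \eqref{7.22} and using that $A_g$ is positivity-preserving and maps into $K$-invariant functions, one obtains
\[
A_{g}^{2} f \;\le\; \tau_\lambda(g)\, A_g f + b\, A_g \tau_\eta \;=\; \tau_\lambda(g)\, A_g f + b\,\tau_\eta(g)\,\tau_\eta \;\le\; \tau_\lambda(g)^2 f + b\bigl(\tau_\lambda(g)\tau_\eta(g) + \tau_\eta(g)^2\bigr)\tau_\eta/\tau_\eta(1),
\]
and iterating $n$ times yields, since $\tau_\eta(1)=1$,
\[
A_{g}^{n} f \;\le\; \tau_\lambda(g)^{n} f \;+\; b\Bigl(\sum_{j=0}^{n-1}\tau_\lambda(g)^{j}\tau_\eta(g)^{\,n-1-j}\Bigr)\tau_\eta .
\]
Here I use $0<\eta<\lambda$ with $g\notin K$, so that by \eqref{7.14} (applied after noting $\lambda>2$) and \eqref{7.13} we have $\tau_\eta(g) < \tau_\lambda(g)$; hence the geometric-type sum is bounded by $\tau_\lambda(g)^{n}/(\tau_\lambda(g)-\tau_\eta(g))$ after factoring out $\tau_\lambda(g)^{n-1}$ and summing the ratios $\bigl(\tau_\eta(g)/\tau_\lambda(g)\bigr)^{i}$. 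Combining, we obtain the clean bound
\[
A_{g}^{n} f \;\le\; \tau_\lambda(g)^{n}\Bigl(f \;+\; \frac{b}{\tau_\lambda(g)-\tau_\eta(g)}\,\tau_\eta\Bigr).
\]

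Next I would evaluate this at the identity. The left side is $(A_g^n f)(1) = \int_{K^n} f(g k_1 g k_2 \cdots g k_n)\,d\sigma(k_1)\cdots d\sigma(k_n)$, an average of $f$ over products of $n$ copies of $g$ interlaced with rotations. The idea is that the norm $\Vert g k_1 g k_2 \cdots g k_n\Vert$ grows essentially like $\Vert g\Vert^{n}$, so that by choosing $n$ large these points exhaust all of $G$ in the relevant sense; more precisely, given an arbitrary $h\in G$ we want to write $h$ (or something $K$-equivalent to it) as $y \cdot (g k_1 \cdots g k_n)$ with $\Vert y\Vert \le \Vert g\Vert$, and then invoke the Lipschitz-type hypothesis \eqref{7.23}. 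Using the Cartan decomposition $h = k_1(h) d(h) k_2(h)$ and left $K$-invariance of $f$, it suffices to treat $h = d_a$ with $a = \Vert h\Vert \ge 1$. One chooses $n$ minimal with $\Vert g\Vert^{n}\ge a$; then I claim there is a choice making $\Vert d_a (g k_1 \cdots g k_n)^{-1}\Vert \le \Vert g\Vert$, because the Cartan projection is "almost additive" along such products and the product $g k_1 \cdots g k_n$ can be steered (by the rotations) to have Cartan part close to $d_{\Vert g\Vert^{n}}$. Actually, the cleaner route avoids this combinatorial claim: use instead \eqref{7.17}, the identity $\int_K \hat\tau_\lambda(\Vert g k d_a\Vert)\,d\sigma(k) = \tau_\lambda(g)\hat\tau_\lambda(a)$, which is exactly the $n=1$ statement with $f = \tau_\lambda$; the general $f$ case then follows by a density/monotonicity comparison of $f$ with multiples of $\tau_\lambda$ on compact pieces, controlled by $Q$.

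The step I expect to be the main obstacle is the passage from the discrete iterate bound on $A_g^n f$ at special points $g k_1 \cdots g k_n$ to a bound on $A_h f$ at an \emph{arbitrary} $h\in G$ — i.e. showing that the family of products $g k_1 \cdots g k_n$, $n\ge 1$, together with the factor-$Q$ slack in \eqref{7.23}, "covers" all scales $\Vert h\Vert$. This requires the quantitative fact that $\Vert g k_1 \cdots g k_n\Vert$ can be made to lie in the interval $[\Vert g\Vert^{n-1}, \Vert g\Vert^{n}]$ (up to the geometry of $SL(2,\mathbf R)$), which is where one genuinely uses that $K$ is a \emph{large} compact subgroup — rotating by $k(\pi/2)$ inverts $d_a$, so products can both grow and shrink the norm, and by continuity every intermediate value is attained. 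Once that covering is in hand, the final constant is read off as
\[
s \;=\; Q\Bigl(f(1) + \frac{b}{\tau_\lambda(g)-\tau_\eta(g)}\Bigr),
\]
the factor $Q$ coming from one application of \eqref{7.23} to bridge the last scale gap, and $f(1)$ replacing $f$ evaluated on the covering points via left $K$-invariance and the bound above at $h=1$.
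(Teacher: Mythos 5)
Your iterative bound on $A_g^n f$ is correct (the intermediate $n=2$ display has a slip — the $\tau_\eta$ coefficient should be $b(\tau_\lambda(g)+\tau_\eta(g))$, not $b(\tau_\lambda(g)\tau_\eta(g)+\tau_\eta(g)^2)$ — but your final formula and the geometric-series step are right), yielding
\[
(A_g^n f)(1)\;\le\;\tau_\lambda(g)^n\Bigl(f(1)+\tfrac{b}{\tau_\lambda(g)-\tau_\eta(g)}\Bigr).
\]
But this is where the genuine gap sits, and it is not a technicality. The quantity $(A_g^n f)(1)$ is a single average of $f$ over products $g k_1 g k_2\cdots g k_n$, whose norms range (essentially continuously) from near $1$ to $\|g\|^n$ — rotating by $k(\pi/2)$ inverts $d_a$, so most tuples $(k_1,\dots,k_n)$ produce substantial cancellation. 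Consequently a bound on this average does \emph{not} bound $f_K(h)$ at a specific $h$ with $\|h\|\approx\|g\|^n$: the positive mass of the average may be concentrated on short products. To extract a pointwise (or $K$-averaged) bound from this mean, one needs quantitative control of the distribution of $\|gk_1\cdots gk_n\|$ relative to the growth of $\tau_\lambda$, which is precisely the content of the lemma, not an input to it. Your fallback remark — ``the general $f$ case then follows by a density/monotonicity comparison of $f$ with multiples of $\tau_\lambda$ on compact pieces, controlled by $Q$'' — is the claim of the lemma restated, not a proof of it; $Q$ only gives multiplicative control at a fixed scale $\|g\|$, not across the $n$ scales.

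The paper resolves exactly this difficulty by a barrier-function / maximum-principle argument rather than iteration. After reducing to bi-$K$-invariant $f$, one assumes $f(h) > s'\tau_\lambda(h)$ for some $h$ and $s'>s$, and forms $\omega = f - a_\lambda\tau_\lambda + a_\eta\tau_\eta$ with $a_\eta=b/(\tau_\lambda(g)-\tau_\eta(g))$ and $a_\lambda=s'/Q$. Using \eqref{7.7}, one checks $A_g\omega\le\tau_\lambda(g)\,\omega$, that $\omega(1)<0$, and — via \eqref{7.18}, \eqref{7.23} and the Cartan decomposition — that $\omega>0$ on the annulus $\|h\|/\|g\|\le\|x\|\le\|h\|$. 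The minimum of $\omega$ over $\{\|x\|\le\|h\|\}$ is therefore negative and attained at some $v$ with $\|v\|\le\|h\|/\|g\|$; since $\|gkv\|\le\|h\|$ for all $k\in K$ one gets $(A_g\omega)(v)\ge\omega(v)>\tau_\lambda(g)\omega(v)$, contradicting $A_g\omega\le\tau_\lambda(g)\omega$. This is where the crucial transfer across scales actually happens, and it is the step your proposal leaves unproved. I would suggest abandoning the $n$-fold iteration and proving the one-step subsolution inequality $A_g\omega\le\tau_\lambda(g)\omega$ for a suitable $\omega=f-a_\lambda\tau_\lambda+a_\eta\tau_\eta$, then running the minimum argument.
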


\begin{proof}
  We define
  \begin{equation*}
    f_{\mathrm K} (h)\defi \int_{\mathrm K} f(hk) \, \dif \sigma (k), \q h\in \mathrm G.
  \end{equation*}
  Since $A_g$ commutes with right translations, and $\tau_\eta$ is right $\mathrm K$-invariant, it follows from \eqref{7.22} that $A_g f_{\mathrm K}\leq \tau_\lambda (g) f_{\mathrm K} + b\tau_\eta$. If $h$ and $y$ are as in \eqref{7.23}, then $f(yhk) \le B f(hk)$ for every $k\in \mathrm K$ and therefore $f_{\mathrm K}(yh) \leq B f_{\mathrm K}(h)$. On the other hand, it is clear that
  \begin{equation*}
    f_{\mathrm K}(h) = (A_hf_{\mathrm K})(1) = (A_h f)(1).
  \end{equation*}
  Thus we can replace $f$ by $f_{\mathrm K}$ and assume that $f$ is bi-$\mathrm K$-invariant. Then we have to prove that $f\leq s\tau_\lambda$. Assume the contrary, then $f(h) > s'\tau_\lambda (h)$ for some $h\in G$ and 
$s' > s$. In view of \eqref{7.14} and \eqref{7.25}, $s' > s\geq Bf(1)$. From this, \eqref{7.18} and \eqref{7.23} we get that $\norm{h} > \norm{g}$ 
and
  \begin{equation}
    \label{7.26}
    f(yh) > \frac{s'}{B} \, \tau_\lambda (yh) \q \text{if} \q \norm{y} \leq \norm{g} \ \ \text{and} \ \ \norm{yh} \leq \norm{h}.
  \end{equation}
  Using the Cartan decomposition, we see that any $x\in \mathrm G$ with $\frac{\norm{h}}{\norm{g}} \leq \norm{x} \leq \norm{h}$ can be written as $x = k_1 yh k_2$, where $k_1, k_2\in \mathrm K$, $\norm{y}\leq \norm{g}$ and $\norm{yh} \leq \norm{h}$. But the functions $f$ and $\tau_\lambda$ 
are bi-$\mathrm K$-invariant. Therefore it follows from \eqref{7.26} that
  \begin{equation}
    \label{7.27}
    f(x) > \frac{s'}{B} \, \tau_\lambda (x) \q \text{if} \q \frac{\norm{h}}{\norm{g}} \leq \norm{x} \leq \norm{h}.
  \end{equation}
  Let
  \begin{align*}
     & a_1\defi \frac{s'}{B} > f(1) + \frac{b}{\tau_\lambda(g)-\tau_\eta(g)}, \,\, a_2 \defi \frac{b}{\tau_{\lambda}(g)-\tau_\eta(g)}, \,\, \text{and} \\
     & \omega  \defi f - a_1\tau_\lambda + a_2\tau_\eta.
  \end{align*}
  In view of \eqref{7.7} and \eqref{7.22}, we see that
  \begin{equation}
    \label{7.28}
    \begin{aligned}
      A_g\omega - \tau_\lambda (g) \omega & = A_g (f-a_1 \tau_\lambda + 
a_2 \tau_\eta) - \tau_\lambda (g)(f-a_1 \tau_\lambda + a_2\tau_\eta)      
                                            \\
                                          & = \left[A_g f -\tau_\lambda(g)f] - a_1 [A_g\tau_\lambda - \tau_\lambda (g)\tau_\lambda\right] + a_2 \left[A_g\tau_\eta - \tau_\lambda (g)\tau_\eta \right] \\
                                          & \leq b\tau_\eta + a_2 \left[\tau_\eta (g) \tau_\eta - \tau_\lambda (g) \tau_\eta \right] = 0.
    \end{aligned}
  \end{equation}
  Since $\tau_\lambda (1) = \tau_\eta (1) = 1$, we have
  \begin{equation}
    \label{7.29}
    \omega (1) = f(1) - a_1 + a_2 < 0.
  \end{equation}
  It follows from \eqref{7.14} that $a_2\geq 0$. Using additionally \eqref{7.25} and \eqref{7.27}, we get that
  \begin{equation}
    \label{sec:5:dreieck}
    \begin{aligned}
      \omega (x) & = f(x) - a_1\tau_\lambda (x) + a_2 \tau_\eta (x) \geq f(x) - a_1 \tau_\lambda (x) \\ &> \left(\frac{s'}{B} - a_1 \right) \tau_\lambda (x) = 0 \,\, \q \text{if}  \q \,\, \frac{\norm{h}}{\norm{g}} \, \leq \norm{x} \leq \norm{h}.
    \end{aligned}
  \end{equation}
  Let $v\in \mathrm G$, satisfying $\norm{v} \le \norm{h}$, be a point where the continuous function $\omega$ attains its minimum on the set $\{ x\in \mathrm G: \norm{x} \leq \norm{h} \}$. It follows from \eqref{7.29} and \eqref{sec:5:dreieck} that
  \begin{equation*}
    \omega (v) < 0 \q \text{and} \q \norm{v} \leq \frac{\norm{h}}{\norm{g}}.
  \end{equation*}
  Because of $\tau_\lambda (g) > 1$ and $\norm{gkv} \leq \norm{g} \norm{v}$ for all $k\in \mathrm K$ we conclude
  \begin{equation*}
    (A_g\omega)(v) = \int_{\mathrm K} \omega (gkv) \, \dif\sigma (k) \geq \omega (v) > \tau_\lambda (g) \omega (v).
  \end{equation*}
  Thus, we get a contradiction with \eqref{7.28}.
\end{proof}

As a special case ($\eta = 2$ and $b =0$) of Lemma \ref{Lemma 7.1}, we have the following

\begin{corollary}
  \label{Lemma 7.2}
  Let $g\in \mathrm G$, $g\notin \mathrm K$, $\lambda > 2$, $B > 1$, and let $f$ be a left $\mathrm K$-invariant positive continuous function on $\mathrm G$ satisfying the inequality \eqref{7.23}. Assume that
  \begin{equation*}
    A_gf\leq \tau_\lambda (g) f.
  \end{equation*}
  Then for all $h\in G$
  \begin{equation*}
    (A_hf)(1) = \int_{\mathrm K} f(hk) \, \dif \sigma(k) \le B f (1) \tau_\lambda (h).
  \end{equation*}
\end{corollary}

\begin{lemma}
  \label{Lemma 7.3}
  Let $g\in \mathrm G$, $g\notin \mathrm K$, $2 < \lambda < \mu$, $B > 1$, $M > 1$, $n\in \N^{+}$ and let $f_i$,  $0\leq i \leq n$, be left $\mathrm K$-invariant positive continuous functions on $\mathrm G$. We denote $\min\{i, n-i\}$ by $\bar i$ and $\sum_{0\leq i\leq n} f_i$ by $f$. Assume 
that
  \begin{alignat}{2}
     & f_i(yh) &  & \le B f_i (h) \q \text{if} \ \ 0\leq i\leq n,\; \; h, 
y\in \mathrm G \; \; \text{and}\; \; \norm{y} \le \norm{g}, \nonumber \\
     & A_gf_i  &  & \leq \tau_\lambda (g) f_i + M \,\,\max_{0 < j\leq\bar 
i} \sqrt{f_{i-j}f_{i+j}}, \q 0 \leq i\leq n, \label{7.36}
  \end{alignat}
  so in particular $A_g f_0 \le \tau_\lambda (g)f_0$ and $A_gf_n \leq \tau_\lambda (g) f_n$.
  Then there is a constant $C = C(g, \lambda ,\mu, B, M, n)$ such that for all $h\in \mathrm G$,
  \begin{equation}
    (A_hf)(1) = \int_{\mathrm K} \, f(hk) \, \dif \sigma (k) \leq C f(1) \tau_\mu (h). \label{7.38}
  \end{equation}
\end{lemma}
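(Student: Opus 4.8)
The plan is to argue by induction on the length $n$ of the system, proving \eqref{7.38} for \emph{every} target exponent $\mu>\lambda$ simultaneously; this uniformity is what makes the inductive step close. \emph{Base case $n\le1$.} Here $\bar i=\min\{i,n-i\}=0$ for every index, so the maximum in \eqref{7.36} is empty and \eqref{7.36} reduces to $A_gf_i\le\tau_\lambda(g)f_i$. Summing over $i$ and using $\tau_\lambda(g)\le\tau_\mu(g)$ for $g\notin K$ (inequality \eqref{7.14}, with $\eta$ there taken to be our $\lambda$ and $\lambda$ there taken to be our $\mu$) gives $A_gf\le\tau_\mu(g)f$; since $f$ also satisfies \eqref{7.23}, Lemma \ref{Lemma 7.2}, applied with $\mu$ in place of $\lambda$, yields \eqref{7.38} with $B=Q$.

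\emph{Inductive step.} Fix $n\ge2$ and assume the assertion for all systems of length $<n$ and all target exponents $>\lambda$. Split $f=(f_0+f_n)+f_{\mathrm{int}}$ with $f_{\mathrm{int}}=\sum_{i=1}^{n-1}f_i$. The end functions carry no coupling, so $A_g(f_0+f_n)\le\tau_\lambda(g)(f_0+f_n)$ and Lemma \ref{Lemma 7.2} gives $(A_h(f_0+f_n))(1)\le Q(f_0+f_n)(1)\,\tau_\lambda(h)$ for all $h$. For the interior, re-index $\tilde f_i:=f_{i+1}$, $0\le i\le n-2$; a direct check shows the tent exponents obey $\min\{i+1,n-1-i\}=\min\{i,n-2-i\}+1$ in every case, so \eqref{7.36} for $\tilde f_i$ takes the form
\[
A_g\tilde f_i\le\tau_\lambda(g)\,\tilde f_i+M\!\!\max_{0<j\le\min\{i,n-2-i\}}\!\!\sqrt{\tilde f_{i-j}\,\tilde f_{i+j}}+M\,E_i,
\]
where $E_i$ is $\sqrt{f_0\,f_{k(i)}}$ or $\sqrt{f_{k'(i)}\,f_n}$, arising solely from the single extra value $j=\min\{i,n-2-i\}+1$, and always pairs one of $f_0,f_n$ with an index in $\{2,\dots,n\}$. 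An AM--GM split of these ``escaping'' terms with a small weight lets one dominate the $f_0,f_n$ parts by the end estimate above (producing an extra additive term of the form $b\,\tau_\lambda$) and reabsorb the remaining parts into $f_{\mathrm{int}}$. The result is that $\{\tilde f_i\}_{i=0}^{n-2}$ satisfies hypotheses of the same shape, with $M$ replaced by an explicit $M'(M,n)$ and an additional term proportional to $\tau_\lambda$; the inductive hypothesis (with target exponent $\lambda':=\tfrac12(\lambda+\mu)$), combined with Lemma \ref{Lemma 7.1} to absorb the $\tau_\lambda$-term (legitimate since $\lambda<\lambda'$), then gives $(A_hf_{\mathrm{int}})(1)\le B_{\mathrm{int}}\,f_{\mathrm{int}}(1)\,\tau_{\lambda'}(h)$. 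Adding the two pieces and using $\tau_\lambda(h)\le\tau_{\lambda'}(h)\le\tau_\mu(h)$ for every $h\in G$ (equalities on $K$ by bi-$K$-invariance, and \eqref{7.14} off $K$) yields \eqref{7.38} with $B=B(g,\lambda,\mu,Q,M,n)$.

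\emph{Where the difficulty lies.} The crux is the handling of the escaping couplings $\sqrt{f_0f_k}$, $\sqrt{f_kf_n}$ produced by the re-indexing: one must choose the AM--GM weight and the intermediate exponent $\lambda'$ so that these finitely many terms (a bounded number per inductive step) fit inside the available exponent gap $\mu-\lambda'>0$ \emph{without} destroying the structural form of the hypotheses for the shorter system, while allowing the constant $B$ to grow with $n$ (which is permitted). I also note that the naive alternative of replacing $f$ by a weighted sum $\sum_i\nu^{\bar i}f_i$ and applying Lemma \ref{Lemma 7.1} directly does not work: purely two-sided local coupling cannot improve the contraction exponent by itself, so the clean inequalities $A_gf_0\le\tau_\lambda(g)f_0$ and $A_gf_n\le\tau_\lambda(g)f_n$ for the two end functions must be exploited, which is precisely what the induction on $n$ does.
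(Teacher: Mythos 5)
Your route is genuinely different from the paper's; inducting on the chain length $n$ by peeling off the endpoints is a natural idea, but as written it has a real gap at the step ``reabsorb the remaining parts into $f_{\mathrm{int}}$''. After re-indexing, each interior $i$ acquires exactly one escaping coupling, $M\sqrt{f_0f_{2i+2}}$ or $M\sqrt{f_{2i+2-n}f_n}$, and after AM--GM one piece is an endpoint (tradeable for $b\,\tau_\lambda$ via Lemma \ref{Lemma 7.2}) while the other is a \emph{plain linear term} $\delta\,\tilde f_{k-1}$ in another interior function of the shorter system. This residual is neither of the coupling form $\max_j\sqrt{\tilde f_{i-j}\tilde f_{i+j}}$ required by your inductive hypothesis, nor an additive $b\tau_\eta$-term to which Lemma \ref{Lemma 7.1} applies, so the inductive hypothesis cannot be invoked for the re-indexed system as claimed. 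Nor does it help to crush it into $\tilde f$: the bound $\delta\,\tilde f_{k-1}\le\delta\tilde f$ contributes $(n-1)\delta\,\tilde f$ to $A_g\tilde f$, but summing the surviving couplings still produces the full $M(n-1)\tilde f$ with no mechanism to make that small relative to $\tau_\mu(g)-\tau_\lambda(g)$ --- which is precisely the obstruction this Lemma is designed to overcome. Already at $n=3$, the interior pair $\tilde f_0=f_1,\tilde f_1=f_2$ should become a decoupled $n'=1$ system, yet the escaping terms are $\sqrt{f_0\tilde f_1}$ and $\sqrt{\tilde f_0 f_3}$: the residual interior factor does not disappear for any choice of $\delta$.

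Your side remark also attacks the wrong weighting. For $\nu^{\bar i}$ the exponent $\bar i-\tfrac12(\overline{i-j}+\overline{i+j})$ can vanish (take $i,i\pm j$ all $\le n/2$), so indeed that choice fails. The paper instead weights by $\varepsilon^{q(i)}$ with $q(i)=i(n-i)$, whose strict concavity yields the exact identity $q(i)-\tfrac12\bigl(q(i-j)+q(i+j)\bigr)=j^2\ge1$, so \emph{every} coupling is depressed by $\varepsilon^{j^2}\le\varepsilon$; summing gives $A_gf_\varepsilon\le\bigl(\tau_\lambda(g)+\varepsilon M(n-1)\bigr)f_\varepsilon$, and once $\varepsilon$ is chosen so that $\tau_\lambda(g)+\varepsilon M(n-1)\le\tau_\mu(g)$, a single application of Lemma \ref{Lemma 7.2} completes the proof, with an explicit $B$. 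In particular the endpoints play no special role in the paper's argument --- they are simply the indices where $q$ vanishes --- so the intuition that ``two-sided local coupling cannot improve the contraction exponent by itself'' is misleading: the quadratic weight is exactly the device that makes it improve.
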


\begin{proof}
  For any $0 < \eps \leq 1$ and $0\leq i \leq n$ we define
  \begin{equation*}
    f_{i,\eps} = \eps^{q(i)} f_i \q \text{where}\,\, q(i) \defi i(n-i).
  \end{equation*}
  Using the inequality \eqref{7.36} for all $i$, $0\leq i\leq n$, we see that
  \begin{align*}
    A_gf_{i,\eps} = \eps^{q(i)} A_g f_i & \leq \eps^{q(i)} \tau_\lambda 
(g) f_i + \eps^{q(i)} M \max_{0 <j \leq \bar i} \sqrt{\eps^{-q(i-j)} f_{i-j,\eps} \eps^{-q(i+j)} f_{i+j,\eps}} \\
                                        & = \tau_\lambda (g) f_{i,\eps} 
+ M \max_{0<j\leq\bar i} \eps^{q(i)-\frac{1}{2}[q(i-j)+q(i+j)]} \sqrt{f_{i-j,\eps} f_{i+j,\eps}}.
  \end{align*}
  Direct computation shows that
  \begin{equation*}
    q(i) - \frac{1}{2} [q(i-j) + q(i+j)] = j^2.
  \end{equation*}
  Hence for all $i$, $0\leq i\leq n$,
  \begin{equation}
    \label{7.41}
    A_g f_{i,\eps} \leq \tau_\lambda (g) f_{i,\eps} + \eps M \max_{0<j\leq\bar i} \, \sqrt{f_{i-j,\eps} f_{i+j,\eps}}.
  \end{equation}
  Let $f_\eps \tdefi \sum_{0\leq i\leq n} f_{i,\eps}$. Summing \eqref{7.41} over all $i$, $0\leq i\leq n$, and using the inequalities $f_\eps > \sqrt{f_{i-j,\eps} \, f_{i+j,\eps}}$, which are satisfied for any $1\leq i\leq n-1$, $0 < j \leq \bar i$, we get
  \begin{equation}
    \label{7.42}
    A_g f_\eps = \sum_{0\leq i\leq n} A_g f_{i,\eps} \leq \tau_\lambda (g) f_\eps + \eps M (n-1)f_\eps =\left(\tau_\lambda (g) + \eps M (n-1)\right) f_\eps.
  \end{equation}
  Write
  \begin{equation*}
    \eps_0 = \min\left\{ 1, \frac{\tau_\mu (g) - \tau_\lambda (g)}{M(n-1)}\right\}
  \end{equation*}
  in order to get from \eqref{7.42} that
  \begin{equation*}
    A_g f_{\eps_{0}} \leq \tau_\mu (g) f_{\eps_{0}}.
  \end{equation*}
  Since $f_\eps$ also satisfies \eqref{7.23}, we can apply Corollary \ref{Lemma 7.2} to $f_{\eps_{0}}$ and get that
  \begin{equation*}
    (A_hf)(1) < \eps^{-n^{2}}_0 (A_h f_{\eps_{0}}) (1) \leq \eps^{-n^{2}}_0 f_{\eps_{0}} (1) \tau_\mu (h) \leq \eps^{-n^{2}}_0 B f(1)\tau_\mu (h)
  \end{equation*}
  for all $h\in \mathrm G$. Hence \eqref{7.38} is true with $C = \eps^{-n^{2}}_0 B$.
\end{proof}

\begin{proposition}
  \label{Prop-7.1}
  Let $g\in \mathrm G$, $g\notin \mathrm K$, $d\in\N^+$, $B>1$, $M>1$. For every $0\leq i\leq 2d$, let $\lambda_i\geq 2$ and let $f_i$ be a left $\mathrm K$-invariant positive continuous function on $\mathrm G$. We denote $\min\{ i, 2d-i\}$ by $\bar{i}$ and $\sum_{0\leq i\leq 2d} f_i$ by $f$. Assume that
  \begin{equation*}
    \lambda_d > \lambda_i \q \text{for any} \,\, i\neq d.
  \end{equation*}
  \begin{equation}
    \label{7.46}
    f_i(yh)\leq B f_i (h) \q \text{if} \q 0 \leq i \leq 2d, \ h,y \in \mathrm G \ \text{and} \ \norm{y} \leq \norm{g},
  \end{equation}
  \begin{equation}
    \label{7.47}
    A_gf_i \leq \tau_{\lambda_{i}}(g) f_i + M \max_{0<j\leq\bar i} \sqrt{f_{i-j} f_{i+j}}, \q 0\leq i\leq 2d,
  \end{equation}
  in particular,
  \begin{equation*}
    A_gf_0 \leq \tau_{\lambda_{0}} (g)f_0 \q \text{and} \q \ A_g f_{2d}\leq \tau_{\lambda_{2d}} (g) f_{2d}.
  \end{equation*}
  Then, using the notation $\ll$ (which until the end of the proof of this proposition means that the left hand side is bounded from above by the right-hand side multiplied by a constant which depends on $g, \lambda_0, \ldots , \lambda_{2d}, B$ and $M$, and does not depend on $f_0,\ldots ,f_{2d}$), we have that
  \begin{enumerate}
    \item For all $h\in \mathrm G$ and $0\leq i\leq 2d,\; i\neq d$,
          \begin{equation*}
            (A_hf_i)(1) = \int f_i (hk) \, \dif\sigma(k) \ll f(1) \tau_\eta (h),
          \end{equation*}
          where
          \begin{equation}
            \label{7.50}
            \eta = \lambda_d - 3^{-(d+1)} (\lambda_d - \eta') < \lambda_d, \q \eta' = \max\{\lambda_i :0\leq i\leq 2d, i\neq d\}.
          \end{equation}
    \item For all $h\in \mathrm G$
          \begin{equation*}
            (A_hf_d)(1) = \int_{\mathrm K} f_d(hk) \, \dif\sigma (k) \ll f(1) \tau_{\lambda_{d}} (h).
          \end{equation*}
    \item For all $h\in \mathrm G$
          \begin{equation*}
            (A_hf)(1) = \int_{\mathrm K} f(hk) \, \dif\sigma (k) \ll f(1) \norm{h}^{\lambda_{d}-2}.
          \end{equation*}
  \end{enumerate}
\end{proposition}

\begin{proof}
  (a) Let
  \begin{equation*}
    f_{i,\mathrm K}(h) \defi \int_{\mathrm K} f_i(hk) \, \dif\sigma (k), \q h\in \mathrm G.
  \end{equation*}
  The Cauchy-Schwarz inequality implies
  \begin{align*}
    \int_{\mathrm K} \sqrt{f_{i-j} (hk) f_{i+j} (hk)} \, \dif \sigma (k)
     & \leq \sqrt{\int_{\mathrm K} f_{i-j}(hk) \, \dif \sigma (k)} \, \sqrt{\int_{\mathrm K} f_{i+j} (hk) \, \dif \sigma (k)} \\
     & = \sqrt{f_{i-j,\mathrm K} (h) f_{i+j,\mathrm K} (h)}.
  \end{align*}
  Hence
  \begin{align*}
    \int_{\mathrm K} \max_{0<j\leq\bar i} \sqrt{f_{i-j} (hk) f_{i+j}(hk)} 
\, \dif \sigma (k)
     & \leq \sum_{0<j\leq\bar i} \int_{\mathrm K} \sqrt{f_{i-j} (hk) f_{i+j}(hk)} \, \dif \sigma (k) \\
     & \leq \sum_{0<j\leq\bar i} \sqrt{f_{i-j, \mathrm K}(h) f_{i+j, \mathrm K}(h)}                  \\
     & \leq d \max_{0<j\leq\bar i} \sqrt{f_{i-j, \mathrm K} (h) f_{i+j,\mathrm K} (h)}.
  \end{align*}
  On the other hand, we have
  \begin{equation*}
    (A_g f_{i,\mathrm K})(h) = \int_{\mathrm K} (A_g f_i)(hk) \, \dif\sigma (k)
  \end{equation*}
  and according to \eqref{7.47}
  \begin{equation*}
    (A_g f_i)(hk)\leq\tau_{\lambda_{i}} (g) f_i(hk) + M \max_{0<j\leq\bar 
i} \sqrt{f_{i-j} (hk) f_{i+j}(hk)}.
  \end{equation*}
  Therefore
  \begin{equation*}
    A_g f_{i,\mathrm K} \leq\tau_{\lambda_{i}} (g) f_{i,\mathrm K} + dM \max_{0<j\leq\bar i} \sqrt{f_{i-j,\mathrm K}f_{i+j,\mathrm K}}.
  \end{equation*}
  But $f_{\mathrm K}(1) = f(1)$,
  \begin{equation*}
    f_{i,\mathrm K} (h) = (A_hf_{i,\mathrm K}) (1) = (A_hf_i)(1)
  \end{equation*}
  and, as easily follows from \eqref{7.46}, we have
  \begin{equation*}
    f_{i,\mathrm K} (yh)\leq Bf_{i,\mathrm K} (h)
  \end{equation*}
  if $h,y\in \mathrm G$, and $\norm{y} \leq \norm{g}$. Thus, replacing $f_i$ by $f_{i,\mathrm K}$ and $M$ by $dM$, we can assume that the functions $f_i$ are bi-$\mathrm K$-invariant. Then we have to prove that
  \begin{equation}
    \label{7.53}
    f_i \ll f(1) \tau_\eta \q \text{for all} \,\, 0\leq i\leq 2d, \; i\neq d.
  \end{equation}
  Let $\eta' = \max\{ \lambda_i : 0\leq i\leq 2d, i\neq d\}$, as in \eqref{7.50}. We define $\mu_i$, $0\leq i\leq 2d$, by
  \begin{align}
    \label{7.54}
    \mu_d & = \lambda_d + 3^{-(d+1)} (\lambda_d -\eta') \q \text{and}   
           \\
    \label{7.55}
    \mu_i & = \mu_d -3^{-\bar i} (\lambda_d - \eta'), \ 0\leq i\leq 2d, 
\ i\neq d.
  \end{align}
  Since \eqref{7.14} implies $\tau_{\lambda_{i}} (g)\leq \tau_{\mu_{d}} (g)$, it follows from \eqref{7.14} and Lemma \ref{Lemma 7.3} that
  \begin{equation}
    \label{7.56}
    f_i \ll f(1)\tau_{\mu_{d}}, \q 0\leq i\leq 2d.
  \end{equation}
  One can easily check that $\eta > \mu_i > \lambda_i\geq 2$ and therefore $\tau_\eta \geq \tau_{\mu_{i}}$ for all $0\leq i\leq 2d, i\neq d$. Thus, to prove \eqref{7.53}, it is enough to show that
  \begin{equation}
    \label{7.57}
    f_i \ll f(1) \tau_{\mu_{i}} \q \text{for all} \ 0\leq i\leq 2d, \ i\neq d.
  \end{equation}
  We will prove \eqref{7.57} for $i\leq d-1$ by using induction in $i$; the proof in the case $i\geq d+1$ is similar. For $i=0$ we have $\tau_{\mu_{0}}(g) > \tau_{\lambda_{0}}(g)$ because of \eqref{7.14} and thus it is enough to use Corollary \ref{Lemma 7.2}. Let $1\leq m\leq d-1$ and assume that \eqref{7.57} is proved for all $i < m$. Using \eqref{7.56} for all $0 < j\leq m$ we find that
  \begin{equation}
    \label{7.58}
    \sqrt{f_{m-j}f_{m+j}} \ll f(1)\sqrt{\tau_{\mu_{m-j}} \tau_{\mu_{d}}} \leq f(1) \sqrt{\tau_{\mu_{m-1}} \tau_{\mu_{d}}} \ll f(1) \tau_{(\mu_{m-1}+\mu_{d})/2}.
  \end{equation}
  Note that the second inequality in \eqref{7.58} follows from \eqref{7.14} and \eqref{7.55}, and the third one follows from \eqref{7.15} and \eqref{7.20}.\par
  Combining \eqref{7.47} and \eqref{7.53} we get
  \begin{equation*}
    A_gf_m \leq\tau_{\lambda_{m}} (g) f_m + Cf(1) \tau_{(\mu_{m-1}+\mu_{d})/2},
  \end{equation*}
  where $C \ll 1$. On the other hand, we have $\lambda_m < \mu_m$ and $(\mu_{m-1} + \mu_d)/2 < \mu_m$ by \eqref{7.54} and \eqref{7.55}. Now, to prove that $f_m \ll f(1)\tau_{\mu_{m}}$, it remains to apply Lemma \ref{Lemma 7.1} combined with \eqref{7.14}.\par
  (b) As in the proof of (a), we can assume that the functions $f_i$ are bi-$K$-invariant. Then we get from \eqref{7.47} and \eqref{7.53} that
  \begin{equation*}
    A_gf_d\leq\tau_{\lambda_{d}} f_d + Df(1) \tau_\eta,
  \end{equation*}
  where $D \ll 1$. Since $\eta < \lambda_d$, Lemma \ref{Lemma 7.1} implies that $f_d \ll f(1)\tau_{\lambda_{d}}$ which proves (b).\par
  (c) Follows from (a), (b), \eqref{7.14}, \eqref{7.15} and \eqref{7.20}.
\end{proof}



\subsection{\texorpdfstring{Quasinorms and Representations of $\SL(2,\R)$}{Quasinorms and Representations of SL(2,R)}}
\label{s2}
We say that a continuous function $v\mapsto \abs{v}$ on a real topological vector space $V$ is a \textit{quasinorm} if it satisfies the following properties
\begin{enumerate}[label=(\roman*)]
  \item $\abs{v} \geq 0$ and $\abs{v} = 0$ if and only if $v=0$,
  \item $\abs{\lambda v} = \abs{\lambda} {\cdot} \abs{v}$ for all $\lambda\in\R$ and $v\in V$.
\end{enumerate}
If $V$ is finite dimensional, then any two quasinorms on $V$ are equivalent in the sense that their ratio lies between two positive constants.

\begin{lemma}
  \label{Lemma8.4}
  Let $\rho$ be a (continuous) representation of $\mathrm G = \SL(2,\R)$ in a real topological vector space $V$, let $\abs{\, \cdot \,}$ be a $\rho(\mathrm K)$-invariant quasinorm on $V$ and let $v\in V, v\neq 0$, be an eigenvector for $\rho$ corresponding to the character $\chi_{-r}, r\in\R$, that is
  \begin{equation*}
    \rho\begin{pmatrix} a & b \\ 0 & a^{-1}\end{pmatrix} v = a^rv.
  \end{equation*}
  Then for any $g\in \mathrm G$ and $\beta\in\R$
  \begin{equation}
    \label{8.1}
    \abs{\rho(g)v}^{-\beta} = \varphi_{\beta r}(g) \abs{v}^{-\beta}
  \end{equation}
  and
  \begin{equation}
    \label{8.2}
    \int_{\mathrm K} \frac{\dif\sigma(k)}{\abs{\rho(gk)v}^\beta} = \tau_{\beta r} (g) \abs{v}^{-\beta}.
  \end{equation}
\end{lemma}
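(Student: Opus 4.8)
The plan is to reduce both identities to the Iwasawa decomposition $g = k(g)\,t(g)$, with $k(g)\in K$ and $t(g)\in T$, combined with the definitions of $\chi_\lambda$ and $\varphi_\lambda$ given in this section together with the $K$-invariance of the quasinorm.

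First I would prove the pointwise identity \eqref{8.1}. Write $t(g) = \begin{pmatrix} a & b \\ 0 & a^{-1}\end{pmatrix}$. The eigenvector hypothesis gives $\rho(t(g))\,v = a^{r}\,v$, and since $\chi_{-r}(t(g)) = a^{r}$ this reads $\rho(t(g))\,v = \chi_{-r}(t(g))\,v$. Hence
$$\rho(g)\,v = \rho(k(g))\,\rho(t(g))\,v = \chi_{-r}(t(g))\,\rho(k(g))\,v.$$
Applying the quasinorm, using the homogeneity property (ii) together with the $K$-invariance $\vert\rho(k(g))\,v\vert = \vert v\vert$, and noting that $\chi_{-r}(t(g)) = a^{r} > 0$, we obtain $\vert\rho(g)\,v\vert = \chi_{-r}(t(g))\,\vert v\vert$. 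Raising to the power $-\beta$ and using $\chi_{-r}(t(g))^{-\beta} = \chi_{\beta r}(t(g)) = \varphi_{\beta r}(g)$ (the last equality by the definition of $\varphi_\lambda$) yields exactly \eqref{8.1}.

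Then \eqref{8.2} follows by applying \eqref{8.1} with $g$ replaced by $gk$ and integrating over $K$ against the normalized Haar measure $\sigma$: since $\vert v\vert^{-\beta}$ is a constant,
$$\int\limits_K \frac{d\sigma(k)}{\vert\rho(gk)\,v\vert^{\beta}} = \vert v\vert^{-\beta}\int\limits_K \varphi_{\beta r}(gk)\, d\sigma(k) = \tau_{\beta r}(g)\,\vert v\vert^{-\beta},$$
the last step being the definition \eqref{7.6} of $\tau_\lambda$. I do not expect a genuine obstacle here; the statement is a direct consequence of the structure already set up. The only points requiring a little care are the exponent bookkeeping linking the eigen-character $\chi_{-r}$ to the function $\varphi_{\beta r}$, and the use of the positivity of $\chi_{-r}(t(g))$ so that the homogeneity of the quasinorm may be applied without spurious absolute-value signs.
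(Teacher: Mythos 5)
Your proof is correct and follows essentially the same route as the paper: \eqref{8.1} via the Iwasawa decomposition, the $K$-invariance and homogeneity of the quasinorm, and the bookkeeping $\chi_{-r}(t(g))^{-\beta}=\chi_{\beta r}(t(g))=\varphi_{\beta r}(g)$; then \eqref{8.2} by replacing $g$ with $gk$ and integrating, using the definition \eqref{7.6} of $\tau_{\lambda}$. The only cosmetic difference is that you extract the scalar $\chi_{-r}(t(g))$ before applying the quasinorm, whereas the paper first invokes $K$-invariance to drop $\rho(k(g))$ and only then uses the eigenvector relation.
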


\begin{proof}
  Using the $\mathrm K$-invariance of $\vert\cdot\vert$ we get that
  \begin{align*}
    \abs{\rho (g)v}^{-\beta} & = \abs{\rho (k(g))\rho (t(g))v}^{-\beta} 
=\abs{\rho (t(g))v}^{-\beta} =\abs{\chi_{-r} (t(g))v}^{-\beta} = \chi_{\beta r}(t(g)) \abs{v}^{-\beta} \\
                             & = \varphi_{\beta r} (g) \abs{v}^{-\beta}.
  \end{align*}
  The equality \eqref{8.2} follows from \eqref{8.1} and from the definition of $\tau_{\beta r}(g)$.
\end{proof}

Let $\norm{z}$ denote the norm of $z\in\C^2$ corresponding to the standard Hermitian inner product on $\C^2$, that is
\begin{equation*}
  \norm{z}^2 = \norm{x}^2 + \norm{y}^2 \q \text{where} \ z = x + iy, \, x, y\in\R^2.
\end{equation*}

\begin{lemma}
  \label{Lemma 8.5}
  For any $z\in\C^2$, $z \neq 0$, $g\in G$ and $\beta > 0$, we have
  \begin{equation}
    \label{8.3}
    F(z) = F_{g,\beta}(z) \defi \Vert z \Vert^\beta \int_{\mathrm K} \frac{\dif \sigma (k)}{\Vert gkz\Vert^{\beta}} \leq \tau_\beta (g).
  \end{equation}
\end{lemma}

\begin{proof}
  Since the measure $\sigma$ on $\mathrm K$ is translation invariant, we have
  \begin{equation}
    \label{8.4}
    F(kz) = F(z) \,\, \text{for any} \,\, k\in \mathrm K.
  \end{equation}
  Also for all $\lambda\in \C$, $\lambda\neq 0$, and $z\in\C^2, z\neq 0$,
  \begin{equation}
    \label{8.5}
    F(\lambda z) = F(z),
  \end{equation}
  because $\norm{\lambda v} = \abs{\lambda} {\cdot} \norm{v}$, $v\in\C^2$, and because $\mathrm G = \SL(2,\R)$ acts $\C$-linearly on $\C^2$. Any non-zero vector $x\in\R^2$ can be represented as $x=\lambda ke_1$ with $\lambda\in\R$, $k\in \mathrm K$, $e_1 = (1,0)$. Then, using \eqref{7.9} from Section \ref{s1}, we get from \eqref{8.4} and \eqref{8.5} that
  \begin{equation}
    \label{8.6}
    F(x) = F(e_1) = \tau_\beta (g) \,\, \text{for all} \,\, x\in\R^2, 
x\neq 0.
  \end{equation}
  Let now $z = x+iy$, $x,y\in\R^2$, $z\neq 0$. We write $e^{i\theta} z = x_\theta + iy_\theta$, $x_\theta, y_\theta\in\R^2$. Then $\frac{\norm{x_\theta}}{\norm{y_\theta}}$ is a continuous function of $\theta$ with values in $\R_{\geq0}\cup\{\infty\}$. But $e^{i\pi /2}z = iz = -y+ ix$ 
and therefore $\frac{\norm{x_{\pi /2}}}{\norm{y_{\pi /2}}} = \left(\frac{\norm{x_0}}{\norm{y_0}}\right)^{-1}$. Hence there exists $\theta$ such that $\norm{x_\theta} = \norm{y_\theta}$. Replacing then $z$ by $e^{i\theta}z$ and using \eqref{8.5} we can assume that $\norm{x_\theta} = \norm{y_\theta}$. Now using the convexity of the function $t\to t^{-\beta /2}$, $t > 0$, and the identity \eqref{8.6} we get that
  \begin{equation}
    \label{8.7}
    \begin{aligned}
      \int_{\mathrm K} \frac{\dif\sigma (k)}{ \norm{gkz}^\beta} & =
      \int_{\mathrm K} \frac{\dif\sigma (k)}{(\norm{gkx}^2+\norm{gky}^2)^{\beta /2}}                                                               
 \\
      & \leq \frac{2^{-\beta /2}}{2} \left[\int_K \frac{\dif\sigma (k)}{\norm{gkx}^\beta}
        + \int_{\mathrm K} \frac{\dif\sigma (k)}{\norm{gky}^\beta}\right]
      = \frac{2^{-\beta /2}}{2} \left[ \frac{\tau_\beta (g)}{\norm{x}^\beta} + \frac{\tau_\beta (g)}{\Vert y\Vert^\beta}\right]                  
   \\
      & = 2^{-\beta /2} \tau_\beta (g) \frac{1}{\Vert x\Vert^\beta}
      =2^{-\beta /2} \tau_\beta (g)\cdot \frac{1}{\Vert z\Vert^\beta\cdot 2^{-\beta /2}}
      = \frac{\tau_\beta (g)}{\Vert z\Vert^\beta}.
    \end{aligned}
  \end{equation}
  Clearly the last inequality \eqref{8.7} implies \eqref{8.3}.
\end{proof}

\noindent Let us recall some basic facts of the finite-dimensional representation theory of $\mathrm{G} = \SL(2,\R)$. Let $W$ be a finite-dimensional complex vector space, there is a correspondence between complex-linear representations of $\mathfrak{sl}(2,\C)$ on $W$ and representations of $\mathrm G$ on $W$, under which invariant subspaces and equivalences are preserved (see \cite{knapp:2001}  Proposition 2.1). It is well-known that any finite-dimensional representation of $\mathfrak{sl}(2,\C)$ is fully reducible, that is, it can be decomposed into the direct sum of irreducible representations (see \cite{knapp:2002} Corollary 1.70). Moreover, for each $m \geq 1$ there exists up to equivalence a unique irreducible complex-linear representation of $\mathfrak{sl}(2,\C)$ on a complex vector space of dimension $m$ (see \cite{knapp:2002} Corollary 1.63).
Hence, any finite-dimensional representation of $\mathrm G$ is fully reducible and any two irreducible finite-dimensional representations of the same degree must be isomorphic.
Let $\mathcal{P}_m$ denote the $(m+1)$-dimensional complex vector space of complex polynomials in two variables homogeneous of degree $m$, and let 
$\psi_m$ denote the regular representation of $\mathrm G = \SL(2,\R)$ on $\mathcal{P}_m$ defined by $(\psi_m(g)P)(z) = P(g^{-1}z)$, for $g\in \mathrm G, z\in \C^2$ and $P\in\mathcal{P}_m$. It is well-known that the representation $\psi_m$ is irreducible for any $m$ (see \cite{kowalski:2014} Example 2.7.11) and hence it is, up to isomorphism, the unique irreducible finite-dimensional representation of $\mathrm G$ of degree $m$. We define
\begin{equation*}
  I(\rho ) = \{ \, m\in \N^+ \, : \, \psi_m \ \text{is isomorphic to a subrepresentation of} \ \rho\ \}.
\end{equation*}

\begin{proposition}
  \label{Proposition8.6}
  Let $\rho$ be a representation of $\mathrm G = \SL(2,\R)$ on a finite-dimensional space $W$. Then there exists a $\rho(\mathrm K)$-invariant quasinorm $\abs{\;\cdot \;} = \abs{\;\cdot\;}_\rho$ on $W$ such that for 
any $w\in W, w\neq 0$, $g\in \mathrm G$ and $\beta > 0$,
  \begin{equation*}
    \int_{\mathrm K} \frac{\dif\sigma (k)}{\abs{\rho (gk)w}^\beta}
    \leq \max_{m\in I(\rho)} \{\tau_{\beta m}(g)\} \frac{1}{\abs{w}^\beta}.
  \end{equation*}
\end{proposition}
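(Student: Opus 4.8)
The plan is to reduce the statement to the case where $\rho$ is irreducible, and then, for each irreducible representation $\psi_m$ on $\Cal P_m$, to build the required $K$-invariant quasinorm \emph{multiplicatively} from the norm on $\C^2$ furnished by Lemma~\ref{Lemma 8.5}. (All quasinorms and inequalities below are first set up on the complexifications $\Cal P_m\otimes\C$ and $W\otimes\C$; restriction to the real subspaces at the end yields the real quasinorm needed in the statement.) Since, as recalled above, every finite dimensional representation of $G=SL(2,\R)$ is fully reducible, fix a decomposition $W=\bigoplus_{j=1}^{N}W_j$ into $G$-invariant irreducible subspaces, say $W_j\cong\psi_{m_j}$ with $m_j\in I(\rho)$. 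Granting that each $\Cal P_m$ carries a $K$-invariant quasinorm $\vert\cdot\vert_m$ with
$$\int_K\frac{d\sigma(k)}{\vert\psi_m(gk)P\vert_m^{\beta}}\le\tau_{\beta m}(g)\,\vert P\vert_m^{-\beta}\qquad(P\neq0,\ g\in G,\ \beta>0),$$
I would transport these through $G$-isomorphisms $W_j\cong\Cal P_{m_j}$ and put $\vert w\vert_\rho:=\max_{1\le j\le N}\vert w_j\vert_{m_j}$, where $w=\sum_j w_j$; this is a $K$-invariant quasinorm since the decomposition is $K$-invariant. For given $g,\beta$ and $w\neq0$, choose $j_0$ with $\vert w_{j_0}\vert_{m_{j_0}}=\vert w\vert_\rho$; since $\vert\rho(gk)w\vert_\rho\ge\vert\rho(gk)w_{j_0}\vert_{m_{j_0}}$ for every $k\in K$,
$$\int_K\frac{d\sigma(k)}{\vert\rho(gk)w\vert_\rho^{\beta}}\le\int_K\frac{d\sigma(k)}{\vert\rho(gk)w_{j_0}\vert_{m_{j_0}}^{\beta}}\le\tau_{\beta m_{j_0}}(g)\,\vert w_{j_0}\vert_{m_{j_0}}^{-\beta}\le\max_{m\in I(\rho)}\tau_{\beta m}(g)\cdot\vert w\vert_\rho^{-\beta},$$
which is the assertion. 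So everything reduces to the irreducible case.

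For that case I would first note that $\psi_1$ on $\Cal P_1$ is $G$-isomorphic to the standard representation of $G$ on $\C^2$, by an isomorphism which is an isometry for the standard Hermitian norm; taking that norm (transported to $\Cal P_1$) as $\vert\cdot\vert_1$, Lemma~\ref{Lemma 8.5} gives directly
$$\int_K\frac{d\sigma(k)}{\vert\psi_1(gk)\ell\vert_1^{\beta}}\le\tau_{\beta}(g)\,\vert\ell\vert_1^{-\beta}\qquad(\ell\neq0,\ g\in G,\ \beta>0).$$
Then I would define $\vert\cdot\vert_m$ multiplicatively: every nonzero $P\in\Cal P_m\otimes\C$ factors over $\C$ into $m$ linear forms, $P=\ell_1\cdots\ell_m$, uniquely up to permutation and up to rescalings $\ell_i\mapsto\lambda_i\ell_i$ with $\prod_i\lambda_i=1$, so $\vert P\vert_m:=\prod_{i=1}^m\vert\ell_i\vert_1$ is well defined. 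It is absolutely homogeneous and vanishes only at $P=0$; it is $K$-invariant because $\psi_m(k)$, $k\in K$, acts on the linear factors and $\vert\cdot\vert_1$ is $K$-invariant; and it is continuous because it is equivalent to the coefficient norm on $\Cal P_m\otimes\C$ (the forms $\ell_1\cdots\ell_m$ with $\vert\ell_i\vert_1=1$ run over a compact subset of $\Cal P_m\otimes\C$ not containing $0$, which gives the two-sided bound). The decisive property is multiplicativity: $\vert PQ\vert_{m+m'}=\vert P\vert_m\,\vert Q\vert_{m'}$, and, because $\psi_m(h)(\ell_1\cdots\ell_m)=(\psi_1(h)\ell_1)\cdots(\psi_1(h)\ell_m)$, also $\vert\psi_m(h)P\vert_m=\prod_i\vert\psi_1(h)\ell_i\vert_1$.

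The averaging inequality on $\Cal P_m$ I would prove by induction on $m$, the case $m=1$ being the displayed bound above. For $m\ge2$, factor $P=\ell\cdot Q$ with $\ell$ linear and $Q\in\Cal P_{m-1}\otimes\C$; multiplicativity gives $\vert\psi_m(gk)P\vert_m=\vert\psi_1(gk)\ell\vert_1\cdot\vert\psi_{m-1}(gk)Q\vert_{m-1}$, so Hölder's inequality on $(K,\sigma)$ with exponents $m$ and $m/(m-1)$ yields
$$\int_K\frac{d\sigma(k)}{\vert\psi_m(gk)P\vert_m^{\beta}}\le\Bigl(\int_K\frac{d\sigma(k)}{\vert\psi_1(gk)\ell\vert_1^{\beta m}}\Bigr)^{1/m}\Bigl(\int_K\frac{d\sigma(k)}{\vert\psi_{m-1}(gk)Q\vert_{m-1}^{\beta m/(m-1)}}\Bigr)^{(m-1)/m}.$$
Applying the case $m=1$ at exponent $\beta m$ to the first factor and the inductive hypothesis at exponent $\beta m/(m-1)$ to the second, both return the single character index $\beta m$, and the product of the two estimates is $\tau_{\beta m}(g)^{1/m}\vert\ell\vert_1^{-\beta}\cdot\tau_{\beta m}(g)^{(m-1)/m}\vert Q\vert_{m-1}^{-\beta}=\tau_{\beta m}(g)\,\vert P\vert_m^{-\beta}$, closing the induction; restricting to real $P$ then finishes the proof.

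I expect the argument to be essentially mechanical once organized this way. The two points that genuinely need care are the continuity of the multiplicative quasinorm $\vert\cdot\vert_m$ — which I would settle via its equivalence with the coefficient norm, using the compactness observation above — and the precise choice of Hölder exponents in the inductive step: they are forced by the requirement that the base case at exponent $\beta m$ and the inductive hypothesis at exponent $\beta m/(m-1)$ both return exactly the index $\beta m$, which is what produces the quantity $\max_{m\in I(\rho)}\tau_{\beta m}(g)$ in the statement. I do not anticipate a deeper obstacle.
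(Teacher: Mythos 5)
Your proof is correct and follows essentially the same route as the paper: reduce to the irreducible case via the $\max$ quasinorm on a direct-sum decomposition, then on each $\mathcal P_m$ build the quasinorm multiplicatively from the factorization into linear forms over $\C$ and invoke Lemma~\ref{Lemma 8.5}. The only (cosmetic) difference is that you combine the $m$ linear factors by induction using two-term H\"older with exponents $m$ and $m/(m-1)$, whereas the paper applies the $m$-term generalized H\"older inequality in a single step in \eqref{8.13}; these are the same estimate, organized differently.
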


\begin{proof}
  Let $W =\bigoplus_{i=1}^n W_i$ be the decomposition of $W$ into the 
direct sum of $\rho(\mathrm G)$-irreducible subspaces, and let $\pi_i \colon W\to W_i$ denote the natural projection. Suppose that we constructed for each $i$ a $\mathrm K$-invariant quasinorm $\vert\cdot\vert_i = \vert\cdot\vert_{\rho_{i}}$ on $W_i$ such that for any $w\in W_i, w\neq 0, g\in \mathrm G$, and $\beta > 0$,
  \begin{equation}
    \label{8.9}
    \int_{\mathrm K} \frac{\dif\sigma(k)}{\abs{\rho_i (gk)w}^\beta_{i}} \leq \tau_{\beta m(i)} (g) \, \frac{1}{ \abs{w}^\beta_i },
  \end{equation}
  where $\rho_i$ denotes the restriction of $\rho$ to $W_i$ and $m(i)\in I(\rho )$ is defined by the condition that $\psi_{m(i)}$ is isomorphic to 
$\rho_i$. Then we define $\vert w\vert = \vert w\vert_\rho$ by
  \begin{equation}
    \label{8.10}
    \vert w \vert = \max_{1\leq i\leq n} \vert\pi_i (w)\vert_i, \,\, w\in W.
  \end{equation}
  Clearly $\vert\cdot\vert_\rho$ is a $\mathrm K$-invariant quasinorm. Let us fix now $w\in W, w\neq 0$. Then
  \begin{align*}
    \int_{\mathrm K} \frac{\dif\sigma (k)}{\abs{\rho (gk)w}^\beta}
     & \le \min_{1\leq i \leq n} \int_{\mathrm  K} \frac{\dif\sigma (k)}{\abs{\pi_i (\rho (gk)w)}^\beta_i}
    = \min_{1\leq i \leq n} \int_{\mathrm K} \frac{\dif\sigma (k)}{\abs{\rho_i(gk)\pi_i(w)}^\beta_i}                                             
                \\
     & \leq \min_{1\leq i \leq n} \tau_{\beta m(i)} (g) \frac{1}{\abs{\pi_i(w)}^\beta_i} \le \max_{m\in I(\rho )} \{\tau_{\beta m}(g)\} \frac{1}{\abs{w}^\beta}.
  \end{align*}
  Thus, it is enough to prove the proposition for representations $\psi_m$. For this, let $P\in \mathcal{P}_m, P\neq 0$. We consider $P$ as a polynomial on $\C^2$ and decompose $P$, using the fundamental theorem of algebra, into the product of $m$ linear forms
  \begin{equation*}
    P = \ell_1\cdot\ldots\cdot\ell_m, \q \text{where} \q  \ell_i (z_1,z_2) = a_iz_1 + b_iz_2, \q a_i,b_i, z_1,z_2\in\C.
  \end{equation*}
  There is a natural $\mathrm K$-invariant norm on the space of linear forms on $\C^2$:
  \begin{equation*}
    \norm{\ell}^2 = \vert a\vert^2 + \vert b\vert^2, \,\, \ell (z_1,z_2) = az_1 + bz_2.
  \end{equation*}
  Now we define a quasinorm on $\mathcal{P}_m$ by the equation
  \begin{equation}
    \label{8.12}
    \abs{P} = \norm{\ell_1} \cdot \ldots\cdot \norm{\ell_m}.
  \end{equation}
  This definition is correct because the factorization \eqref{8.12} is unique up to the order of factors and the multiplication of $\ell_i$, $1\leq i\leq n$, by constants. We denote by $\tilde\psi_1$ the extension of $\psi_1$ to the space of linear forms on $G$. It is isomorphic to the standard representation of $\mathrm G$ on $\C^2$. Then using Lemma \ref{Lemma 8.5} and the generalized H\"older inequality, we get that
  \begin{equation}
    \label{8.13}
    \begin{aligned}
      \int_{\mathrm  K} \frac{\dif\sigma (k)}{\vert\psi_m(gk)P\vert^\beta}
      = \int_{\mathrm K} \frac{\dif\sigma (k)} {\prod_{i=1}^m \norm{\tilde\psi_1(gk) \ell_i}^\beta}
       & \leq \prod_{i=1}^m \left(\int_{\mathrm K} \frac{\dif\sigma (k)}{\Vert\tilde\psi_1 (gk)\ell_i\Vert^{\beta m}} \right)^{1/m}              
 \\
       & \leq \prod_{i=1}^m \left(\frac{\tau_{\beta m}(g)}{ \norm{\ell_i}^{\beta m}}\right)^{1/m} = \frac{\tau_{\beta m}(g)}{\vert P\vert^\beta}.
    \end{aligned}
  \end{equation}
  Since $I(\psi_m) = \{ m\}$, \eqref{8.13} implies \eqref{8.9} for $\rho = \psi_m$.
\end{proof}

We recall from Section \ref{s1}, see \eqref{7.13} and \eqref{7.14}, that $\tau_\mu (g) < 1$ and $\tau_\eta (g)< \tau_\lambda (g)$ for any $g\notin 
\mathrm  K$, $0 < \mu < 2$, $\lambda \geq 2$ and $0 < \eta < \lambda$. Using this, we deduce from the previous Proposition \ref{Proposition8.6} the following corollary.

\begin{corollary}
  \label{corollary8.7}
  Let $\rho$ be a representation of $\mathrm  G = \SL(2,\R)$ in a finite dimensional space $W$, and let $m$ be the largest number in $I(\rho )$. 
Then there exists a $\rho(\mathrm  K)$-invariant quasinorm $\vert\cdot\vert = \vert\cdot\vert_\rho$ on $W$ such that
  \begin{enumerate}[label=(\roman*)]
    \item if $\beta > 0$ and $\beta m\geq 2$ then for any $w\in W$, $w\neq 0$, and $g\in \mathrm  G$
          \begin{equation*}
            \int_{\mathrm  K} \frac{\dif\sigma (k)}{\vert\rho (gk)w\vert^\beta} \leq \tau_{\beta m} (g) \frac{1}{\vert w\vert^\beta},
          \end{equation*}
    \item if $\beta > 0$ and $\beta m < 2$ then for any $w\in W$, $w\neq 0$, and $g\in \mathrm  G$, $g\notin \mathrm  K$,
          \begin{equation*}
            \int_{\mathrm  K} \frac{\dif\sigma (k)}{\vert\rho (gk)w\vert^\beta} < \frac{1}{\vert w\vert^\beta}.
          \end{equation*}
  \end{enumerate}
\end{corollary}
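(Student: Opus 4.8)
The plan is to take for $\vert\cdot\vert_\rho$ exactly the $K$-invariant quasinorm produced by Proposition \ref{Proposition8.6}, and then read off both assertions from the bound
\[
\int_K \frac{d\sigma(k)}{\vert\rho(gk)w\vert^\beta}\ \le\ \Bigl(\max_{n\in I(\rho)}\tau_{\beta n}(g)\Bigr)\frac{1}{\vert w\vert^\beta}
\]
supplied by that proposition, once one understands how $\max_{n\in I(\rho)}\tau_{\beta n}(g)$ behaves according as $\beta m\ge 2$ or $\beta m<2$. Thus the whole argument reduces to the elementary properties of the function $\lambda\mapsto\tau_\lambda(g)$ recorded in \eqref{7.11}, \eqref{7.13} and \eqref{7.14} of Section \ref{s1}, together with the remark that $m=\max I(\rho)$.

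First I would dispose of the case $g\in K$: then $g$ acts on $\R^2$ by a rotation, so $\Vert gu\Vert=1$ for every $u\in S^1$ and hence $\tau_\lambda(g)=1$ for all $\lambda$ by \eqref{7.9}; both inequalities are then immediate, the strict one in (ii) being required only for $g\notin K$. So from now on assume $g\notin K$.

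For (i), assume $\beta>0$ and $\beta m\ge 2$. Since $m$ is the largest element of $I(\rho)$, every $n\in I(\rho)$ satisfies $0<\beta n\le\beta m$. If $\beta n\ge 2$, then \eqref{7.14} applied with $\lambda=\beta m$ and $\eta=\beta n$ gives $\tau_{\beta n}(g)\le\tau_{\beta m}(g)$, with equality precisely when $n=m$; if $\beta n<2$, then \eqref{7.13} gives $\tau_{\beta n}(g)<1$, while $\tau_{\beta m}(g)\ge 1$ — equal to $1$ when $\beta m=2$ by \eqref{7.11}, and $>1$ when $\beta m>2$ by \eqref{7.13}. In all cases $\max_{n\in I(\rho)}\tau_{\beta n}(g)=\tau_{\beta m}(g)$, and the estimate of Proposition \ref{Proposition8.6} becomes exactly the one asserted in (i). For (ii), assume $\beta m<2$; then $0<\beta n<2$ for every $n\in I(\rho)$, so \eqref{7.13} yields $\tau_{\beta n}(g)<1$ for each such $n$, and since $I(\rho)$ is finite, $\max_{n\in I(\rho)}\tau_{\beta n}(g)<1$, which gives the strict bound in (ii).

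I do not expect any genuine obstacle: the substantive content has already been packaged into Proposition \ref{Proposition8.6} (via Lemma \ref{Lemma8.4}, Lemma \ref{Lemma 8.5} and the H\"older estimate \eqref{8.13}), so what remains is only the bookkeeping across the three regimes $\beta n<2$, $\beta n=2$, $\beta n>2$ for the auxiliary function $\tau_{\beta n}(g)$, plus the observation that for $g\notin K$ the map $\lambda\mapsto\tau_\lambda(g)$ is strictly increasing on $[2,\infty)$, so its maximum over the finite set $I(\rho)$ is attained at the top index $m$.
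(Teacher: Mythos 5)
Your proof is correct and takes essentially the same route as the paper: invoke Proposition \ref{Proposition8.6} with its quasinorm, then identify $\max_{n\in I(\rho)}\tau_{\beta n}(g)$ using the monotonicity and convexity facts \eqref{7.11}, \eqref{7.13}, \eqref{7.14}. The paper compresses this into one sentence; you have usefully spelled out the case $g\in K$ in (i), where both sides equal $1/\vert w\vert^\beta$ by $K$-invariance of the quasinorm and $\tau_\lambda(g)=1$, which is needed since (i), unlike (ii), does not exclude $g\in K$.
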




\subsection{\texorpdfstring{Functions $\alpha_i$ on the Space of Lattices 
and Estimates for $A_h\alpha_i$}{Functions alpha on the Space of Lattices 
and corresponding Estimates}}
\label{s3}
Let $\rho$ be a representation of $\mathrm G = \SL(2,\R)$ on $\R^n$ and for each $1 \leq i \leq n$ let $| \, \cdot \, |_i$ be a $(\wedge^i\rho )(\mathrm  K)$-invariant quasinorm on the exterior product $\largewedge^i \R^n$. Throughout this section the underlying quasinorms in the definition of the lattice functions $\alpha_i$ and $\alpha$ are taken to be with respect to this particular choice of quasinorms (see \eqref{9.2} and \eqref{9.3}).
For every compact subset $A \subset \mathrm G$ note that
\begin{align*}
   & \sup\left\{\frac{\vert(\wedge^i\rho )(h)v\vert_i}{\vert v\vert_i} : h\in A, v\in \largewedge^i\R^n, v\neq 0\right\} \\
   & =\sup \{\vert (\wedge^i\rho )(h)v\vert_i : h\in A, v\in \largewedge^i \R^n, \vert v\vert_i = 1\}
\end{align*}
is finite for every $i$, $1\leq i\leq n$. Hence, if we fix $g\in \mathrm  G, g\notin \mathrm K$, then there exists some $B > 1$ such that for
any $i$, $1\leq i\leq n$, and $v \in \largewedge^i \R^n$, $v\neq 0$,
\begin{equation}
  \label{9.4}
  B^{-1} < \frac{\abs{(\wedge^i\rho )(y)v}_i}{\abs{v}_i} < B \q \text{if} 
\ y\in \mathrm  G \ \text{and} \ \norm{y} \leq \norm{g},
\end{equation}
where $\norm{h} = \norm{h^{-1}}$ denotes the norm of $h \in \mathrm  G = \SL(2,\R)$ with respect to the standard Euclidean norm on $\R^2$. Now, let $\Delta$ be a lattice in $\R^n$ and $L$ a $\Delta$-rational subspace. For any $h \in \SL(2,\R)$ observe that $h L$ is an $h\Delta$-rational subspace and if $v_1, \dots, v_i$ is a basis of $\Delta \cap L$ then $h v_1, \dots, h v_i$ is a basis of $h\Delta \cap h L$. This observation together with \eqref{9.4} implies that
\begin{equation}
  \label{9.5}
  B^{-1} < \frac{d_{y\Delta}(yL)}{d_\Delta (L)} < B \q \text{if} \ y\in \mathrm  G \ \text{and} \ \norm{y} \leq \norm{g}.
\end{equation}
Hence, for any $i \in \{ 0, \ldots, n\}$ it follows that
\begin{equation}
  \label{9.6}
  \alpha_i (y\Delta) < B \alpha_i (\Delta ) \q \text{if} \ y\in  \mathrm  
G \ \text{and} \ \norm{y} \leq \norm{g}.
\end{equation}
For any $\beta > 0$ and $1\leq i\leq n$ we define the functions $F_{i,\beta}$ on $\largewedge^i \R^n \setminus \{ 0\}$ by
\begin{equation*}
  F_{i,\beta} (w) \defi \int_K \frac{\abs{w}_i^\beta}{\abs{ (\wedge^i\rho 
)(gk)w}_i^\beta} \, \dif \sigma (k), \quad w\in \wedge^i\R^n, w\neq 0.
\end{equation*}
It is clear that the functions $F_{i,\beta}$ are continuous and that $F_{i,\beta}(\lambda w) = F_{i,\beta}(w)$ for any $\lambda\in\R$, $\lambda\neq 0$. Let $c_{0,\beta}\tdefi 1$ and for $1 \leq i \leq n$
\begin{equation}
  \label{9.8}
  c_{i,\beta} \defi \sup\{ F_{i,\beta}(w): w\in\largewedge^i\R^n, w\neq 0\} = \sup\{ F_{i,\beta} (w): w\in\largewedge^i\R^n, \vert w\vert_i = 1\}.
\end{equation}
We note that $c_{n,\beta} = 1$, since the image of any continuous homomorphism $\SL(2,\R) \rightarrow \GL(n,\R)$ is contained in $\SL(n,\R)$ and thus $\abs{ (\wedge^n\rho )(gk)w}_n = \abs{\det(\wedge^n\rho (gk))} \abs{w}_n = \abs{w}_n$.

\begin{lemma}
  \label{Lemma 9.7}
  For any $i$, $0\leq i\leq n$,
  \begin{equation}
    \label{9.9}
    A_g \alpha^\beta_i\leq c_{i,\beta} \alpha^\beta_i + C^\beta B^{2\beta} \max_{0 < j\leq\bar i} \sqrt{\alpha^\beta_{i-j} \alpha^\beta_{i+j}},
  \end{equation}
  where $\bar{i} = \min\{ i,n-i\}$, the constant $C\geq 1$ is from Lemma \ref{Lemma 9.6} and the operator $A_g$ is defined by \eqref{7.4} from Section \ref{s1}.
\end{lemma}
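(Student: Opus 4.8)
The plan is to fix a lattice $\Delta\subset\R^n$ and, unwinding the definition \eqref{7.4}, to estimate
\[
(A_g\alpha_i^\beta)(\Delta)=\int_K\alpha_i(gk\Delta)^\beta\,d\sigma(k).
\]
Throughout I would use that for an $i$-dimensional $\Delta$-rational subspace $L$, with $w_L=u_1\wedge\cdots\wedge u_i$ for a $\Z$-basis $u_1,\dots,u_i$ of $L\cap\Delta$, one has $|w_L|_i=d_\Delta(L)$ and $d_{gk\Delta}(gkL)=|(\wedge^i\rho)(gk)w_L|_i$, so that $\alpha_i(gk\Delta)^{-1}=\inf_L|(\wedge^i\rho)(gk)w_L|_i$, the infimum over $i$-dimensional $\Delta$-rational $L$ (attained, the values $d_\Delta(L)$ being discrete and bounded below). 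The extreme cases $i=0$ and $i=n$ are immediate, since there $\alpha_i$ is $A_g$-invariant, $c_{i,\beta}=1$ and $\bar i=0$; so assume $1\le i\le n-1$, hence $\bar i\ge1$.

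First I would fix an $i$-dimensional $\Delta$-rational $L^*$ with $d_\Delta(L^*)=\alpha_i(\Delta)^{-1}$ minimal, and split $K$ into the Borel sets $K_g=\{k\in K:\ |(\wedge^i\rho)(gk)w_{L^*}|_i=\alpha_i(gk\Delta)^{-1}\}$ and $K_b=K\setminus K_g$; here $K_b$ is measurable because $k\mapsto|(\wedge^i\rho)(gk)w_{L^*}|_i$ is continuous and $k\mapsto\alpha_i(gk\Delta)^{-1}$ is a countable infimum of continuous functions. On $K_g$ the subspace $L^*$ still realises $\alpha_i$, so there $\alpha_i(gk\Delta)=|(\wedge^i\rho)(gk)w_{L^*}|_i^{-1}$, and by the definition \eqref{9.8} of $c_{i,\beta}$ (applied to $w=w_{L^*}$),
\[
\int_{K_g}\alpha_i(gk\Delta)^\beta\,d\sigma(k)\ \le\ \int_K\frac{d\sigma(k)}{|(\wedge^i\rho)(gk)w_{L^*}|_i^{\beta}}\ =\ \frac{F_{i,\beta}(w_{L^*})}{|w_{L^*}|_i^{\beta}}\ \le\ \frac{c_{i,\beta}}{d_\Delta(L^*)^{\beta}}\ =\ c_{i,\beta}\,\alpha_i(\Delta)^\beta .
\]
This is the main term, and it needs no Diophantine input.

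The heart of the matter is the contribution of $K_b$, and here the plan is to show that the mere nonemptiness of $K_b$ already forces $\alpha_i(\Delta)$ itself to be small. If $k\in K_b$ there is an $i$-dimensional $\Delta$-rational $L\ne L^*$ with $d_{gk\Delta}(gkL)\le d_{gk\Delta}(gkL^*)$; set $s=i-\dim(L\cap L^*)$, so $1\le s\le\bar i$ and $\dim(L+L^*)=i+s$. Applying the submultiplicativity inequality of Lemma \ref{Lemma 9.6} to $gkL$ and $gkL^*$ inside the lattice $gk\Delta$, and feeding in the distortion estimate \eqref{9.5} to replace $d_{gk\Delta}(gk(L\cap L^*))$, $d_{gk\Delta}(gk(L+L^*))$ and $d_{gk\Delta}(gkL^*)$ by their values at $\Delta$ up to the factor $Q$ (and then the defining inequalities $d_\Delta(L\cap L^*)\ge\alpha_{i-s}(\Delta)^{-1}$, $d_\Delta(L+L^*)\ge\alpha_{i+s}(\Delta)^{-1}$, $d_\Delta(L^*)=\alpha_i(\Delta)^{-1}$), a short computation gives $\alpha_i(\Delta)^2\le C^2Q^4\,\alpha_{i-s}(\Delta)\,\alpha_{i+s}(\Delta)$. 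Combining this with the crude bound $\alpha_i(gk\Delta)\le Q\alpha_i(\Delta)$ from \eqref{9.6}, we get for \emph{every} $k\in K$
\[
\alpha_i(gk\Delta)^\beta\ \le\ (CQ^{3})^{\beta}\,\max_{0<j\le\bar i}\sqrt{\alpha_{i-j}(\Delta)^\beta\,\alpha_{i+j}(\Delta)^\beta},
\]
so, since $\sigma(K_b)\le1$, integrating over $K_b$ and adding the $K_g$-estimate yields \eqref{9.9} (the surplus power of $Q$ is immaterial for the sequel; it can be reduced to the stated one by balancing the last display against the companion inequality $\alpha_i(gk\Delta)\le C^2Q\,\alpha_{i-s}(gk\Delta)\alpha_{i+s}(gk\Delta)/\alpha_i(\Delta)$ coming from the same application of Lemma \ref{Lemma 9.6}, via $\min(x,y/x)\le\sqrt{y}$). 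I expect this $K_b$-step to be the delicate point: one has to convert the local failure of optimality of $L^*$ at $gk\Delta$ into genuine lower bounds for $d$ at the neighbouring dimensions $i\pm s$, and then recover precisely the \emph{symmetric} combination $\sqrt{\alpha_{i-j}\alpha_{i+j}}$ demanded by \eqref{9.9}, rather than the lopsided ratio $\alpha_{i-j}\alpha_{i+j}/\alpha_i$ that Lemma \ref{Lemma 9.6} produces on its own.
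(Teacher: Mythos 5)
Your proof is correct and reaches \eqref{9.9} by essentially the same mechanism the paper uses; the only real difference is how the case distinction is organized. The paper fixes the minimizer $L$ for $\alpha_i(\Delta)$ and introduces the set $\Psi_i$ of $i$-dimensional $\Delta$-rational $M$ with $d_\Delta(M) < Q^2 d_\Delta(L)$: if $\Psi_i = \{L\}$, then \eqref{9.5} shows $L$ realizes $\alpha_i(gk\Delta)$ for \emph{every} $k$, giving the main term via the definition \eqref{9.8} of $c_{i,\beta}$; if $\Psi_i$ contains some $M\neq L$, Lemma \ref{Lemma 9.6} applied \emph{in} $\Delta$ together with $d_\Delta(M)<Q^2 d_\Delta(L)$ gives $\alpha_i(\Delta)\le CQ^2\sqrt{\alpha_{i-j}(\Delta)\alpha_{i+j}(\Delta)}$, which through \eqref{9.6} controls $\alpha_i(gk\Delta)$ uniformly in $k$. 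You instead partition $K=K_g\cup K_b$ by whether $L^*$ stays optimal at $gk\Delta$, but your $K_b$-step reduces precisely to the paper's second case: a witness $k\in K_b$ with competing $L$ forces, via \eqref{9.5}, $d_\Delta(L)<Q^2 d_\Delta(L^*)$, so $L\in\Psi_i\setminus\{L^*\}$, and Lemma \ref{Lemma 9.6} yields the same quadratic inequality in the $\alpha$'s. Thus the two dichotomies (one on $\Delta$, one on $K$) coincide in substance, and the larger power of $Q$ you incur (from applying Lemma \ref{Lemma 9.6} in $gk\Delta$ rather than directly in $\Delta$) is harmless, as you correctly observe.
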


\begin{proof}
  Let $\Delta$ be a lattice in $\R^n$. We have to prove that
  \begin{equation}
    \label{9.10}
    \int_{\mathrm K} \alpha_i (gk\Delta)^\beta \, \dif\sigma (k) \leq c_{i,\beta} \alpha_i(\Delta)^\beta +C^\beta B^{2\beta} \max_{0<j\leq\bar i} \sqrt{\alpha_{i-j} (\Delta )^\beta \alpha_{i+j} (\Delta )^\beta}.
  \end{equation}
  According to Remark \ref{remark_existence_alpha_l} there exists a $\Delta$-rational subspace $L$ of dimension $i$ such that
  \begin{equation}
    \label{9.11}
    \frac{1}{d_\Delta (L)} = \alpha_i (\Delta ).
  \end{equation}
  Let us denote the set of $\Delta$-rational subspaces $M$ of dimension $i$ with $d_\Delta (M) < B^2 d_\Delta (L)$ by $\Psi_i$. For a $\Delta$-rational $i$-dimensional subspace $M\notin \Psi_i$ we get from \eqref{9.5} that
  \begin{equation*}
    d_{gk\Delta} (gkM) > d_{gk\Delta} (gkL).
  \end{equation*}
  If $\Psi_i = \{ L\}$, then it follows from this and the definitions of $\alpha_i$ and $c_{i,\beta}$ that
  \begin{equation}
    \label{9.12}
    \int_{\mathrm  K} \alpha_i (gk\Delta )^\beta \, \dif\sigma (k) \leq c_{i,\beta} \alpha_i (\Delta )^\beta.
  \end{equation}
  Assume now that $\Psi_i\neq\{ L\}$. Let $M\in\Psi_i$, $M\neq L$. Then $\dim (M+L) = i+j$, $0 < j\leq\bar i$. Now we obtain by \eqref{9.5}, \eqref{9.11} and Lemma \ref{Lemma 9.6} for any $k\in K$ that
  \begin{align*}
    \alpha_i (gk\Delta ) < B \alpha_i (\Delta ) = \frac{B}{d_\Delta (L)} \leq \frac{B^2}{\sqrt{d_\Delta (L)d_\Delta (M)}} & \leq \frac{CB^2}{\sqrt{d_\Delta (L\cap M)d_\Delta (L+M)}} \\  & \leq CB^2 \sqrt{\alpha_{i-j} (\Delta ) \alpha_{i+j} (\Delta)}.
  \end{align*}
  Hence, if $\Psi_i\neq \{ L\}$,
  \begin{equation}
    \label{9.14}
    \int_{\mathrm  K} \alpha_i (gk\Delta )^\beta \, \dif\sigma (k) \leq C^\beta B^{2\beta} \max_{0<j\leq\bar i} \sqrt{\alpha_{i-j} (\Delta )^\beta 
\alpha_{i+j}(\Delta )^\beta}.
  \end{equation}
  Combining \eqref{9.12} and \eqref{9.14}, we get \eqref{9.10}.
\end{proof}

\begin{theorem}
  \label{GM}
  Let $d\in\N^+$ and let $\rho_d$ be a representation of $\mathrm G =\SL(2,\R)$ isomorphic to the direct sum of $d$ copies of the standard $2$-dimensional representation. Let $\beta$ be a positive number such that $\beta d > 2$. Then there is a constant $R$, depending only on $\beta$ and the choice of the $\mathrm  K$-invariant quasinorms $| \, \cdot \, |_i$ involved in the definition of $\alpha_i$, such that for any $h\in \mathrm  G$ and any lattice $\Delta$ in $\R^{2d}$
  \begin{equation*}
    (A_h \alpha^\beta)(\Delta ) = \int_{\mathrm K} \alpha (hk\Delta )^\beta \, \dif\sigma (k)
    \leq R \4 \alpha (\Delta )^\beta \Vert h\Vert^{\beta d-2}.
  \end{equation*}
\end{theorem}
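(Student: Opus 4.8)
The plan is to apply Proposition~\ref{Prop 7.1} to the functions $\alpha_0^\beta,\dots,\alpha_{2d}^\beta$ on the space of lattices in $\R^{2d}$, after transporting them to $G$ along an orbit map. Fix once and for all a $g\in G$ with $g\notin K$, and fix a lattice $\Delta$ in $\R^{2d}$. For $0\le i\le 2d$ set $f_i(h):=\alpha_i(\rho_d(h)\Delta)^\beta$, $h\in G$, where $\alpha_i$ is formed with the given $K$-invariant quasinorms $\vert\cdot\vert_i$ on $\wedge^i\R^{2d}$. Because $\rho_d(k)$ preserves $\vert\cdot\vert_i$, each $f_i$ is left $K$-invariant, and it is positive, finite and continuous (standard facts from the geometry of numbers). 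Put $f:=\sum_{i=0}^{2d}f_i$. Then $(A_hf)(1)=\int_K f(hk)\,d\sigma(k)=\int_K\sum_i\alpha_i(\rho_d(hk)\Delta)^\beta\,d\sigma(k)$, while $\alpha=\max_i\alpha_i$ gives $\alpha(\rho_d(hk)\Delta)^\beta\le f(hk)$ and $f(1)=\sum_i\alpha_i(\Delta)^\beta\le(2d+1)\,\alpha(\Delta)^\beta$; hence the theorem will follow once one shows $(A_hf)(1)\ll f(1)\,\Vert h\Vert^{\beta d-2}$ with an implied constant depending only on $\beta$, $d$ and the quasinorms.

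So the task reduces to checking the hypotheses of Proposition~\ref{Prop 7.1}. Condition \eqref{7.46} follows at once from \eqref{9.6}: with $Q>1$ the constant of \eqref{9.4}, $\alpha_i(\rho_d(y)\Lambda)<Q\alpha_i(\Lambda)$ for every lattice $\Lambda$ and every $y\in G$ with $\Vert y\Vert\le\Vert g\Vert$, so $f_i(yh)<Q^\beta f_i(h)$ and one runs the proposition with $Q^\beta$ in place of $Q$. For \eqref{7.47}, evaluating the inequality of Lemma~\ref{Lemma 9.7} at the lattice $\rho_d(h)\Delta$ gives $A_gf_i\le c_{i,\beta}f_i+C^\beta Q^\beta\max_{0<j\le\bar i}\sqrt{f_{i-j}f_{i+j}}$, so one takes $M:=C^\beta Q^\beta$; what remains is to supply exponents $\lambda_i\ge2$ with $c_{i,\beta}\le\tau_{\lambda_i}(g)$ and, crucially, $\lambda_d>\lambda_i$ for every $i\ne d$.

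These exponents are dictated by the representation theory developed above. Decomposing $\wedge^i\rho_d=\wedge^i\bigl((\C^2)^{\oplus d}\bigr)$, every summand in the natural decomposition into tensor products of $\wedge^0\C^2\cong\wedge^2\C^2\cong\psi_0$ (trivial) and $\wedge^1\C^2\cong\psi_1$ is isomorphic to $\psi_1^{\otimes s}$, where $s$ is the number of $\psi_1$-factors; by Clebsch--Gordan the highest $SL(2)$-type occurring in $\psi_1^{\otimes s}$ is $\psi_s$. The constraints $s+2t=i$ and $s+t\le d$ (with $t$ the number of $\wedge^2\C^2$-factors) force $0\le s\le\min(i,2d-i)=\bar i$ with $s\equiv i\pmod 2$, and $s=\bar i$ is attained; hence the largest number in $I(\wedge^i\rho_d)$ equals $\bar i$. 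Choosing $\vert\cdot\vert_i$ to be the $K$-invariant quasinorm furnished by Corollary~\ref{corollary8.7} for $\rho=\wedge^i\rho_d$ (any admissible choice differs from it only up to an equivalence constant, absorbed into $R$), Corollary~\ref{corollary8.7}(i) gives $c_{i,\beta}=\sup_w F_{i,\beta}(w)\le\tau_{\beta\bar i}(g)$ when $\beta\bar i\ge2$, while Corollary~\ref{corollary8.7}(ii) together with \eqref{7.11} gives $c_{i,\beta}\le1=\tau_2(g)$ when $\beta\bar i<2$. Setting $\lambda_i:=\max(2,\beta\bar i)$, one has $\lambda_i\ge2$ for all $i$; since $\bar d=d$ and $\beta d>2$, $\lambda_d=\beta d$; and since $\bar i\le d-1$ for $i\ne d$, $\lambda_i\le\max(2,\beta(d-1))<\beta d=\lambda_d$. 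For $i\in\{0,2d\}$ we have $\bar i=0$, so $A_gf_i\le\tau_2(g)f_i$, which is the ``in particular'' part of Proposition~\ref{Prop 7.1}.

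All hypotheses of Proposition~\ref{Prop 7.1} now hold, and part (c) yields $(A_hf)(1)\ll f(1)\,\Vert h\Vert^{\lambda_d-2}=f(1)\,\Vert h\Vert^{\beta d-2}$, the implied constant depending only on the fixed $g$, on the $\lambda_i$ (hence on $\beta$ and $d$), and on $Q$ and $M$ (hence on $\beta$, $d$ and the quasinorms). Combined with $f(1)\le(2d+1)\alpha(\Delta)^\beta$ and $\alpha(\rho_d(hk)\Delta)^\beta\le f(hk)$ this gives $\int_K\alpha(hk\Delta)^\beta\,d\sigma(k)\le R\,\alpha(\Delta)^\beta\,\Vert h\Vert^{\beta d-2}$ with $R$ as required. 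I expect the only genuinely delicate step to be the representation-theoretic one — identifying the top $SL(2)$-type of $\wedge^i\rho_d$ as $\psi_{\bar i}$, so that $\lambda_d$ \emph{strictly} dominates the remaining $\lambda_i$; granting that, feeding the inequalities of Lemma~\ref{Lemma 9.7} and Corollary~\ref{corollary8.7} into Proposition~\ref{Prop 7.1} is routine.
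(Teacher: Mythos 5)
Your proof is correct and follows essentially the same route as the paper: you transport the $\alpha_i^\beta$ to functions $f_i$ on $G$ via the orbit of $\Delta$, invoke Lemma~\ref{Lemma 9.7} and Corollary~\ref{corollary8.7} to verify hypotheses \eqref{7.46}--\eqref{7.47} of Proposition~\ref{Prop 7.1} with $\lambda_i=\max\{2,\beta\bar i\}$, and apply part (c). The only point you spell out that the paper leaves as ``well known'' is the Clebsch--Gordan computation showing the top $SL(2)$-type of $\wedge^i\rho_d$ is $\psi_{\bar i}$, and that argument is correct.
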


\begin{proof}
  As in Section \ref{s2}, we define for a finite dimensional representation $\rho$ of $\mathrm  G$
  \begin{equation*}
    I(\rho ) =\{ m\in\N^+: \psi_m \,\, \text{is isomorphic to a subrepresentation of} \,\, \rho\},
  \end{equation*}
  where $\psi_m$ denotes the regular representation of $\mathrm  G$ in the space of complex homogeneous polynomials in two variables homogeneous of degree $m$. Let $m_i$ be the largest number in $I(\wedge^i\rho_d)$, $1\leq i\leq 2d$. It is well known that
  \begin{equation}
    \label{9.16}
    m_i = \bar{i} \defi \min\{ i, 2d-i\}.
  \end{equation}
  We fix $g\in \mathrm  G, g\notin \mathrm K$. It follows from \eqref{9.16} and from  Corollary \ref{corollary8.7} 
  that we can choose quasinorms $\vert\cdot\vert_i$ on $\largewedge^i\R^{2d}$ in such a way that for $w\in\largewedge^i \R^{2d}$, $w\neq 0$,
  \begin{equation*}
    \int_{\mathrm K} \frac{\vert w\vert^\beta_i}{\vert (\wedge^i \rho_d)(g)w\vert^\beta_i} \, \dif \sigma (k) \leq \begin{cases} \tau_{\beta\bar i}(g) & \text{if} \ \beta\bar i\geq 2 \\ 1 & \text{if} \ \beta\bar i < 2.\end{cases}
  \end{equation*}
  Hence
  \begin{equation}
    \label{9.19}
    c_{i,\beta} \leq \tau_{\beta \bar{i}} (g) \q \text{if} \ \ \beta \bar{i} \geq 2 \q \text{and} \q  c_{i,\beta}\leq 1 \q \text{if} \ \ \beta\bar{i} < 2.
  \end{equation}
  where $c_{i,\beta}$, $1\leq i\leq 2d$, is defined by \eqref{9.8} and $c_{0,\beta} = 1$. As a remark, we notice that $c_{i,\beta} = \tau_{\beta\bar i} (g)$ if $\beta \bar{i} \geq 2$.\par According to Lemma \ref{Lemma 9.7}, the functions $\alpha^\beta_i$, $ 0\leq i\leq 2d$, satisfy the following system of inequalities
  \begin{equation}
    \label{9.21}
    A_g \alpha^\beta_i \leq c_{i,\beta} \alpha^\beta_i + C^\beta B^{2\beta} \max_{0 < j\leq\bar i} \sqrt{\alpha^\beta_{i-j} \alpha^\beta_{i+j}}.
  \end{equation}
  Let
  \begin{equation}
    \label{9.22}
    \lambda_i \defi \max\{ 2, \beta \bar{i}\}, \q  0 \leq i \leq 2d.
  \end{equation}
  Since $\tau_2(g) = 1$, see \eqref{7.11} in Section \ref{s1}, it follows from \eqref{9.19}-\eqref{9.22} that
  \begin{equation}
    \label{9.23}
    A_g \alpha^\beta_i\leq\tau_{\lambda_{i}}(g) \alpha^\beta_i + C^\beta B^{2\beta} \max_{0<j\leq\bar i}\sqrt{\alpha^\beta_{i-j}\alpha^\beta_{i+j}}, \q 0\leq i\leq 2d.
  \end{equation}
  Now we fix a lattice $\Delta$ in $\R^{2d}$ and define functions $f_i$, $ 0\leq i\leq 2d$, on $\mathrm G$ by
  \begin{equation*}
    f_i (h) = \alpha_i (h\Delta )^\beta, \q h\in \mathrm  G.
  \end{equation*}
  Then it follows from \eqref{9.23} that
  \begin{equation*}
    A_g f_i\leq \tau_{\lambda_{i}} (g) f_i + C^\beta B^{2\beta} \max_{0 < 
j\leq\bar i} \sqrt{f_{i-j}f_{i+j}}, \q 0\leq i\leq 2d.
  \end{equation*}
  On the other hand, in view of \eqref{9.6},
  \begin{equation*}
    f_i(yh) \leq B^\beta f_i(h), \q \text{if} \q 0\leq i\leq 2d,\; h,y \in \mathrm G\q \text{and} \q \norm{y} \leq \norm{g}.
  \end{equation*}
  Since $\beta d > 2$, we have that $\beta d = \lambda_d > \lambda_i$ for any $i\neq d$. Now we can apply Proposition \ref{Prop-7.1} (c) in order to get that
  \begin{equation}
    \label{9.27}
    \begin{aligned}
      (A_h \alpha^\beta )(\Delta ) < (A_h \sum_{0\leq i\leq 2d} \alpha^\beta_i)(\Delta) & = (A_h \sum_{0\leq i\leq 2d} f_i)(1) \ll  (\sum_{0\leq 
i\leq 2d} f_i (1)) \4 \Vert h\Vert^{\lambda_{d}-2}                        
          \\
& = (\sum_{0\leq i\leq 2d} \alpha_i(\Delta )^\beta )\Vert 
h \Vert^{\lambda_{d}-2} \leq 2d \alpha (\Delta )^\beta\4 \Vert h\Vert^{\beta d-2}.
    \end{aligned}
  \end{equation}
  The inequality \eqref{9.27} proves the theorem for our specific choice of the quasinorms $\vert\cdot\vert_i$. Now it remains to notice that any two quasinorms on $\largewedge^i \R^n$ are equivalent.
\end{proof}




\section{Proofs of Theorem \ref{maintheorem} and Theorem \ref{th:gaussian_weights}}
\label{conclus}
\noindent In this section we shall prove our main theorem, giving effective estimates on the lattice remainder. But, before doing this, we have to establish mean-value estimates for the $\alpha_d$-characeristics of $\Lambda_t$ by applying Theorem \ref{GM} combined with Lemma \ref{special_int}.

\begin{corollary}
  \label{special_int1}
  Let $r \ge q^{1/2}$, $I=[t_0,t_0+1]$ with $t_0 \in \R$, $0< \beta \le 1/2$ with $\beta d >2$ and $\wh{g}_I \tdefi \max \{\abs{\wh{g}_w(t)}\, :\, t \in I\}$. Using the notation \eqref{defgamma}, we have
  \begin{equation}
    \label{eq:alpha-int2}
    \int_I  \alpha_d(\Lambda_t)^{1/2} \4 \abs{\wh{g}_w(t)} \, \dif t \ll_{\beta,d} q \4 \abs{\det{Q}}^{-\beta/2} \4 \wh{g}_I \4 \gamma_{I, \beta}(r) \4  r^{\frac{d}{2}-2},
  \end{equation}
  where $\gamma_{I, \beta}(r) =1$ if $\beta =1/2$. Note that we need at least $d \ge 5$.
\end{corollary}

Based on our variant of Weyl's inequality (see Lemma \ref{thetaestimate2} and Corollary \ref{alphamax}) the $\alpha$-characteristic enters with a power $1/2$ in \eqref{eq:alpha-int2}. While saving a maximum of the $\alpha$-characeristic, it will enter still with an exponent $0 < \beta \leq 1/2$ for its average (compare Lemma \ref{special_int}). Since the crucial averaging recursion (Theorem \ref{GM}) fails unless $\beta d >2$, the proof essentially needs $d >4$ and thus $d \geq 5$.

\begin{proof}
  In order to apply Lemma \ref{special_int}, we cover $I$ by intervals $I_j = [s_j,s_{j+1}]$ of length at most $1/q$, where $s_j = t_0 + j/q$ with $j \in J \tdefi \{0, \ldots, \lceil q \rceil \}$. This implies
  \begin{equation}
    \label{eq:recall:cor6.1}
    \begin{aligned}
      \int_I  \alpha_d(\Lambda_t)^{1/2} \4 \abs{\wh{g}_w(t)} \, \dif t
       & \le r^{\frac{d}{2}-\beta d} \4 \wh{g}_I  \4 \gamma_{I,\beta}(r) \frac{1}{q} \sum_{j \in J} \int_{-\pi}^\pi  \alpha(d_{r_*} \4 k_\theta \4 
\Lambda_{Q,s_j})^{\beta}\4 \frac{\dif \theta}{2 \4 \pi} \\
       & \ll r^{\frac{d}{2}-\beta d} \4  \wh{g}_I  \4 \gamma_{I,\beta}(r) 
\max_{j \in J} \int_{-\pi}^\pi  \alpha(d_{r_*} \4 k_\theta \4 \Lambda_{Q,s_j})^{\beta}\4 \frac{\dif \theta}{2 \4 \pi}.
    \end{aligned}
  \end{equation}
  Now, we shall apply Theorem \ref{GM} with $h=d_{r_*}$, $r_*=r/q^{1/2}$ and the lattices $\Lambda_{Q,s_j} = d_{q^{1/2}} \4 u_{s_j} \Lambda_Q$, as defined in \eqref{latticelambdaqt}, and obtain
  \begin{equation*}
    \max_{j\in J}\int_{-\pi}^{\pi} \alpha(d_{r_*} \4 k_\theta \4 \Lambda_{Q,s_j})^{\beta}\4 \frac{d \theta}{2 \4 \pi} \ll_{\beta,d}  \max_{j\in J} 
\alpha(\Lambda_{Q,s_j} )^\beta \norm{d_{r_*}}^{\beta d-2} \ll_d r^{\beta d-2}\4\big(q \4 d_Q^\beta \big),
  \end{equation*}
  where we have used $\norm{d_{r_*}}= r_* = r/q^{1/2}$ and \eqref{apriori-alpha01} in form of
  \begin{equation*}
    \alpha(\Lambda_{Q,s_j} ) \ll_d \alpha_d (\Lambda_{Q,s_j} ) \ll_d \abs{\det Q}^{-1/2}\4 q^{d/2}.
  \end{equation*}
  Note that we have applied Corollary \ref{alphamax} with $r = q^{1/2}$ 
and $t= s_j$ in order to get $\alpha(\Lambda_{Q,s_j} ) \asymp_d\alpha_d(\Lambda_{Q,s_j})$. Finally, in view of \eqref{eq:recall:cor6.1}, this concludes the proof of \eqref{eq:alpha-int2}.
\end{proof}

In order to bound the lattice point remainder for `wide shells', that is $b-a> q^{1/2}$, we need to extend the averaging result, established in Corollary \ref{special_int1}, for small values of $t$. To do this, we recall the bound
\begin{equation}
  \label{gw}
  \abs{\wh{g}_w(t)} \ll \min\{ \abs{b-a}, \abs{t}^{-1}\} \4\exp \{ - \abs{t \4 w}^{1/2}\}
\end{equation}
for the integrand $\wh{g}_w(t)$ in \eqref{eq:alpha_int1}, provided that $0 < w < (b-a)/4$. Note that it is of size $b-a$ for $\abs{t} \le 1/(b-a)$ 
and changes rapidly if $\abs{b-a}>1 $ grows with $r$.

\begin{lemma}
  \label{special-int2}
  If $r \ge q^{1/2}$, $\beta d >2$ and $0< w < \abs{b-a}/4$, then
  \begin{equation}
    \label{IJ56}
    \int_{\imbound}^{q^{-1/2}} \alpha_d(\Lambda_t)^{1/2} \4 \abs{\wh{g}_w(t)} \, \dif t \ll_{\beta,d} q^{\beta d+1/2} \4 \abs{\det{Q}}^{-\beta/2} \4 \gamma_{I,\beta}(r) \4 r^{\frac{d}{2}-2},
  \end{equation}
  where $I = [\imbound, q^{-1/2}]$.
\end{lemma}

\begin{proof}
  Proceeding first as in the proof of Lemma \ref{special_int} and changing variables to $s= t^{-1}$ it is plain to see that
  \begin{align*}
    \int_{\imbound}^{q^{-1/2}} \alpha_d(\Lambda_t)^{1/2} \4 \abs{\wh{g}_w(t)} \, \dif t \ll_d \gamma_{I,\beta}(r)\,r^{d/2-\beta d} \int_{q^{1/2}}^{r q_0 ^{1/2}}  \alpha_{d} (d_r\4 u_{4 s^{-1}}\4\Lambda_Q)^{\beta} \4 \abs{\wh{g}_w(s^{-1})}  \, \frac{\dif s}{s^2}.  \end{align*}
  Let $N = \lceil r(q_0/q)^{1/2} \rceil$, then the integral on the right-hand side is bounded by $\sum_{j=2} ^N I_j$, where
  \begin{align*}
    I_j \defi \int_{q^{1/2}(j-1)} ^{q^{1/2}j} \alpha_d(d_r u_{4s^{-1}}\Lambda_Q)^\beta  \4 \abs{\wh{g}_w(s^{-1})}  \, \frac{\dif s}{s^2}.
  \end{align*}
  For $2 \leq j \leq N$ write $t_j = q^{-1/2}j^{-1}$, then using that
  \begin{align*}
    d_r u_{4s^{-1}} = d_r \4 u_{4(s^{-1} - t_j)} \4 u_{4 t_j} = d_{4 r j^{-1}} \4 u_{4^{-1}j^2(s^{-1} - t_j)}\4 d_{4^{-1} j} \4 u_{4 t_j}
  \end{align*}
  together with the change of variables $v = 4^{-1} j^2 (s^{-1}- t_j)$ yields
  \begin{align*}
    I_j & \leq \frac{4}{j^2} \int_0 ^{1} \alpha_d(d_{4 r  j^{-1}}  \4 u_{v} \4 d_{4^{-1} j} \4 u_{4t_j} \Lambda_Q )^\beta \4 \abs{\wh{g}_w(4 v j^{-2}+t_j)}  \,  \dif v \\ & \ll_d
    \frac{q^{1/2}}{j} \int_0 ^{1} \alpha_d(d_{4 r  j^{-1}}  \4 u_{v} \4 d_{4^{-1} j} \4 u_{4t_j} \Lambda_Q )^\beta \4 \dif v,
  \end{align*}
  where the last inequality is a consequence of $\abs{\wh{g}_w(t)} \ll \abs{t}^{-1}$. Hence, since $ 4 rj^{-1} \geq 1$ and $q^{1/2}j t_j =1$, we deduce from Lemma \ref{compact-group}, Theorem \ref{GM} and \eqref{apriori-alpha2} of Lemma \ref{apriori} that
  \begin{align*}
    I_j & \ll_d \frac{q^{1/2}}{j} \int_{\mathrm K} \alpha_d (d_{4 r  j^{-1}}  \4 k \4 d_{4^{-1} j} \4 u_{4t_j} \Lambda_Q )^\beta \4 \dif \sigma(k) 
\\ & \ll_d r^{\beta d -2}  \abs{\det Q}^{-\beta/2} q^{\beta d /2+ 1/2} j^{1-\beta d }  \max \{1 ,(4q^{1/2} j^{-1})^{\beta d} \}.
  \end{align*}
  Summing the last inequality over $2 \leq j \leq N$, we observe that it suffices to show that the following estimate holds
  \begin{align*}
    \textstyle
    \sum_{j=2} ^N j^{1-\beta d }  \max \{1 ,(4q^{1/2} j^{-1})^{\beta d} 
\} \ll_{\beta,d}r^{\beta d -2}  \abs{\det Q}^{-\beta/2} q^{\beta d+ 1/2}.
  \end{align*}
  Indeed, split the previous sum according to whether $j \le 4q^{1/2}$ or 
$j > 4q^{1/2}$. The sum over $j >  4 q^{1/2}$ can be bounded by
  \begin{equation*}
    \textstyle
    \label{eq:L6.2:1}
    r^{\beta d -2}  \abs{\det Q}^{-\beta/2} q^{\beta d /2+ 1/2} \sum_{j = 
\lceil 4q^{1/2} \rceil} ^N j^{1-\beta d }  \ll_{\beta,d} r^{\beta d -2} 
 \abs{\det Q}^{-\beta/2} \4 q^{3/2},
  \end{equation*}
  and the sum over $2 \leq j \le 4 q^{1/2} $ by
  \begin{equation*}
    \label{eq:L6.2:2}
    \textstyle
    r^{\beta d -2}  \abs{\det Q}^{-\beta/2} q^{\beta d+ 1/2} \sum_{j = 2} ^{\lfloor 4 q^{1/2} \rfloor}  j^{1-2\beta d }  \ll_{\beta,d} r^{\beta d -2}  \abs{\det Q}^{-\beta/2} q^{\beta d+ 1/2}.\qedhere
  \end{equation*}
\end{proof}




\begin{proof}[Proof of Theorem \ref{maintheorem}]
 In view of \eqref{crucialbound}, it remains to estimate $I_\theta$. By \eqref{eq:5.1:intro}, with $K_0 \tdefi [\imbound,1]$ and $K_j \tdefi (j,j+1]$, $j \ge 1$, we have
\begin{equation}
  \label{final_I}
  I_\theta \ll_d \abs{\det{Q}}^{-\frac{1}{4}} \norm{\wh{\zeta}}_1 \Big( I_{\theta,0} + \sum_{j=1}^{\infty} I_{\theta,j}\Big), \ \  \text{where} \ \  I_{\theta,j} \defi \int_{K_j} \abs{\wh{g}_w(t)} \4 \alpha_d(\Lambda_t)^{\frac{1}{2}} \4 \dif t.
\end{equation}
For fixed $r \geq q^{1/2}$ we may choose
\begin{equation}
  \label{choice_wab}
  0< w < (b-a)/4, \q 1 \ge T_{-} \ge \imbound, \q T_+ \geq 1  \q \text{and} \q \frac{d}{2} > \beta d  >2.
\end{equation}
For notational simplicity, we write $C_Q \tdefi q \4 \abs{\det{Q}}^{-1/4-\beta/2}$.\\[2mm]
\textit{Step 1: Estimate of $I_{\theta,0}$.} We consider the case $b-a \le q$ first. Here we apply Corollary \ref{special_int1} to bound the integral over $K_0$ combined with $\wh{g}_{K_0} \ll s_{[a,b]_{\pm w}}(t) \ll b-a$, compare \eqref{h-estimate1} and \eqref{h-estimate2}. Note that we didn't use the restriction $b-a \le q$ at all. For wide shells, i.e.\ in the case $b-a > q$, we use Lemma \ref{special-int2} for $t\in K_0$, $\imbound \le \abs{t} \le q^{-1/2}$  and Corollary \ref{special_int1} for the other $t$ in $K_0$ together with $\wh{g}_{[q^{-1/2},1]} \ll q^{1/2}$. Furthermore, for both cases of $b-a$, split $K_0=K_{00} \cup K_{01}$, where $K_{00} \tdefi [\imbound, T_{-}]$ and $K_{01}\tdefi 
(T_{-},1]$. Then \eqref{apriori-alpha1} of Lemma \ref{apriori} yields
\begin{equation}
  \label{K0}
  \gamma_{K_{00},\beta}(r) \ll_d \bigl(\abs{\det Q}^{\frac{1}{2}} \, T_{-}^{d}\bigr)^{\frac{1}{2}-\beta} = T_{-}^{\frac{d}{2}-2-\delta}\4 \abs{\det{Q}}^{ \frac{1}{4} -\frac{\beta}{2} },
\end{equation}
with the notation \eqref{defgamma}. Using $C_Q q^{(2\beta d -1)/2} =\bar{C}_Q$, we may bound $I_{\theta,0}$ as\index{1@ $(b-a)_q := (b-a)I(b-a 
\le q) + q^{(2\beta d -1)/2} I(b-a > q)$}
\begin{align}
  \label{final_I1}
  I_{\theta,0} & \ll_d C_Q\4 (b-a)_q \4
  \bigl(\abs{\det{Q}}^{ \frac{1}{4} -\frac{\beta}{2}} \4 T_{-}^{\frac{d}{2}-2-\delta} + \gamma_{K_{01},\beta}(r)\bigr) \4 r^{d-2}, \q\text{where} \\
  \label{defab}
  (b-a)_q      & \defi (b-a)I(b-a \le q) + q^{(2\beta d -1)/2} I(b-a > q).
\end{align}
As a side remark, we note that the above splitting of the interval $K_0 = [\imbound,1]$ is required for our later applications - especially, Corollary \ref{irr-dio} is only valid for fixed intervals $[T_{-},T_{+}]$.\\[2mm]
\textit{Step 2: Estimate of $I_{\theta,j}$ for $j \ge 1$.} Similar as before, applying Corollary \ref{special_int1} (with $\beta=1/2$), while noting that $\gamma_{I, \beta}(r) =1$ if $\beta =1/2$, yields 
\begin{equation}
  \label{final_IJ}
  I_{\theta,j} \ll_d \wh{g}_{K_j} \4 q \4 \abs{\det{Q}}^{-1/2} \4 r^{d-2}.
\end{equation}
We recall the bound \eqref{gw} for $\wh{g}_w$ and the choices of $T_+$ and $w$ in \eqref{choice_wab} in order to get
\begin{equation*}
  \sum_{j=T_+}^{\infty} \wh{g}_{K_j} \ll \int_{T_+}^\infty  \frac{\exp\{-\abs{sw}^{1/2}\}}{s} \, \dif s \ll \frac{1}{\sqrt{T_+w}} \exp \{ -\abs{T_+ \4 w }^{1/2} \}.
\end{equation*}
Thus, we obtain
\begin{equation}
  \textstyle
  \label{final_BigT}
  \sum_{j = T_+}^\infty I_{\theta,j} \ll_d  r^{d-2} \4 q \4 \abs{\det{Q}}^{-1/2} \4 (T_+ \4 w)^{-1/2} \exp \{ -\abs{T_+ \4 w}^{1/2}\}.
\end{equation}
Furthermore, for $b-a > 1$ we can use $\abs{\wh{g}_{K_j}}\ll j^{-1}$ to bound the remaining sum. Whereas for $b-a \le 1$ we use $\abs{\wh{g}_{K_j}} \ll b-a$ for $1 \leq j \leq S-1$ and $\abs{\wh{g}_{K_j}}\ll j^{-1}$ for 
$S \leq j \leq T_+-1$ and minimize the resulting expression in $S$. In both cases this leads to
\begin{equation}
  \textstyle
  \label{final_SmallT}
  \sum_{j=1}^{T_+-1} \wh{g}_{K_j} \ll 1+ \log ( (b-a)^* \4 T_+),
\end{equation}
where\index{1@ $(b-a)^* \defi (b-a)I(b-a \le 1) + I(b-a > 1)$}
\begin{equation*}
  (b-a)^* \defi (b-a)I(b-a \le 1) + I(b-a > 1).
\end{equation*}
Hence, using \eqref{final_I} combined with \eqref{final_I1}, \eqref{final_BigT} and \eqref{final_SmallT} with \eqref{final_IJ}, we get
\begin{equation}
  \label{t_rest}
  \begin{aligned}
    I_\theta \ll_d \norm{\wh \zeta}_1\4 r^{d-2}\4 C_Q\4\Bigl((b & -a)_q\4 
( c_Q T_{-}^{\frac{d}{2}-2-\delta}+ \gamma_{[T_{-},1],\beta}(r)) \\
& +\gamma_{(1,T_+],\beta}(r) \4 (1+ \log((b-a)^* \4 T_+)) + c_Q^{-1} \4 \tfrac{\exp(- (T_+w)^{1/2})}{(T_+ \4 w)^{1/2}} \Bigr),
  \end{aligned}
\end{equation}
where $c_Q = \abs{\det{Q}}^{\frac{1}{4}-\frac{\beta}{2}}$. Together with the inequality \eqref{crucialbound} we obtain
\begin{equation}
  \label{finalb0}
  \begin{aligned}
    \Delta_r(\specialv) & \defi \ \Big\lvert \sum_{m\in\Z^d} I_{[a,b]}(Q[m]) \specialv_r (m) - \int_{\R^d} I_{[a,b]}(Q[x]) \specialv_r (x) \, \dif x \Big\rvert                                                              \\
& \ \ll_{\beta,d} \4 r^{d-2} \big( \4 \norm{\wh \zeta}_1\4 C_Q \4 \rho_{Q,b-a}^{w}(r) + w \4 \norm{\specialv}_Q \4 \big) \hspace{-1pt} + d_Q \4 r^{d/2}\4 \norm{\wh{\zeta}}_{*,r} \log \hspace{-1pt} 
\Big(1 \hspace{-1pt}+ \hspace{-1pt} \tfrac{\abs{b-a}}{q_0^{1/2} r}\Big),
  \end{aligned}
\end{equation}
where\index{R@ $\rho_{Q,b-a}^{w}(r)$}
\begin{equation*}
  \begin{aligned}
    \rho_{Q,b-a}^{w}(r) \! \defi \! \inf \big\{ (b-a)_q \4 ( c_Q T_{-}^{\frac{d}{2}-2-\delta} \! + \gamma_{[T_{-},1],\beta}(r)) \! + \gamma_{(1,T_+],\beta}(r) \4 (1+\log((b-a)^* \4 T_+)) \  &        \\
    + c_Q^{-1} \4 (T_+ w)^{-1/2} \4 \e^{- (T_+w)^{1/2}} : T_{-} \in [\imbound,1] , \ T_+ \geq 1 & \big\}
  \end{aligned}
\end{equation*}
under the condition $0 < w < (b-a)/4$. This completes the proof of Theorem \ref{maintheorem}.
\end{proof}

\begin{proof}[Proof of Theorem \ref{th:gaussian_weights}]
 We have only to apply Theorem \ref{maintheorem} to the Gaussian weights $\specialv(x) = \exp\{-2 \4 Q_{+}[x]\}$  noting that $\zeta(x) = \exp 
\{ - Q_{+}[x] \}$ satisfies the integrability condition \eqref{zeta-decay}. This yields
 \begin{equation*}
   R(\specialv_r I_{E_{a,b}}) \ll_{Q,\beta,d} \big\{ w \4\norm{\specialv}_Q + \norm{\wh\zeta}_1 \4 \rho_{Q,b-a}^{w}(r)\big\} r^{d-2} r^{d/2} \norm{\hat \zeta}_{*,r} \log\big(1+ \tfrac{\abs{b-a}} {q_0^{1/2} r}\big).
 \end{equation*}
 In view of \eqref{def:v_Q:2} and \eqref{def:v_Q:1}, we see that $\norm{\specialv}_Q \ll d_Q$. Here we used that $\varphi_{\specialv}(v,\sqrt{u^2-v}) = \exp\{-2u^2\}$, if $Q$ is indefinite; and $\varphi_{\specialv}(v) 
= \exp\{-2v^2\}$ if $Q$ is positive definite. Moreover, a simple calculation shows that $\norm{\wh\zeta}_1 \ll_d 1$ and by following the arguments in the proof of \eqref{zetaperiod-notadmiss} we get $\norm{\hat \zeta}_{*,r} \ll_{d} q^{d/4} ( (q/q_0)^{d/2} + d_Q q^{d/2})$ as well.
\end{proof}




\section{Lattice Point Deficiency for Admissible Regions and Applications}
\label{section:parallelepipeds}
\noindent Before we can apply Theorem \ref{maintheorem}, we have to construct smooth bump functions, approximating the indicator function of special parallelepiped regions, and also to control the additional error produced by this smoothing step: In the following Lemma \ref{l3} we shall bound the volume of $\eps$-boundaries of $r\Omega\cap E_{a,b}$ and in Lemma \ref{l2} we estimate integrals of the Fourier transform of the region $\Omega$. For wide shells the lattice point counting remainders will reflect the Diophantine properties of $Q$ more directly when using counting regions $\Omega$ which are `admissible' convex polyhedra. 

\subsection{Smoothing of Special Parallelepiped Regions}
\label{subsection:smoothing_regions}
Here we confine ourselves to study a specially oriented parallelepiped $\Omega = B^{-1} [-1,1]^d$ with
\begin{equation}
  \label{eq:sec7:Def:Omega}
  \quad Q_+ \leq B^T B \leq c_B Q_+
\end{equation}
for a suitable $B \in \GL(d,\R)$ and a positive constant $c_B \geq 1$ depending on $B$. In this case, the Minkowski functional of $\Omega$ is given by $M(x)= \max( \langle g_{i,\pm} ,x \rangle \4 : \4 i=1,\ldots,d \4 )$, where $g_{i,\pm} = \pm B^T e_i$ are $2d$ outward normal vectors of the faces of $\Omega$. Note that the inequalities in \eqref{eq:sec7:Def:Omega} imply the norm equivalence
\begin{equation}
  \label{M-bound}
  d^{-1/2} \4 \norm{Q_+^{1/2}x} \le M(x) \le  (c_B)^{1/2} \4 \norm{Q_+^{1/2}x}.
\end{equation}
We now approximate $I_{\Omega}$ by smooth weight functions. For this, introduce\index{O@ $\Omega_{\pm \eps}:= (1\pm\eps)\Omega$, $\epsilon$-thickening resp. thinning of $\Omega$}\index{O@ $(\partial\Omega)_{\eps}:= \Omega_{\eps}\setminus \Omega_{-\eps}$, $\epsilon$-thickened boundary of $\Omega$}\index{V@ $\specialv_{\pm\eps} := I_{\Omega_{\pm\eps}}*k_{B,\eps}$, $\epsilon$-smoothing of $\Omega$}
\begin{equation}
  \label{omega_eps}
  \Omega_{\pm \eps}\defi (1\pm\eps)\Omega, \q (\partial\Omega)_{\eps}\defi \Omega_{\eps}\setminus \Omega_{-\eps} \q \text{and}\q \specialv_{\pm\eps} \defi I_{\Omega_{\pm\eps}}*k_{B,\eps},
\end{equation}
where $k_{B,\eps}(A) = k_{\eps}(BA)$ for any $A\in\mathcal{B}^d$ and $k_{\eps}$ denotes the rescaled measure on $\R^d$ introduced in the beginning of Subsection \ref{subsection:3.1}. Moreover, we need the technical restriction $0 < \eps \leq \eps_0$ with $\eps_0 \tdefi 1/15$. Since Lemma \ref{lemma:1} can be adapted to this situation, taking $\specialv_{\pm\eps,r}(x) \tdefi \specialv_{\pm\eps}(x/r)$, we get for the lattice point remainder\index{R@ $R(I_{E_{a,b}} \4 \specialv_r), R(g \4 \specialv_r),R( I_{E_{a,b} \cap r\Omega})$, lattice point remainder} \eqref{smooth-remainder}
\begin{equation}
  \label{Omega-smooth0}
  \abs{R( I_{E_{a,b} \cap r\Omega})} \leq \max_{\pm} \abs{R(I_{E_{a,b}} \specialv_{\pm\eps,r})} +R_{\eps,r},
\end{equation}
where, in view of \eqref{smooth-basic}, the remainder term is given by
\begin{equation}
  \label{smooth-error}
  R_{\eps,r} \defi \int_{\R^d} I_{(\partial\Omega)_{2\eps}}(x /r) \4 I_{[a,b]}(Q[x]) \, \dif x.
\end{equation}
For hyperbolic shells the latter term \eqref{smooth-error} will be absent, but for elliptic shells we shall find that
\begin{equation}
  \label{omega-smooth}
  \abs{R( I_{E_{a,b} \cap r\Omega})} \le \max_{\pm} \abs{R( I_{E_{a,b}} \4 \specialv_{\pm \eps,r})} + d_Q \4 (b-a)\4 \eps\4 r^{d-2}.
\end{equation}
This estimate will be proven in the following Lemma \ref{l3}, but first we need to introduce some notations: For a measurable, non-negative, bounded weight function $\specialv$ on $\R^d$ we shall define 
the spherical mean by
\begin{equation}
    \label{eq:spherical_mean_v}
    \varphi_{\specialv}(r_1,r_2) \defi \int_{S^{p-1}\times S^{q-1}} \specialv(Q_+^{-1/2} U^{-1}(r_1\4\eta_1,r_2\4 \eta_2))\,\dif\sigma(\eta_1)\4 \dif \sigma(\eta_2),
\end{equation}
where $r_1,r_2 \geq 0$, $\sigma$ denotes the unique normalized Haar measure on the sphere $S^{p-1}$ resp.\ $S^{q-1}$, $(p,q)$ denotes the signature of $Q$ (with $p+ q =d$) and $U$ a rotation in $\R^d$ such that $U Q U^{-1}$ is diagonal matrix whose first $p$ entries are positive and the latter $q$ are negative. Note that in the case of positive definite forms $Q$ (i.e.\ $q=0$), the double integral must be replaced by a single one.

\begin{lemma}
  \label{l3}
  Let $\varphi_{\specialv}$ be defined as in \eqref{eq:spherical_mean_v}. If $Q$ is indefinite, define also
      \begin{equation}
       \label{def:v_Q:1}
       \norm{\specialv}_Q \defi d_Q \sup_{v \in r^{-2} \partial_{w}[a,b]} \left\lvert \int_{0}^{\infty}I( u^2 \ge v)\4 u^{p-1}\varphi_{\specialv}(u,\sqrt{u^2-v}) (u^2-v)^{(q-2)/2} \4 \dif u \right\rvert
  \end{equation}
  and suppose that the latter integral exists. Otherwise, if $Q$ is positive definite, define
  \begin{equation}
    \label{def:v_Q:2}
    \norm{\specialv}_Q \defi d_Q \sup_{v \in r^{-2} \partial_{w}[a,b]} \4 \abs{v^{d-1} \4 \varphi_{\specialv}(v)}
  \end{equation}
  and assume that the latter supremum is bounded. Under these conditions, 
writing $\partial_{w}[a,b] \tdefi [a-2w,a+2w] \cup[b-2w,b+2w]$, we have for $0 < w < (b-a)/4$
  \begin{equation}
    \label{Q-smooth1}
    \int I_{\partial_{w}[a,b]}(Q[x]) \4 \specialv(x/r) \, \dif x \ll_d   w \4 \norm{\specialv}_Q \4 r^{d-2}.
  \end{equation}
  Assuming additionally $\max\{\abs{a},\abs{b}\} \leq c_0 r^2$ with $c_0 = (c_B)^{-1} /5$, the estimates
  \begin{align}
    \label{smoothing}
    R_{\eps,r} & \ \ll_d \  d_Q \4 (b-a) \4 \eps \4 r^{d-2} \\
    \label{volume_bound}
    \volu H_r  & \ \gg_d \ d_Q \4 (\sqrt{c_B})^{-(d-2)} \4 (b-a) \4 r^{d-2}
  \end{align}
  hold for indefinite forms $Q$, provided that $\eps \in (0,\eps_0]$. Moreover, for the special choice $\specialv = \specialv_{\pm\eps}$, as defined in \eqref{omega_eps}, we have
  \begin{equation}
    \label{special:v_eps}
    \norm{\specialv_{\pm\eps}}_Q \ \ll_d \abs{\det{Q}}^{-1/2},
  \end{equation}
  whereby the condition $\max\{\abs{a},\abs{b}\} \leq c_0 r^2$ can be dropped if $Q$ is positive definite.
\end{lemma}

The lower bound \eqref{volume_bound} can be also found in \cite{bentkus-goetze:1999}, see Lemma 8.2. Moreover, Lemma 3.8 in \cite{eskin-margulis-mozes:1998} provides an asymptotic formula for the volume of $H_r$.

\begin{proof}
  For a bounded measurable function $g$ on $\R$ with compact support we introduce
  \begin{equation*}
    R_g \defi \int_{\R^d} g(Q[x]) \4 \specialv(x/r) \, \dif x.
  \end{equation*}
  Let $S_Q=Q\4 Q_{+}^{-1}, L_Q=Q_{+}^{1/2}$ and let $U$ denote the rotation stated in the lemma. In particular, $UQU^{-1}$ and $UL_QU^{-1}$ are diagonal. Changing variables via $x= r L_Q^{-1} U^{-1} \4 y $ in $\R^d$ with $y\in\R^p\times\R^q$, $d=p+q$ and using polar coordinates, $y=\4(r_1\4 \eta_1,r_2\eta_2)$, where $r_1,r_2 >0$ and $\eta_1\in S^{p-1}$, $\eta_2\in S^{q-1}$, that is $\norm{\eta_1}=\norm{\eta_2}=1$, we may write $Q[x]= r^2(r_1^2-r_2^2)$ and obtain by Fubini's theorem
  \begin{equation}
    \label{vol-trf}
    R_g = r^d d_Q \int_0^{\infty}\int_0^{\infty} r_1^{p-1}\4 r_2^{q-1} g(r^2(r_1^2-r_2^2))\4 \varphi_{\specialv}(r_1,r_2)\,\dif r_1\4 \dif r_2,
  \end{equation}
  where $\varphi_{\specialv}(r_1,r_2)$ is defined as in \eqref{eq:spherical_mean_v} for suitable weight functions $\specialv$. (As already noted, in the case of positive definite forms $Q$, the double integral in \eqref{vol-trf} must be replaced by a single one.)  Next, we change variables via $v\tdefi r_1^2-r_2^2$ and $u\tdefi r_1$, so that $r_1^2+r_2^2=2u^2-v$ and $r_2=\sqrt{u^2-v}$. Thus, we get
  \begin{equation}
    \label{V1_bound}
    R_g = r^d \frac{d_Q}{2} \int_{\R} g(r^2\4 v) \int_{0}^{\infty}I( u^2 \ge v)\4 u^{p-1}\varphi_{\specialv}(u,\sqrt{u^2-v}) (u^2-v)^{(q-2)/2} \4 \dif u \, \dif v.
  \end{equation}
  In order to prove \eqref{Q-smooth1}, we choose $g=I_{\partial_{w}[a,b]}$ in \eqref{V1_bound}. Since the length of $r^{-2} \, \mathrm{supp} \, g$ is at most $\ll \abs{w} r^{-2}$, we get $R_g \ll_d \abs{w} r^{d-2} \norm{\specialv}_Q$, where $\norm{\specialv}_Q$ is defined as in \eqref{def:v_Q:1} if $Q$ is indefinite, resp.\ as in \eqref{def:v_Q:2} if $Q$ is positive definite.
  
  Next we prove \eqref{volume_bound}: Taking $g= I_{[a,b]}$, $\specialv(x)= I_{\Omega}(x) = I(M(x) \leq 1)$ and using
  \begin{equation}
    \label{eq:ScalingOfMinkowski}
    \norm{y} d^{-1/2} \leq M(L_Q^{-1} U^{-1}y) \leq \norm{y} (c_B)^{1/2}
  \end{equation}
  gives the lower bound
  \begin{align*}
    \varphi_\specialv(r_1,r_2) & \geq \int_{S^{p-1}\times S^{q-1}} I(\norm{(r_1 \eta_1,r_2 \eta_2)} \leq (c_B)^{-1/2}) \, \dif \sigma(\eta_1) \, \dif 
\sigma(\eta_2) \\
                              & \gg_d I(2u^2+\abs{v} \leq (c_B)^{-1}).
  \end{align*}
  Thus, we find
  \begin{align*}
    \volu H_{r} & \gg_d r^d d_Q \int_{r^{-2}a}^{r^{-2}b} \int_0^\infty I(u^2 \geq v) \4 I(2u^2+\abs{v} \leq (c_B)^{-1}) \4 u^{p-1} (u^2-v)^{(q-2)/2}\4 
\, \dif u \, \dif v        \\
                & \gg_d r^d d_Q \int_{r^{-2}a}^{r^{-2}b} I(\abs{v}\leq c_0) \int_0^\infty \, I( \tfrac{5}{4} c_0 \leq u^2 \leq 2 c_0) u^{p-1} (u^2-v)^{(q-2)/2}  \dif u \, \dif v \\
                & \gg_d r^{d-2} (b-a) d_Q (\sqrt{c_0})^{d-2}.
  \end{align*}
  \textit{Proof of \eqref{smoothing}.} In \eqref{V1_bound} we choose $g=I_{[a,b]}$ and $\specialv=I_{(\partial \Omega)_{2\eps}}$ with $0<\eps \leq \eps_0$. By the properties of the polyhedron $\Omega$, see \eqref{M-bound}, we have $I_{ (\partial \Omega)_{2\eps}}(x) \leq I(M(x)\in J_{1,2\eps})$, where $J_{1,2\eps}\tdefi [1-2\eps, 1+ 2 \eps]$. Let $g_1,\ldots,g_{2d}$ denote the $2d$-tuple of normal vectors defining $\Omega$ and let $f_m = U L_Q^{-1} g_m$, $m=1,\ldots,2d$, be the transformed vectors. Since
  \begin{equation*}
    \textstyle
    I(M(L_Q^{-1}U^{-1}\,y ) \in J_{1,2\eps}) \leq \sum_{m=1}^{2d} I(\langle y ,f_m \rangle \in J_{1,2\eps})
  \end{equation*}
  we may bound $\varphi_{\specialv}(r_1,r_2)$ in \eqref{V1_bound} as follows
  \begin{equation*}
    \textstyle
    \varphi_{\specialv}(r_1,r_2) \leq \sum_{m=1}^{2d} \varphi_{\specialv,m}(r_1,r_2),
  \end{equation*}
  where
  \begin{equation*}
    \varphi_{\specialv,m}(r_1,r_2) \defi \int_{S^{p-1} \times S^{q-1}} I \big[\langle (r_1\eta_1,r_2\eta_2),f_m \rangle \in J_{1,2\eps} \big]\, \dif \eta_1\, \dif \eta_2.
  \end{equation*}
  Recall $\abs{v} \leq c_0$, $v= r_1^2-r_2^2$, $u=r_1$ and $r_2=\sqrt{u^2-v}$. The inequality \eqref{eq:ScalingOfMinkowski} implies
  \begin{equation*}
    (1+2\eps)^2 d \ge r_1^2+r_2^2=2u^2 - v \geq (1-2\eps)^2 (c_B)^{-1}.
  \end{equation*}
  Therefore $\varphi_{\specialv}(u,\sqrt{u^2-v})=0$ if
  \begin{equation*}
    0 \le u < 2^{-\frac{1}{2}} \sqrt{5 c_0 (1-2\eps)^2 -c_0} \quad\text{or}\quad u> C_\Omega \defi  \frac{(1+2\eps)}{\sqrt{2}} \sqrt{d+c_0}.
  \end{equation*}
  Because of
  \begin{equation*}
    2^{-\frac{1}{2}} \sqrt{5 c_0 (1-2\eps)^2-c_0} \geq c_\Omega \defi \frac{\sqrt{310c_0}}{15}
  \end{equation*}
  and $u^2-v \ge 17c_0/45>0$, we get
  \begin{equation}
    \label{bV1_bound}
    \begin{aligned}
      R_g & \ll r^d d_Q \int_{r^{-2}a}^{r^{-2} b} \Bigg( \int_{c_\Omega}^{C_\Omega} u^{p-1} (u^2-v)^{\frac{q-2}{2}} \varphi_{\specialv} (u,\sqrt{u^2-v})  \, \dif u \Bigg) \dif v \\
      & \le r^d d_Q \sum_{m=1}^{2d} \int_{r^{-2}a}^{r^{-2}b} \Bigg( \int_{c_\Omega}^{C_\Omega}  u^{p-1} (u^2-v)^{\frac{q-2}{2}} \varphi_{\specialv,m}(u,\sqrt{u^2-v}) \, \dif u \Bigg) \dif v.
    \end{aligned}
  \end{equation}
  By interchanging the variables $r_1$ and $r_2$ we can suppose that $q \ge 2$. Thus, since $u \ll_d 1$ and $\sqrt{u^2-v} \ll_d 1$, we see that
  \begin{equation}
    \label{bV1_bound:2}
    \int_{c_\Omega}^{C_\Omega}  u^{p-1} (u^2-v)^{\frac{q-2}{2}} \varphi_{\specialv,m}(u,\sqrt{u^2-v}) \, \dif u  \ll_d \int_{c_\Omega}^{C_\Omega} \varphi_{\specialv,m}(u,\sqrt{u^2-v}) \, \dif u.
  \end{equation}
  We claim that
  \begin{equation}
    \label{l3.1}
    R_g \ll_d  d_Q  \4 \eps \4 (b-a)r^{d-2}
  \end{equation}
  holds. In view of \eqref{bV1_bound} and \eqref{bV1_bound:2}, the estimates
  \begin{equation*}
    R_m \defi \int_{c_\Omega}^{C_\Omega}  \varphi_{\specialv,m}(u,\sqrt{u^2-v})\, \dif u \ll_d \eps \4 c_\Omega
  \end{equation*}
  for all $m=1,\dots,2d$ will prove the bound \eqref{l3.1}.\par Thus let $F_m(u)\tdefi \langle (u\4 \eta_1,(u^2-v)^{1/2} \4 \eta_2),f_m \rangle$ 
for fixed $\abs{v} \le c_0$ and $(\eta_1,\eta_2)$. If
  \begin{equation}
    \label{l3.2}
    \Big\lvert\frac{\partial}{\partial u} F_m(u)\Big\rvert \ge c_1 >0
  \end{equation}
  for all $c_\Omega \leq u\le C_\Omega$ with $F_m(u)\in [1-2\eps,1+2\eps]$ uniformly in $(\eta_1,\eta_2)$ and $v$, then
  \begin{equation*}
    \int_{c_\Omega}^{C_\Omega} I(F_m(u)\in [1-2\eps,1+2\eps])\, \dif u\ll\frac{\eps}{c_1}
  \end{equation*}
  and hence $R_m \ll_d c_1^{-1} \eps$ for all $m=1,\dots,2d$. Note that
  \begin{equation*}
    \frac{\partial}{\partial u} F_m(u) = \frac{1}{u} \Bigg(F_m(u) + \frac{v}{\sqrt{u^2-v}} \langle (0,\eta_2), f_m \rangle \Bigg)
  \end{equation*}
  and because of $\norm{L_Q^{-1} B^T} = \norm{B \4 L_Q^{-1}} \le \sqrt{c_B}$ we see that
  \begin{equation*}
    \Big\lvert\frac{\partial}{\partial u} F_m(u)\Big\rvert \ge \frac{1}{u}\Big( \abs{F_m(u)} - \frac{c_0}{ \sqrt{17 c_0/45} } \norm{f_m} \Big) \ge \frac{1}{u} \Big( 1-2 \eps - \frac{4}{5} \Big) \gg c_\Omega^{-1}.
  \end{equation*}
  Note, that here it is important that $\eps >0$ is not too large, i.e.\ $\eps \in (0,\eps_0]$. Thus, \eqref{l3.2} holds and the assertion \eqref{l3.1} is proved. This yields the claimed bound for $R_{\eps,r}$, compare \eqref{smooth-error}. 
  
  Finally, we prove \eqref{special:v_eps}. Here we have $v= v_{\pm\eps}$ and $v_{\pm\eps}(x) \leq I(M(x) \leq 1 + 2 \eps)$. In view of \eqref{eq:ScalingOfMinkowski}, we find that the $u$-integral in \eqref{def:v_Q:1} can be restricted to $2u^2 \leq 2d +v$. Hence
  \begin{equation*}
    \norm{\specialv_{\pm\eps}}_Q \ll_d d_Q \sup_{v \in r^{-2} \partial_{w}[a,b]}   (1+\abs{v})^{(d-3)/2} \int_0^\infty I(v \leq u^2 \leq d+v/2) \, 
\dif u \ll d_Q,
  \end{equation*}
  because $\abs{v} \le r^{-2}(\abs{a}+\abs{b}) \le 2 c_0 \le 1$. Since $\varphi_\specialv$ is supported in $\norm{\cdot}$-ball of radius $2d^{1/2}$, we get also in the case of positive definite forms that \eqref{def:v_Q:2} is bounded by $\ll_d d_Q$.
\end{proof}



\subsection{Fourier Transform of Weights for Polyhedra}
\label{ftpoly}
Here we continue to estimate the remainder terms in \eqref{omega-smooth}. 
Since the bounds for $R(g^Q_w \4 \specialv_{-\eps,r})$ are exactly the same as for $R(g^Q_w \4 \specialv_{+\eps,r})$ we shall consider the latter only. We 
shall now modify the weight $\specialv_{\eps}$, defined in \eqref{omega_eps}, as follows. Define $\varphi=I_{[-2,2]} \ast k$, where $k$ is again 
the probability measure from Subsection \ref{subsection:3.1}. Of course, $\varphi$ is smooth and $\varphi(u)=1$ if $\abs{u} \leq 1$ and $\varphi(u)=0$ if $\abs{u} \geq 3$. Let $s_d \tdefi d(1+2\eps_0)^2$. Now, by construction $\varphi(Q_{+}[x] s_d^{-1})$ is identical to $1$ on the support of the $\eps$-smoothed indicator of $\Omega_{\eps} = B^{-1}[-(1+\eps),(1+\eps)]^d$, that is $\specialv_{\eps}(x)$. Hence we may rewrite the weights $\zeta$ of \eqref{zeta-def} via
\begin{equation}
  \label{sec7:CutOff:SmoothingRegion}
  \zeta_{\eps}(x)= \specialv_{\eps}(x) \exp\{Q_{+}[x]\} =\specialv_{\eps}(x) \psi(x)
\end{equation}
using the $C^{\infty}$ function $\psi(x)\tdefi \exp\{ Q_{+}[x] \} \varphi 
(Q_{+}[x]  s_d^{-1})$ of bounded support, whose Fourier transform can easily be estimated, see \eqref{l2.4}. In particular, the weights $\zeta_{\eps}$ satisfy the integrability condition \eqref{zeta-decay}, i.e.\ $\sup_{x\in \R^d}\big(\abs{\zeta_{\eps}(x)} + \abs{\wh{\zeta}_{\eps}(x)}\big) (1+\norm{x})^{d+1} < \infty$.

\begin{lemma}
  \label{l2}
  The following estimate holds
  \begin{equation}
    \label{l2.2}
    \int_{\R^d} \, \abs{ \wh{\zeta_{\eps} }(v)} \, \dif v \ll_d  \int \abs{\wh{I}_{[-1,1]^d}}(v)  \4 {\textstyle \prod_{j=1}^d } \exp\{-  \abs{\eps v_j}^{1/2}\} \, \dif v \ll_d  (\log \eps^{-1})^d.
  \end{equation}
\end{lemma}

\begin{remark}
  In the general case, when $\Omega$ has finite Minkowski surface measure 
$c_{\Omega}$ only, defined via $\mathrm{meas}(\partial_{\eps} \Omega) \le 
 c_{\Omega} \eps$, we have\index{I@ $\norm{\wh{I_{\Omega}}}_{1,\eps} 
:= \int_{\R^d} \abs{\wh{I}_{\Omega}(v)} \exp \{-  \norm{\eps v}^{1/2} \} \, \dif v \ll_{\Omega} \eps^{-(d+1)/2}$}
  \begin{equation*}
    \norm{\wh{I_{\Omega}}}_{1,\eps} \defi \int_{\R^d} \abs{\wh{I}_{\Omega}(v)} \exp \{-  \norm{\eps v}^{1/2} \} \, \dif v \ll_d c_\Omega \eps^{-d}
  \end{equation*}
  as can be deduced from the bound in Theorem 2.9 of \cite{brandolini-et.al:1997}, that is
  \begin{equation*}
   \frac{1}{\v(u \leq \norm{v} \leq 2 u)} \int_{\{u \leq \norm{v} \leq 2 u\}} \abs{\wh{I}_{\Omega}(v)} \, \dif v \leq c_\Omega (2+u)^{-(d+1)/2}.
  \end{equation*}
  This estimate is sharp as shown by the explicit example of an unit ball, see \cite{brandolini-et.al:1997} for more details. That paper contains also bounds on the average $\eta \mapsto \abs{\wh{I}_{\Omega}(s \eta)}$ over the unit sphere $S^{d-1}$ for polyhedra, which are usually of smaller order than pointwise bounds. In fact, the pointwise decay of $\wh{I}_\Omega(v)$ may depend crucially on the direction of $v$. In our setting (finding $L^1$-estimates for specially oriented parallelepipeds $\Omega$) more elementary arguments can be used.
\end{remark}

\begin{proof}
  Note that by definition
  \begin{equation}
    \label{l2.0}
    \int_{\R^d} \abs{\wh{\zeta_{\eps}}(v)} \, \dif v= \int_{\R^d} \abs{\wh{\specialv_{\eps} \psi}(v)}\, \dif v = \int_{\R^d} \Big\lvert\int_{\R^d} \wh{\specialv_{\eps}} (v-x)\wh{\psi}(x)\, \dif x \Big\rvert\4 \dif 
v \leq \norm{\wh{\specialv_{\eps}}}_1 \4 \norm{\wh{\psi}}_1.
  \end{equation}
  Since
  \begin{equation*}
    \wh{\psi}(x) = \abs{\det{Q}}^{-1/2} \int_{\R^d} \exp[ v^2] \4 \varphi( v^2 s_d^{-1}) \4 \e^{- 2 \pi \iu \langle v, Q_{+}^{-1/2} x \rangle} \, \dif v
  \end{equation*}
  we easily conclude that
  \begin{equation}
    \label{l2.4}
    \abs{\wh{\psi}(x)} \le \abs{\det{Q}}^{-1/2} c(d,k)(1+Q_{+}^{-1}[x])^{-k}, \, x\in\R^d, \q \text{and thus} \q \norm{\wh{\psi}}_1 \le c(d).
  \end{equation}
  Defining $Z \tdefi (B^{-1})^T$ and changing variables shows also that
  \begin{equation}
    \label{Appendix:Fourier:Region}
    \wh{I}_{\Omega_{\eps}}(v) = (1+\eps)^d \wh{I}_{\Omega}((1+\eps)v) = (1+\eps)^d \abs{\det{B}}^{-1} \4 \wh{I}_{[-1,1]^d}((1+\eps) Z v)
  \end{equation}
  and
  \begin{equation}
    \label{Appendix:Fourier:Smooth}
    \textstyle
    \abs{\wh{k}_{B,\eps}(v)} \leq \exp\{-   \eps^{1/2} \sum_{j=1}^d \abs{(Z v)_j}^{1/2} \}.
  \end{equation}
  Thus we get for $\specialv_{\eps}=I_{\Omega_{\eps}}*k_{B,\eps}$
  \begin{equation}
    \label{l2.5}
    \norm{\wh{\specialv}_{\eps}}_1 = \norm{\wh{I}_{\Omega_{\eps}} \wh{k}_{B,\eps}}_1 \ll_d \int_{\R^d} \4 \abs{\wh{I}_{[-1,1]^d}((1+\eps)v)} \4 {\textstyle \prod_{j=1}^d } \exp\{-  \abs{\eps v_j}^{1/2} \} \, \dif v.
  \end{equation}
  Finally, using $\wh{I}_{[-1,1]^d}(v) =  \prod_{j=1}^d \sin( 2 \pi v_j)/( \pi v_j)$ together with \eqref{l2.5} gives the estimate
  \begin{equation}
    \label{final-omega}
    \norm{\wh{\specialv}_{\eps}}_1 \ll_d \Big( \int_0^\infty \frac{1}{u+\eps} \4 \e^{- \sqrt{u}} \, \dif u \Big)^d \ll_d \Big(1 + \int_0^1 \frac{1}{u+\eps} \, \dif u \Big)^d  \ll_d \log(\eps^{-1})^d.
  \end{equation}
  We now obtain the estimate \eqref{l2.2} from \eqref{l2.0} combined with 
\eqref{l2.4} and \eqref{final-omega}.
\end{proof}



\subsection{Lattice Point Remainders for Admissible Parallelepipeds}
\label{specialp}
Now we restrict the parallelepiped $\Omega = B^{-1} [-1,1]^d$, as defined in \eqref{eq:sec7:Def:Omega}, such that its faces are in a general position relative to the standard lattice $\Z^d$. This ensures that the lattice point remainder for $r \Omega$  is  of `abnormally' small error \textit{uniformly} in $r$. To construct it, we may alternatively construct lattices $B \4\Z^d$ such that the faces of $[-1, 1]^d$ have this property. Following Skriganov \cite{skriganov:1994}, we call a lattice  $\Gamma \subset \R^d$ of full rank, and likewise $\Omega$, `admissible' if
\begin{equation}
  \label{Def:Admissible}
  \textstyle
  \Nm \Gamma \defi \inf_{\gamma \in \Gamma \setminus \{ 0 \}} \4 \abs{ \Nm \gamma } >0,
\end{equation}
where $\Nm \gamma = \abs{\gamma_1 \cdots \gamma_d}$ in standard coordinates $\gamma=(\gamma_1, \ldots, \gamma_d)$. 

\begin{remark}
 \label{remark:app_a:set_of_admis}
 The set of all admissible lattices is dense in the space of lattices (see \cite{skriganov:1998}). Hence, for any $\eta >0$, if $D_\eta$ denotes the set of diagonal matrices with entries in $[1,1+\eta)$, then $\text{O}(d)D_\eta \text{O}(d) \Gamma$ contains an admissible lattice. In particular, if $\Gamma = Q_+^{1/2} \Z^d$, then there exist orthogonal matrices $k,l \in \text{O}(d)$ and a diagonal matrix $d \in D_\eta$ such that  $B \Z^{d}$ is admissible, where $B = k d  l \, Q_+^{1/2}$ satisfies property \eqref{eq:sec7:Def:Omega} with a constant $c_B$ depending only on $\eta$.
\end{remark}

\begin{remark}
  This definition is a special case of `admissible lattices' for \textit{star-bodies}, see Chapter IV.4 in \cite{Cassels:1959}. Here, the star-body is given by $\{F <1\}$ with the \textit{distance function} $F(x) = \abs{x_1 \cdots x_d}^{1/d}$.
\end{remark}

As shown in Lemma 3.1 of \cite{skriganov:1994}, the dual lattice $\Gamma^*=Z\Z^d$ of $\Gamma$, where $Z^TB=\mathrm{Id}$, is admissible as well. Another property of admissible lattices is that there exists a cube $[-r_0,r_0]^d$ containing a fundamental domain $F$ of $\Gamma$ such that $r_0>0$ depends only by means of the invariants $\det{\Gamma}$ and $\Nm \Gamma$.

\begin{example}
  \label{example:app_a:admis_latt}
  Well known examples are provided by the \textit{Minkowski embedding} of 
a \textit{totally real} algebraic number field $\mathbb{F}$ of degree $d$ 
into $\R^d$. Given all embeddings $\sigma_1,\ldots,\sigma_d$ of $\mathbb{F}$, the Minkowski embedding $\sigma \colon \mathbb{F} \rightarrow \R^d$ is defined by $\sigma = (\sigma_1,\ldots,\sigma_d)$. In this case $\Nm \sigma(\alpha) = \abs{N_{\mathbb{F}/\mathbb{Q}}(\alpha)}$ is the field norm of any $\alpha \in \mathbb{F}$, where we interpret multiplication by 
$\alpha$ as a $\mathbb{Q}$-linear map. Thus, the image of the ring of integers $\mathcal{O}_\mathbb{F}$ is an admissible lattice $\Gamma$ with $\Nm \Gamma \geq 1$. For more information, see Chapter 2.3 in \cite{borevich-schafarevich:1966}.
\end{example}

\begin{remark}
  We also note that for any natural number $n \in \mathbb{N}$ we may choose a real number field of degree $n$ which is normal over the rational numbers. In fact, let $m \in \mathbb{N}$ be chosen such that $2n \mid \varphi(m)$ and let $\xi_m$ be a primitive $m$-th root of unity. Then $\mathbb{Q}(\xi_m + \xi_m^{-1})$ is a real number field of degree $\varphi(m)/2$, 
which is also normal and its Galois group $G$ is abelian. Since $G$ contains a subgroup $H$ of order $\varphi(m)/(2n)$, the fixed field of $H$ is real, normal and of degree $n$. Thus, there exists an admissible region $\Omega$ satisfying \eqref{eq:sec7:Def:Omega} with $c_B \asymp_d q/q_0$ and $\Nm(B) \asymp_d q^{d/2}$.
\end{remark}

\begin{lemma}
  \label{omega-remainder}
  Assume that the lattice $\Gamma=B\Z^d$ is admissible and $B$ satisfies \eqref{eq:sec7:Def:Omega}. For $0< \eps \leq \eps_0$ and $r \geq 1$ we get for the parallelepiped $\Omega=B^{-1} [-1,1]^d$ and the corresponding weights $\zeta_{\eps}(x)= \specialv_{\eps}(x) \psi(x)$ introduced in 
Subsection \ref{ftpoly}
  \begin{equation}
    \label{zetaperiod}
    I_{\zeta}\defi \int_{\norm{v}_{\infty} > r/2} \frac{\abs{\wh \zeta_{\eps}(v)}} {( q^{1/2} r^{-1}+\norm{ r^{-1}\4 v }_{\Z^d})^{d/2}}\4 \dif v \ll_d  q_0^{-d/4} \4 d_Q \4 \abs{\det{B}} \lambda_{r,\eps}^{d-1} \4 \frac{\bar{\lambda}_{r,\eps,\Gamma}}{\Nm(\Gamma)},
  \end{equation}
  where $\lambda_{r,\eps} \tdefi \min \{ \log(r+1), \log (\eps^{-1}) \}$ and $\bar{\lambda}_{r,\eps,\Gamma} \tdefi \max \{ \lambda_{r,\eps},\log(2+\tfrac{1}{\Nm(\Gamma) \4 r\eps})\}$. For any inadmissible parallelepiped 
$\Omega$ only the estimate
  \begin{equation}
    \label{zetaperiod-notadmiss}
    I_{\zeta} \ll_d d_Q \4 q^{d/2} \4 c_B^{(d+1)/2} \eps^{-d}
  \end{equation}
  holds. Additionally, we also have $d_Q \abs{\det{B}} \leq (c_B)^{d/2}$.
\end{lemma}

\begin{proof}
  We start by making the change of variables $w=r^{-1} Z\4v$ in \eqref{zetaperiod} and then splitting $I_\zeta$ into integrals over cells $C^*:= 
Z [-\frac{1}{2},\frac{1}{2})^d$, where $\Gamma^* \tdefi Z\Z^d$ denotes the dual lattice to $\Gamma$, that is $Z= (B^T)^{-1}$, in order to get
  \begin{equation}
    \label{1dint}
    I_{\zeta} = \hspace*{-1mm} \sum_{ \gamma^* \in \Gamma^* \setminus \{0\} } \hspace*{-1mm}  I_{\zeta}(\gamma^*), \ \  \text{where} \ \  I_{\zeta}(m) \defi  r^d \abs{\det{B}} \int_{C^*} \frac{\abs{\wh \zeta_{\eps}(Z^{-1}r(\gamma^* +v))}}{( q^{\frac{1}{2}} r^{-1} + \norm{Z^{-1}v}_\infty)^{\frac{d}{2}}} \, \dif v.
  \end{equation}
  Note that $\Gamma^*$ satisfies $\norm{Z} \leq \norm{Q_{+}^{-1/2}} \leq q_0^{-1/2}$, since the first inequality in \eqref{eq:sec7:Def:Omega} implies
  \begin{equation}
    \label{OpNormBoundForDualLattice}
    1 \geq \norm{Q_{+}^{1/2} B^{-1}} = \norm{((B^T)^{-1} Q_{+}^{1/2})^T} = \norm{(B^T)^{-1} Q_{+}^{1/2}} = \norm{Z Q_{+}^{1/2}}.
  \end{equation}
  In particular, the fundamental domain $C^*$ is contained in $q_0^{-1/2} 
\sqrt{d} [-\frac{1}{2},\frac{1}{2}]^d$. Next, we shall bound the Fourier transform of $\zeta_\eps$. Recall that by definition
  \begin{equation}
    \label{app:eq:l7.6-0}
    \wh{\zeta}_{\eps}(u) = ((\wh{I}_{\Omega_{\eps}} \cdot \wh{k}_{B,\eps}) * \wh\psi) (u).
  \end{equation}
  As verified in \eqref{Appendix:Fourier:Region}, we have in coordinates $u=(u_1, \ldots, u_d)$
  \begin{equation}
    \label{app:eq:l7.6-1}
    \abs{\wh{I}_{\Omega_\eps}(Z^{-1}u)} \ll_d \abs{\det{B}}^{-1} \prod_{j=1}^d \Big\lvert \frac{\sin[2 \pi(1+\eps)u_j]}{(1+\eps) u_j}\Big\rvert \ll_d \abs{\det{B}}^{-1} \prod_{j=1}^d (1+\abs{u_j})^{-1}.
  \end{equation}
  Since \eqref{OpNormBoundForDualLattice} also implies $\norm{Q_{+}^{-1/2}(Z^{-1}u)} \geq \norm{u}$, we can rewrite \eqref{l2.4} by
  \begin{equation}
    \textstyle
    \label{app:eq:l7.6-2}
    \abs{\wh \psi (Z^{-1}u)} \ll_{d,k} \abs{\det{Q}}^{-1/2} (1+\norm{u}^2)^{-k} \ll_{d,k} \abs{\det{Q}}^{-1/2} \prod_{j=1}^d (1+u_j^2)^{-k/d},
  \end{equation}
  where we applied the AM-GM inequality. In view of \eqref{Appendix:Fourier:Smooth} we have the bound
  \begin{equation}
    \textstyle
    \label{app:eq:l7.6-3}
    \abs{ \wh{k}_{B,\eps}(Z^{-1}u)} \leq \exp\{-  \sum_{j=1}^d \abs{\eps \4 
u_j}^{1/2} \}
  \end{equation}
  as well. Combining these estimates yields
  \begin{equation*}
    \abs{\wh{\zeta}_{\eps}(Z^{-1} r w)} \ll_{d,k} d_Q \int_{\R^d}  \prod_{j=1}^d \frac{1}{(1+u_j^2)^{k/d}} \frac{ \exp\{-  \eps^{1/2} \abs{rw_j-u_j}^{1/2}\}}{1+\abs{rw_j-u_j}} \, \dif u.
  \end{equation*}
  Thus, we get for a fixed lattice point $\gamma^* = (\gamma^*_1,\ldots,\gamma^*_d) \in \Gamma^*$
  \begin{equation*}
    I_{\zeta}(\gamma^*) \ll_{d,k} \int_{C^*} \frac{\abs{\det{Q}}^{-1/2} \4 \abs{\det{B}}}{(q r^{-1} + \norm{Z^{-1} v}_\infty)^{d/2}} \int_{\R^d} \prod_{j=1}^d \bar{\omega}(u_j) \frac{ \omega( \eps r (\gamma_j^* + v_j-\frac{u_j}{r})) }{r^{-1}+\abs{\gamma_j^* + v_j -\frac{u_j}{r}}} \, \dif u 
\, \dif v,
  \end{equation*}
  where $\bar{\omega}(x) \tdefi (1 + x^2)^{-k/d}$ and $\omega(x) \tdefi \exp\{- \abs{x}^{1/2} \}$. We now estimate the last double integral coordinatewise: Note that we have $\abs{v_i} \leq \bar{v} \tdefi \sqrt{d}/2$ and
  \begin{equation*}
    \textstyle
    (q^{1/2} r^{-1}+\norm{Z^{-1}v}_\infty)^{d/2} \gg_d q_0^{d/4} (r^{-1} + \norm{v}_\infty)^{d/2} \geq q_0^{d/4} \prod_{j=1}^d (r^{-1} + \abs{v_i})^{1/2},
  \end{equation*}
  since $\norm{Z^{-1}v}_\infty \gg_d \norm{Z}^{-1} \norm{v}_\infty \geq q_0^{1/2} \norm{v}_\infty$. Hence, we find
  \begin{equation*}
    \textstyle
    I_{\zeta}(\gamma^*) \ll_{d,k} q_0^{-d/4} \4 d_Q \abs{\det{B}} \prod_{j=1}^d  J_{\zeta}(\gamma^*_j;\R),
  \end{equation*}
  where
  \begin{equation*}
    J_{\zeta}(\gamma^*_j;D) \defi \int_{-\bar{v}}^{\bar{v}} \frac{1}{(r^{-1} +\abs{v})^{1/2}} \int_{D} \bar{\omega}(u) \frac{ \omega( \eps r (\gamma_j^* + v-\frac{u}{r})) }{r^{-1}+\abs{\gamma_j^* + v -\frac{u}{r}}} \, \dif u \, \dif v.
  \end{equation*}
  In order to estimate $J_{\zeta}(\gamma^*_j;\R)$, we decompose the integral into parts corresponding to the extremal points of the integrands. Defining $D_j \tdefi \{\abs{u} \geq r\abs{\gamma^*_j +v}/2\}$, we get
  \begin{equation*}
    J_{\zeta}(\gamma^*_j;D_j) \le \int_{-\bar{v}}^{\bar{v}} \frac{r}{\abs{v}^{1/2}} \int_{D_j} \bar{\omega}(u) \, \dif u \, \dif v \ll_{k,d} \int_{-\bar{v}}^{\bar{v}} \frac{1}{\abs{v}^{1/2}} \frac{r}{(1+r \abs{\gamma^*_j+v})^{\frac{k}{d}-1}} \, \dif v.
  \end{equation*}
  In the case $\abs{\gamma_j^*} \geq \sqrt{d}$, we have $\abs{\gamma^*_j+v} \geq \abs{\gamma_j^*}/2$ and hence
  \begin{equation*}
    J_{\zeta}(\gamma^*_j;D_j) \ll_d \frac{r}{(1+ \abs{r \gamma_j^*})^{d+2}} \int_{-\bar{v}}^{\bar{v}} \frac{1}{\abs{v}^{1/2}} \, \dif v \ll_d \frac{1}{(1+ \abs{r \gamma_j^*})^{d+1}}
  \end{equation*}
  if we take $k = d (d+3)$. In the other case $\abs{\gamma_j^*} < \sqrt{d}/2$, we split the $v$-integral into two parts as follows in order to find the estimate
  \begin{align*}
    J_{\zeta}(\gamma^*_j;D_j) & \ll_d \int_{-\bar{v}}^{\bar{v}} \frac{\abs{\gamma_j^*}^{-\frac{1}{2}} \4 r \4 I(\abs{v} \geq \abs{ \gamma_j^*}/2)}{(1+ r\abs{\gamma_j^*+v})^{d+2}} \, \dif v + \int_{0}^{\abs{\gamma_j^*}/2} \frac{r}{v^{\frac{1}{2}} (1+ r ( \abs{\gamma^*_j}-v))^{d+2}} \, \dif v \\
                              & \ll_d \abs{\gamma_j^*}^{-\frac{1}{2}} + \frac{\abs{\gamma_j^*}^{\frac{1}{2}} r}{(r\abs{\gamma^*_j}+1)^{d+2}} \int_0^{1/2} \frac{1}{v^\frac{1}{2}(1-v)^{d+2}} \, \dif v \ll_d  \abs{\gamma_j^*}^{-\frac{1}{2}}.
  \end{align*}
  In the complement $u \in D_j^c$ we have $\abs{\gamma_j^* + v-\frac{u}{r}} \geq \abs{\gamma_j^* + v}/2$ and thus
  \begin{equation*}
    J_{\zeta}(\gamma^*_j;D_j^c) \ll_d  \int_{-\bar{v}}^{\bar{v}}\abs{v}^{-\frac{1}{2}} \frac{ \omega( \eps r (\gamma_j^* + v)/2) }{r^{-1}+\abs{ \gamma_j^* + v}} \, \dif v.
  \end{equation*}
  If $\abs{\gamma_j^*} \geq \sqrt{d}$, then we easily conclude that $J_{\zeta}(\gamma^*_j;D_j^c) \ll_d \omega( \eps r \gamma_j^* /4 ) \abs{\gamma_j^*}^{-1}$. At last, we consider the case $\abs{\gamma_j^*} < \sqrt{d}$. The $v$-integral over the region $\{\bar{v} \geq \abs{v} \geq \abs{\gamma^*_j}/2\}$ can be bounded by
  \begin{align*}
     & \ll_d \abs{\gamma_j^*}^{-1/2}\int_{-\bar{v}}^{\bar{v}} \frac{I( \abs{v} \geq \abs{\gamma^*_j}/2)}{(r^{-1}+\abs{\gamma_j^* + v})(1+ \eps r \abs{ \gamma_j^* + v})} \, \dif v        \\
     & \ll_d \abs{\gamma_j^*}^{-1/2} \int_0^{3 \sqrt{d}/2}\frac{1}{r^{-1}+v}\frac{1}{1+\eps r v} \, \dif v \ll_d \abs{\gamma_j^*}^{-1/2} \min \{ \log(\eps^{-1}), \log(r \! + \! 1)\}
  \end{align*}
  and similar over the complement by
  \begin{equation*}
    \ll_d \int_0^{\abs{\gamma^*_j}/2} \frac{v^{-1/2}}{r^{-1}+\abs{ \gamma_j^*} -v} \, \dif v \ll_d  \abs{\gamma^*_j}^{-1/2}.
  \end{equation*}
  Hence we conclude that
  \begin{equation}
    \label{Idef}
    I_{\zeta} \ll_d q_0^{-d/4} \4 d_Q \4 \abs{\det{B}} \sum_{(\gamma^*_1, 
\ldots, \gamma^*_d) \in \Gamma^*\setminus \{0\}} \prod_{j=1}^d \frac{H_{r,\eps}(\gamma_j^*)}{\abs{\gamma_j^*}},
  \end{equation}
  where
  \begin{equation}
    \label{DefOfH:Lemma7.4}
    H_{r,\eps}(x) \tdefi \lambda_{r,\eps} \abs{x}^{1/2} I(\abs{x} < \sqrt{d}) + (1+ \eps r \abs{x})^{-d} I(\abs{x} \ge \sqrt{d}).
  \end{equation}
  In view of the following Lemma \ref{Fourier-sum} this concludes the proof of the bound \eqref{zetaperiod}.\par
  If the region $\Omega$ is not admissible, then we  change variables to $w = r^{-1}v$ split the left-hand side of \eqref{zetaperiod} into integrals over unit cells $E:= [-\frac{1}{2},\frac{1}{2})^d$ in order to find
  \begin{equation*}
    I_{\zeta} = \sum_{ m \in \Z^d \setminus \{0\} } I_{\zeta}(m), \ \  \text{where} \ \  I_{\zeta}(m) \defi  r^d \int_E \frac{\abs{\wh \zeta_{\eps}(r (m +w)}}{( q^{1/2} r^{-1} + \norm{w}_\infty)^{d/2}} \, \dif w.
  \end{equation*}
  Because of $\sum_{j=1}^d \abs{u_j}^{1/2} \geq \norm{u}^{1/2}$ we can further estimate \eqref{app:eq:l7.6-3} by
  \begin{equation*}
    \abs{ \wh{k}_{B,\eps}(Z^{-1}u)} \leq \exp\{-  \norm{\eps \4 u}^{1/2} \}.
  \end{equation*}
  Recalling the definition \eqref{app:eq:l7.6-0} and the estimates \eqref{app:eq:l7.6-1}--\eqref{app:eq:l7.6-2} for $u =Z w$ shows that
  \begin{equation*}
    \abs{\wh{\zeta}_{\eps}(r w)} \ll_k  d_Q \4 \eps^{-k+1} (r \norm{Zw}+1)^{-k} \ll  d_Q \4 \eps^{-k+1} (q \4 c_B)^{k/2} (r \norm{w}+1)^{-k}.
  \end{equation*}
  Thus, taking $k=d+1$ we find
  \begin{equation*}
    I_{\zeta} \ll_d d_Q \4 q^{d/2} \4 c_B^{(d+1)/2} \eps^{-d}.
  \end{equation*}
  The last remark easily follows by comparing the volume of the bodies $\{\norm{Bx} \leq 1\}$ and $\{ \norm{Q_{+}^{1/2}x} \leq 1\}$: Using \eqref{eq:sec7:Def:Omega} leads to $\abs{\det{Q}}^{1/2} \leq \abs{\det{B}} \leq (c_B)^{d/2} \abs{\det{Q}}^{1/2}$.
\end{proof}

\begin{lemma}
  \label{Fourier-sum}
 For an admissible lattice $\Gamma$ we have for any weight function $\omega(x)>0$ on $\R$, such that $\omega_\infty \tdefi 1+ \max_x\omega(x) (1+ \abs{x})^p < \infty$, where $p \in \N$ and $\eps>0$, the bound
  \begin{equation}
    \label{Fourier-sum:eq}
    S_{\Gamma, \eps}\defi \hspace{-1mm} \sum_{(\gamma_1, \ldots, \gamma_d) \in \Gamma \setminus \{0\}} \Big\lvert\frac{\omega_{r,\eps}(\gamma_1) \ldots \omega_{r,\eps} (\gamma_d) } {\gamma_1 \ldots \gamma_d}\Big\rvert \ll_d \omega_\infty \4 \lambda_{r,\eps}^{d-1} \4 \frac{\bar{\lambda}_{r,\eps,\Gamma}}{\Nm(\Gamma)},
  \end{equation}
  where $\omega_{r,\eps}(x) \tdefi \lambda_{r,\eps} \abs{x}^{\frac{1}{2}} 
I(\abs{x} <  \sqrt{d})+ \omega( \eps r x) I(\abs{x} \ge \sqrt{d})$ and $ \lambda_{r,\eps}$, $\bar{\lambda}_{r,\eps,\Gamma}$ are as introduced in Lemma \ref{omega-remainder}.
\end{lemma}

\begin{proof}
  First, we make a decomposition of $\Gamma$ as follows. For any $(x_1,\ldots,x_d) \in \R^d$ with $\abs{x_1 \cdots x_d}\ge \Nm(\Gamma)$ let $m_j \in \Z$ be the unique integers satisfying $2> \abs{ 2^{m_j} x_j} d^{-1/2} \ge 1$ for $j=2, \ldots,d$. We have $\abs{x_1} \ge \Nm(x) \abs{x_2\ldots x_d}^{-1} \ge \Nm(\Gamma) d^{(1-d)/2} \prod_{j=2}^d 2^{m_j -1}$ and this implies that $\abs{2^{m_1}x_1} \in [k\4c_{\Gamma},(k+1)c_{\Gamma})$ for a unique integer $k \geq 1$, where $m_1 \in \Z$ is determined by $m_1 + m_2 +\ldots+ m_d =0$ and $c_{\Gamma}= d^{(1-d)/2} 2^{-d+1}\Nm(\Gamma)$. Introducing the lattice
  \begin{equation*}
    E_d \tdefi \{m=(m_1, \ldots, m_d)\in \Z^d \,: \, m_1 + \ldots +m_d=0 \} \subset \Z^d
  \end{equation*}
  and the interval $B_k\tdefi[k\4c_{\Gamma},(k+1)c_{\Gamma})$, we can write
  \begin{equation*}
    I( \abs{x_1\ldots x_d} \ge \Nm(\Gamma) \4 ) = \sum_{m\in E_d} \sum_{k\in \N} I_{B_k}(\abs{2^{m_1 }\4 x_1}) \prod_{j=2}^{d}I_{[\sqrt{d},2\sqrt{d})}(\abs{2^{m_j} \4 x_j}),
  \end{equation*}
  and hence
  \begin{equation}
    \label{Gamma-sum1}
    S_{\Gamma, \eps} = \sum_{m\in E_d} \sum_{k\in \N} \sum_{\gamma\in \Gamma} I_{B_k}(\abs{2^{m_1}\4 \gamma_1})\prod_{j=2}^{d} I_{[\sqrt{d},2\sqrt{d})}(\abs{2^{m_j} \4 \gamma_j})  \Big\lvert \frac{\omega_{r,\eps}( \gamma_1) \ldots \omega_{r,\eps}( \gamma_d) }{\gamma_1 \ldots \gamma_d} \Big\rvert.
  \end{equation}
  We also introduce the obvious notations $\Nm(x) \tdefi \abs{x_1 \cdots x_d }$, $2^m x= ( 2^{m_1}x_1, \ldots 2^{m_d} x_d ),\\ m \in E_d$ and $2^m \Gamma$ for the rescaled lattice $\{ 2^m \gamma \4 : \4 \gamma \in \Gamma \}$. Note that $\Nm(2^m\gamma)=\Nm(\gamma)$ and hence $\Nm(\Gamma)=\Nm(2^m\Gamma)$. Defining $C_k\tdefi B_k\times [\sqrt{d},2\sqrt{d})^{d-1}$ and $h(x) \tdefi ( 1+ \abs{x})^{-p}$ (where $p \in \mathbb{N}$ is the same as in the assumptions of the lemma), we may rewrite and bound \eqref{Gamma-sum1} by
  \begin{equation}
    \label{Gamma-sum2}
    \begin{aligned}
      S_{\Gamma,\eps} & = \sum_{m\in E_d} \Big( \sum_{k \in \N} \sum_{\eta \in 2^m\Gamma} I_{C_k}(\eta) \prod_{j=1}^d \frac {\omega_{r,\eps}( 2^{-m_j}\eta_j )}{\abs{\eta_j} }\Big)                                     
               \\
                      & \ll_d \omega_\infty \sum_{m\in E_d} \sum_{k \in \N} \Big( \Big( \sum_{\eta \in 2^m\Gamma} I_{C_k}(\eta) \Big) \frac{h_{r,\eps}(c_\Gamma 2^{-m_1}k)}{c_{\Gamma} \4 k} \Big) \prod_{j=2}^d h_{r,\eps}(2^{-m_j}),
    \end{aligned}
  \end{equation}
  where $h_{r,\eps}(x) \tdefi \lambda_{r,\eps} \abs{x}^{\frac{1}{2}} I(\abs{x} <  1)+ h( \eps r x) I(\abs{x} \ge 1)$. In order to perform the summation in $k$ and $\eta$ in \eqref{Gamma-sum2} we first observe that
  \begin{equation}
    \label{box1}
    \sum_{\eta \in 2^m\Gamma} I_{C_k}(\eta) \le 1.
  \end{equation}
  Proof of \eqref{box1}: Assume that two different lattice points $\eta, \eta'\in 2^m\Gamma $ lie in $C_k$. Then we have $\abs{\eta_1-\eta'_1} < c_{\Gamma}$ and $\max_{2\le j\le d}\abs{\eta_j-\eta'_j} < \sqrt{d}$. Since 
$\eta-\eta'\in 2^m \Gamma\setminus \{0\}$ implies $\abs{\eta_2- \eta'_2 }\cdots \abs{\eta_d- \eta'_d }\ge (\Nm \Gamma)/c_{\Gamma}=d^{(d-1)/2}2^{(d-1)}$ and hence $\abs{(\eta_2- \eta'_2) } \ge 2 \sqrt{d}$ for some $j\ge 2$, we get at a contradiction which proves \eqref{box1}.\par
  Estimating the following sum in $k$ by an integral, we obtain
  \begin{equation}
    \label{box2}
    \sum_{k=1}^\infty \frac{h_{r,\eps}(\alpha \4 k)}{k} \ll \lambda_{r,\eps} I(\alpha < 1) + \log \Big(1+\frac{2}{\alpha r \eps}\Big) \defi \bar{h}_{r,\eps}(\alpha).
  \end{equation}
  Hence, making use of \eqref{box1} and \eqref{box2} in \eqref{Gamma-sum2}, shows that
  \begin{equation}
    \textstyle
    \label{Lemma7.5:FinStep1}
    S_{\Gamma,\eps} \ll_d \omega_\infty (c_\Gamma)^{-1} \sum_{m \in E_d} H(2^{-m}),
  \end{equation}
  where $2^{m} \tdefi (2^{m_1},\ldots,2^{m_d})$ and $H(x)\tdefi \bar{h}_{r,\eps}(c_\Gamma x_1) h_{r,\eps}(x_2)\cdots h_{r,\eps}(x_d)$.\par Let $E_d'$ denote the subset of $E_d$ consisting of all lattice points $(m_1,\ldots,m_d) \in E_d$ with $m_1 \leq 0$. We claim that
  \begin{equation}
    \label{Lemma7.4:SimpleCase}
    \sum_{m \in E_d'} H(2^{-m}) \ll_d \big(\lambda_{r,\eps} + \log(1 +\tfrac{1}{\Nm(\Gamma) r \eps }) \big) \lambda_{r,\eps}^{d-1}.
  \end{equation}
  Proof of \eqref{Lemma7.4:SimpleCase}: Let $m \in E_d' \setminus \{0\}$. 
Assume for definiteness that $m_1, \ldots, m_{l-1} \leq 0$ and $m_l, \ldots, m_d > 0$. By definition of $E_d$ we get $2 \sum_{j=l}^m m_j= \sum_{j=1}^d \abs{m_j} \ge \norm{m}_2$. Since $h_{r,\eps}(2^{-k}) \le 1$ for $k \le 0$ and otherwise $h_{r,\eps}(2^{-k}) = \lambda_{r,\eps} 2^{-k/2}$, we obtain
  \begin{align*}
    H(2^{-m}) & \ll_d \big(\lambda_{r,\eps} + \log(1 +\tfrac{1}{\Nm(\Gamma) r \eps }) \big) \lambda_{r,\eps}^{d-l} { \textstyle  \prod_{j=l}^d} 2^{-m_j/2} \\
              & \ll_d \big(\lambda_{r,\eps} + \log(1 +\tfrac{1}{\Nm(\Gamma) r \eps }) \big) \lambda_{r,\eps}^{d-l} 2^{-\norm{m}/4}.
  \end{align*}
  Thus, splitting the sum according to the number of positive coordinates 
and then summing over the $(d-1)$-dimensional lattice $E_d$ yields \eqref{Lemma7.4:SimpleCase}.

  In order to bound the sum over the complement of $E_d'$, we again split 
the sum according to the number of positive coordinates. For simplicity, we may assume that $m_1,m_2,\ldots,m_l >0$ and $m_{l+1},\ldots,m_d \leq 0$. Similar to the previous case, we find that
  \begin{equation*}
    H(2^{-m})\ll_d \big(\norm{m}+\lambda_{r,\eps} + \log(1 {+} \tfrac{1}{\Nm(\Gamma) r \eps }) \big) \lambda_{r,\eps}^{l-1} \Big( {\textstyle \prod_{j=2}^l } 2^{-\frac{m_j}{2}} \Big) \min ( 1, (r\eps)^{-dp}\4 2^{-p\norm{m}/2 }).
  \end{equation*}
  If we parameterize the $(d-1)$-dimensional lattice $E_d$ by $(m_1,\bar{m})$, where $m_1 = - (m_2+ \ldots + m_d)$ and $\bar{m}=(m_2,\ldots,m_d) 
\in \Z^{d-1}$, and split the summation into a ball of radius $\norm{\bar{m}}_2 \le R_{\eps}\tdefi 3d\4 \log (2+(r\eps)^{-1})$ and its complement, where $(r\eps)^{-dp}\4 2^{-p\norm{m}_2/2} \le (r\eps)^{-dp}\4 2^{-p\norm{\bar{m}}_2/2} \le 1$, we can bound the sum corresponding to a fixed $l$ by
  \begin{align*}
     & \ll_d \lambda_{r,\eps}^{l-1} \Big(  \sum_{ \norm{\bar{m}}_2 \le R_{\eps}} (\bar{\lambda}_{r,\eps,\Gamma} + \norm{\bar{m}})\prod_{j=2}^l 2^{-m_j/2} + \sum_{ \norm{\bar{m}}_2 > R_{\eps}} (\bar{\lambda}_{r,\eps,\Gamma} + \norm{\bar{m}}) (r\eps)^{-dp}\4 2^{-p\norm{\bar{m}}_2 /2 } \Big) \\
     & \ll_d \lambda_{r,\eps}^{l-1} \Big( \bar{\lambda}_{r,\eps,\Gamma} \log(2+ \tfrac{1}{r\eps})^{d-1-(l-1)}+  \bar{\lambda}_{r,\eps,\Gamma} \Big) \ll_d \lambda_{r,\eps}^{d-1} \bar{\lambda}_{r,\eps,\Gamma},
  \end{align*}
  where we have estimated the sums by comparison with the corresponding integrals. Using this estimate for each $l=1,\ldots,d-1$ together with \eqref{Lemma7.4:SimpleCase} in \eqref{Lemma7.5:FinStep1} yields the bound \eqref{Fourier-sum:eq}.
\end{proof}



\subsection{Applications of Theorem \ref{maintheorem}}
\label{subsection:Applications}
We start by smoothing the indicator function of the region $\Omega$. We choose weights $\specialv=\specialv_{\pm \eps}$ as defined in \eqref{omega_eps} with $\eps \in (0,\eps_0]$ and the related $\zeta=\zeta_{\eps}$, see Section \ref{ftpoly}, corresponding to parallelepipeds $\Omega=B^{-1}[-1,1]^d$ satisfying $Q_+ \le B^T B \leq c_B Q_+$, compare \eqref{eq:sec7:Def:Omega}. Recalling \eqref{omega-smooth}, where we have used Lemma \ref{l3} to estimate the $\eps$-smoothing error, yields 
a total error\index{D@ $\Delta_r := \abs{ \vol H_r -  \volu H_r}$, lattice point deficiency}
\begin{equation}
  \label{pluseps}
  \Delta_r \defi \abs{\vol(E_{a,b}\cap r\Omega)-\v (E_{a,b}\cap r\Omega)} 
\ll_d d_Q (b-a)\eps r^{d-2}+ \max_{\pm} \abs{R( I_{E_{a,b}} \specialv_{\pm \eps,r})}.
\end{equation}
Now we can apply Theorem \ref{maintheorem} in order to bound the latter remainder $\abs{R( I_{E_{a,b}} \specialv_{\pm \eps,r})}$ as follows. In \eqref{finalb0} we shall estimate $\norm{\wh \zeta_{\eps}}_{*,r}$ by using $\norm{\specialv_{\eps}}_Q \ll_d d_Q$ of Lemma \ref{l3}, $\norm{\wh \zeta_{\eps}}_1\ll_d (\log{\eps^{-1}})^d$ of Lemma \ref{l2} and
\begin{equation}
  \label{proof:th2.1:admis}
  \norm{\wh \zeta_{\eps}}_{*,r} \ll_d q^{d/4} \Big( (\tfrac{q}{q_0})^{d/2} \log(\eps^{-1})^d + q_0^{-d/4} \4 c_B^{d/2} \4 \lambda_{r,\eps}^{d-1} \4 \tfrac{\bar{\lambda}_{r,\eps,\Gamma}}{\Nm(\Gamma)} \Big)
\end{equation}
of Lemma \ref{omega-remainder} for admissible regions $\Omega$, i.e.\ \eqref{Def:Admissible} holds, to get
\begin{equation}
  \label{fullbound}
  \begin{aligned}
    \Delta_r
    \ll_{\beta,d} \  & d_Q r^{d-2}  \Big(  \eps (b-a) + w + a_Q (\log \tfrac{1}{\eps} )^d \rho_{Q,b-a}^{w}(r) \Big) \\
                     & \4 + d_Q \4 q^{d/4}  r^{d/2}  \Big( (\tfrac{q}{q_0})^{d/2} \4 \log(\eps^{-1})^d + q_0^{-d/4} \4 c_B^{d/2} \4 \lambda_{r,\eps}^{d-1} \4 \tfrac{\bar{\lambda}_{r,\eps,\Gamma}}{\Nm(\Gamma)} \Big)  \log \big(1 + \tfrac{b-a}{q_0^{1/2} r}\big),
  \end{aligned}
\end{equation}
where $a_Q \tdefi q \4 c_Q = q \abs{\det{Q}}^{1/4-\beta/2} = C_Q (d_Q)^{-1}$\index{A@ $a_Q \tdefi q \4 c_Q $}, provided that $0 <w < (b-a)/4$. 
This bound holds for admissible parallelepipeds $\Omega$ only. If $\Omega$ is not admissible, then we have to replace the smoothing error \eqref{proof:th2.1:admis} by
\begin{equation}
  \label{fullbound:replace}
  \norm{\wh \zeta_{\eps}}_{*,r} \ll_d q^{d/4} \big( (q/q_0)^{d/2} \log(\eps^{-1})^d + d_Q \4 q^{d/2} \4 (c_B)^{(d+1)/2} \4 \eps^{-d} \big),
\end{equation}
that is \eqref{zetaperiod-notadmiss} of Lemma \ref{omega-remainder}. With 
these bounds we are ready to prove the main statements on the lattice point remainder for hyperbolic shells.


\begin{proof}[Proof of Corollary \ref{variable_smooth}]
  \label{proof:corollary:variable_smooth}
  For wide shells, i.e.\ $b-a > q$, we optimize \eqref{fullbound} in the smoothing parameter $w$ first by choosing $w = \mathrm{W}(qT_{+}/2)^2/T_{+}$, where $\mathrm{W}$ denotes the upper branch, defined on the interval $(-\mathrm{e}^{-1},\infty)$, of the inverse function of $x \mapsto x e^x$. (The function $W$ is also known as the Lambert-$W$-function, see \cite{CGHFK:1996} for more details and some applications.)
  
  Since $x \mapsto W(x)^2/x$ has a global maximum at $x= \mathrm{e}$ with value $\mathrm{e}^{-1}$, we find $w \le q/(2 \e) < (b-a)/4$ as required in the restrictions \eqref{choice_wab}. This leads to the partial bound
  \begin{equation*}
    d_Q \4 w + C_Q c_Q^{-1} (T_+ w)^{-1/2} \4 \e^{- (T_+w)^{1/2}} \ll d_Q 
\tfrac{\mathrm{W}(q T_{+}/2)^2}{T_{+}} \ll d_Q \tfrac{\log(q T_{+}+1)^2}{T_{+}},
  \end{equation*}
  where we used that $W(x) \leq \log(x+1)$ and $W(x)^{-1} \exp(-W(x)) = x^{-1}$. Next, we calibrate the $\eps$-dependent terms in \eqref{fullbound} by choosing $\eps=T_{-}^{\frac{d}{2}-2-\delta} \4 (b-a)^{-1} /15$. Again, this choice satisfies the required restrictions, i.e.\ $\eps \leq \eps_0 = 1/15$. Because of
  \begin{align*}
    \eps (b-a)                                      & \le a_Q \4 (b-a)_q \4 c_Q \4 T_{-}^{\frac{d}{2}-2-\delta}, \quad \log \eps^{-1} \ll \log(r \! + \! 1) \quad \text{and}   \\
    \frac{\bar{\lambda}_{r,\eps,\Gamma}}{\log(r+1)} & \ll \max \Big\{ 1 , 
\frac{ \log(2+\tfrac{r^{d+1}}{\Nm(\Gamma)})}{\log(r+1)} \Big\} \ll_d \log(2+\tfrac{1}{\Nm(\Gamma)}),
  \end{align*}
  compare the definition in Lemma \ref{omega-remainder}, we can simplify \eqref{fullbound} to
  \begin{equation}
    \label{proof:cor2.4:eq1}
    \begin{aligned}
      \Delta_r \ll_{\beta,d} \  & d_Q \4 r^{d-2}  \rho_{Q,b-a}^{\mathrm{hyp}+}(r) \\
      & \4 + d_Q \4 q^{\frac{d}{4}} \4 r^{\frac{d}{2}} \log(r \hspace{-1pt} + \hspace{-1pt} 1)^d \big( (\tfrac{q}{q_0})^{\frac{d}{2}} \hspace{-1pt} + \hspace{-1pt}  \tfrac{c_B^{d/2} q_0^{-d/4}}{\Nm(\Gamma)} \log(2 \hspace{-1pt} + \hspace{-1pt} \tfrac{1}{\Nm(\Gamma)})  \big) \hspace{-1pt} \log \hspace{-1pt} \Big(1 \hspace{-1pt} + \hspace{-1pt} \tfrac{b-a}{q_0^{1/2} r}\Big),
    \end{aligned}
  \end{equation}
  where\index{R@ $\rho_{Q,b-a}^{\mathrm{hyp}+}(r)$}
  \begin{equation*}
    \begin{aligned}
      \rho_{Q,b-a}^{\mathrm{hyp}+}(r) \defi {\inf}^*_{T_{+},T_{-}} \Big\{ \log \big( (b{-}a)T_{-}^{-(\frac{d}{2}-2-\delta)}{+}1 \big)^d \Big(  a_Q \4 q^{(2\beta d -1)/2} ( c_Q T_{-}^{\frac{d}{2}-2-\delta} {+} \gamma_{[T_{-},1],\beta}(r)) & \\
      + a_Q \gamma_{(1,T_{+}],\beta}(r) \log(T_{+}+1)  + \tfrac{\log(q T_{+}+1)^2}{T_{+}} \Big) \Big\}                                                                                                                     
 &
    \end{aligned}
  \end{equation*}
  and the infimum is taken over all $T_{-} \in [\imbound,1]$, $T_{+} \geq 1$. This proves the first part of Corollary \ref{variable_smooth}. Next, we consider the case of thin shells, i.e.\ $b-a \leq q$. Here we take $\eps = T_{-}^{\frac{d}{2}-2-\delta}/15$ and $w= T_{-}^{\frac{d}{2}-2-\delta} (b-a)/4$ in \eqref{fullbound}, noting that $d_Q  ( w + \eps \4 (b-a)) \leq  a_Q (b-a) c_Q T_{-}^{\frac{d}{2}-2-\delta}$, in order to get the bound \eqref{proof:cor2.4:eq1}, whereby the factor $\rho_{Q,b-a}^{\mathrm{hyp}+}(r)$, depending on the Diophantine properties of $Q$, has to be replaced by\index{R@ $\rho_{Q,b-a}^{\mathrm{hyp}-}(r)$}
  \begin{equation*}
    \begin{aligned}
      \rho_{Q,b-a}^{\mathrm{hyp}-}(r) \defi {\inf}_{T_{-},T_{+}}^* \Big\{ a_Q \log \big(1 + T_{-}^{-(\frac{d-4}{2}-\delta)}\big)^d \Big( & (b-a) ( c_Q \4 T_{-}^{\frac{d-4}{2}-\delta}+ \gamma_{[T_{-},1],\beta}(r) \Big) \\
      & \ + \gamma_{(1,T_+],\beta}(r) \4 (\log( (b-a)^* \4 T_+)+1)) \Big\}.
    \end{aligned}
  \end{equation*}
  In the last equation the infimum is taken over all $T_{-} \in [\imbound,1]$ and $T_{+} \geq 1$ with
  \begin{equation*}
    T_{+} \geq 4 (b-a)^{-1} T_{-}^{-(\frac{d}{2}-2-\delta)}  \max \{1,\log \big( c_Q^2 (b-a) T_{-}^{\frac{d}{2}-2-\delta}\big)^2\},
  \end{equation*}
  where the last condition ensures that
  \begin{equation*}
    c_Q^{-1} (T_+ w)^{-1/2} \4 \e^{- (T_+w)^{1/2}} \le c_Q (b-a) T_{-}^{\frac{d}{2}-2-\delta}.
  \end{equation*}
  Finally, we note that Corollary \ref{irr-dio} implies that $\gamma_{[T_{-},1],\beta}(r)\to 0$ and also $\gamma_{[1,T_{+}],\beta}(r)\to 0$ for $r\to\infty$ and any fixed $T_{-} \in [\imbound,1]$, $T_{+} \geq 1$, when $Q$ is irrational. Thus, we conclude that $\rho_{Q,b-a}^{\mathrm{hyp}+}(r)\to 0$, resp.\ $\rho_{Q,b-a}^{\mathrm{hyp}-}(r)\to 0$, for $r\to\infty$ and fixed $b-a$.
\end{proof}

\begin{corollary}
  \label{variable_smooth:non-admissible}
  Consider an indefinite quadratic form $Q$ in $d \geq 5$ variables and a (not necessary admissible) parallelepiped $\Omega$ satisfying \eqref{eq:sec7:Def:Omega} and $\max\{\abs{a},\abs{b}\} \le c_0 r^2$, where $c_0>0$ is chosen as in Lemma \ref{l3}. Then for all $b-a \leq 1$
  \begin{equation*}
    \Delta_r \ll_{\beta,d} d_Q r^{d-2} \big( \rho_{Q,b-a}^{\mathrm{hyp}*}(r) \hspace{-0.5pt} + \hspace{-0.5pt} (b-a) \4 r^{1-d/2} \4 q^{(d-2)/4} \log(1 \hspace{-1pt} + \hspace{-1pt} r)^d (q/q_0)^{(d+1)/2}  (c_B)^{(d+1)/2} \big),
  \end{equation*}
  where $\rho_{Q,b-a}^{\mathrm{hyp}*}$ is defined in \eqref{cor6.3:eq}. In particular, for irrational $Q$ we have $\rho_{Q,b-a}^{\mathrm{hyp}*}(r) \rightarrow 0$ for $r \rightarrow \infty$, provided that $b-a$ is fixed.
\end{corollary}

\begin{proof}
  We shall argue similar as in the previous proof of Corollary \ref{variable_smooth}, but here we can only use \eqref{fullbound:replace} to bound $\norm{\wh \zeta_{\eps}}_{*,r}$, since $\Omega$ is not necessarily admissible. Thus, we have to replace the error bound \eqref{fullbound} for the lattice remainder by
  \begin{equation}
   \label{eq:var_smooth:latticeremainder}
   \begin{aligned}
    \Delta_r \ll_{\beta,d} \  & d_Q r^{d-2} \Big(  \eps (b-a) + w + a_Q (\log \tfrac{1}{\eps} )^d \rho_{Q,b-a}^{w}(r) \Big) \\
    & \4 + d_Q \4 q^{d/4}  r^{d/2}  \Big( (\tfrac{q}{q_0})^{d/2} (\log \tfrac{1}{\eps})^d + d_Q \4 q^{d/2} \4 (c_B)^{(d+1)/2} \4 \eps^{-d} \Big) \log \big(1 + \tfrac{b-a}{q_0^{1/2} r}\big).
   \end{aligned}
 \end{equation}
 Now the right-hand side can be optimized by taking
  \begin{equation*}
    \eps = (15 \log(1+T_{-}^{-(\frac{d}{2}-2-\delta)}))^{-1} \q \ \text{and} \q \ w = T_{-}^{\frac{d}{2}-2-\delta} (b-a)/4
  \end{equation*}
  and this leads to the bound 
  \begin{align*}
    \Delta_r \ll_{\beta,d} d_Q r^{d-2} \rho_{Q,b-a}^{\mathrm{hyp}*}(r)  + d_Q q^{d/4} r^{d/2} \big( & \log(1+r)^d (q/q_0)^{d/2} \\
    & + d_Q q^{d/2} (c_B)^{(d+1)/2} \log(1 \! + \! r)^d \big)  \log \Big(1+ \tfrac{\abs{b-a}}{q_0^{1/2} r} \Big),
  \end{align*}
  where\index{R@ $\rho_{Q,b-a}^{\mathrm{hyp}*}(r)$}
  \begin{equation}
    \label{cor6.3:eq}
    \begin{aligned}
      \rho_{Q,b-a}^{\mathrm{hyp}*}(r) \hspace{-1mm} \defi \hspace{-1mm}  
{\inf} \Big\{a_Q \log (1{+}T_{-}^{-(\frac{d}{2}-2-\delta)})^d \Big( (b-a) 
\4 ( c_Q \4 T_{-}^{(\frac{d}{2}-2-\delta)}+ \gamma_{[T_{-},1],\beta}(r)) \q &        \\
      + \gamma_{(1,T_+],\beta}(r) \log((b-a) \4 T_+) \Big) + \tfrac{b-a}{\log(1 \! + \! T_{-}^{-(\frac{d}{2}-2-\delta)})} & \Big\}
    \end{aligned}
  \end{equation}
  and the infimum is taken over all $T_{-} \in [\imbound,1]$ and
  \begin{equation*}
    T_+ \geq  4 (b-a)^{-1} T_{-}^{-(\frac{d}{2}-2-\delta)} \max \{1, \log(c_Q^2 (b-a) T_{-}^{\frac{d}{2}-2-\delta})^2 \}. \qedhere
  \end{equation*}
\end{proof}

The next corollary provides a lower bound for the number of lattice points and is useful for proving quantitative bounds in the Oppenheim conjecture.

\begin{corollary}
  \label{cor:6.4}
  For the special choice $B= Q_{+}^{1/2}$, i.e.\ $\Omega = Q_{+}^{-1/2} [-1,1]^d$ and $c_B=1$, and all $\max\{\abs{a},\abs{b}\} \leq r^2/5$ and $b-a \leq 1$ there exists constants $b_{\beta,d} >0$ and $\tilde{b}_{\beta,d}>0$, depending on $\beta$ and $d$ only, such that for all $r \geq \tilde{b}_{\beta,d} \4 q^{1/2} (q/q_0)^{(d+1)/(d-2)}$
  \begin{equation}
    \label{eq:cor:6.4}
    \Delta_r \le \frac{\volu H_r}{5} + b_{\beta,d} \4 d_Q \4 r^{d-2} \rho_{Q,b-a}^{\mathrm{hyp}**}(r)
  \end{equation}
  where $c_Q = \abs{\det{Q}}^{1/4-\beta/2}$, $a_Q = q \4 c_Q$ and\index{R@ $\rho_{Q,b-a}^{\mathrm{hyp}**}(r)$}
  \begin{equation}
    \rho_{Q,b-a}^{\mathrm{hyp}**}(r) \hspace{-1mm} \defi \hspace{-1mm} {\inf} \{ a_Q \big( (b-a) (c_Q T_{-}^{\frac{d}{2}-2-\delta} + \gamma_{[T_{-},1],\beta}(r)) + \gamma_{(1,T_{+}],\beta}(r) \log((b-a) \4 T_{+}) \big)\}
  \end{equation}
  and the infimum is taken over all $T_{-} \in [\imbound,1]$ and $T_{+} \ge 1$ with
  \begin{equation*}
    T_{+} \ge C_{\beta,d} \frac{1}{(b-a)} \max \Big\{ \log \Big( \frac{b-a}{q \, c_{\beta,d} } \Big)^2, 1 \Big\}
  \end{equation*}
  and $C_{\beta,d}, c_{\beta,d} \ge 1$ are constants depending on $d$ and 
$\beta$ only.
\end{corollary}

\begin{proof}
  Here we only consider the special region $\Omega = Q_{+}^{-1/2} [-1,1]^d$, i.e.\ $B = Q_{+}^{1/2}$ and thus \eqref{eq:sec7:Def:Omega} is valid with $c_B =1$. Since $\Omega$ is not necessarily admissible, we have to argue as in the previous proof (of Corollary \ref{variable_smooth:non-admissible}): Starting with the estimate \eqref{eq:var_smooth:latticeremainder}, we can take $\eps = (30 \, a_{d} \, b_{\beta,d})^{-1}$ and $w= (b-a) \eps$ in the optimization procedure, where $a_d \ge 1$, resp.\ $b_{\beta,d} \ge 1$, denotes the implicit constant in \eqref{volume_bound} (see Lemma \ref{l3}), resp.\ \eqref{eq:var_smooth:latticeremainder}. (Of course, we have $\eps \in (0,\eps_0]$ and $0< w < (b-a)/4$ as required.) This yields
  \begin{equation*}
   \begin{aligned}
    \Delta_r \leq & \frac{\volu H_r}{15}  + b_{\beta,d} d_Q r^{d-2} a_Q (\log \tfrac{1}{\eps} )^d \rho_{Q,b-a}^{w}(r) + \bar{b}_{\beta,d} (b-a) d_Q \4 q^{\frac{d-2}{4}}  r^{d/2-1} \Big(\frac{q}{q_0}\Big)^{\frac{d+1}{2}},
   \end{aligned}
 \end{equation*}
 where $\bar{b}_{\beta,d} \tdefi b_{\beta,d} ( \eps^{-d} + \log(\eps^{-1})^d)$ depends on $\beta$ and $d$ only. Again referring to Lemma \ref{l3}, 
we also see that
 \begin{equation*}
   \bar{b}_{\beta,d} (b-a) d_Q \4 q^{(d-2)/4}  r^{d/2-1} \Big(\frac{q}{q_0}\Big)^{(d+1)/2} \leq \frac{\volu H_r}{15}
 \end{equation*}
 if we choose $r \geq \tilde{b}_{\beta,d} \4 q^{1/2} (q/q_0)^{(d+1)/(d-2)}$ with $\tilde{b}_{\beta,d} = (15 a_d \bar{b}_{\beta,d})^{-1}$. Finally, we make the restriction $T_+ \ge w^{-1} \max\{\log( (15 a_d \bar{b}_{\beta,d})^{-1} q^{-1} (b-a) )^2,1\}$ to ensure that
 \begin{equation*}
   b_{\beta,d} \, (\log \eps^{-1})^d \, q \, r^{d-2} d_Q \, (T_{+} w)^{-1/2} \exp(-\abs{T_{+} w}^{1/2}) \le \volu H_r/15.
 \end{equation*}
 Collecting the remaining terms proves \eqref{eq:cor:6.4}.
\end{proof}

Now we consider elliptic shells as well and optimize the lattice remainder as in the case of `wide shells'. In contrast to the previous cases, the 
error caused by the smoothing of the region $\Omega$ is not present here.

\begin{proof}[Proof of Corollary \ref{positive}]
  In the case of ellipsoids, i.e.\ $Q$ is a positive definite form, we choose the (not necessary admissible) parallelepiped $\Omega\tdefi B^{-1} [-1,1]^d$ with $B= Q_+^{1/2}$ and $r=\sqrt{2b} \geq q^{1/2}$, resp.\ $2b=r^2$, $a=0$ and $\eps= 1/15$. Then \eqref{eq:sec7:Def:Omega} is satisfied with $c_B =1$ and $E_{0,b} \subset r\Omega$, i.e.\ $H_r := E_{a,b} \cap r \Omega = E_{a,b}$. Moreover, since $E_{0,b}$ does not intersect $r (\partial \Omega)_{2 \eps}$ (the $2 \eps r$-boundary of $r \Omega$ as defined in \eqref{omega_eps}), we get an error $R_{\eps,r}=0$ for smoothing the indicator function of $r \Omega$. Hence, we may remove the term proportional to $(b-a)\eps$ in \eqref{pluseps}. Note that apart from Lemma \ref{l3} the \textit{indefiniteness} of $Q$ has not been used in all arguments so far. In contrast to the case of hyperbolic shells, we optimize \eqref{finalb0} in $w$ first. Again including the bound $\norm{\specialv_{\eps}}_Q \ll_d d_Q$ of Lemma \ref{l3} and here taking $w=\mathrm{W}(q T_+/4)^2/T_+$, where $\mathrm{W}$ denotes the upper branch of the Lambert-$W$-function (for more details on the Lambert-$W$-function see the proof of Corollary \ref{variable_smooth} on p.\ \pageref{proof:corollary:variable_smooth}), and noting that $w \le q/(4e) < (b-a)/4$, leads (as in the proof of Corollary \ref{variable_smooth}) to the bound
  \begin{equation}
    \begin{aligned}
      \Delta_r \ll_{\beta,d} r^{d-2}  \Big( \! C_Q \big( \4 q^{(2 \beta d 
-1)/2} ( c_Q \4 T_{-}^{\frac{d}{2}-2-\delta} \! + \gamma_{[T_{-},1],\beta}(r)) \! + \gamma_{(1,T_+],\beta}(r) \log(T_{+} \! + \! 1) \big) \  &   \\
      + d_Q \tfrac{\log(1+q \4 T_+)^2}{T_+} \Big) + d_Q q^{d/4} r^{d/2} ( 
( q /q_0 )^{d/2} \! + \!  d_Q q^{d/2}) \log \Big( 1\!+ \! \tfrac{r}{q_0^{1/2}} \Big)                                                         & ,
    \end{aligned}
  \end{equation}
  where $T_{-} \in [\imbound,1]$ and $T_+ \geq 1$. This can be rewritten as
  \begin{equation*}
    \Delta_r \ll_{\beta,d} d_Q \4 r^{d-2} \4 \rho_Q(r) + d_Q \4 q^{d/4} \4 r^{d/2} (q/q_0)^{d/2} \log(1+r/q_0^{1/2})
  \end{equation*}
  with\index{R@ $\rho_Q^{\mathrm{ell}}(r)$}
  \begin{equation*}
    \rho_Q^{\mathrm{ell}}(r) \hspace{-1mm} \defi \hspace{-1mm} \inf \Big\{ a_Q \big( q^{\beta d -\tfrac{1}{2}} (c_Q \4 T_{-}^{\frac{d}{2}-2-\delta} \! + \gamma_{[T_{-},1],\beta}(r)) \! +  \gamma_{(1,T_+],\beta}(r) \log(T_{+} \! + \! 1) \big) \! + \tfrac{\log(1+q T_+)^2}{T_+} \Big\},
  \end{equation*}
  where the infimum is taken over all $T_{-} \in [\imbound,1]$ and $T_+ \geq 1$. Note that as in the indefinite case $\lim_{r\to \infty} \rho_Q^{\mathrm{ell}}(r)=0$ if $Q$ is irrational by Corollary \ref{irr-dio}. This proves Corollary \ref{positive}. Furthermore, we remark that $\volu H_r 
=\volu (r\Omega \cap E_{0,b}) =  d_Q\4 \omega_d \4 r^{d}$, where $\omega_d$ denotes the volume of the unit $d$-ball.
\end{proof}

Similar arguments can be used in order to obtain related bounds for both wide ($b-a >r$) and narrow ($b-a < r$) shells in the case of ellipsoidal shells $E_{a,b}$.\\[2mm] Given a quadratic form $Q$ of Diophantine type $(\kappa,A)$, i.e.\ $Q$ satisfies \eqref{diophant}, we shall apply Corollary \ref{irr-dio} in order to estimate the Diophantine factors explicitly. 
Hereby, we prove quantitative bounds in the Oppenheim conjecture (for indefinite quadratic forms $Q$ of Diophantine type $(\kappa,A)$) by comparing the volume with the corresponding lattice sum.

\begin{proof}[Proof of Corollary \ref{th:diophante:smallzeros}]
  We begin by applying Corollary \ref{cor:6.4} with $b = - a = \eps$ and $\beta = 2/d+\delta'/d$ for an appropriate $\delta'>0$: Taking $T_{-} \asymp_{\beta,d}  q^{-1/(d(1/2-\beta)) } \4 \abs{\det{Q}}^{-1/d}$, so that $b_{\beta,d} (b-a) d_Q \4 r^{d-2} a_Q \4 c_Q \4 T_{-}^{d(1/2-\beta)} 
\leq (\volu H_r)/5$ holds, yields the lattice remainder bound
  \begin{equation*}
      \Delta_r \leq \frac{2\volu H_r}{5} + r^{d-2} C_Q \4 b_{\beta,d} \4 ( 2 \eps \4 \gamma_{[T_{-},1],\beta}(r) + \gamma_{(1,T_{+}],\beta}(r) \log(2\eps T_{+}) ).
  \end{equation*}
  This estimate is valid provided that $r \gg_{\beta,d} (q/q_0)^{(d+1)/(d-2)} q^{1/2+2/(d-4)+\delta}$. Note that we have $T_{-} \in[\imbound,1]$ as required and that the assumptions of Corollary \ref{cor:6.4} are satisfied as well. Next we calibrate the parameter $T_{+}$ by taking
  \begin{equation*}
    T_{+} \asymp_{\beta,d} \eps^{-1} \max\{1, \log(2 \eps (q c_{\beta,d})^{-1})^2\}.
  \end{equation*}
  Since $Q$ is of Diophantine type $(\kappa,A)$, we can use Corollary \ref{irr-dio} in order to find that
  \begin{equation*}
    \gamma_{[T_{-},1],\beta}(r) \ll_{Q,\beta,d} A^{-\frac{1-2\beta}{2(\kappa+1)}} \4 r^{-\frac{1-2\beta}{\kappa+1}}
  \end{equation*}
  and also that
  \begin{equation*}
    \gamma_{(1,T_{+}],\beta}(r) \ll_{Q,\beta,d} A^{-\frac{1-2\beta}{2(\kappa+1)}} r^{-\frac{1-2\beta}{\kappa+1}} (\eps^{-1} \log(\eps^{-1}))^{\frac{\kappa}{\kappa+1} (\frac{1}{2}-\beta)}.
  \end{equation*}
  In view of \eqref{volume_bound}, we may increase $r \gg_{Q,\beta,d} \max \{A^{-1},1\}$ to get
  \begin{equation*}
    2 b_{\beta,d} \4  C_Q \4 r^{d-2} \gamma_{[T_{-},1],\beta}(r)  \leq (\volu H_r)/5.
  \end{equation*}
  Now, we choose $r \asymp_{A,Q,\delta,d} \eps^{-(2d + 3 \kappa d - 4 \kappa)/(2d-8) - \delta} $ in order to obtain
  \begin{equation*}
    b_{\beta,d}  \4 C_Q \4 r^{d-2} \log(2\eps T_{+}) \4 \gamma_{(1,T_{+}],\beta}(r) \leq (\volu H_r)/5.
  \end{equation*}
  All in all, we have
  \begin{equation*}
    5 \vol H_r \ge \volu H_r \gg_d d_Q \4 \eps \4 r^{d-2}.
  \end{equation*}
  Since $(2d + 3 \kappa d - 4 \kappa)/(2d-8) \geq 1/(d-2)$ holds if $d \geq 5$, we find that $\vol H_r > 1$. This means that there exists at least 
one non-zero lattice point $m \in \Z^d$ satisfying both $\abs{Q[m]} < \eps$ and also $\norm{Q_{+}^{1/2}m} \ll_d r$.
\end{proof}

We can argue similarly to investigate the density of values of a quadratic form:

\begin{proof}[Proof of Corollary \ref{corgaps}]
   It is sufficient to prove that $\volu_{\Z^d}(r\Omega\cap E_{a,b})>0$ for any $\max\{\abs{a}, \abs{b}\} \leq c_0 r^2/2 $, where $c_0$ is as in Lemma \ref{l3}, with $r^{-\nu_0+ \delta} =  b-a$ for $r \geq c_{\delta,d,\Omega,Q,A,\kappa}$ and a sufficiently large constant $c_{\delta,d,\Omega,Q,A,\kappa} >1$. In particular, we consider small shells, i.e.\ $b-a \leq 1$. Repeating the proof of Corollary \ref{cor:6.4}, we see that Corollary \ref{cor:6.4} is also valid for arbitrary parallelepipeds satisfying \eqref{eq:sec7:Def:Omega}, but then the constants depend additionally on the scaling parameter $c_B \geq 1$. Also repeating the previous proof (of Corollary \ref{th:diophante:smallzeros}) in this situation shows that we can take $r = c_{\delta,d,\Omega,Q,A,\kappa} (b-a)^{-1/\nu_0}$, where $\nu_0 \tdefi \frac{2(d-4)}{2d + 3 \kappa d - 4 \kappa}$, to ensure that $\volu_{\Z^d}(r\Omega\cap E_{a,b})>0$.
\end{proof}

Using the Diophantine estimates for quadratic forms $Q$ of Diophantine type $(\kappa,A)$, we can estimate $\rho_{Q,b-a}^{\mathrm{hyp}+}(r)$ and $\rho_{Q,b-a}^{\mathrm{hyp}-}(r)$ in Corollary \ref{variable_smooth} explicitly as follows.

\begin{proof}[Proof of Corollary \ref{diophantex}]
  First, we consider `wide shells', i.e.\ $b-a \geq q$. By applying Corollary \ref{irr-dio}, we can bound the Diophantine factor from Corollary \ref{variable_smooth} by
  \begin{equation*}
    \begin{aligned}
      \rho_{Q,b-a}^{\mathrm{hyp}+}(r) \ll_d {\inf}_{T_{-},T_{+}}^* \big\{ \log \big( (b-a) T_{-}^{-(\frac{d-4}{2}-\delta)} \! + \!  1 \big)^d \big( 
q \big( q^{\frac{3}{2}+\delta} (a_Q^2 T_{-}^{\frac{d-4}{2}-\delta} \! + \! q^\nu A^{-\nu} \4 T_{-}^{-\nu} r^{- 2\nu } ) &  \\
      + q^\nu A^{-\nu} \4 T_{+}^{ \kappa \nu} r^{-2 \nu} \log(T_{+}+1) \big) + c_Q \tfrac{\log(q \4 T_{+}+1)}{T_{+}} \big) & \! \big\},
    \end{aligned}
  \end{equation*}
  where $\nu \tdefi (1-2\beta)/(2\kappa+2)$ and the infimum is taken over all $T_{-} \in [\imbound,1]$ and $T_{+} \geq 1$. Next we optimize this expression by taking $T_{-} = r^{- 2\nu /(\nu+\sigma)}$ and $T_{+} = r^{(2\nu)/(\kappa \nu +1)}$, where $\sigma \tdefi d(1/2-\beta)$: This parameter choice is permissible, since $T_{-} \in [\imbound,1]$ holds (because of $\sigma \ge \nu$), and thus we obtain
  \begin{equation*}
    \rho_{Q,b-a}^{\mathrm{hyp}+}(r) \ll_{\beta,d} \log(r+1)^d \4 h_Q \4 q^{\frac{3}{2}+ \delta + \nu} (1+ A^{-\nu}) ( r^{-\frac{2 \nu \sigma}{\nu + \sigma}} + r^{-\frac{2\nu}{\kappa \nu +1}} \log(q \4 r +1 )),
  \end{equation*}
  where $h_Q \tdefi q \4 \abs{\det{Q}}^{1/2-\beta}$ (here we avoided to give an optimal estimate in terms of $\abs{\det{Q}}$ to reduce the notational burden). In view of the bound from Corollary \ref{variable_smooth} and \eqref{volume_bound} we get the relative lattice error
  \begin{align*}
    \Big\lvert \frac{\volu_{\Z} H_r}{\volu H_r} -1 \Big\rvert \ll_{Q,\Omega,\beta,d} (b-a)^{-1} \log(r+1)^d \Big( r^{-\frac{2 \nu \sigma}{\nu + \sigma}} & + 
r^{-\frac{2\nu}{\kappa \nu +1}} \log(r +1 ) \\
    & + r^{-\frac{d}{2}+2} \log \big(1 {+} \tfrac{b-a}{r} \big) \Big).
  \end{align*}
  For `thin shells', i.e.\ $b-a \leq q$, we have
  \begin{align*}
    \rho_{Q,b-a}^{\mathrm{hyp}-}(r) \ll_{\beta,d} {\inf}_{T_{-},T_{+}}^* \big\{ \4 h_Q \log \big(1+T_{-}^{-\frac{d-4}{2}+\delta)} \big)^d  \big( (b-a) (T_{-}^{\frac{d}{2}-2-\delta} + q^\nu A^{-\nu} \4 T_{-}^{-\nu} r^{- 2\nu } ) \q & \\
    + q^\nu A^{-\nu} \4 T_{+}^{ \kappa \nu} r^{-2 \nu} (\log((b-a)^*T_{+}) \big)+1)\big)                                                           
                                                                      & \big\},
  \end{align*}
  where the infimum is taken over all $T_{-} \in [r^{-1},1]$ and $T_{+} \geq 1$ satisfying
  \begin{equation*}
    T_{+} \geq 4 (b-a)^{-1} T_{-}^{-(\frac{d}{2}-2-\delta)} \max \Big\{1, \log(c_Q^2 (b-a) T_{-}^{-(\frac{d}{2}-2-\delta)})^2 \Big\}. \qedhere
  \end{equation*}
\end{proof}



\section{Small Values of Quadratic Forms at Integer Points}
\label{sec:small_values:qform}

\noindent Finally we shall prove Theorem \ref{dio-ineq} by using our effective equidistribution results (in form of Corollary \ref{cor:6.4}) together with bounds on small zeros of indefinite integral quadratic forms. Our proof is based on the following strategy: If $Q$ has `good' Diophantine properties, we can compare the volume with the number of lattice points to establish bounds for non-trivial lattice points $m \in \Z^d \setminus \{0\}$ satisfying the Diophantine inequality $\abs{Q[m]} < \eps$. Otherwise $Q$ is near a rational form and here we shall use Schlickewei's bound \cite{Schlickewei:1985} for small zeros of integral quadratic forms. 

\subsection{Integer-valued Quadratic Forms}
\label{section:integer_val:qforms}
In this section we summarize some essential results on small zeros of integer-valued quadratic forms. Here $A[m]$ denotes an integer-valued indefinite quadratic form on a lattice $\Lambda$ in $\R^d$ of full rank. Meyer \cite{Meyer:1884} proved in 1884 that such a form represents zero non-trivially on $\Lambda$ if $d \ge 5$. Nowadays, this result is usually deduced from the Hasse-Minkowski theorem, which is a \textit{local-global principle} (see \cite{Gerstein:2008}, Theorem 5.7, Corollary 5.10).\par
Similarly to the result of Birch and Davenport \cite{birch-davenport:1958} on diagonal forms in five variables, our quantitative bounds in Theorem 
\ref{dio-ineq} depend essentially on explicit bounds for small zeros of integral forms (see Corollary \ref{bound:schlickewei:corollary}). First bounds of this kind were proved by Cassels \cite{Cassels:1955}, based on a geometric argument. Birch and Davenport improved Cassels' result as follows: If $d \geq 3$ and $A[m]$ admits a non-trivial zero on the lattice $\Lambda$, then there exists an isotropic lattice point $m \in \Lambda \setminus \{0\}$ with Euclidean norm
\begin{equation}
  \label{Meyer_Birch_Davenport}
  0<\norm{m}^2 \leq \gamma_{d-1}^{d-1} \4 (2 \Tr A^2 )^{(d-1)/2}\4 (\det{\Lambda})^2,
\end{equation}
where $\gamma_d$ denotes the Hermite constant in dimension $d$ (see \cite{davenport:1957,birch-davenport:1958a}). This bound is essentially best possible in view of an example by M.~Kneser, see \cite{Cassels:1956}, if $A$ has signature $(d-1,1)$. In 1985 Schlickewei \cite{Schlickewei:1985} extended Cassels' argument non-trivially by showing that the dimension, say $d_0$, of a maximal rational isotropic subspace has an essential impact on the size of small zeros, rather than mere indefiniteness (i.e. $d_0 \geq 1$). He established the following relation between small zeros of integral forms and the dimension $d_0$.

\begin{theorem}[Schlickewei \cite{Schlickewei:1985}]
  \label{th:Schlickewei}
  Let $\Lambda$ be a $d$-dimensional lattice and $A$ a non-trivial quadratic form in $d$ variables taking integral values on $\Lambda$. Also let $d_0 \geq 1$ be maximal such that there exists a $d_0$-dimensional sublattice of $\Lambda$ on which $A$ vanishes. Then there exist linearly independent lattice points $m_1,\ldots,m_{d_0} \in \Lambda$, spanning an isotropic subspace, of size
  \begin{equation}
    (\norm{m_1} \ldots \norm{m_{d_0}})^2 \ll_d ( \Tr A^2 )^{(d-d_0)/2} (\det{\Lambda})^2.
  \end{equation}
\end{theorem}

In the same way as Birch and Davenport \cite{birch-davenport:1958a} deduce their Theorem B from their Theorem A, we may conclude

\begin{theorem}[Schlickewei \cite{Schlickewei:1985}]
  \label{th:Schlickewei:alternative}
  Let $F,G \ne 0$ be quadratic forms in $d$ variables and suppose in addition that $G$ is positive definite. Let $d_0$ be maximal such that $F$ vanishes on a rational subspace of dimension $d_0$. Then there exist $d_0$ linearly independent lattice points $m_1,\ldots,m_{d_0} \in \Z^d$ such that $F$ vanishes on the corresponding subspace and
  \begin{equation*}
    G[m_1] \cdots G[m_{d_0}] \ll_d ( \Tr (FG^{-1})^2)^{(d-d_0)/2} \det{G},
  \end{equation*}
  where the implicit constant depends on $d$ only.
\end{theorem}

Using an induction argument combined with Meyer's theorem, Schlickewei derived also the following lower bound \eqref{bound:schlickewei} - which we only state for non-singular forms - for the dimension of a maximal rational isotropic subspace in terms of the signature $(r,s)$. For notational convenience, we may suppose that $r \geq s$. Then \textit{Hilfsatz} of Section 4 in \cite{Schlickewei:1985} reads

\begin{equation}
  \label{bound:schlickewei}
  d_0 \geq \begin{cases}
    s   & \text{if } r \geq s+3                \\
    s-1 & \text{if } r = s+2 \text{ or } r=s+1 \\
    s-2 & \text{if } r=s.
  \end{cases}
\end{equation}
 
\begin{remark}
  \label{bound:schlickewei:complement}
  One can complement Schlickewei's lower bound \eqref{bound:schlickewei} with the upper bound $d_0 \leq \min \{r,s\}$, which follows immediately by a dimension argument: If we decompose $\R^d = V_{+} \oplus V_{-}$ into subspaces $V_{+}$, $V_{-}$, on which $Q$ is positive or negative definite, and if $V_{\mathrm{iso}}$ denotes an isotropic subspace, then $V_\mathrm{iso} \cap V_{\pm} = \{0\}$ and thus
  \begin{equation*}
    \dim(V_\mathrm{iso}) =  \dim(V_\mathrm{iso} + V_{\pm}) - \dim(V_{\pm}) \leq d - \dim(V_{\pm}).
 \end{equation*}
 In particular, the lower bound \eqref{bound:schlickewei} is essentially optimal.
\end{remark}

Obviously, a straightforward combination of the upper bound \eqref{bound:schlickewei} together with Theorem \ref{th:Schlickewei} yields explicit bounds on the smallest non-trivial isotropic vector. However this application can be improved in the cases $r=s+2$ and $r=s$ by reducing the problem to dimension $d-1$ as done by Schlickewei in \textit{Folgerung 3} of \cite{Schlickewei:1985}, were he proved that for any integral quadratic 
form $A$ of signature $(r,s)$ there exists an isotropic lattice point $m \in \Z^d \setminus \{0\}$ such that $\norm{m}^2 \ll_d (\Tr A^2 )^\rho$\index{R@ $\rho=\rho(r,s)$, Schlickewei exponent}, where
\begin{equation*}
  \rho := \rho(r,s) := \begin{cases}
    \frac{1}{2} \frac{r}{s}     & \text{for} \ r \geq s+3                \\
    \frac{1}{2} \frac{s+2}{s-1} & \text{for} \ r=s+2 \ \text{or} \ r=s+1 \\
    \frac{1}{2} \frac{s+1}{s-2} & \text{for} \ r=s
  \end{cases}
\end{equation*}
as defined in \eqref{dio-ineq-rho} (see Section \ref{subsection:dio_ineq}). We shall extend this result to general lattices leading to the following strengthening of \eqref{Meyer_Birch_Davenport}.

\begin{corollary}
  \label{bound:schlickewei:corollary}
  Suppose that $A$ is a non-singular quadratic form of signature $(r,s)$ in $r+s=d \ge 5$ variables, which takes integral values on $\Lambda$. Additionally suppose that $\abs{\det(\Lambda)} \ge 1$, then the smallest non-trivial isotropic vector $m \in \Lambda$ of $A$ satisfies
  \begin{equation}
    \label{bound:schlickewei:2}
    0 < \norm{m}^2 \ll_d \max \{ (\Tr A^2 )^\frac{1}{2}, (\Tr A^2 )^\rho\}  \abs{\det{\Lambda}}^\frac{4\rho+2}{d}
  \end{equation}
  where $\rho$ is as defined in \eqref{dio-ineq-rho}.
\end{corollary}

Compared to \eqref{Meyer_Birch_Davenport}, the exponent in \eqref{bound:schlickewei:2}  is considerably smaller for a wide range of signatures $(r,s)$. Especially, if  $r \sim s$, then $\rho \sim 1/2$ and therefore $(2\rho+1)/d \sim 2/d$.

\begin{proof}
  As can be checked easily, in the cases $r \ge s+3$ and $r=s+1$ the bound \eqref{bound:schlickewei:2} follows immediately from Theorem \ref{th:Schlickewei} together with \eqref{bound:schlickewei}, since  $d/d_0 \le 2 \rho +1$ and $2 \leq d/d_0$ (by Remark \ref{bound:schlickewei:complement}) in both cases. (Here we estimate $(\Tr A^2)^{(d-d_0)/2}$ by $(\Tr A^2)^{1/2}$ if $\Tr A^2 < 1$ and by $(\Tr A^2)^{\rho}$ if $\Tr A^2 \geq 1$.) If $r=s$ or $r=s+2$, then the first relation does not hold. Here we fix a reduced basis $v_1,\ldots,v_d$ of $\Lambda$ with 
  \begin{equation*}
  \norm{v_1} \le \ldots \le \norm{v_d} \quad \text{and} \quad \abs{\det(\Lambda)} \asymp_d \norm{v_1} \ldots \norm{v_d}.
  \end{equation*}
  Let $\Lambda_0 := \Z v_1  + \ldots+ \Z v_{d-1}$, which is a $d{-}1$ dimensional sublattice of $\Lambda$, and note that Hadamard's inequality shows that $\det(\Lambda_0) = \norm{v_1 \wedge \ldots \wedge v_{d-1}} \le \norm{v_1} \ldots \norm{v_{d-1}}$. Thus
  \begin{equation*}
    \det(\Lambda_0) \ll_d  \det(\Lambda)^{(d-1)/d}.
  \end{equation*}
  Now denote by $A_0$ the restriction of $A$ to the subspace generated by $v_1,\ldots,v_{d-1}$. It follows that $A_0$ has signature either $(r,s-1)$ or $(r-1,s)$ and, since $(\Tr A^2)^{1/2} = \norm{A}_{\mathrm{HS}}$, also that $\Tr A_0^2 \le \Tr A^2$. Applying Theorem \ref{th:Schlickewei} (resp.\ Theorem \ref{th:Schlickewei:alternative} after a coordinate change) to $A_0$ and $\Lambda_0$ shows that there exists an isotropic lattice point $m \in \Lambda_0 \setminus \{0\}$ such that
  \begin{equation*}
    \norm{m}^2 \ll_d ( \Tr A_0^2 )^{\frac{d-1-d_0}{2d_0}} \abs{\det{\Lambda_0}}^{\frac{2}{d_0}} \ll_d  ( \Tr A^2 )^{\frac{d-1-d_0}{2d_0}} \abs{\det{\Lambda}}^{\frac{d-1}{d}\frac{2}{d_0}},
  \end{equation*}
  where $d_0$ denotes the dimension of a maximal isotropic subspace of $A_0$ (instead of $A$). Completing the proof, we note that in both cases $r=s+2$ and $r=s$ one has
  \begin{equation*}
    2 \leq (d-1)/d_0 \leq 2 \rho +1,
  \end{equation*}
  as can be readily seen.
\end{proof}

\begin{remark} In 1988 Schlickewei and Schmidt \cite{Schlickewei-Schmidt:1988}
complemented their work \cite{Schlickewei-Schmidt:1987} on isotropic subspaces of quadratic forms showing that Schlickewei's bound in terms of $d_0$ is best possible. Additionally, one can also ask if Schlickewei's bound \eqref{bound:schlickewei}  in terms of $(r,s)$ is best possible, as was 
already conjectured by Schlickewei himself in \cite{Schlickewei:1985}. At 
least for the cases $r \ge s + 3$ and $(3{,}2)$ this is known and due to Schmidt, see \cite{Schmidt:1985}.  
\end{remark}

\begin{remark}
 As a final remark we note that in the Geometry of Numbers it is often the case that one can use the existence of a lattice points satisfying some 
inequality in order to get several independent points satisfying a joint inequality. This argument was used by Schlickewei and Schmidt \cite{Schlickewei-Schmidt:1987,Schlickewei-Schmidt:1989} to prove an extension of Theorem \ref{th:Schlickewei}, in which they considered several isotropic subspaces and their relative position.
\end{remark}

\subsection{Proof of Theorem \ref{dio-ineq}}
Now we are in position to prove the second main theorem of this paper. To 
simplify the notation we may replace $Q$ by $Q/\eps$ and consider the solubility of the Diophantine inequality $\abs{Q[m]} < 1$. Notice that this rescaling does not change the constant $c_B =1$ occuring in Corollary \ref{cor:6.4}.

\begin{proof}[Proof of Theorem \ref{dio-ineq}]
  Let $d \ge 5$, $q_0 \ge 1$ and 
  \begin{equation}
  \label{th:dioeq:firstr}
  r \geq \tilde{b}_{\beta,d} q^{1/2} (q/q_0)^{(d+1)/(d-2)}
  \end{equation} 
  as in Corollary \ref{cor:6.4} and $\beta =2/d +\delta'/d$ with fixed $\delta' >0$ depending on $\delta >0$. Applying Corollary \ref{cor:6.4} with $b=-a=1/5$ (note that both conditions $\max\{\abs{a},\abs{b}\} \le r^2/5$ and $b-a \le 1$ are satisfied) gives the bound
  \begin{equation*}
      \Delta_r \le \frac{\volu H_r}{5}+ b_{\beta,d} \4 d_Q \4 r^{d-2} \4 q c_Q \Big( c_Q \4 T_{-}^{d(1/2-\beta)} +\gamma_{[T_{-},1],\beta}(r) + \gamma_{(1,T_{+}],\beta}(r) \log(T_{+}) \Big)
  \end{equation*}
  for any $T_{-} \in [\imbound,1]$ and
  \begin{equation*}
    T_{+} \gg_{\beta,d} \max \{1, \log( 10 \4 c_{\beta,d} q) )^2\} \gg_{\beta,d} \log(q+1)^2.
  \end{equation*}
  Hence, we can take $T_{+} \asymp_{\beta,d} \log(q \! + \! 1)^2$. Additionally, by taking
  \begin{equation*}
   T_{-} \asymp_{\beta,d} q^{-2/(d-4)-\delta/4} \abs{\det{Q}}^{-1/d}
  \end{equation*}
  we can also ensure that 
  \begin{equation*}
    b_{\beta,d}\4 d_Q \4 r^{d-2} q \abs{\det{Q}}^{1/2-\beta} T_{-}^{d(1/2-\beta)} \leq (\volu H_r)/10,
  \end{equation*}
  compare the lower bound \eqref{volume_bound} of Lemma \ref{l3}. At this 
step we have to choose
  \begin{equation}
    \label{th:dioeq:eq2}
    r \gg_{\beta,d} (q/q_0)^{1/2} q^{1/2+2/(d-4)+\delta/4} \ge q_0^{-1/2} 
\abs{\det{Q}}^{1/d} q^{2/(d-4)+\delta/4}
  \end{equation}
  in order to guarantee that $T_{-} \in [\imbound,1]$ is satisfied.\\[2mm]
  \noindent \textsl{First Case:} We consider first classes of quadratic forms $Q$ for which the lattice remainder is 'small': Corresponding to Diophantine properties of $Q$, we assume that
  \begin{equation}
    \label{proof:th1.4:eq1}
    \begin{aligned}
      b_{\beta,d} \4 q \4 \abs{\det{Q}}^{1/4-\beta/2} \gamma_{[T_{-},1],\beta}(r) & \le h_{\beta,d} \q \text{and} \\   b_{\beta,d} \4 q \4 \abs{\det{Q}}^{1/4-\beta/2} \gamma_{[1,T_{+}],\beta}(r) \log(T_{+}) & \le h_{\beta,d}
    \end{aligned}
  \end{equation}
  with some constant $h_{\beta,d}>0$ depending on $d$ and $\beta$ only (compare again with \eqref{volume_bound}) such that $5 \vol H_r \geq \volu H_r$.  Note that $r \geq q^{1/2}$ is fixed here. According to Corollary \ref{cor:6.4} and \eqref{th:dioeq:eq2} we shall take a priori
  \begin{equation}
    \label{th:dioeq:eq3}
    r \asymp_{\beta,d} (q/q_0)^{(d+1)/(d-2)} q^{1/2 +2/(d-4)+\delta}.
  \end{equation}
  Increasing the implict constant guarantees that $\vol H_r \ge 2$, i.e.\ there exists at least one non-zero lattice point $m \in \Z^d \setminus \{0\}$ satisfying both $\abs{Q[m]} \le 1$ and $\norm{Q_{+}^{1/2}m} \le r$. Because of $\rho \geq 1/2$, it is easy to see that the right-hand side of \eqref{th:dioeq:eq3} is bounded, up to absolute constants, by the right-hand side of \eqref{size_bound}. \\[2mm]
  \textsl{Second Case:} Now we assume that one of the inequalities in \eqref{proof:th1.4:eq1} fails. Then there exists a $t_0\in [T_{-}, T_{+}] $ such that the reciprocal $\alpha_d$-characteristic satisfies at least
  \begin{equation}
    \label{proof:th1.4:eq3}
    \beta_{t_0;r}^{-1} = d_Q \4 r^d \alpha_d(\Lambda_{t_0})^{-1}  \ll_{\beta,d} E(t_0) \defi ( q \log \log(q+\exp(1)) )^{\frac{2d}{d-4}+\delta/4}
  \end{equation}
  Following the proof of Lemma \ref{alpha_dio}, we see that there exists a $d$-dimensional sublattice $\Lambda' \subset \Lambda_{t_0}$ with $\alpha_d(\Lambda_{t_0}) = \abs{\det{\Lambda'}}^{-1} = \norm{w_1 \wedge \ldots \wedge w_n}^{-1}$, where
  \begin{equation*}
    w_j = \begin{pmatrix} r Q_+^{-1/2} (m_j - 4 t_0 Q n_j) \\ r^{-1} Q_{+}^{1/2} n_j \end{pmatrix}
  \end{equation*}
 is a basis of $\Lambda'$ determined by integral vectors $m_j,n_j \in \Z^d$, $j=1,\ldots,d$. We have also proven, writing $N = (n_1,\ldots,n_d), M = (m_1,\ldots,m_d) \in \mathrm{M}(d,\Z)$, that $N$ is invertible with $\beta_{t_0;r}^{-1} > \abs{\det{N}}$ and that the estimate
  \begin{equation*}
   \alpha_d(\Lambda_{t_0})^{-1} \gg_d r^{-(d-2)} q^{-1} \abs{\det{Q}}^{1/2} \abs{\det{N}} \norm{M N^{-1} -4t_0 Q} 
  \end{equation*}
  holds, provided that $\alpha_d(\Lambda_{t_0}) > q d_Q r^{d-2}$. In view of \eqref{proof:th1.4:eq3} the last condition is satisfied if we take a priori
  \begin{equation}
    \label{proof:th1.4:eq3:0}
    r \gg_{\beta,d} (E(t_0) q)^{1/2}.
  \end{equation}
  Now we are in position to apply Corollary \ref{bound:schlickewei:corollary} with the rescaled lattice $\Lambda = r \Lambda'$, noting that $\det(\Lambda) = r^d \det(\Lambda') \geq \abs{\det{Q}}^{1/2} \abs{\det{N}} \geq 1$, and the quadratic form $A[x] = \langle x , A x \rangle$ induced by the symmetric matrix
  \begin{equation*}
   A \defi \begin{pmatrix}
             0 & r^{-2} \mathbbm{1}_{d} \\
             r^{-2} \mathbbm{1}_{d} & 8 t_0 S
           \end{pmatrix}
  \end{equation*}
  with $\langle w_i, A w_j \rangle = \langle m_i, n_j \rangle + \langle 
m_j,n_i \rangle$. In other words, the quadratic form $A$ is represented by the symmetric matrix $A_0 \tdefi N^T M + M^T N$ in coordinates $w_1,\ldots,w_d$. In particular, $A$ is integer-valued on $\Lambda$. Since $A_1[n] := A_0[N^{-1}n]$, i.e.\ $A_1 = M N^{-1} + (M N^{-1})^T$, has the same signature as $A_0$, we need to check that the signature of $A_1$ is $(r,s)$. Because of
  \begin{equation*}
    \norm{A_1 - 8 t_0 Q} \ll_{\beta,d} \abs{\det{N}}^{-1} r^{-2} q E(t_0)
  \end{equation*}
  we may choose a priori $r \gg_{\beta,d} (q/q_0)^{1/2} \max\{1,t_0^{-1/2}\} q^{d/(d-4) +\delta}$, i.e.\ 
  \begin{equation}
    \label{proof:th1.4:eq4:before}
    r \gg_{\beta,d} (q/q_0)^{1/2} q^{1/2+(d+1)/(d-4)+\delta}
  \end{equation}
  to ensure that $A_1$ and $t_0 Q$ have the same number of eigenvalues with the same sign, i.e.\ the same signature (e.g.\ apply the Hoffman-Wielandt inequality, see Theorem 6.3.5 in \cite{horn-johnson:2013}). Thus, there exists a non-trivial lattice point $w = a_1 rw_1+\ldots + a_d rw_d \in \Lambda$, where $(a_1,\ldots,a_d) \in \Z^d \setminus \{0\}$, which satisfies $A[w]=0$ and, writing $n_0 = a_1 n_1 + \ldots + a_d n_d \in \Z^d \setminus \{0\}$, is of size
  \begin{equation}
    \label{proof:th1.4:eq4}
    \begin{aligned}
         \norm{Q_+^{1/2} n_0}^2 \leq \norm{w}^2 &\ll_d \max\{ (\Tr A^2)^\frac{1}{2}, (\Tr A^2)^\rho\} \abs{\det{\Lambda}}^\frac{4 \rho+2}{d} \\
         &\ll_{\beta,d} \log(q+1)^{4 \rho} (\abs{\det{Q}}^{1/2} E(t_0))^\frac{4 \rho+2}{d}\\
         &\ll_{\beta,d} q^{\delta+ \frac{8 \rho+4}{d-4}} \abs{\det{Q}}^\frac{2 \rho +1}{d}
    \end{aligned}
  \end{equation}
  where we used $\Tr A^2 \ll_d (r^{-2} + t_0)^2 \ll t_0^2 \ll_{\beta,d} \log(q+1)^4$ and \eqref{proof:th1.4:eq3}. Writing $w = (w_1,w_2) \in \mathbb{R}^d \times \mathbb{R}^d$ we also see that $0 = A[w] = r^{-2} \langle w_1, w_2 \rangle + 8 t_0 Q[n_0]$ and thus
  \begin{equation}
    \label{proof:th1.4:eq5}
    \begin{aligned}
      \abs{Q[n_0]} &\ll (r^2 t_0 )^{-1} \norm{w_1} {\cdot} \norm{w_2} \leq  (r^2 t_0 )^{-1}  \norm{w}^2 \\
      &\ll_d \max \{ 1, t_0^{2\rho-1}\} \abs{\det{\Lambda}}^\frac{4 \rho+2}{d} r^{-2} \ll_{\beta,d} q^{\delta+ \frac{8 \rho+4}{d-4}} \abs{\det{Q}}^\frac{2 \rho +1}{d} r^{-2}.
    \end{aligned}
  \end{equation}
  Hence, requiring in addition
  \begin{equation}
  \label{th:dioeq:lastr}
  r \gg_{\beta,d}  q^{\frac{1}{2}+ \frac{d \rho+2}{d-4}+\delta} \geq q^{\delta+ \frac{4 \rho+2}{d-4}} \abs{\det{Q}}^\frac{2 \rho +1}{2d},
  \end{equation}
  it follows from \eqref{proof:th1.4:eq5} that $\abs{Q[n_0]} \ll_{\beta,d} 1$, which in turn guarantees $\abs{Q[n_0]} < 1$ as long as $r$ is taken large enough in terms of $\beta$ and $d$. Combining this choice with the lower bounds on $r$ already required in \eqref{th:dioeq:firstr}, \eqref{th:dioeq:eq2}, \eqref{proof:th1.4:eq3:0} \eqref{proof:th1.4:eq4:before} and \eqref{th:dioeq:lastr}, we observe that an appropriate choice for $r$ is given by
 \begin{equation}
  r \asymp_{\beta,d} (q/q_0)^{\frac{d+1}{d-2}} q^{\frac{1}{2} + \frac{\max\{ \rho d +2,d+1\}}{d-4}  +\delta},
 \end{equation}
 where the implicit constant is chosen large enough depending on $\beta$ and $d$ only. This concludes the proof of Theorem \ref{dio-ineq}.
\end{proof}

\nocite{*}
\printindex
\printbibliography
\vspace{2mm}
\textsc{\scriptsize Mathematisches Institut, Bunsenstrasse 3-5, D-37073 G\"{o}ttingen, Germany} \\
\indent {\scriptsize \textit{Email address:} \texttt{buterus@mathematik.uni-goettingen.de}} \\[2mm]
\textsc{\scriptsize Faculty of Mathematics, Univ.\ Bielefeld, P.O.Box 100131, 33501 Bielefeld, Germany} \\
\indent {\scriptsize \textit{Email address:} \texttt{goetze@math.uni-bielefeld.de}} \\[2mm]
\indent \textsc{\scriptsize Mathematics Department, Northwestern University, 2033 Sheridan Road, Evanston, IL 60208, USA} \\
\indent {\scriptsize \textit{Email address:} \texttt{thomas.hille@northwestern.edu}}\\[2mm]
\indent \textsc{\scriptsize Dept.\ of Mathematics, Yale University, New Haven, CT, USA} \\
\indent {\scriptsize \textit{Email address:} \texttt{grigorii.margulis@yale.edu}}\\[2mm]

\end{document}